\newcommand{\gaps}[1]{\mathcal{G}_{#1}}
\newcommand{\sepgaps}[1]{\widetilde{\mathcal{G}}_{#1}}
\newcommand{\sk}[2]{\mathrm{sk}_{#1}(#2)}
\newcommand{\localstates}[1]{S({#1})}
\newcommand{\reducedlocalstates}[1]{\widetilde{S}({#1})}
\newcommand{\intrinsic}[1]{S(#1)}
\newcommand{\reducedintrinsic}[2]{S^{#1}(#2)}
\newcommand{\fullyreduced}[1]{\widetilde{S}(#1)}
\newcommand{\tailreduced}[1]{S^\partial(#1)}
\newcommand{\starreduced}[1]{S^{\Yup}(#1)}
\newcommand{\bimod}[1]{\mathcal{B}\mathrm{imod}_{#1}}
\newcommand{\isotopyinvariant}[1]{\mathcal{I}\mathrm{nvt}_{#1}^\mathrm{iso}}
\newcommand{\DeltaZero}{\mathbf{1}}
\newcommand{\Cx}{\mathcal{C}\mathrm{x}}
\newcommand{\SD}{\mathrm{SD}}
\newcommand{\sing}{\mathrm{sing}}
\DeclareMathOperator{\forgettodomain}{forget}
\newcommand{\Flw}{\mathcal{F}\mathrm{lw}}
\newcommand{\XX}{\mathcal{X}}
\newcommand{\YY}{\mathcal{Y}}
\newcommand{\Gph}{\mathcal{G}\mathrm{ph}}
\DeclareMathOperator{\Lan}{Lan}
\DeclareMathOperator{\hoLan}{hoLan} 
\newcommand{\Conf}{\mathrm{Conf}}
\newcommand{\Ch}{\mathcal{C}\mathrm{h}}
\newcommand{\Emb}{\mathcal{E}\mathrm{mb}}
\newcommand{\Top}{\mathcal{T}\mathrm{op}}
\newcommand{\Fun}{\mathrm{Fun}}
\newcommand{\id}{\mathrm{id}}
\newcommand{\Uu}{\mathcal{U}}
\newcommand{\Ind}{\mathrm{Ind}}
\newcommand{\subd}{\mathcal{P}}
\newcommand{\D}{\mathcal{D}}
\newcommand{\E}{\mathcal{E}}
\newcommand{\F}{\mathcal{F}}
\newcommand{\Cc}{\mathcal{C}}
\newcommand{\X}{\mathcal{X}}
\newcommand{\Y}{\mathcal{Y}}
\DeclareMathOperator*{\im}{\mathrm{im}}
\newcommand{\pt}{\mathrm{pt}}
\DeclareMathOperator*{\hocolim}{\mathrm{hocolim}}
\DeclareMathOperator*{\colim}{\mathrm{colim}}
\definecolor{coloryellow}{RGB}{240,228,66}
\definecolor{colorskyblue}{RGB}{86,180,233}
\definecolor{colorvermillion}{RGB}{213,94,0}
\DeclareMathOperator{\coker}{coker}
\DeclareMathOperator{\Hom}{Hom}
\DeclareMathOperator{\rk}{rk}
\newcommand{\netgraph}{\graphfont{N}}
\newcommand{\graphfont}{\mathsf}
\newcommand{\thetagraph}[1]{\graphfont{\Theta}_{#1}}
\newcommand{\completegraph}[1]{\graphfont{K}_{#1}}
\newcommand{\intervalgraph}{\graphfont{I}}
\newcommand{\stargraph}[1]{\graphfont{S}_{#1}}
\newcommand{\graf}{\graphfont{\Gamma}}
\newcommand{\Xigraph}{\graphfont{\Xi}}
\newcommand{\treegraph}{\graphfont{T}}
\newcommand{\lollipopgraph}[1]{\graphfont{L}_{#1}}
\newcommand{\figureeightgraph}{\graphfont{8}}
\newcommand{\cyclegraph}[1]{\graphfont{C}_{#1}}
\DeclareMathOperator{\tor}{Tor}
\newcommand{\boundary}{\partial}
\newtheorem*{decomposition}{Decomposition Theorem}
\newtheorem*{comparison}{Comparison Theorem}
\theoremstyle{definition}
\newtheorem{definition}{Definition}[section]
\newtheorem{notation}[definition]{Notation}
\newtheorem{example}[definition]{Example}
\newtheorem{construction}[definition]{Construction}
\theoremstyle{plain}
\newtheorem{proposition}[definition]{Proposition}
\newtheorem{lemma}[definition]{Lemma}
\newtheorem{corollary}[definition]{Corollary}
\newtheorem{theorem}[definition]{Theorem}
\theoremstyle{remark}
\newtheorem{remark}[definition]{Remark}
\def\YEAR{\year}\newcount\VOL\VOL=\YEAR\advance\VOL by-1995
\def\firstpage{1}\def\lastpage{1000}
\def\received{}\def\revised{}
\def\communicated{}
\def\magnification{\afterassignment\m@g\count@}
\def\m@g{\mag=\count@\hsize6.5truein\vsize8.9truein\dimen\footins8truein}
\font\eightrm=cmr8
\font\caps=cmcsc10                    
\font\Caps=cmcsc10 scaled \magstep1   
\font\scaps=cmcsc8
\def\DocMath{{\def\th{\thinspace}\scaps Documenta Math.}}
\renewcommand{\@oddfoot}{\hfill\scaps Documenta Mathematica 
    \number\VOL\  (\number\YEAR) \number\firstpage--\lastpage\hfill}
\renewcommand{\@evenfoot}{\ifnum\thepage>\lastpage\hfill\scaps
    Documenta Mathematica \number\VOL\  (\number\YEAR)\hfill\else\@oddfoot\fi}%
\renewcommand{\@evenhead}{%
    \ifnum\thepage>\lastpage\rlap{\thepage}\hfill%
    \else\rlap{\thepage}\slshape\leftmark\hfill{\caps\SAuthor}\hfill\fi}%
\renewcommand{\@oddhead}{%
    \ifnum\thepage=\firstpage{\DocMath\hfill\llap{\thepage}}%
    \else{\slshape\rightmark}\hfill{\caps\STitle}\hfill\llap{\thepage}\fi}%
\def\TSkip{\bigskip}
\newbox\TheTitle{\obeylines\gdef\GetTitle #1
\ShortTitle  #2
\SubTitle    #3
\Author      #4
\ShortAuthor #5
\EndTitle
{\setbox\TheTitle=\vbox{\baselineskip=20pt\let\par=\cr\obeylines%
\halign{\centerline{\Caps##}\cr\noalign{\medskip}\cr#1\cr}}%
    \copy\TheTitle\TSkip\TSkip%
\def\next{#2}\ifx\next\empty\gdef\STitle{#1}\else\gdef\STitle{#2}\fi%
\def\next{#3}\ifx\next\empty%
    \else\setbox\TheTitle=\vbox{\baselineskip=20pt\let\par=\cr\obeylines%
    \halign{\centerline{\caps##} #3\cr}}\copy\TheTitle\TSkip\TSkip\fi%
\centerline{\caps #4}\TSkip\TSkip%
\def\next{#5}\ifx\next\empty\gdef\SAuthor{#4}\else\gdef\SAuthor{#5}\fi%
\ifx\received\empty\relax
    \else\centerline{\eightrm Received: \received}\fi%
\ifx\revised\empty\TSkip%
    \else\centerline{\eightrm Revised: \revised}\TSkip\fi%
\ifx\communicated\empty\relax
    \else\centerline{\eightrm Communicated by \communicated}\fi\TSkip\TSkip%
\catcode'015=5}}\def\Title{\obeylines\GetTitle}
\def\Abstract{\begingroup\narrower
    \parskip=\medskipamount\parindent=0pt{\caps Abstract. }}
\def\EndAbstract{\par\endgroup\TSkip}
\long\def\MSC#1\EndMSC{\def\arg{#1}\ifx\arg\empty\relax\else
     {\par\narrower\noindent%
     2010 Mathematics Subject Classification: #1\par}\fi}
\long\def\KEY#1\EndKEY{\def\arg{#1}\ifx\arg\empty\relax\else
    {\par\narrower\noindent Keywords and Phrases: #1\par}\fi\TSkip}
\newbox\TheAdd\def\Addresses{\vfill\copy\TheAdd\vfill
    \ifodd\number\lastpage\vfill\eject\phantom{.}\vfill\eject\fi}
{\obeylines\gdef\GetAddress #1
\Address #2 
\Address #3
\Address #4
\EndAddress
{\def\xs{4.3truecm}\parindent=0pt
\setbox0=\vtop{{\obeylines\hsize=\xs#1\par}}\def\next{#2}
\ifx\next\empty 
     \setbox\TheAdd=\hbox to\hsize{\hfill\copy0\hfill}
\else\setbox1=\vtop{{\obeylines\hsize=\xs#2\par}}\def\next{#3}
\ifx\next\empty 
     \setbox\TheAdd=\hbox to\hsize{\hfill\copy0\hfill\copy1\hfill}
\else\setbox2=\vtop{{\obeylines\hsize=\xs#3\par}}\def\next{#4}
\ifx\next\empty\ 
     \setbox\TheAdd=\vtop{\hbox to\hsize{\hfill\copy0\hfill\copy1\hfill}
                \vskip20pt\hbox to\hsize{\hfill\copy2\hfill}}
\else\setbox3=\vtop{{\obeylines\hsize=\xs#4\par}}
     \setbox\TheAdd=\vtop{\hbox to\hsize{\hfill\copy0\hfill\copy1\hfill}
            \vskip20pt\hbox to\hsize{\hfill\copy2\hfill\copy3\hfill}}
\fi\fi\fi\catcode'015=5}}\gdef\Address{\obeylines\GetAddress}
\def\LOCAL{\jobname.files}
\begin{document}
\Title Subdivisional spaces and graph braid groups
\ShortTitle 
\SubTitle   
\Author Byung Hee An, Gabriel C. Drummond-Cole, and Ben Knudsen
\ShortAuthor An, Drummond-Cole, and Knudsen
\EndTitle
\Abstract We study the problem of computing the homology of the configuration spaces of a finite cell complex $X$. We proceed by viewing $X$, together with its subdivisions, as a \emph{subdivisional space}---a kind of diagram object in a category of cell complexes. After developing a version of Morse theory for subdivisional spaces, we decompose $X$ and show that the homology of the configuration spaces of $X$ is computed by the derived tensor product of the Morse complexes of the pieces of the decomposition, an analogue of the monoidal excision property of factorization homology. 

Applying this theory to the configuration spaces of a graph, we recover a cellular chain model due to \'{S}wi\k{a}tkowski. Our method of deriving this model enhances it with various convenient functorialities, exact sequences, and module structures, which we exploit in numerous computations, old and new.

\EndAbstract
\MSC primary: 20F36, 55R80, 55U05; secondary: 05C10
\EndMSC
\KEY configuration spaces, cell complexes, subdivisional spaces, graphs, braid groups
\EndKEY
\Address Byung Hee An\\Center for Geometry\\ and Physics\\Institute for Basic Science\\Pohang 37673\\Republic of Korea
\Address Gabriel C. Drummond-Cole\\Center for Geometry\\ and Physics\\Institute for Basic Science\\Pohang 37673\\Republic of Korea
\Address Ben Knudsen\\Department of Mathematics\\Harvard University\\Cambridge 02138\\USA
\Address
\EndAddress

\setcounter{tocdepth}{1}
\tableofcontents
\section{Introduction}

Consider the following problem: given a cell structure on a space $X$, compute the homology of the configuration space \[B_k(X)\coloneqq{}\{(x_1,\ldots, x_k)\in X^k: x_i\neq x_j\text{ if }i\neq j\}/\Sigma_k,\]where $\Sigma_k$ is the symmetric group on $\{1,\ldots, k\}$. 
In this work, we provide new tools to address this problem by combining ideas from homotopy theory and robotics. We apply these tools to study the homology of configuration spaces of graphs.

\subsection{Configuration spaces and gluing} First introduced in \cite{FadellNeuwirth:CS}, configuration spaces of manifolds have a long and rich history in algebraic topology---for a taste, we direct the reader to \cite{Arnold:CRGDB}, \cite{Segal:CSILS}, \cite{CohenLadaMay:HILS}, \cite{Totaro:CSAV}, and \cite{LongoniSalvatore:CSANHI}. One powerful idea in this area has been the idea that configuration spaces behave predictably under collar gluings; that is, if we decompose a manifold $M$ into two open submanifolds $M_0$ and $M_1$ glued along the thickening of an embedded codimension one submanifold $N$, then the configuration spaces of $M$ are in some sense determined by the configuration spaces of these smaller manifolds. This decomposition technique has borne considerable fruit---see \cite{McDuff:CSPNP}, \cite{Bodigheimer:SSMS}, \cite{BodigheimerCohen:RCCSS}, \cite{BodigheimerCohenTaylor:OHCS}, \cite{FelixThomas:RBNCS}, and~\cite{Knudsen:BNSCSVFH}. 

One articulation of this gluing property comes from the theory of factoriztion homology, which provides a quasi-isomorphism \[
C_*(B(M))\simeq C_*(B(M_0))\bigotimes^\mathbb{L}_{C_*(B(N\times\mathbb{R}))}C_*(B(M_1)),
\] where $B(M):=\coprod_{k\geq0} B_k(M)$ \cite{AyalaFrancis:FHTM}. Unfortunately, these chain complexes are only algebras and modules up to structured homotopy, so this quasi-isomorphism may be difficult to use in computations. To address this issue, we will combine this gluing property with ideas drawn from a different school of thought.

\subsection{Configuration spaces and cell structures} Since their introduction in \cite{GhristKoditschek:SCRPVDOG} and \cite{Ghrist:CSBGGR}, configuration spaces of graphs have been studied intensively---see \cite{Abrams:CSBGG}, \cite{Farber:TCMP}, and \cite{Farber:CFMPG} and the references therein. According to a theorem of \cite{Ghrist:CSBGGR}, the configuration spaces of a graph $\graf$ are all classifying spaces for their fundamental groups, the so-called \emph{graph braid groups} of $\graf$.

The key to approaching these spaces is to notice that the cell structure of a graph $\graf$ yields a cellular approximation $B_k^\Box(\graf)\to B_k(\graf)$, which becomes a homotopy equivalence after finite subdivision \cite{Abrams:CSBGG}. Thus, one may apply the Morse theory for cell complexes developed in \cite{Forman:MTCC}. This approach has led to many computations---see \cite{FarleySabalka:DMTGBG}, \cite{Farley:HTBG}, \cite{FarleySabalka:OCRTBG}, \cite{FarleySabalka:PGBG}, \cite{KimKoPark:GBGRAAG}, \cite{Sabalka:ORIPTBG}, and \cite{KoPark:CGBG}.

There are two immediate obstacles to extending the success of cellular methods in the case of a graph to higher dimensional complexes. First, the geometry in higher dimensions may be too difficult to handle directly. To address this problem, we will use the cut-and-paste idea employed in the manifold case, working with locally defined Morse data. Second, the approximation $B_k^\Box$ may fail to capture the homotopy type of the configuration space in higher dimensions, even after finite subdivision. 

\subsection{Subdivisional spaces and decomposition} Fortunately, the cellular approximation does improve with subdivision, and, by applying the method of \cite{Abrams:CSBGG}, we prove in Theorem~\ref{thm:discrete approximation} that, for any suitably \emph{convergent} set $\subd$ of subdivisions of $X$, the map \[\colim_{X'\in \subd}B_k^\Box(X')\to B_k(X)\] is a weak homotopy equivalence. Thus, in the presence of such a convergent \emph{subdivisional structure} $\subd$, the study of the configuration spaces of $X$ is equivalent to the study of the diagram of configuration complexes determined by $\subd$.

These diagrammatics are formalized within the framework of \emph{subdivisional spaces,} which behave rigidly in some ways and like continuous objects in others. In particular, cellular chains and an abstract form of Morse theory lift effortlessly to the subdivisional context, whereas the \emph{subdivisional configuration space} $B^\SD(X)$ captures the homotopy type of $B(X)$. Our first main result is a gluing theorem in this context.

\begin{decomposition}[Theorem~\ref{thm:decomposition}]
Given a decomposition $X\cong X_0\amalg_{A\times I} X_1$ as subdivisional spaces, together with suitable locally-defined Morse data, the homology of $B(X)$ is computed by the derived tensor product of Morse complexes \[
I(B^\SD(X_0))\bigotimes^\mathbb{L}_{I(B^\SD(A\times I))}I(B^\SD(X_1)).
\]
\end{decomposition}

\subsection{Swiatkowski complexes} In the second half of the paper, we apply this theory to the semi-classical case of graphs. What results is a family of chain complexes computing the homology of graph braid groups, which first appeared in the work of \'{S}wi\k{a}tkowski \cite{Swiatkowski:EHDCSG}. Let $\graf$ be a graph with vertices $V$ and edges $E$. We set 
\[\intrinsic{\graf}\coloneqq{}\mathbb{Z}[E]\otimes \bigotimes_{v\in V}\localstates{v},\] 
where $\localstates{v}$ is the free Abelian group generated by $\{\varnothing, v\}\amalg H(v)$. Here $H(v)$ is the set of half-edges at $v$. The \emph{\'{S}wi\k{a}tkowski complex} of $\graf$ is the (bigraded differential) $\mathbb{Z}[E]$-module $\intrinsic{\graf}$---see \S\ref{section:intrinsic complex} for details. Our second main result is the following:

\begin{comparison}[Theorem~\ref{thm:comparison}]
There is a natural isomorphism of bigraded Abelian groups\[H_*(B(\graf))\cong H_*(\intrinsic{\graf}).\] 
\end{comparison}

To derive Theorem~\ref{thm:comparison} from the decomposition theorem, we fragment $\graf$ completely. We take $\graf_0$ to be a disjoint union over $v\in V$ of the star graphs $\stargraph{d(v)}$, where $d(v)$ is the number of half-edges incident on $v$. We take $\graf_1$ to be a disjoint union of intervals, one for each edge in $E$. We obtain $\graf$ by gluing these pieces along $2|E|$ disjoint intervals. We define local Morse data by putting the vertex of each star graph ``at the top,'' so that configurations flow down the legs (see \S\ref{section:interval and star flows})---and the resulting Morse complex is a reduced version of the \'{S}wi\k{a}tkowski complex for the star graph. The decomposition theorem gives the isomorphism \[H_*(B(\graf))\cong H_*\left(\bigotimes_{v\in V}\mathbb{Z}[H(v)]\otimes \localstates{v}\bigotimes_{\mathbb{Z}[E]^{\otimes 2}}\mathbb{Z}[E],\,\partial\right),\] and the righthand side is isomorphic to $\intrinsic{\graf}$ by inspection.

\subsection{Homology computations} The \'{S}wi\k{a}tkowski complex has many desirable features. It is finite dimensional in each bidegree and finitely generated as a $\mathbb{Z}[E]$-module. It connects configuration spaces of different cardinalities by the action of $\mathbb{Z}[E]$. It depends only on intrinsic graph theoretic data and requires no choice of subdivision. It decomposes geometrically, assigning a short exact sequence to the removal of a vertex. It is functorial for embeddings among graphs, so relations at the level of atomic subgraphs impose global constraints. Some of these properties are evident or implicit already in~\cite{Swiatkowski:EHDCSG}; several are new. These features amount to a robust computational toolkit, which we exploit extensively in \S\ref{section:tools} and Appendix~\ref{appendix:degree one}. 

\subsection{How to read this paper}The reader concerned mainly with graph braid groups may wish to start with just enough of \S\ref{section:graph braid groups} to see our conventions on graphs and the definition of the \'{S}wi\k{a}tkowski complex before skipping directly to the computations of \S\ref{section:tools}, returning to the theory later. Starting from the beginning is recommended for the reader interested in configuration spaces in general, higher dimensional applications, or variations on the ideas of factorization homology.

\subsection{Relation to previous work}\label{section:relation to previous work} This paper grew out of the desire to combine the local-to-global approach to configuration spaces of graphs promised by the stratified factorization homology developed in \cite{AyalaFrancisTanaka:FHSS} with the combinatorial character and computational ease of the discrete Morse theoretic model of \cite{FarleySabalka:DMTGBG}, following \cite{Abrams:CSBGG}. Although we do not directly employ the results of any of this work, its ideas permeate the theory developed in \S\ref{section:configuration complexes}--\ref{section:graph braid groups}.

The \'Swi\k{a}tkowksi complex first appeared in \cite{Swiatkowski:EHDCSG} (see also \cite{Luetgehetmann:CSG}). There \'{S}wi\k{a}tkowski constructed a cubical complex lying inside $B_k(\graf)$ as a deformation retract to study the fundamental group. The cellular chain complex of this cubical complex is isomorphic to the weight $k$ subcomplex of $S(\graf)$. This observation implies a weaker version of Theorem~\ref{thm:comparison} which contains more direct geometric content. 

A similar edge stabilization mechanism, in a different complex and for trees only, was studied by Ramos \cite{Ramos:SPHTBG}.

The generators and some of the relations that we describe for the first homology groups appear in~\cite{HarrisonKeatingRobbinsSawicki:NPQSG}. That work uses these generators to perform many computations motivated by physics, and it contains another alternative proof of the theorem of \cite{KoPark:CGBG} dealt with in our Appendix~\ref{appendix:degree one}.

\subsection{Future directions} We defer pursuit of the following ideas to future work. 
\begin{enumerate}
\item \emph{Edge stabilization}. In the sequel to this paper \cite{AnDrummond-ColeKnudsen:ESHGBG}, we show that the $\mathbb{Z}[E]$-action is geometric, arising from a new family of stabilization maps at the level of the configuration spaces themselves, and we carry out a detailed investigation of its properties. 
\item \emph{Destabilization}. Dual to the process of adding points is that of splitting configurations apart, which may be phrased as a cocommutative coalgebra structure for which $\mathbb{Z}[E]$ acts by coderivations.
\item \emph{Higher dimensions}. Little research has been done on configuration spaces of higher dimensional cell complexes in general---see \cite{Gal:ECCSC}, \cite{AnPark:OSBGC}, and \cite{Wiltshire-Gordon:MCSSC} for rare examples. Replicating the computational success of the \'{S}wi\k{a}tkowski complex in higher dimensions amounts to identifying tractable local Morse data.
\item \emph{Cup products}. The diagonal is not a cellular embedding, but it is an embedding of subdivisional spaces, so our methods may shed light on the cohomology rings of configuration spaces. This is already very interesting for graphs---see \cite{Sabalka:ORIPTBG}.
\item \emph{Ordered configurations}. Our program translates with minor modifications to the context of ordered configurations, and we expect to recover an enhanced version of the cellular chain complex of the cubical model constructed in~\cite{Luetgehetmann:CSG}.
\end{enumerate}

\subsection{Conventions}
Graded objects are concentrated in non-negative degrees. This restriction is only used in Proposition \ref{prop:dold-kan hocolim}. We write $\Ch_\mathbb{Z}$ for the category of chain complexes. 

Bigradings of modules are by \emph{degree} and \emph{weight}. The braiding isomorphism for a tensor product of modules has a sign which depends on degree and not on weight: if $x$ and $y$ have degree $i$ and $j$, the braiding isomorphism takes $x\otimes y$ to $(-1)^{ij}y\otimes x$. We write $[m]$ for the degree shift functor by $m$ and $\{n\}$ for the weight shift functor by $n$ so that the degree $i$ and weight $j$ component of $M[m]\{n\}$ is the degree $i-m$ and weight $j-n$ component of $M$.

Symmetric monoidal functors are strong monoidal. We use the phrases ``(natural) weak equivalence'' to refer to a (natural) isomorphism in the relevant homotopy category  and the phrases ``(natural) quasi-isomorphism'' to refer to a (natural) chain map which induces an isomorphism on homology groups. 

We write $C^\sing(X)$ for the singular chain complex of the topological space $X$. If $X$ is a CW complex, we denote the cellular chain complex of $X$ by $C(X)$.

\subsection{Acknowledgments}
The second author thanks Felix Boes for sharing computational tools. The third author thanks Daniel L\"{u}tgehetmann for noticing the connection to the work of \'{S}wi\k{a}tkowski \cite{Swiatkowski:EHDCSG}. The authors thank Tomasz Maci\k{a}\.{z}ek for pointing out several related papers.

\section{Subdivisional spaces}\label{section:configuration complexes}\label{section:subdivisional spaces}

Following the ideas of \cite{Abrams:CSBGG}, we approximate the configuration spaces of a cell complex by cell complexes, and we show, in Theorem~\ref{thm:discrete approximation}, that the homotopy types of these approximations often ``converge'' to the homotopy type of the true configuration space under transfinite subdivision. We then introduce the framework of subdivisional spaces, which sets a complex $X$ equipped with a set of subdivisions $\subd$ on equal footing with the corresponding collection of configuration complexes $\{B_k^\Box(X')\}_{X'\in\subd}$. We identify a natural theory of homology for these objects, the complex of subdivisional chains, and we show that it is homotopically well-behaved.

 \subsection{Complexes and subdivision}
If $X$ is a CW complex and $c\subseteq X$ is an $n$-cell, we write $\partial c$ for the image of $\partial D^n$ under a characteristic map for $c$, and we set $\mathring{c}\coloneqq{}c\setminus \partial c$. The $n$-skeleton of $X$, which is to say the union of the cells of $X$ of dimension at most $n$, is denoted $\sk{n}{X}$. A cellular map is a map preserving skeleta.

From now on, a \emph{complex} will be a finite CW complex. We choose to restrict our attention to the finite case for the sake of convenience only.

\begin{definition}
Let $f:X\to Y$ be a cellular map between complexes. We say that $f$ is \begin{enumerate}
\item \emph{regular} if $f$ preserves both closed and open cells;
\item an \emph{isomorphism} if $f$ is regular and bijective;
\item an \emph{embedding} if $f$ is regular and injective;
\item a \emph{subdvision} if $f$ is bijective and preserves subcomplexes;
\item a \emph{subdivisional embedding} if $f$ is injective and preserves subcomplexes.
\end{enumerate}
\end{definition}

Thus, a subdivisional embedding factors via its image into a subdivision followed by an embedding.

Given a complex $X$, we write $\SD(X)$ for the category whose objects are subdivisions of $X$ and whose morphisms are commuting triangles of subdivisions. Note that, since a subdivision is in particular a homeomorphism, there can be at most one morphism in $\SD(X)$ with fixed source and target.

\begin{remark}
Some authors consider a subdivision to be the inverse to what we have defined to be a subdivision. We choose this convention because it matches the direction of the functoriality that will arise naturally in the examples of interest. Modulo this issue of direction, our notion of subdivision is equivalent to that of \cite[Def.~II.6.2]{LundellWeingram:TCWC}.
\end{remark}

\subsection{Configuration complexes}

We now introduce the main object of study.

\begin{definition}
Let $X$ be a topological space. \begin{enumerate}
\item The $k$th ordered \emph{configuration space} of $X$ is \[\Conf_k(X)=\{(x_1,\ldots, x_k):x_i\neq x_j\text{ if }i\neq j\}.\] \item The $k$th \emph{unordered} configuration space of $X$ is the quotient \[B_k(X)=\Conf_k(X)/\Sigma_k.\]
\end{enumerate}
\end{definition}

Unfortunately, a cell structure on $X$ does not induce an obvious cell structure on $\Conf_k(X)$; however, following \cite{Abrams:CSBGG}, there is a cellular approximation.

\begin{definition}
Let $X$ be a complex. \begin{enumerate}
\item The $k$th ordered \emph{configuration complex} of $X$ is the largest subcomplex $\Conf^\Box_k(X)\subseteq X^k$ contained in $\Conf_k(X)$. 
\item The $k$th unordered configuration complex of $X$ is the quotient \[B_k^\Box(X)=\Conf_k^\Box(X)/\Sigma_k.\]
\end{enumerate}
\end{definition}

In other words, a cell $(c_1,\ldots, c_k)$ of $X^k$ lies in $\Conf^\Box_k(X)$ if and only if $c_i\cap c_j=\varnothing=\varnothing$ for $i\ne j$.

The quotient $B_k^\Box(X)$ is again a complex, which we view as an approximation to $B_k(X)$. These approximations enjoy a certain functoriality.

\begin{lemma}\label{lem:conf subdivision}
Let $s:X\to X'$ be a subdivision. The restriction of $s^k$ to $\Conf_k^\Box(X)$ factors $\Sigma_k$-equivariantly through $\Conf_k^\Box(X')$ as a subdivisional embedding.
\end{lemma}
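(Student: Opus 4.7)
The strategy is to bootstrap from a single observation about products: the $k$-fold product $s^k \colon X^k \to (X')^k$ is itself a subdivision of CW complexes. Cellularity and bijectivity pass to products in an obvious way, while the preservation of subcomplexes is inherited factor-by-factor, so $s^k$ satisfies the definition. Since $\Conf_k^\Box(X) \subseteq X^k$ and $\Conf_k^\Box(X') \subseteq (X')^k$ are by definition subcomplexes, it then suffices to check the set-theoretic containment $s^k(\Conf_k^\Box(X)) \subseteq \Conf_k^\Box(X')$; equivariance of the restriction will be automatic from the fact that $s^k$ manifestly commutes with coordinate permutations, and the subdivisional-embedding property will follow from the same property of $s^k$ together with injectivity on the source.

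For the containment, I would argue cell-by-cell. Let $(c_1,\ldots,c_k)$ be an open cell of $\Conf_k^\Box(X)$, so that $\overline{c_i}\cap \overline{c_j}=\varnothing$ for $i\neq j$. Because $s$ is a subdivision, each closed cell $\overline{c_i}$ of $X$ is a union of closed cells of $X'$; equivalently, any open cell $c'_i$ of $X'$ meeting $c_i$ is contained in $\overline{c_i}$. Thus every cell of $(X')^k$ lying in the image of $(c_1,\ldots,c_k)$ has the form $(c'_1,\ldots,c'_k)$ with $\overline{c'_i}\subseteq \overline{c_i}$, and so
\[
\overline{c'_i}\cap \overline{c'_j}\subseteq \overline{c_i}\cap \overline{c_j}=\varnothing,
\]
which certifies $(c'_1,\ldots,c'_k)\in \Conf_k^\Box(X')$.

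Equivariance is immediate: the symmetric group acts on $X^k$ and $(X')^k$ by permutation of factors, and $s^k$ is $\Sigma_k$-equivariant by construction, so the same is true of any restriction. Finally, because $s^k$ is a subdivision, it is in particular injective and sends subcomplexes of $X^k$ to subcomplexes of $(X')^k$; restricting to the subcomplex $\Conf_k^\Box(X)$ yields an injection into $\Conf_k^\Box(X')$ which sends subcomplexes of $\Conf_k^\Box(X)$ to subcomplexes of $\Conf_k^\Box(X')$, i.e.\ a subdivisional embedding.

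There is no substantive obstacle here; the only point requiring any care is the product assertion, and in particular the fact that the closed cells of $X^k$ are the products of closed cells of $X$, so that ``preserves subcomplexes'' really does pass to products. Once that is in hand, the remainder is a direct unwinding of definitions.
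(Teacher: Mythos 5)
Your argument is correct, and the paper itself states this lemma without proof, so there is no ``official'' argument to compare against. Your route is the natural one: first show that $s^k\colon X^k\to (X')^k$ is a subdivision (cellularity and bijectivity are componentwise; preservation of subcomplexes follows because each cell of $X^k$ is a product of closed cells of $X$, $s$ carries each closed cell of $X$ to a subcomplex of $X'$, a finite product of subcomplexes is a subcomplex, and a union of subcomplexes is a subcomplex), then deduce the containment $s^k(\Conf_k^\Box(X))\subseteq\Conf_k^\Box(X')$ and the rest by restriction.

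Two small remarks. First, once you have established that $s^k$ is a subdivision, the containment step can be shortened: $s^k(\Conf_k^\Box(X))$ is automatically a subcomplex of $(X')^k$ contained in $\Conf_k(X')$ (because $s^k$ is a homeomorphism), so it lies in $\Conf_k^\Box(X')$ by the latter's maximality; the cell-by-cell disjointness check is then redundant, though of course also valid. Second, a notational point: in this paper's conventions the symbol $c$ already denotes the \emph{closed} cell and $\mathring c$ its interior, so the bars in $\overline{c_i}$ are superfluous, and the condition ``$c_i\cap c_j=\varnothing$'' should be read on closed cells directly.
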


Our next result, which will serve as a replacement for \cite[Thm.~2.1]{Abrams:CSBGG}, asserts that, after perhaps transfinite subdivision, this combinatorial approximation is faithful. It will be useful to state this result in some generality, and we introduce the following notion in order to do so.

\begin{definition}\label{defi:convergent}
Let $X$ be a complex. A \emph{subdivisional structure} on $X$ is a subcategory $\subd\subseteq \SD(X)$ that is non-empty, full, and filtered. We say that the subdivisional structure $\subd$ is \emph{convergent} if there is a metric on $X$ such that \[\lim_{X'\in\subd}\max_{c\subseteq X'}\mathrm{diam}(c)=0.\]
\end{definition}

\begin{example}
If $X$ is regular, then the barycentric subdivisions of $X$ form a convergent subdivisional structure---see \cite[Thm.~III.1.7]{LundellWeingram:TCWC}.
\end{example}

\begin{theorem}[Convergence theorem]\label{thm:discrete approximation}
For any complex $X$, any convergent $\subd\subseteq \SD(X)$, and any $k\geq0$, the natural map \[\colim_{X'\in\subd}\Conf_k^\Box(X')\longrightarrow \Conf_k(X)\] is a homeomorphism and, in particular, a weak homotopy equivalence.
\end{theorem}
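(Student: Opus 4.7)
The plan is to establish the map as a continuous bijection and then upgrade to a homeomorphism via a compactness argument. Continuity is automatic from the universal property, since each map $\Conf_k^\Box(X') \to X^k$ factors through $\Conf_k(X)$: pairwise disjoint closed cells in $X'$ translate into pointwise distinct coordinate values.

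For injectivity, I would use filteredness to lift any two elements of the colimit to a common $\Conf_k^\Box(X'')$, where, since the underlying map $X'' \to X$ is a homeomorphism, equality of images in $\Conf_k(X) \subseteq X^k$ already forces equality in the colimit. For surjectivity, I would invoke convergence directly. Given $(x_1, \ldots, x_k) \in \Conf_k(X)$, set $\epsilon = \tfrac12 \min_{i \neq j} d(x_i, x_j) > 0$ and choose $X' \in \subd$ with maximum cell diameter strictly less than $\epsilon$. Let $c_i$ denote the closed cell of $X'$ containing $x_i$. If $z \in c_i \cap c_j$ for some $i \neq j$, then $d(x_i, x_j) \leq \mathrm{diam}(c_i) + \mathrm{diam}(c_j) < 2\epsilon$, contradicting the choice of $\epsilon$. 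Hence the $c_i$ are pairwise disjoint, placing the configuration in $\Conf_k^\Box(X')$.

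To upgrade the continuous bijection to a homeomorphism, the key point is that every compact $K \subseteq \Conf_k(X)$ already lies inside some $\Conf_k^\Box(X')$. This follows from a uniform version of the same diameter argument: the minimum-distance function $(x_1, \ldots, x_k) \mapsto \min_{i \neq j} d(x_i, x_j)$ is continuous and positive on $K$, so attains a positive minimum $2\epsilon$, and any $X' \in \subd$ with cell diameters below $\epsilon$ then satisfies $K \subseteq \Conf_k^\Box(X')$. Each $\Conf_k^\Box(X')$ is compact and injects continuously into the Hausdorff space $\Conf_k(X)$, hence embeds as a closed subspace. Since $\Conf_k(X)$ is an open subspace of the finite CW complex $X^k$, it is locally compact Hausdorff and thus compactly generated. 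If $U$ is open in the colimit topology, then for each compact $K \subseteq \Conf_k(X)$, choosing $X'$ with $K \subseteq \Conf_k^\Box(X')$ gives $U \cap K = (U \cap \Conf_k^\Box(X')) \cap K$, which is open in $K$; compact generation then promotes $U$ to an open subset of $\Conf_k(X)$.

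The main obstacle is the topological upgrade: filtered colimits of closed inclusions into a Hausdorff ambient space do not in general surject onto the set-theoretic union as a homeomorphism. The convergence hypothesis resolves this by ensuring that every compact subset of $\Conf_k(X)$ is swallowed by a single stage of the diagram, after which compact generation of $\Conf_k(X)$ does the rest.
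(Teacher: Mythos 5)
Your proof is correct and follows the same basic strategy as the paper's: convergence guarantees that configurations (indeed compact sets of configurations) are absorbed by a single finite stage, from which surjectivity and the homeomorphism follow. The paper's published proof is a single sentence recording only the pointwise absorption and calling the stages a closed cover; your treatment usefully supplies the uniform compact-set absorption and the compact-generation argument needed to see that the colimit topology actually agrees with the subspace topology of $\Conf_k(X) \subseteq X^k$, a point the paper leaves to the reader.
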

\begin{proof}
Convergence implies that any point of $\Conf_k(X)$ lies in $\Conf_k^\Box(X')$ for some subdivision $X'\in \subd$, i.e., the collection $\{\Conf_k^\Box(X'):X'\in \subd\}$ forms a closed cover of the configuration space.
\end{proof}

Since taking homology commutes with the filtered colimit in question, this observation provides a way of understanding the homology of the configuration spaces of $X$ in terms of the homology of its configuration complexes.

\begin{remark}
The content of the convergence theorem is that configuration complexes are useful in the study of configuration spaces whenever $X$ admits a convergent subdivisional structure, and it would be interesting to know for which $X$ this is the case.
\end{remark}

\begin{remark}
It is natural to wonder whether finite subdivision suffices to recover the correct homotopy type. We do not address this question here, noting only that the method of proof completely breaks down; indeed, the homotopy groups of $\Conf_2(D^3)\simeq S^2$ are all non-zero above degree one \cite{IvanovMikhailovWu:ONCHGS}.
\end{remark}

We close by noting that the configuration complexes interact predictably with disjoint unions.

\begin{lemma}\label{lem:box monoidal}
There is a natural commuting diagram 
\[\begin{tikzcd}
\displaystyle\coprod_{i+j=k}B_i^\Box(X)\times B_j^\Box(Y)\ar[start anchor={[yshift=2.5ex]},d]\ar{r}{\simeq}& B_k^\Box(X\amalg Y)\ar[d]\\
\displaystyle\coprod_{i+j=k}B_i(X)\times B_j(Y)\ar{r}{\simeq}& B_k(X\amalg Y)
\end{tikzcd}
\] in which the bottom arrow is a homeomorphism and the top a cellular isomorphism.
\end{lemma}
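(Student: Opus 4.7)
The plan is to argue at the ordered level first and then descend to the quotient. I would begin by observing that a cell of $(X\amalg Y)^k$ is a $k$-tuple $(c_1,\ldots,c_k)$ in which each $c_r$ is a cell of either $X$ or $Y$, and that two cells lying in different components of $X\amalg Y$ are automatically disjoint. Sorting such a tuple by the component to which each $c_r$ belongs yields a decomposition of the cell set of $(X\amalg Y)^k$ indexed by subsets $S\subseteq\{1,\ldots,k\}$, and the box condition $c_r\cap c_s=\varnothing$ for $r\ne s$ only constrains pairs $(r,s)$ with $r,s\in S$ or $r,s\in S^c$. This gives a $\Sigma_k$-equivariant cellular isomorphism
\[
\Conf_k^\Box(X\amalg Y)\;\cong\;\coprod_{S\subseteq\{1,\ldots,k\}}\Conf_{|S|}^\Box(X)\times\Conf_{k-|S|}^\Box(Y),
\]
where $\Sigma_k$ acts on the left by permuting factors and on the right by permuting subsets together with the induced action on each product.

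Next I would pass to quotients. The $\Sigma_k$-orbits on the indexing set of subsets are parametrized by the cardinality $i=|S|$, and the stabilizer of a fixed $S$ is $\Sigma_i\times\Sigma_{k-i}$ acting in the standard way on the product $\Conf_i^\Box(X)\times\Conf_{k-i}^\Box(Y)$. Taking the quotient therefore yields the desired cellular isomorphism
\[
B_k^\Box(X\amalg Y)\;\cong\;\coprod_{i+j=k}B_i^\Box(X)\times B_j^\Box(Y).
\]
The analogous statement for the unmarked configuration spaces on the bottom row is classical and proved by exactly the same set-theoretic bookkeeping, so the bottom arrow is a homeomorphism.

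Finally, commutativity of the square reduces to the observation that the cellular approximation maps $B^\Box(-)\to B(-)$ are induced by regarding a cell as a subset of the space, and this construction is compatible with the partition by components above: a point of $B_i^\Box(X)\times B_j^\Box(Y)$ lies in a product cell whose image under the identifications is the same whether we first include into $B_k^\Box(X\amalg Y)$ and then into $B_k(X\amalg Y)$, or first include each factor into its unbox analogue and then use the bottom arrow. Naturality in $X$ and $Y$ follows because the whole construction is expressed in terms of cellular maps and the underlying topological disjoint union.

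The only real subtlety is the combinatorial bookkeeping of the $\Sigma_k$-action on the indexing subsets $S$, which I would handle by writing the decomposition once carefully and then invoking standard orbit-stabilizer arguments; no geometric difficulty arises because disjointness across components is automatic.
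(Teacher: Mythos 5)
Your proof is correct and is essentially the argument that the paper omits (the lemma is stated without a proof, as an observation); there is no competing approach to compare against. The decomposition of $\Conf_k^\Box(X\amalg Y)$ over subsets $S\subseteq\{1,\ldots,k\}$, the observation that disjointness across components is automatic so the box condition decouples, and the orbit-stabilizer passage to the quotient by $\Sigma_k$ are exactly the standard bookkeeping one would write down. One small point worth recording explicitly, since you use the word ``cellular isomorphism'' and should know the target is actually a CW complex: the $\Sigma_k$-action on $\Conf_k^\Box(X)$ permutes cells freely (a permutation fixing a cell $(c_1,\ldots,c_k)$ must fix each $c_r$ setwise, and pairwise-disjointness of the $c_r$ then forces the permutation to be the identity), so the quotient $B_k^\Box(X)$ inherits a CW structure and the identification you construct is a cellular isomorphism in the honest sense; similarly the products $B_i^\Box(X)\times B_j^\Box(Y)$ are CW because the factors are finite complexes. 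With those remarks in place the argument is complete.
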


\subsection{Subdivisional spaces} 

Complexes and subdivisional embeddings form a category which we denote by $\Cx^\SD$.

\begin{definition}\label{defi:subdivisional space}
A \emph{subdivisional space} is a functor $\X:\subd\to \Cx^\SD$ with $\subd$ a filtered category.
\end{definition}

We write $\Emb^\SD$ for the category whose objects are subdivisional spaces and whose morphisms are given by \[\Hom_{\Emb^\SD}(\X, \Y)=\lim_{p\in\subd}\colim_{q\in\mathcal{Q}}\Hom_{\Cx^\SD}(\XX(p), \YY(q)).\] In other words, $\Emb^\SD$ is the category of \emph{ind-objects} $\Ind(\Cx^\SD).$ We shall make very little use of the general theory of ind-objects, but the reader looking for further information on the subject may consult \cite[Ch.~6]{KashiwaraSchapira:CS}.

\begin{remark}
The formula for hom sets in $\Ind(\Cc)$ is derived from the following intuitions: \begin{enumerate}
\item an object of $\Cc$ should determine an ind-object of $\Cc$;
\item a general ind-object of $\Cc$ should be a filtered colimit of objects of $\Cc$; and
\item objects of $\Cc$ should be compact as ind-objects.
\end{enumerate}
\end{remark}

We say that $\X:\subd\to \Cx^\SD$ is \emph{indexed} on $\subd$. Subdivisional spaces indexed on different categories may be isomorphic.

\begin{example}\label{example:subdivisional space from a complex}
A subdivisional structure $\subd\subseteq \SD(X)$ determines a subdivisional space $\X:\subd\to \Cx^\SD$ sending $X\to X'$ to $X'$. 
\end{example}

Most complexes admit many subdivisional structures and many non-isomorphic realizations as subdivisional spaces. Roughly, we imagine that a subdivisional structure $\subd$ forces $X$ to be isomorphic to each of the subdivisions contained in $\subd$.

\begin{example}
Since the product of filtered categories is filtered, the levelwise Cartesian product and disjoint union of two subdivisional spaces is again a subdivisional space. Note that the former is not the categorical product, nor is the latter the categorical coproduct; indeed, since we work with embeddings, this failure is already present in $\Cx^\SD$.
More explicitly, there is in general no embedding from a disjoint union extending embeddings on its components because the images may intersect nontrivially.
On the other hand, the projections of an embedding into a product are not necessarily themselves embeddings. 
\end{example}

\begin{definition}
The \emph{spatial realization} functor $|-|$ is the composite \[\Emb^\SD=\Ind(\Cx^\SD)\to \Ind(\Top)\xrightarrow{\colim}\Top.\] 
\end{definition}

\begin{example}
If $\X$ is any of the subdivisional spaces of Example~\ref{example:subdivisional space from a complex}, there is a canonical homeomorphism $|\X|\cong X$.
\end{example}

According to Lemma~\ref{lem:conf subdivision}, configuration complexes are functorial for subdivisional embeddings, so we may make the following definition.

\begin{definition}\label{defi: subdivisional configuration} Let $\X:\subd\to \Cx^\SD$ be a subdivisional space. The $k$th ordered \emph{subdivisional configuration space} of $\X$ is the subdivisional space
\[\subd\xrightarrow{\X}\Cx^\SD\xrightarrow{\Conf_k^\Box}\Cx^\SD.\]
\end{definition}

Similarly, we have the $k$th unordered subdivisional configuration space $B_k^\SD(\X)$. We will be particularly interested in this construction when $\X$ comes from a complex $X$ with a convergent subdivisional structure, for in this case Theorem~\ref{thm:discrete approximation} gives a weak homotopy equivalence \[|B_k^\SD(\X)|\xrightarrow{\sim}B_k(X).\]

It will often be convenient to consider configuration spaces of all finite cardinalities simultaneously, and we set \[B^\SD(\X)\coloneqq{}\coprod_{k\geq0} B_k^\SD(\X)\] (note that the indicated disjoint union does in fact exist in $\Emb^\SD$, since $\Emb^\SD$ admits filtered colimits). Thus, we have a functor \[B^\SD:\Emb^\SD\to \Emb^\SD.\] Using Lemma~\ref{lem:box monoidal}, we see that $B^\SD$ naturally carries the structure of a symmetric monoidal functor, where $\Emb^\SD$ is symmetric monoidal under disjoint union in the domain and Cartesian product in the codomain.

\subsection{Subdivisional chains}

Since the complex of cellular chains is functorial for cellular maps between complexes, and, in particular, for subdivisional embeddings, we may make the following definition.

\begin{definition}
The functor $C^\SD$ of \emph{subdivisional chains} is the composite \[\Emb^\SD=\Ind(\Cx^\SD)\xrightarrow{\Ind(C)} \Ind(\Ch_\mathbb{Z})\xrightarrow{\colim}\Ch_\mathbb{Z}.\]
\end{definition}

Viewing $\Emb^\SD$ and $\Ch_\mathbb{Z}$ as symmetric monoidal under Cartesian product and tensor product, respectively, $C^\SD$ naturally carries the structure of a symmetric monoidal functor (here we use that cellular chains sends products to tensor products and that the tensor product distributes over colimits).

\begin{remark}
In contrast to the cellular chain complex, $C^\SD(\X)$ is typically a very large object. For example, if $\X$ is obtained by equipping the interval $I$ with the subdivisional structure $\SD(I)$, then $C^\SD(\X)$ is uncountably generated.
\end{remark}

The fundamental fact about the functor of subdivisional chains is the following.

\begin{proposition}\label{prop:computes homology}
There is a natural weak equivalence
\[C^\SD(-)\simeq C^\sing(|-|)\] of functors from $\Emb^\SD$ to chain complexes.
\end{proposition}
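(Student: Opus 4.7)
The plan is to exhibit a natural transformation at the level of ordinary complexes, promote it to ind-objects by a filtered colimit argument, and then identify the target with $C^\sing$ of the spatial realization.

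First, for each complex $X$ in $\Cx^\SD$, the classical comparison (via skeletal induction, identifying $C(X)$ with the relative singular chain complex $C^\sing(\sk{n}{X}, \sk{n-1}{X})$ in appropriate degrees) furnishes a quasi-isomorphism $C(X) \to C^\sing(X)$. All morphisms in $\Cx^\SD$ are cellular: a subdivisional embedding factors as a subdivision followed by an embedding, and both preserve skeleta. So this quasi-isomorphism is natural as a transformation of functors $\Cx^\SD \to \Ch_\mathbb{Z}$ from $C$ to $C^\sing \circ |-|$.

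Next, promote this to $\Emb^\SD = \Ind(\Cx^\SD)$. Given $\X : \subd \to \Cx^\SD$ with $\subd$ filtered, apply the previous step levelwise to obtain a quasi-isomorphism $C(\X(p)) \to C^\sing(|\X(p)|)$ natural in $p$. Filtered colimits of chain complexes are exact and thus preserve quasi-isomorphisms, so
$$
C^\SD(\X) = \colim_{p \in \subd} C(\X(p)) \xrightarrow{\simeq} \colim_{p \in \subd} C^\sing(|\X(p)|).
$$
Finally, identify the right-hand side with $C^\sing(|\X|)$. By definition $|\X| = \colim_p |\X(p)|$ in $\Top$, and the transition maps are closed subspace inclusions: a subdivisional embedding is injective, and its image is a subcomplex (by the subcomplex-preservation clause in its definition), hence closed in the target. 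For such a filtered colimit of closed CW inclusions, any singular simplex $\Delta^n \to |\X|$ has compact image which factors through some $|\X(p)|$ by a standard cofinality argument, yielding a canonical isomorphism $\colim_p C^\sing(|\X(p)|) \cong C^\sing(|\X|)$. Composing produces the desired quasi-isomorphism $C^\SD(\X) \simeq C^\sing(|\X|)$. Naturality under ind-morphisms is automatic: a morphism $\X \to \Y$ in $\Emb^\SD$ is represented by compatible cellular maps $\X(p) \to \Y(q_p)$, and the comparison is natural at each such level.

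The main obstacle is the final identification step—justifying that singular chains commute with the specific filtered colimits in $\Top$ that compute spatial realizations. This reduces to verifying two geometric facts about $\Cx^\SD$: that subdivisional embeddings are closed inclusions of underlying spaces (which follows cleanly from injectivity plus subcomplex-preservation), and that compact sets in a filtered colimit of closed CW inclusions factor through some stage. Everything else in the argument is routine homological algebra, so the weight of the proof rests on these topological considerations.
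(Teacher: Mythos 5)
Your outer structure---pass to the level of objects of $\Cx^\SD$, then use exactness of filtered colimits of chain complexes to promote the comparison to $\Emb^\SD$, and finally identify $\colim_p C^\sing(|\X(p)|)$ with $C^\sing(|\X|)$ because the transition maps are closed inclusions and compact sets factor through a stage---matches the paper's strategy well, and you are right that the topological identification at the end deserves the care you give it.

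The gap is in the step you dismiss as routine. There is no natural chain map $C(X)\to C^\sing(X)$. The cellular chain group $C_n(X)=H_n(\sk{n}{X},\sk{n-1}{X})$ is a subquotient of relative singular chains, not a subcomplex, and producing singular cycles representing its generators requires choices (of representatives for each cell) that cannot be made functorially in cellular maps: collapses and identifications of cells obstruct any coherent choice. The classical comparison theorem produces a natural isomorphism $H^{CW}_*(X)\cong H_*(X)$ via a spectral sequence or diagram chase, not a natural transformation of the chain-level functors. Likewise there is no natural chain map in the other direction $C^\sing(X)\to C(X)$, since that would require, e.g., naturally cellularly approximating singular simplices. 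What does exist, and what the paper uses, is a natural \emph{zig-zag}: the paper interposes the simplicial set $S^{\mathrm{cell}}(X)$ of cellular maps $\Delta^n\to X$, so that $|S^{\mathrm{cell}}(X)|$ admits a cellular map to both $X$ and $|S(X)|$, yielding natural quasi-isomorphisms
\[
C(X)\xleftarrow{\;\sim\;} C(|S^{\mathrm{cell}}(X)|)\xrightarrow{\;\sim\;} C(|S(X)|)\cong C^\sing(X),
\]
and this zig-zag, not a single map, is what gets promoted through $\Ind$ and filtered colimits. Since the proposition only asserts a natural weak equivalence (an isomorphism in the homotopy category, by the paper's conventions), the zig-zag suffices, and the rest of your argument goes through unchanged once you replace the purported direct map with such a zig-zag. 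The actual weight of the proof is in constructing this comparison, not in the colimit gymnastics at the end.
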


In the proof, we use the following intermediary.

\begin{construction}
Let $X$ be a CW complex. We define $S^\mathrm{cell}(X)$ to be the simplicial set given in simplicial degree $n$ by the set of \emph{cellular} maps $\sigma:\Delta^n\to X$. This construction is functorial for cellular maps. There is an inclusion $S^\mathrm{cell}(X)\to S(X)$, where $S$ is the standard functor of singular simplices, and this map is a weak homotopy equivalence of simplicial sets by the cellular approximation theorem. 
Since both the induced map on geometric realizations and the composite $|S^\mathrm{cell}(X)|\to |S(X)|\to X$ are cellular by construction, we obtain the zig-zag 
\[C(X)\xleftarrow{\sim} C(|S^\mathrm{cell}(X)|)\xrightarrow{\sim} C(|S(X)|) \cong C^\mathrm{sing}(|X|)
\]
\end{construction}

\begin{proof}[Proof of Proposition~\ref{prop:computes homology}] 
Consider the following diagram:
\[\begin{tikzcd}
\Ind(\Cx^{\SD})\ar[swap, drrr,near start,"\Ind(C)",bend right]\ar{drrr}[description]{\Ind(C(|S^{\mathrm{cell}}|))}\ar{rrr}&&&\Ind(\Top)\ar{d}{\Ind(C^{\mathrm{sing}})}\ar{r}{\colim}&\Top\ar{d}{C^{\mathrm{sing}}}
\\
&&&\Ind(\Ch_{\mathbb{Z}})\ar{r}{\colim}&\Ch_{\mathbb{Z}}.
\end{tikzcd}\]
The composition along the top and right is $C^{\mathrm{sing}}(|- |)$. The composition along the bottom is $C^{\SD}$. The right square commutes up to natural isomorphism because the colimit is filtered. The triangle and bigon on the left commute up to natural objectwise quasi-isomorphism. Since filtered colimits preserve quasi-isomorphisms, the conclusion follows.
\end{proof}

Thus there is a homotopically well-behaved natural isomorphism 
\[H_*(C^\SD(\X))\cong H_*(|\X|).\] 
To make this statement precise, we use \emph{homotopy colimits} for diagrams of chain complexes---reminders and references are in Appendix~\ref{section:hocolim appendix}. The following corollary will be a key ingredient in our proof of Theorem~\ref{thm:decomposition} below.

\begin{corollary}\label{cor:hocolim}
Let $\X$ be a subdivisional space and $F:\D\to \Emb^\SD$ a functor equipped with a natural transformation to the constant functor at $\X$. If the induced map $\hocolim_\D|F|\to |\X|$ is a weak homotopy equivalence, then the induced map \[\hocolim_\D C^\SD(F)\rightarrow C^\SD(\X)\] is a quasi-isomorphism.
\end{corollary}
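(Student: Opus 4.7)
The plan is to reduce everything to singular chains via Proposition~\ref{prop:computes homology} and to invoke the fact that the singular chains functor preserves homotopy colimits. Concretely, the natural weak equivalence $C^\SD(-) \simeq C^\sing(|-|)$ from Proposition~\ref{prop:computes homology} fits into a commuting diagram (up to natural quasi-isomorphism)
\[
\begin{tikzcd}
\hocolim_\D C^\SD(F) \ar[r]\ar[d,"\simeq"] & C^\SD(\X)\ar[d,"\simeq"] \\
\hocolim_\D C^\sing(|F|) \ar[r] & C^\sing(|\X|),
\end{tikzcd}
\]
where the vertical maps are quasi-isomorphisms because homotopy colimits preserve objectwise quasi-isomorphisms between diagrams in $\Ch_\mathbb{Z}$ (a fact reviewed in Appendix~\ref{section:hocolim appendix}). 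Thus it suffices to show that the bottom horizontal map is a quasi-isomorphism.

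Next, I would factor the bottom map through $C^\sing(\hocolim_\D |F|)$. The key ingredient is that $C^\sing:\Top\to \Ch_\mathbb{Z}$ preserves homotopy colimits. This is standard: $C^\sing$ decomposes as $\Top \xrightarrow{S} \sSet \xrightarrow{\mathbb{Z}[-]} s\mathrm{Ab} \xrightarrow{N} \Ch_\mathbb{Z}$, each of which is a left Quillen functor (with the standard model structures and the projective structure on $\Ch_\mathbb{Z}$), so the composite is also left Quillen and in particular commutes with homotopy colimits. Hence the canonical comparison map $\hocolim_\D C^\sing(|F|) \to C^\sing(\hocolim_\D |F|)$ is a quasi-isomorphism.

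Finally, I would use the hypothesis: since $\hocolim_\D |F|\to |\X|$ is a weak homotopy equivalence of spaces, applying $C^\sing$ produces a quasi-isomorphism $C^\sing(\hocolim_\D|F|) \to C^\sing(|\X|)$. Composing the three quasi-isomorphisms, the bottom horizontal map in the diagram is a quasi-isomorphism, and therefore so is the top map, which is the one asserted by the corollary.

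The main technical hurdle is the assertion that $C^\sing$ preserves homotopy colimits; everything else is formal from Proposition~\ref{prop:computes homology} and the homotopy invariance of hocolim. Since this is a well-known consequence of the Quillen adjunction above, the argument as a whole is short, and really just a matter of stringing together existing pieces cleanly once the Appendix~\ref{section:hocolim appendix} framework is in place.
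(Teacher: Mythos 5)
Your proof is correct and follows essentially the same route as the paper: reduce to singular chains via Proposition~\ref{prop:computes homology}, use Lemma~\ref{lemma: hocolim and weak equivalence} for the vertical arrows, and then combine Proposition~\ref{prop:sing commutes with hocolim} with the hypothesis to handle the bottom row. The only difference is that you unpack why $C^\sing$ preserves homotopy colimits (via the chain of left Quillen functors), whereas the paper simply cites this as the standard Proposition~\ref{prop:sing commutes with hocolim}.
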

\begin{proof}
Applying homology to the natural weak equivalence of Proposition~\ref{prop:computes homology}, and using the fact that a levelwise weak homotopy equivalence of functors induces a weak homotopy equivalence on homotopy colimits (Lemma~\ref{lemma: hocolim and weak equivalence}), we obtain the isomorphisms in the following commutative square:
\[
\begin{tikzcd}
H_*\left(\hocolim_\D C^\sing(|F|)\right)\ar[r]\ar{d}[rotate=90, above, xshift=1pt
]{\backsimeq}
 & H_*(C^\sing(|\X|))\ar{d}[rotate=-90, above, xshift=-1pt
]{\simeq}\\
H_*\left(\hocolim_\D C^\SD(F)\right)\ar[r]&H_*(C^\SD(\X)).
\end{tikzcd}
\] From our assumption and Proposition~\ref{prop:sing commutes with hocolim}, the top map is an isomorphism, and the claim follows.
\end{proof}

\section{Decomposition}

We prove our first main result, the decomposition theorem, stated below as Theorem~\ref{thm:decomposition}. A careful formulation of this result requires that we supply a certain amount of definitional groundwork, and this task will occupy our attention in \S\ref{section:decomposition}--\ref{subsec:subdivisional morse theory}. The theorem and its proof appear in \S\ref{subsec:decomposition theorem}.

The proof of the decomposition theorem is premised on various manipulations of homotopy colimits. For the convenience of the reader less familiar with categorical homotopy theory, we have included a brief review of the relevant terminology and results in Appendix~\ref{section:hocolim appendix}, as well as a number of references.

\subsection{Decompositions and gaps}\label{section:decomposition}

We first make precise the data involved in the type of decomposition that we wish to consider. Before doing so, we remind the reader of two operations on subdivisions. First, by restricting in the source and target, a subdivision yields a subdivision on any subcomplex. Second, the Cartesian product of two subdivisions is a subdivision of the Cartesian product. Both of these constructions respect further subdivision, so a subdivisional structure on a complex yields a subdivisional structure on any subcomplex by restriction, and any subdivisional structures on two complexes yield a subdivisional structure on their product.

\begin{definition}\label{defi:decomposition}
An ($r$-fold) \emph{decomposition} of the complex $X$ is the data of \begin{enumerate}
\item a collection of complexes $\{\widetilde X_0,\ldots, \widetilde X_r, A_1, \ldots, A_r\}$;
\item for each $0\leq j\leq r$, a pair of embeddings $A_{j}\to \widetilde X_j\leftarrow A_{j+1}$ with disjoint images, where $A_0=A_{r+1}=\varnothing$ by convention;
\item an isomorphism \[X\cong \widetilde X_0\coprod_{A_1\times\{0\}} (A_1\times I)\coprod_{A_1\times\{1\}} \cdots\coprod_{A_{r}\times\{0\}}(A_r\times I)\coprod_{A_r\times\{1\}} \widetilde X_r;\]
\item a subdivisional structure $\subd_X\subseteq \SD(X)$ restricting to a product of subdivisional structures $\subd_{A_j}\times \SD(I)$ on $A_j\times I$ for each $1\leq j\leq r$.
\end{enumerate}
We say that the decomposition is \emph{convergent} if $\subd_X$ is so.
\end{definition}

Given a decomposition, we set \[X_j\coloneqq{}(A_j\times I)\coprod_{A_j\times\{1\}}\widetilde X_j\coprod_{A_{j+1}\times\{0\}} (A_{j+1}\times I).\] These complexes are called the \emph{components} of the decomposition, and the complexes $A_j\times I$ are the \emph{bridges.}

\begin{remark}
Essentially all of our results hold if $\SD(I)$ is replaced with a convergent subdivisional structure on $I$.
\end{remark}

\begin{figure}[ht]{
\begin{center}
\subfigure[A $2$-fold decomposition $\E$ and the map $\pi_\E$]{
\begingroup%
  \makeatletter%
  \ifx\svgwidth\undefined%
    \setlength{\unitlength}{227.9999994bp}%
    \ifx\svgscale\undefined%
      \relax%
    \else%
      \setlength{\unitlength}{\unitlength * \real{\svgscale}}%
    \fi%
  \else%
    \setlength{\unitlength}{\svgwidth}%
  \fi%
  \global\let\svgwidth\undefined%
  \global\let\svgscale\undefined%
  \makeatother%
  \begin{picture}(1,0.35557315)%
    \put(0,0){\includegraphics[width=\unitlength,page=1]{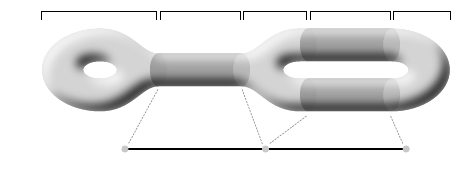}}%
    \put(0.19298246,0.34890854){\color[rgb]{0,0,0}\makebox(0,0)[lb]{\smash{$\widetilde X_0$}}}%
    \put(0.36842105,0.34890854){\color[rgb]{0,0,0}\makebox(0,0)[lb]{\smash{$A_1\times I$}}}%
    \put(0.56140351,0.34890854){\color[rgb]{0,0,0}\makebox(0,0)[lb]{\smash{$\widetilde X_1$}}}%
    \put(0.68421053,0.34890854){\color[rgb]{0,0,0}\makebox(0,0)[lb]{\smash{$A_2\times I$}}}%
    \put(0.86666667,0.34890854){\color[rgb]{0,0,0}\makebox(0,0)[lb]{\smash{$\widetilde X_2$}}}%
    \put(0.24671053,0.00241731){\color[rgb]{0,0,0}\makebox(0,0)[lb]{\smash{$0$}}}%
    \put(0.54276316,0.00241731){\color[rgb]{0,0,0}\makebox(0,0)[lb]{\smash{$1$}}}%
    \put(0.83881579,0.00241731){\color[rgb]{0,0,0}\makebox(0,0)[lb]{\smash{$2$}}}%
  \end{picture}%
\endgroup
}
\subfigure[A gap and the complement of its preimage under $\pi_\E$]{\includegraphics{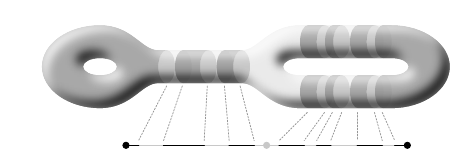}}
\end{center}
}
\vspace{-1em}
\caption{A decomposition and a gap}
\label{figure: decomposition and basics}
\end{figure}

We typically abbreviate the data of a decomposition to the letter $\E$. Note that, by restriction, a decomposition determines subdivisional structures on all of the complexes involved, and every inclusion between two such lifts to a morphism of subdivisional spaces.

\begin{definition} Let $\E$ and $\F$ be $r$-fold decompositions of $X$ and $Y$, respectively. A \emph{map of decompositions} from $\E$ to $\F$ is a map $f:X\to Y$ of subdivisional spaces whose restrictions fit into a commuting diagram of subdivisional spaces
\[\begin{tikzcd}
\widetilde X_0\ar[d]&A_1\ar[l]\ar[d]\ar[r]&\cdots&A_r\ar[d]\ar[l]\ar[r]&\widetilde X_r\ar[d]\\
\widetilde Y_0& B_1\ar[r]\ar[l]&\cdots&B_r\ar[l]\ar[r]&\widetilde Y_r,
\end{tikzcd}\] and such that $f|_{A_j\times I}=f|_{A_j}\times \id_I$ for each $1\leq j\leq r$.
\end{definition}

A decomposition gives rise to a combinatorial relationship to a certain poset.

\begin{definition}
The category of ($r$-fold) \emph{gaps} is the partially ordered set $\gaps{r}$ of nonempty open subsets $A\subseteq [0,r]$ such that \begin{enumerate}
\item the complement of $A$ is a (possibly empty) finite union of closed intervals of positive length;
\item if $i\in \{0,\ldots, r\}$ lies in the closure of $A$, then $i\in A$; and
\item for every $1\leq j\leq r$, $A\cap [j-1,j]\neq \varnothing$.
\end{enumerate}
\end{definition}

\begin{construction}
Given a decompositon $\E$ of $X$, there is a continuous map $\pi_\E:X\to [0,r]$ specified by requiring that \begin{enumerate}
\item $\pi_\E(\widetilde X_j)=\{j\}$, and
\item $\pi_\E|_{A_j\times I}$ is the projection onto $I\cong [j-1,j]$.
\end{enumerate} If $A\subseteq [0,r]$ is a gap, condition (4) of Definition \ref{defi:decomposition} provides the inverse image of $[0,r]\setminus A$ with a canonical subdivisional structure, and we obtain this way a functor
\[
\gamma_\E:\gaps{r}^{op}\to \Emb^\SD.
\] If $f:\E\to \F$ is a map of decompositions, then $f(\pi_\E^{-1}([0,r]\setminus A))\subseteq \pi_\F^{-1}([0,r]\setminus A)$ for every $A\in \gaps{r}$, so we may interpret $f$ as a natural transformation from $\gamma_\E$ to $\gamma_\F$.
\end{construction}

Each $\gamma_\E(A)$ is a (possibly empty) union of some number of components of the form $(A_j\times [a,1])\cup\widetilde X_j\cup(A_{j+1}\times[0,b])$ with $0<a,b<1$ or $U=A_j\times[c,d]$ with $0<c<d<1$. We refer to such a component as a \emph{basic}. We say that the former type of basic is of \emph{component type} and the latter of \emph{bridge type}. The term ``basic'' is borrowed from \cite{AyalaFrancisTanaka:FHSS}, whose ideas heavily influence our approach to Theorem \ref{thm:decomposition}.

Note that $X$ itself typically does not lie in the image of $\gamma_\E$, since $\varnothing\notin\gaps{r}$.

\subsection{Local invariants}\label{section:invariants} In this paper, our main interest in the decomposition theorem stated below will be as a tool to study configuration spaces, but the proof of the theorem will only make use of a few key features of these spaces.

\begin{definition}\label{defi:local invariant}
Let $\E$ be a decomposition of $X$. An $\E$-\emph{local invariant} is a symmetric monoidal functor $F:(\Emb^\SD,\amalg)\to (\Emb^\SD,\times)$ such that the natural map \[\hocolim_{\gaps{r}^{op}}|F(\gamma_\E)|\to |F(X)|\] is a weak equivalence. A map of local invariants (possibly for different decompositions) is a symmetric monoidal natural transformation.
\end{definition}

\begin{remark}
At the cost of greater verbal overhead, it is possible to work with invariants that are only defined locally relative to a given $\E$. All of our results carry over into this more general context.
\end{remark}

We now check that this condition is satisfied in the example of greatest interest to us.

\begin{proposition}
The symmetric monoidal functor $B^\SD$ is an $\E$-local invariant for any convergent decomposition $\E$.
\end{proposition}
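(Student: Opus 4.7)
The plan is to verify the two conditions of Definition \ref{defi:local invariant}. The symmetric monoidal structure of $B^\SD : (\Emb^\SD, \amalg) \to (\Emb^\SD, \times)$ was established in the discussion following Lemma \ref{lem:box monoidal}, so the content lies in the hocolim equivalence. First I would apply the Convergence Theorem \ref{thm:discrete approximation} to the restricted subdivisional structures on $X$ and on each $\gamma_\E(A) = \pi_\E^{-1}(K_A)$, all convergent as restrictions of $\subd_X$. This produces a natural zigzag of weak equivalences identifying the functor $A \mapsto |B^\SD(\gamma_\E(A))|$ with $A \mapsto B(Y_A)$, where $Y_A := \pi_\E^{-1}([0,r] \setminus A) \subseteq X$, as well as $|B^\SD(X)|$ with $B(X)$. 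By Lemma \ref{lemma: hocolim and weak equivalence}, the problem reduces to showing that the natural map
\[
\hocolim_{A \in \gaps{r}^{op}} B(Y_A) \longrightarrow B(X)
\]
is a weak equivalence. Since $B = \coprod_k B_k$ and hocolims commute with coproducts, it suffices to prove the analogous statement for each cardinality $k$ separately.

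For this, I would appeal to the standard homotopy cover principle: given a small category $\mathcal{D}$, a space $Z$, and a diagram $Z_{(-)} : \mathcal{D} \to \Top$ of closed inclusions into $Z$, the natural map $\hocolim_{\mathcal{D}} Z_d \to Z$ is a weak equivalence provided $\bigcup_d Z_d = Z$ and, for each $z \in Z$, the full subcategory $\mathcal{D}(z) := \{d \in \mathcal{D} : z \in Z_d\}$ has contractible classifying space. The covering hypothesis is immediate: for any configuration $c \in B_k(X)$ with $T_c := \pi_\E(\{x_1,\ldots,x_k\})$, one constructs $A \in \gaps{r}$ whose complement is a disjoint union of short closed intervals around the points of $T_c$, chosen to avoid integer inner endpoints and to contain no full unit subinterval; then $c \in B_k(Y_A)$.

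The main obstacle will be establishing contractibility of $\mathcal{A}(c) := \{A \in \gaps{r}^{op} : A \cap T_c = \varnothing\}$ for each such $c$. Dually, this is the contractibility of the poset $\mathcal{K}(T_c)$ of admissible covers---finite disjoint unions of closed intervals of positive length in $[0,r]$ containing $T_c$, with non-integer inner endpoints, and containing no complete unit subinterval---ordered by inclusion. A naive filtered-colimit argument fails, since the union of two admissible covers may contain a complete unit subinterval, violating condition (3) of the definition of $\gaps{r}$. I would address this by passing to a cofinal subposet of ``sufficiently small'' covers: those in which each constituent interval has diameter less than a small $\delta > 0$ depending only on $T_c$, chosen so that each interval is contained in a small neighborhood of a single point of $T_c$ and disjoint from the others. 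Such a subposet is genuinely filtered (small unions remain small) and decomposes as a filtered product of local posets, one for each point of $T_c$, each of which parametrizes small intervals around a single point and is contractible by a straight-line shrinking homotopy. Verifying cofinality is then a routine point-set argument using the finiteness of $T_c$ and the openness of the complement of the integers in $[0,r]$.
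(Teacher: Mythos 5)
Your overall plan---reduce via the Convergence Theorem and Lemma~\ref{lemma: hocolim and weak equivalence} to showing that $\hocolim_{\gaps{r}^{op}} B(Y_A) \to B(X)$ is a weak equivalence---is the same reduction the paper makes, but you then diverge, and the divergence contains a genuine gap.

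You work throughout with the \emph{closed} subsets $Y_A = \pi_\E^{-1}([0,r]\setminus A)$, and you appeal to a ``standard homotopy cover principle'' for closed covers: $\hocolim Z_d \to Z$ is a weak equivalence when the $Z_d$ cover $Z$ and each indexing subcategory $\mathcal{D}(z)$ is contractible. This statement, for closed covers and without further point-set hypotheses, is not a standard theorem, and it is not the tool the paper uses or develops. The paper's Theorem~\ref{thm:dugger-isaksen} (Dugger--Isaksen) is specifically for \emph{complete open covers}. You would need to either locate and verify the hypotheses of a genuine closed-cover descent theorem (typically requiring cofibrancy or CW-type hypotheses on the pieces and their intersections), or find a different path. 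As written, this is an unjustified step, not a routine one.

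The paper sidesteps the difficulty by a small but decisive change: instead of the closed sets $\pi_\E^{-1}([0,r]\setminus A)$, it covers $B(X)$ by the \emph{open} sets $B\bigl(\pi_\E^{-1}([0,r]\setminus \overline{A})\bigr)$, where $\overline{A}$ denotes closure. This has two payoffs you do not get from the closed version. First, it is an open cover, so Theorem~\ref{thm:dugger-isaksen} applies directly once one checks completeness. Second, completeness is nearly free: a finite intersection of these open sets is $B\bigl(\pi_\E^{-1}([0,r]\setminus\bigcup_S\overline{A_s})\bigr)$, and a finite union of closures of gaps is again the closure of a gap (one takes the interior of $\overline{\bigcup A_s}$, which adds back isolated points of the complement). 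This entirely avoids the obstruction you correctly identify---that $A\cup A'$ need not be a gap when the complement develops isolated points---because the paper never needs $A\cup A'$ itself to be a gap, only its closure to be the closure of one. The paper then passes from the open cover back to the closed sets $|\gamma_\E(A)|$ by a separate, easier step: the two functors $A\mapsto B(\pi_\E^{-1}([0,r]\setminus\overline{A}))$ and $A\mapsto B(|\gamma_\E(A)|)$ are connected by a levelwise homotopy equivalence, since $B$ preserves homotopies through injective maps between injective maps.

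Your proposed fix for the failure of filteredness of $\mathcal{A}(c)$ (passing to a cofinal subposet of ``small'' covers) is plausible in spirit but not verified, and it is doing work the paper never needs to do. I would recommend switching to the open sets indexed by closures of gaps and invoking Theorem~\ref{thm:dugger-isaksen}; once you observe that finite unions of closures of gaps are closures of gaps, the contractibility question dissolves.
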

\begin{proof}
The claim will follow by two-out-of-three after verifying that each of the numbered arrows in the commuting diagram
\[\begin{tikzcd}\displaystyle\hocolim_{A\in \gaps{r}^{op}}B\left(\pi_\E^{-1}\left([0,r]\setminus \overline{A}\right)\right)\ar{d}[swap]{(2)}\ar{rr}{(1)}&&
B(X)
\ar[no head, d, shift left=.97pt]
\ar[no head, d, shift right=.97pt]
\\
\displaystyle\hocolim_{A\in \gaps{r}^{op}}B\left(|\gamma_\E(A)|\right)\ar{rr}{(3)}&&B(X)\\
\displaystyle\hocolim_{\gaps{r}^{op}}{|B^\SD(\gamma_\E)|}\ar{u}{(4)}\ar[rr]&& {|B^{\SD}(\X)|}, \ar{u}[swap]{(5)}
\end{tikzcd}\]  
is a weak homotopy equivalence, where $\X$ is the subdivisional space determined by $X$ and the subdivisional structure $\subd_X\subseteq \SD(X)$ of $\E$. The second equivalence follows from the fact that $B$ preserves homotopies through injective maps between injective maps, and the third follows from (1) and (2) by two-out-of-three. Theorem~\ref{thm:discrete approximation} gives the fifth and (together with Lemma~\ref{lemma: hocolim and weak equivalence}) the fourth, using our assumption on $\E$ (see the discussion following Definition~\ref{defi: subdivisional configuration}). Thus, it remains to verify the first equivalence. 

Consider the collection $\Uu\coloneqq{}\{B(\pi_\E^{-1}\left([0,r]\setminus \overline{A}\right)): A\in \gaps{r}\}$, which is an open cover of $B(X)$. In fact, $\Uu$ is a \emph{complete} cover in the sense of Definition~\ref{def:complete cover}; to see this, we note that for $S$ finite, \begin{align*}\bigcap_S B(\pi_\E^{-1}\left([0,r]\setminus \overline{A_s}\right))&=B\left(\bigcap_S \pi_\E^{-1}\left([0,r]\setminus \overline{A_s}\right)\right)\\
&=B\left(\pi_\E^{-1}\left(\bigcap_S [0,r]\setminus \overline{A_s}\right)\right)\\
&=B\left(\pi_\E^{-1}\left([0,r]\setminus \bigcup_S\overline{A_s}\right)\right),
\end{align*} and that a finite union of closures of gaps is again the closure of a gap. With this observation, the desired equivalence follows from Theorem~\ref{thm:dugger-isaksen}.
\end{proof}

\begin{remark}
The local invariant $B^\SD$, from its definition as a disjoint union over finite cardinalities, is naturally a \emph{graded} subdivisional space. Furthermore, every map induced by an inclusion $U\subseteq V$ automatically preserves this grading, as do the isomorphisms $B^\SD(U_1\amalg U_2)\cong B^\SD(U_1)\times B^\SD(U_2)$. It is very useful to keep track of this grading in studying configuration spaces (see Theorem~\ref{thm:comparison}, for example), and we will often do so implicitly. 
\end{remark}

\subsection{Flows and bimodules}\label{subsec:subdivisional morse theory} 

The decomposition theorem allows us to study configuration spaces and other local invariants in terms of local information. In order to get a combinatorial handle on the local information, we axiomatize what it means to simplify a local invariant coherently.

\begin{definition}
The \emph{category of abstract flows}, $\Flw_{\mathbb{Z}}$, has objects $(C,\pi)$, where $C$ is a chain complex and $\pi:C\xrightarrow{\sim} C$ is an idempotent quasi-isomorphism. A morphism $(C,\pi)\to (C',\pi')$ is a chain map $f$ such that $\pi'\circ f = \pi'\circ f\circ \pi$. Such a map is called \emph{flow compatible}.

We will use two functors $\Flw_{\mathbb{Z}}\to \Ch_{\mathbb{Z}}$. The \emph{forgetful functor} takes $(C,\pi)$ to $C$ and is the identity on morphisms. The \emph{Morse complex} $I$ takes $(C,\pi)$ to $\pi(C)$ and $f$ to $\pi\circ f$. The subcategory of objects of $\Flw_\mathbb{Z}$ with underlying chain complexes flat in each degree inherits a symmetric monoidal structure for which each of these functors is symmetric monoidal.
\end{definition}

We think of $\pi$ as the limit of a flow on $C$ and the elements of the associated Morse complex as the critical points of that flow.

\begin{remark}
A discrete flow in the sense of \cite{Forman:MTCC} gives rise to an abstract flow; indeed, our definitions of abstract flow and flow compatible map were motivated by the desire to work functorially with discrete Morse data. 
\end{remark}

\begin{definition}
Let $\X$ be a subdivisional space indexed on $\subd$. 
\begin{enumerate}
\item A \emph{subdivisional flow} on $\X$ is the data of the dashed lift in the diagram 
\[\begin{tikzcd}&&&\Flw_\mathbb{Z}\ar[d, description,"\text{forget}"] 
\\
\subd\ar[dashed,urrr]\ar{rr}{\X}&&\Cx^\SD\ar{r}{C}&\Ch_\mathbb{Z}.
\end{tikzcd}\] 
\item A map $f$ of subdivisional spaces equipped with subdivisional flows is \emph{flow compatible} if $C(f)$ lies in the image of the forgetful functor $\Ind(\Flw_\mathbb{Z})\to \Ind(\Ch_\mathbb{Z})$.
\end{enumerate}
\end{definition}

\begin{definition}
Let $\X:\subd\to \Cx^\SD$ be a subdivisional space equipped with a subdivisional flow $g=\{g_p:C(\X(p))\to C(\X(p))\}_{p\in\subd}$. The associated \emph{Morse complex} is the chain complex \[I(C^\SD(\X),g)=\colim_{\subd} I(C(\X(p)),g_p).\]
\end{definition}

The Morse complex is functorial for flow compatible maps and comes equipped with a natural quasi-isomorphism \[C^\SD(\X)\xrightarrow{\sim} I(C^\SD(\X),g),\] since a filtered colimit of quasi-isomorphisms is a quasi-isomorphism. In particular, the Morse complex computes the homology of $|\X|$. Typically, when the choice of subdivisional flow $g$ is clear from context, we abbreviate the Morse complex to $I(C^\SD(\X))$ or simply $I(\X)$.

If $\X$ and $\Y$ are equipped with subdivisional flows, then, since the functor of cellular chains is symmetric monoidal and takes values in levelwise flat chain complexes, the product $\X\times\Y$ inherits a canonical subdivisional flow, which we refer to as the \emph{product flow.} Note that, in this case, there are canonical isomorphisms $I(\X\times \Y)\cong I(\X)\otimes I(\Y)$ satisfying obvious associativity and commutativity relations.

\begin{definition}\label{def:flow on invariant}
Let $\E$ be a decomposition of $X$ and $F$ an $\E$-local invariant. {A \emph{local flow}} on $F$ is the data of a subdivisional flow on $F(\gamma_\E(A))$ for each $A\in \gaps{r}$, subject to the following conditions:
\begin{enumerate}
\item\label{item: subset flow compatibility} for $A\subseteq B$, the induced map $F(B)\to F(A)$ is flow compatible; and \item\label{item: disjoint union flow compatibility} the isomorphism
$F(\gamma_\E(A))\times F(\gamma_\E(B))\cong F(\gamma_\E(A)\amalg \gamma_\E(B))$ and its inverse are each flow compatible, where the lefthand side carries the product flow, whenever $\gamma_\E(A)$ and $\gamma_\E(B)$ are disjoint in $X$.
\end{enumerate}
A map between local invariants equipped with local flows is \emph{flow compatible} if each of its components is so.\end{definition}

A local flow in particular determines the dashed lift in the diagram 

\[\begin{tikzcd}
&&&\Ind(\Flw_\mathbb{Z})\ar{d}{\colim}\\
\gaps{r}^{op}\ar{r}\ar[dashed,urrr]&\Emb^\SD\ar{r}{F}&\Emb^\SD\ar{r}{C^\SD}&\Ch_\mathbb{Z}.
\end{tikzcd}\]

Since the homotopy colimit of the bottom composite computes the homology of $|F(X)|$ by Corollary~\ref{cor:hocolim} and Definition~\ref{defi:local invariant}, we can hope to understand this homology by means of the Morse complexes associated to these subdivisional flows. We now introduce a further condition guaranteeing that the Morse theory is sufficiently rigid.

\begin{definition}\label{def:isotopy invariance}
Let $\E$ be a decomposition of $X$ and $F$ an $\E$-local invariant. A local flow on $F$ is \emph{isotopy invariant} if every inclusion between two basics of bridge type or two basics of component type induces an isomorphism on Morse complexes.
\end{definition}

Note that a local flow does not assign a subdivisional flow to $F(X)$. We also refrain from assigning flows to the components or the bridges; instead, we set $I(F(A_j\times I)):=\colim I(F(U))$, where the colimit is taken over all basics contained in $A_j\times I$, and similarly for $I(F(X_j))$. Isotopy invariance guarantees each of these ``Morse complexes'' is canonically isomorphic to the Morse complex of any corresponding basic.

In a more homotopical context, it would be sensible to require a weaker condition. The strictness of isotopy invariance is motivated by the computational nature of our goals, and it has the following important consequence, which is drawn from the theory of factorization algebras---see \cite{Ginot:NFAFHA} and the references therein.

\begin{construction}\label{construction:algebra and module structure}
Let $\E$ be a decomposition of $X$ and $F$ an $\E$-local invariant equipped with an isotopy invariant local flow. For each $1\leq j \leq r$, the Morse complex $I(F(A_j\times I))$ of the bridge carries a natural associative algebra structure, for which the Morse complexes $I(F(X_{j-1}))$ and $I(F(X_j))$ of the relevant components are right and left modules, respectively.

We indicate how the algebra structure arises (the module structures are similar). Set $R_j\coloneqq{}I(F(A_j\times I))$; then the unit map $\eta_j:\mathbb{Z}\to R_j$ is induced by the inclusion $\varnothing\to A_j\times I$. As for the multiplication, we take any two disjoint subbasics $U_1$ and $U_2$ of bridge type contained in $A_j\times I$ and define $\mu_j$ to be the composite
\begin{align*}
R_j\otimes R_j&\xrightarrow{\simeq} I(F(U_1))\otimes I(F(U_2))&\text{(\ref{def:isotopy invariance})}\\
&\xrightarrow{\simeq}I(F(U_1)\times F(U_2))\\
&\xrightarrow{\simeq}I(F(U_1\amalg U_2)) &{\text{(\ref{def:flow on invariant}(\ref{item: disjoint union flow compatibility}))}}\\
&\xrightarrow{\hphantom{\sim}} I(F(A_j\times I))= R_j,&{\text{(\ref{def:flow on invariant}(\ref{item: subset flow compatibility}))}}
\end{align*}where in the second line we have used that the Morse complex of a product subdivisional flow is the tensor product of the Morse complexes.

Any two choices for $U_1$ and $U_2$ may be connected by a zig-zag of inclusions of disjoint pairs of basics of bridge type, so, by naturality of the maps in question, $\mu_j$ is independent of this choice. Tracing through the construction shows that $\eta_j$ is a unit for $\mu_j$. For associativity, we consider a configuration of five bridge type basics with containments as in 
Figure~\ref{figure:associativity}. We use $U_1$ and $U_{23}$ to define the outer multiplication and $U_2$ and $U_3$ for the inner multiplication in the expression $\mu_j\circ (\id\otimes \mu_j)$, and we use $U_{12}$ and $U_3$ to define the outer multiplication and $U_1$ and $U_2$ to define the inner multiplication in the expression $\mu_j\circ(\mu_j\otimes \id)$. Then both expressions are given in terms of maps $I(F(U_1))\otimes I(F(U_2))\otimes I(F(U_3))\to R_j$, which coincide because of the associativity of the structure morphisms for local invariants.
\end{construction}

\begin{figure}[ht]
\[
\begin{tikzcd}[column sep=-25mm, row sep=2mm]
&{\vcenter{\hbox{\def\svgscale{0.75}
\begingroup%
  \makeatletter%
  \ifx\svgwidth\undefined%
    \setlength{\unitlength}{192bp}%
    \ifx\svgscale\undefined%
      \relax%
    \else%
      \setlength{\unitlength}{\unitlength * \real{\svgscale}}%
    \fi%
  \else%
    \setlength{\unitlength}{\svgwidth}%
  \fi%
  \global\let\svgwidth\undefined%
  \global\let\svgscale\undefined%
  \makeatother%
  \begin{picture}(1,0.25306194)%
    \put(0,0){\includegraphics[width=\unitlength,page=1]{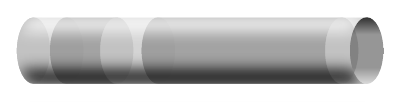}}%
    \put(0.16666667,0.10569773){\color[rgb]{0,0,0}\makebox(0,0)[lb]{\smash{$U_1$}}}%
    \put(0.58333333,0.10569773){\color[rgb]{0,0,0}\makebox(0,0)[lb]{\smash{$U_{23}$}}}%
  \end{picture}%
\endgroup%
}}}
\ar[dr,"\mu_j",start anchor={[yshift=4pt, xshift=-4pt]},end anchor = {[yshift = -4pt, xshift = 4pt]}]\\
{\vcenter{\hbox{\def\svgscale{0.75}
\begingroup%
  \makeatletter%
  \ifx\svgwidth\undefined%
    \setlength{\unitlength}{192bp}%
    \ifx\svgscale\undefined%
      \relax%
    \else%
      \setlength{\unitlength}{\unitlength * \real{\svgscale}}%
    \fi%
  \else%
    \setlength{\unitlength}{\svgwidth}%
  \fi%
  \global\let\svgwidth\undefined%
  \global\let\svgscale\undefined%
  \makeatother%
  \begin{picture}(1,0.25306194)%
    \put(0,0){\includegraphics[width=\unitlength,page=1]{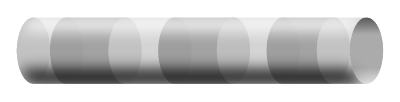}}%
    \put(0.1875,0.10569773){\color[rgb]{0,0,0}\makebox(0,0)[lb]{\smash{$U_1$}}}%
    \put(0.4375,0.10569773){\color[rgb]{0,0,0}\makebox(0,0)[lb]{\smash{$U_2$}}}%
    \put(0.70833333,0.10569773){\color[rgb]{0,0,0}\makebox(0,0)[lb]{\smash{$U_3$}}}%
  \end{picture}%
\endgroup%
}}}
\ar[ur,"\id\otimes\mu_j",start anchor={[yshift=-4pt, xshift=-4pt]},end anchor = {[yshift = 4pt, xshift = 4pt]}]
\ar[dr,swap,"\mu_j\otimes\id",start anchor={[yshift=4pt, xshift=-4pt]},end anchor = {[yshift = -4pt, xshift = 4pt]}]
& &
{\vcenter{\hbox{\def\svgscale{0.75}
\begingroup%
  \makeatletter%
  \ifx\svgwidth\undefined%
    \setlength{\unitlength}{192bp}%
    \ifx\svgscale\undefined%
      \relax%
    \else%
      \setlength{\unitlength}{\unitlength * \real{\svgscale}}%
    \fi%
  \else%
    \setlength{\unitlength}{\svgwidth}%
  \fi%
  \global\let\svgwidth\undefined%
  \global\let\svgscale\undefined%
  \makeatother%
  \begin{picture}(1,0.25306194)%
    \put(0,0){\includegraphics[width=\unitlength,page=1]{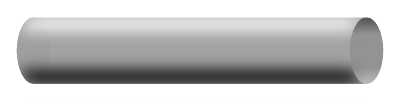}}%
    \put(0.41666667,0.10569773){\color[rgb]{0,0,0}\makebox(0,0)[lb]{\smash{$A_j\times I$}}}%
  \end{picture}%
\endgroup%
}}}\\
&{\vcenter{\hbox{\def\svgscale{0.75}
\begingroup%
  \makeatletter%
  \ifx\svgwidth\undefined%
    \setlength{\unitlength}{192bp}%
    \ifx\svgscale\undefined%
      \relax%
    \else%
      \setlength{\unitlength}{\unitlength * \real{\svgscale}}%
    \fi%
  \else%
    \setlength{\unitlength}{\svgwidth}%
  \fi%
  \global\let\svgwidth\undefined%
  \global\let\svgscale\undefined%
  \makeatother%
  \begin{picture}(1,0.25306194)%
    \put(0,0){\includegraphics[width=\unitlength,page=1]{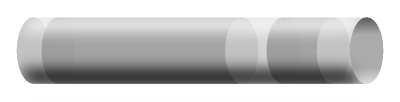}}%
    \put(0.33333333,0.10569773){\color[rgb]{0,0,0}\makebox(0,0)[lb]{\smash{$U_{12}$}}}%
    \put(0.70833333,0.10569773){\color[rgb]{0,0,0}\makebox(0,0)[lb]{\smash{$U_3$}}}%
  \end{picture}%
\endgroup%
}}}
\ar[ur,swap,"\mu_j",start anchor={[yshift=-4pt, xshift=-4pt]},end anchor = {[yshift = 4pt, xshift = 4pt]}]
\end{tikzcd}
\]
\caption{Basics showing the associativity of the product.}\label{figure:associativity}
\end{figure}

It will be convenient to have terminology for this emergent algebraic structure.
\begin{definition}
An \emph{$r$-fold bimodule} is 
\begin{enumerate}
\item a collection $(R_1,\ldots, R_r, M_0,\ldots, M_r)$ of chain complexes,
\item the structure of a differential graded unital associative algebra on $R_j$ for each $1\leq j\leq r$, and
\item the structure of an $(R_j, R_{j+1})$-bimodule on $M_j$ for each $0\leq j\leq r$, where $R_0=R_{r+1}=\mathbb{Z}$ by convention.
\end{enumerate}
A map from the $r$-fold bimodule $(R_1,\ldots, R_r, M_0,\ldots, M_r)$ to the $r$-fold bimodule $(R'_1,\ldots, R'_r,M'_0,\ldots, M'_r)$ consists of a collection of maps of algebras $R_j\to R'_j$ together with a map of $(R_j,R_{j+1})$-bimodules from $M_j$ to $M'_j$ for each $j$, where $M_j'$ carries the bimodule structure induced by restriction along the maps $R_j \to R_j'$ and $R_{j+1}\to R_{j+1}'$.
\end{definition}

It is clear from Construction~\ref{construction:algebra and module structure} and the definition of a local flow on a local invariant that a flow compatible map between local invariants intertwines the various algebra and module structures.

We summarize the discussion so far in categorical terms. There is a category $\isotopyinvariant{r}$ whose objects are triples $(X,\E,F)$ consisting of a complex $X$, an $r$-fold decomposition $\E$, and an $\E$-local invariant $F$ equipped with an isotopy invariant local flow (suppressed in the notation), and whose morphisms are pairs of a map of decompositions and a flow compatible map of local invariants. There is a second category $\bimod{r}$ whose objects are $r$-fold bimodules and whose morphisms are maps of such. What we have constructed so far is a canonical lift of the Morse complex to a functor \[I:\isotopyinvariant{r}\to \bimod{r}.\]

\begin{remark}\label{rmk:pointing}
In fact, the $r$-fold bimodule arising from an isotopy invariant local flow is always a \emph{pointed} $r$-fold bimodule, which is to say that each $M_j$ is equipped with a distinguished map $\mathbb{Z}\to M_j$ arising from the inclusion $\varnothing\to X_j$. Except in defining the natural transformation of Construction \ref{construction:morse and bar}, this extra structure will play little role in what follows.
\end{remark}

\subsection{The decomposition theorem}\label{subsec:decomposition theorem}
Our goal is to use the algebraic structures of Construction~\ref{construction:algebra and module structure} to express the global value of a local invariant with an isotopy invariant local flow in terms of its values on the pieces of the decomposition. In order to make a precise statement, we first need to spell out how the various pieces of an $r$-fold bimodule may be assembled to give a corresponding ``global value.''

\begin{definition}
Let $(R_1,\ldots, R_r, M_0,\ldots, M_r)$ be an $r$-fold bimodule.
\begin{enumerate}
\item The \emph{simplicial bar construction} on these data is the $r$-fold simplicial chain complex given in degree $(n_1,\ldots, n_r)$ by \[\mathrm{Bar}_\Delta(M_0, R_1,\ldots, R_r, M_r)_{(n_1,\ldots, n_r)}=M_0\otimes R_1^{\otimes n_1}\otimes\cdots\otimes R_r^{\otimes n_r} \otimes M_r.\] The face maps are defined by the respective module action and algebra multiplication maps, and the degeneracies are defined by the respective units.
\item The \emph{bar construction} or \emph{bar complex} on these data is the total chain complex of the multicomplex obtained from the simplicial bar construction by taking the respective alternating sums of the face maps in each simplicial direction.
\end{enumerate}
\end{definition}

The bar complex of the $r$-fold bimodule is a functor in a straightforward manner and computes the homology of the derived tensor product \[M_0\otimes^\mathbb{L}_{R_1}\cdots\otimes^\mathbb{L}_{R_r}M_r.\] Indeed, depending on one's point of view, this prescription may even be taken as the definition of the derived tensor product. For details on these matters, the reader may consult~\cite{Smith:HAEMSS}, for example.

We are now equipped to state our main result.

\begin{theorem}[Decomposition theorem]\label{thm:decomposition} There is a natural weak equivalence connecting the two composites in the diagram \[\begin{tikzcd}
\isotopyinvariant{r}\ar[swap]{d}{I}\ar{rrr}{(X,\E,F)\mapsto F(X)}&&&\Emb^\SD\ar{d}{C^\SD}\\
\bimod{r}\ar{rrr}{\mathrm{Bar}}&&&\Ch_\mathbb{Z}.
\end{tikzcd}\]
\end{theorem}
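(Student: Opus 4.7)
The strategy is to express the cellular chains of $F(X)$ as a homotopy colimit indexed by $\gaps{r}^{op}$, replace the diagram values via the Morse complex, and then identify the resulting combinatorial object with the bar complex.

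First, by the local invariance of $F$ and Corollary~\ref{cor:hocolim}, the canonical map $\hocolim_{\gaps{r}^{op}} C^\SD\circ F\circ \gamma_\E \to C^\SD(F(X))$ is a natural quasi-isomorphism. The local flow on $F$ provides, for each gap $A$, a natural Morse quasi-isomorphism $C^\SD(F(\gamma_\E(A)))\xrightarrow{\sim} I(F(\gamma_\E(A)))$, and flow compatibility of subset inclusions (condition \ref{item: subset flow compatibility} of Definition~\ref{def:flow on invariant}) ensures this is natural in $A \in \gaps{r}^{op}$. Since homotopy colimits preserve levelwise quasi-isomorphisms (Lemma~\ref{lemma: hocolim and weak equivalence}), the problem reduces to producing a natural quasi-isomorphism
\[
\mathrm{Bar}(I(X,\E,F)) \simeq \hocolim_{\gaps{r}^{op}} I\circ F\circ \gamma_\E.
\]

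Next, I would interpret the right-hand diagram combinatorially. For each gap $A$, the decomposition of $\gamma_\E(A)$ into its basic components together with condition \ref{item: disjoint union flow compatibility} of Definition~\ref{def:flow on invariant} and the monoidality of $I$ identifies $I(F(\gamma_\E(A)))$ canonically with the tensor product of the Morse complexes of those basics. Isotopy invariance (Definition~\ref{def:isotopy invariance}) then canonically identifies each factor: a component-type basic containing $\widetilde X_j$ contributes $M_j := I(F(X_j))$, and a bridge-type basic in $A_j \times I$ contributes $R_j := I(F(A_j \times I))$. Tracing Construction~\ref{construction:algebra and module structure} shows that the structure maps induced by inclusions $A \subseteq A'$ of gaps are exactly built from the relevant multiplications, module actions, and unit maps, recording the combinatorial transitions among patterns of closed intervals in the complement.

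Finally, I would construct a functor $\Phi: \Delta^r \to \gaps{r}$ sending $(n_1,\ldots,n_r)$ to a canonical \emph{standard gap} whose complement in $[0,r]$ consists of one small closed interval about each integer $0, 1,\ldots, r$, together with $n_j$ additional, equally spaced, interior closed intervals in each $(j-1, j)$. The restricted diagram $(I\circ F\circ \gamma_\E) \circ \Phi^{op}$ is then precisely the $r$-fold simplicial bar complex $\mathrm{Bar}_\Delta(M_0, R_1, \ldots, R_r, M_r)$: face maps correspond to merging adjacent closed intervals, inducing the appropriate multiplications or module actions, and degeneracies correspond to splitting an interval and inserting a unit. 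The homotopy colimit of this simplicial diagram computes the total bar complex by a standard calculation. The main technical obstacle---and the part requiring the most care---is to show that $\Phi^{op}: (\Delta^{op})^r \to \gaps{r}^{op}$ is homotopy final, so that the induced map on homotopy colimits is a quasi-isomorphism. By the standard cofinality criterion, this reduces to verifying that for each gap $A$, the comma category of standard gaps lying over (resp.\ under) $A$ has contractible nerve. Given the explicit combinatorics of $\gaps{r}$---every gap admits a cofinal family of standard refinements parametrized by finer subdivisions of its interval partition---this should follow from showing that the relevant refinement categories are filtered.
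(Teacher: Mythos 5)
Your first two reductions match the paper's proof exactly, but the endgame has a fatal gap: the proposed section $\Phi\colon\Delta^r\to\gaps{r}$ of the trace $\tau$ cannot exist as a functor. The category $\gaps{r}$ is a poset, so between any two objects there is at most one morphism, whereas $\Delta^r$ has parallel morphisms in abundance; more decisively, the two kinds of generating maps pull your putative standard gaps in opposite directions. A coface $\delta^i\colon[n-1]\to[n]$ must lift to an inclusion $\Phi([n-1])\subseteq\Phi([n])$, while a codegeneracy $\sigma^i\colon[n+1]\to[n]$ must lift to $\Phi([n+1])\subseteq\Phi([n])$; composing these forces $\Phi([n])=\Phi([n+1])$ for all $n$, which contradicts $\tau\circ\Phi=\mathrm{id}$. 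So the step ``$(I\circ F\circ\gamma_\E)\circ\Phi^{op}$ is the simplicial bar complex'' is not available, and the homotopy-finality argument for $\Phi^{op}$ has nothing to apply to. The paper avoids this obstruction by never constructing a section: it compares the two homotopy colimits through the homotopy Kan-extension identity $\hoLan_{\tau^{op}}(V\circ\tau^{op})\simeq V$ (Corollary~\ref{cor:counit}), whose proof is exactly what Lemma~\ref{lem:fiber final} (homotopy initiality of the fibers $\tau^{-1}(S)\hookrightarrow(S\!\downarrow\!\tau)$) supplies, and where the hard combinatorics live (Lemmas~\ref{lem:contractible intervals}--\ref{lem:comp finality}).

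A second, smaller gap: your identification of $I(F(\gamma_\E(A)))$ with the corresponding tensor product of $M_j$'s and $R_j$'s is only correct when $A$ is \emph{separated}. If $j\in A$, then $\widetilde X_j$ is not contained in $\gamma_\E(A)$, so the expected $M_j$ factor is absent and what you actually get are only bridge-type basics near $j$; the comparison map $\psi_A$ of Construction~\ref{construction:morse and bar} (which also uses the pointed structure of Remark~\ref{rmk:pointing}) is then merely a chain map, not an isomorphism. The paper repairs this via Lemma~\ref{lem:pullback}, using that the subcategory $\sepgaps{r}$ of separated gaps is homotopy initial in $\gaps{r}$. You will need an analogue of both of these ingredients before the reduction to an identification over $(\Delta^{op})^r$ can proceed.
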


In particular, there is a weak equivalence \[C^\SD(F(X))\simeq I(F(X_0))\bigotimes_{I(F(A_1\times I))}^\mathbb{L}\cdots\bigotimes^\mathbb{L}_{I(F(A_r\times I))} I(F(X_r)).\]

We turn now to the proof of Theorem~\ref{thm:decomposition}. For the sake of brevity, when considering the $r$-fold bimodule arising from an $\E$-local invariant $F$, we use the somewhat abusive notation $\mathrm{Bar}(I(\E))$ for the corresponding bar complex. Since $\mathrm{Bar}(I(\E))$ arises from the multi-simplicial object $\mathrm{Bar}_\Delta(I(\E))$, and since $C^\SD(F(X))$ may be recovered as a homotopy colimit over $\gaps{r}^{op}$, our strategy in relating these two objects will be to relate the categories $\Delta^r$ and $\gaps{r}$.

\begin{definition}
Let $A\subseteq [0,r]$ be a gap. The $j$th \emph{trace} of $A$ is the ordered set \[\tau_j(A)=\pi_0\left(A\cap [j-1,j]\right),\] with the ordering induced by the standard orientation of $\mathbb{R}$.
\end{definition}

It follows from the definitions that the set $\tau_j(U)$ is always non-empty, so the various traces extend to a functor \[\tau:\gaps{r}\to \Delta^r.\]

Using the trace, we may relate the Morse complex to the bar construction. 

\begin{construction}\label{construction:morse and bar}
We define a natural transformation \[\psi:I(F(\gamma_\E))\to\mathrm{Bar}_\Delta(I(\E))\circ\tau^{op}\] of functors from $\gaps{r}^{op}$ to chain complexes. For a gap $A$, we have 
\begin{align*}
\mathrm{Bar}_\Delta(I(\E))(\tau^{op}(A))&\cong 
I(F(X_0))\otimes I(F(A_1\times I))^{\otimes|\tau_1(A)|-1}\otimes\cdots
\\&\quad{} \cdots\otimes I(F(A_r\times I))^{\otimes|\tau_r(A)|-1}\otimes I(F(X_r))\end{align*} and we define the component $\psi_A$ by expressing $\gamma_\E(A)$ as a disjoint union of basics and tensoring together the maps induced by the inclusions of each basic into the corresponding component or bridge, or of the empty set into the relevant component---see Remark \ref{rmk:pointing}. Naturality follows from flow compatibility of the structure maps involved, together with the fact that these Morse complexes arise from product flows, both of which are guaranteed by Definition~\ref{def:flow on invariant}.
\end{construction}

As a matter of terminology, we say that an $r$-fold gap $A\in \gaps{r}$ is \emph{separated} if $A\cap\{0,\ldots,r\}=\varnothing$. Write $\sepgaps{r}\subseteq \gaps{r}$ for the full subcategory of separated gaps.

\begin{lemma}\label{lem:pullback}
Let $\E$ be a decomposition of $X$ and $F$ an $\E$-local invariant with an isotopy-invariant local flow. The canonical map \[\hocolim_{\gaps{r}^{op}}I(F(\gamma_\E))\xrightarrow{\sim} \hocolim_{\gaps{r}^{op}}\mathrm{Bar}_\Delta(I(\E))\circ\tau^{op}\] induced by $\psi$ is a quasi-isomorphism.
\end{lemma}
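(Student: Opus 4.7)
The plan is to restrict both homotopy colimits to the full subcategory $\sepgaps{r}^{op}\subseteq\gaps{r}^{op}$ of separated gaps, on which $\psi$ becomes a levelwise isomorphism, and then to invoke a cofinality argument. For separated $A$, gap condition (2) together with separation forces $A$ to avoid a neighborhood of each $j\in\{0,\ldots,r\}$, so $[j-1,j]\setminus A$ has exactly $|\tau_j(A)|+1$ connected components; two of these feed into the component-type basics for $\widetilde X_{j-1}$ and $\widetilde X_j$, while the remaining $|\tau_j(A)|-1$ are bridge-type basics inside $A_j\times I$. Combining the symmetric monoidality of the Morse complex on product flows with isotopy invariance identifies $I(F(\gamma_\E(A)))$ with $\mathrm{Bar}_\Delta(I(\E))(\tau(A))$, and this identification is exactly $\psi_A$ by the construction in \ref{construction:morse and bar}.

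The heart of the argument is to prove that the inclusion $\iota\colon\sepgaps{r}^{op}\hookrightarrow\gaps{r}^{op}$ is homotopy cofinal. By Quillen's Theorem A, this reduces to showing that for every $A\in\gaps{r}$, the slice poset $\mathcal{P}_A\coloneqq\{B\in\sepgaps{r}:B\subseteq A\}$, ordered by inclusion, has contractible nerve, and I would deduce this by exhibiting $\mathcal{P}_A$ as filtered. For nonemptiness, openness of $A$ and condition (3) yield $A\cap(j-1,j)\neq\varnothing$ for each $1\leq j\leq r$, so one may pick pairwise disjoint intervals $(a_j,b_j)\subseteq A\cap(j-1,j)$ whose union lies in $\mathcal{P}_A$. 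For upper bounds, given $B_1,B_2\in\mathcal{P}_A$, the set $\overline{B_1\cup B_2}$ is a finite union of closed intervals contained in $A$ and disjoint from $\{0,\ldots,r\}$, since separated gaps have positive-length complements near the integers, so their closures stay uniformly bounded away from them; adjoining to $B_1\cup B_2$ sufficiently small open neighborhoods, inside $A$, of its finitely many singleton complement points produces an element $B_3\in\mathcal{P}_A$ with $B_1\cup B_2\subseteq B_3$.

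With cofinality in hand, restriction along $\iota$ gives quasi-isomorphisms $\hocolim_{\sepgaps{r}^{op}}\to\hocolim_{\gaps{r}^{op}}$ on both sides, and the levelwise isomorphism of $\psi$ on $\sepgaps{r}^{op}$ combined with Lemma~\ref{lemma: hocolim and weak equivalence} yields a quasi-isomorphism between the restricted homotopy colimits; naturality of $\psi$ assembles these into the desired quasi-isomorphism. The main technical obstacle is the filteredness check, where one must carefully arrange the enlargement of $B_1\cup B_2$ to remain inside $A$, avoid the integer points, and preserve the positive-length complement condition; once that is set up, the rest is formal manipulation of cofinality and Lemma~\ref{lemma: hocolim and weak equivalence}.
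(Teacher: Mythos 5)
Your proof is correct and follows the same strategy as the paper: identify $\psi_A$ as an isomorphism on separated gaps, then show the inclusion $\sepgaps{r}\hookrightarrow\gaps{r}$ is homotopy initial by checking that each slice poset $\mathcal{P}_A$ is contractible. One small note: the paper asserts these posets are cofiltered, whereas your more careful analysis shows they are in fact \emph{filtered} (upper bounds under union-and-thickening exist, but lower bounds need not, e.g.\ two separated gaps with disjoint closures inside $A$); both conditions suffice for contractibility via Example~\ref{example: cofiltered}, so the argument goes through either way, and your version is the accurate one.
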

\begin{proof}
A gap $A$ is separated if and only if $\gamma_\E(A)$ intersects each $\widetilde X_j$ non-vacuously, so, by isotopy invariance, $\psi_A$ is a quasi-isomorphism for separated $A$. Thus, by Lemma~\ref{lemma: hocolim and weak equivalence} and Proposition~\ref{prop:finality criterion}, it suffices to note that the inclusion $\sepgaps{r}\subseteq\gaps{r}$ is homotopy initial (so that the inclusion of opposite categories is homotopy final). Indeed, all of the overcategories in question are cofiltered and hence contractible.
\end{proof}

Theorem~\ref{thm:decomposition} relies on the following fact about the functor $\tau$.

\begin{lemma}\label{lem:fiber final}
For any object $S\in\Delta^{r}$, the inclusion $\iota:\tau^{-1}(S)\to(S\!\downarrow\!\tau)$ is homotopy initial. 
\end{lemma}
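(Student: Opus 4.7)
My plan is to describe $\iota \downarrow (A, f)$ explicitly and show it is contractible. Unpacking the definitions, an object is a gap $B \subseteq A$ with $\tau(B) = S$ whose inclusion into $A$ realizes $f$ on traces, with morphisms given by further inclusion. Such a $B$ is determined, for each $j \in \{1, \ldots, r\}$ and each $s \in S_j$, by the choice of an open subinterval of the $f_j(s)$-component of $A \cap [j-1, j]$, subject to: (a) within each component of $A \cap [j-1, j]$, the chosen subintervals corresponding to distinct $s$'s are pairwise disjoint and ordered compatibly with the order on $S_j$; and (b) a finite case analysis at integers $k \in A \cap \{0, \ldots, r\}$ to handle condition~(2) in the definition of gaps. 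Modulo this boundary bookkeeping, $\iota \downarrow (A, f)$ factors as a product, over pairs $(j, c)$ with $c \in \tau_j(A)$, of posets $P_{n_{j,c}}$, where $n_{j,c} = |f_j^{-1}(c)|$ and $P_n$ denotes the poset of $n$-tuples of pairwise-disjoint ordered open subintervals of a fixed open interval, ordered by component-wise inclusion.

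The heart of the argument is to show each $P_n$ is contractible. For $n = 1$, $P_1$ is filtered under taking the smallest enclosing open interval, hence contractible. For $n \geq 2$, $P_n$ fails to be filtered, so I would cover it by filtered sub-posets $\{P_n^{\vec c}\}_{\vec c}$ indexed by barrier sequences $\vec c = (c_1 < \cdots < c_{n-1})$ in the open simplex $\Delta^{n-1}_{\text{open}}$: set $P_n^{\vec c} = \{(I_1, \ldots, I_n) \in P_n : I_i \subseteq (c_{i-1}, c_i) \text{ for each } i\}$, with the conventions $c_0 = 0, c_n = 1$. Each $P_n^{\vec c}$ is a product of $n$ copies of $P_1$, hence filtered and contractible; any $B \in P_n$ lies in $P_n^{\vec c}$ for any choice of $c_i \in (b_i, a_{i+1})$; and non-empty finite intersections are again products of $P_1$'s on narrower slots, hence filtered, with emptiness precisely corresponding to failure of the column-wise separation $\max_\alpha c_{i-1}^{(\alpha)} < \min_\alpha c_i^{(\alpha)}$. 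A nerve-theorem argument in the spirit of Theorem~\ref{thm:dugger-isaksen} would then identify $P_n$ with the nerve of this good cover.

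The resulting nerve is a flag complex on $\Delta^{n-1}_{\text{open}}$ whose simplices are determined by the column-wise separation condition above, and I expect to conclude by showing this flag complex is homotopy equivalent to the contractible open simplex $\Delta^{n-1}_{\text{open}}$ via a further nerve-theorem comparison with an open convex cover of $\Delta^{n-1}_{\text{open}}$. The main obstacle will be this secondary identification, together with the careful treatment of the integer-endpoint bookkeeping in the product decomposition. A cleaner alternative that I would attempt in parallel is to work topologically throughout: the space of ordered configurations of $n$ disjoint open subintervals of $(0, 1)$ is a convex open subset of Euclidean space, hence contractible, and one can hope to identify its homotopy type directly with that of the order complex $|N(P_n)|$ via a continuous-poset comparison, bypassing the flag-complex step entirely.
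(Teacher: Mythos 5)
Your plan identifies the right target---reducing the overcategory $(\iota\!\downarrow\!(A,f))$ to a product of posets of configurations of ordered subintervals and showing each factor is contractible---and this is indeed what the paper does, but there are two genuine gaps in the proposal.

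First, the ``boundary bookkeeping'' is not a finite case analysis you can wave away. When $A$ touches an integer $i$, a gap $B\subseteq A$ with $\tau(B)=S$ may or may not contain $i$, and the objects that do contain $i$ couple the slots over $[i-1,i]$ and $[i,i+1]$, destroying the product structure. The paper handles this by proving a separate, nontrivial result (Lemma~\ref{lem:comp reduction}): the inclusion of the sub-poset of \emph{separated} gaps into the full poset is homotopy initial, shown by a filteredness argument. Only after this reduction does the product decomposition of Lemma~\ref{lem:comp finality} hold cleanly. Your writeup treats the factorization as holding ``modulo bookkeeping'' on the unseparated poset itself, which is not true, and no argument is given for why the non-separated objects can be discarded up to homotopy.

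Second, your contractibility argument for $P_n$ (which is the paper's $\sepgaps{1,n}$, the content of Lemma~\ref{lem:contractible intervals}) is not rigorous. You cover $P_n$ by downward-closed filtered sub-posets indexed by barrier sequences and invoke a ``nerve-theorem argument in the spirit of Theorem~\ref{thm:dugger-isaksen},'' but that theorem concerns complete covers of \emph{topological spaces}, not covers of posets by sub-posets; you would need a poset/simplicial-complex nerve theorem and then a second, unargued identification of the resulting flag complex (on a continuum of vertices) with the contractible open simplex. Your ``cleaner alternative'' is much closer to the paper's actual strategy: the paper defines a functor $\chi:\sepgaps{1}\to\Top$ assigning to each separated gap $A$ the (contractible, because convex) space of $k$-point configurations hitting every component of $A$, shows via Lemma~\ref{lemma: hocolim and weak equivalence} that $|N\sepgaps{1,k}|\simeq\hocolim\chi$, and shows via Theorem~\ref{thm:dugger-isaksen} that $\hocolim\chi\simeq B_k((0,1))\simeq\pt$. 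Your interval-configuration variant could be made to work the same way, but as written it is only an expectation. I recommend abandoning the barrier-sequence cover in favor of this comparison-with-a-contractible-space strategy, and adding the missing homotopy-initiality step for separated gaps.
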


In one form or another, this fact is certainly well-known to experts. In the name of a self-contained narrative, we nevertheless include a proof, which is deferred to \S\ref{section:fiber final proof} below. For now, we draw the following consequence (see Appendix~\ref{section:hocolim appendix} for notation).

\begin{corollary}\label{cor:counit}
Let $V:(\Delta\!^{op})^{r}\to \Ch_\mathbb{Z}$ be a multi-simplicial chain complex. There is a natural weak equivalence $\hoLan_{\tau^{op}}(V\circ\tau^{op})\simeq V$.
\end{corollary}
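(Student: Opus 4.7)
The plan is to verify the equivalence pointwise at each $S\in\Delta^r$. By the pointwise formula for homotopy left Kan extensions, the counit at $S$ is
\[
\hoLan_{\tau^{op}}(V\circ\tau^{op})(S)\simeq\hocolim_{(A,f)\in(\tau^{op}\downarrow S)}V(\tau(A))\longrightarrow V(S),
\]
the map induced by $V(f)$ on each summand. Identifying $(\tau^{op}\downarrow S)$ with $(S\downarrow\tau)^{op}$ in the usual way, Lemma~\ref{lem:fiber final} says that $\iota^{op}\colon\tau^{-1}(S)^{op}\to(\tau^{op}\downarrow S)$ is homotopy final, so by the cofinality theorem (cf.\ Proposition~\ref{prop:finality criterion}) the homotopy colimit can equivalently be taken over $\tau^{-1}(S)^{op}$.

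Restricted to the fiber, the diagram is canonically constant at $V(S)$: every object satisfies $\tau(A)=S$ by definition, and since $\gaps{r}$ is a poset, every morphism in $\tau^{-1}(S)$ is sent by $\tau$ to $\id_S$ and hence by $V\circ\tau^{op}$ to $\id_{V(S)}$. The homotopy colimit of a constant diagram at $V(S)$ over a small category is $V(S)$ tensored with the chain complex on the nerve of the indexing category, and the counit map is the natural augmentation. Consequently the corollary reduces to showing that $|\tau^{-1}(S)|$ is contractible.

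To establish contractibility, I plan to exhibit a terminal object of $(S\downarrow\tau)$. The full interval $[0,r]$ itself lies in $\gaps{r}$: its complement is the empty union of closed intervals, and conditions (2)--(3) in the definition of a gap are satisfied trivially. Since $[0,r]$ is connected on each subinterval $[j-1,j]$, its trace $\tau([0,r])=([0],\ldots,[0])$ is the terminal object of $\Delta^r$. As every gap sits inside $[0,r]$, this exhibits $([0,r],!_S)$ as a terminal object of $(S\downarrow\tau)$, so $|(S\downarrow\tau)|$ is contractible. Lemma~\ref{lem:fiber final} together with Quillen's Theorem~A then forces $|\tau^{-1}(S)|\simeq|(S\downarrow\tau)|$ to be contractible as well, completing the pointwise verification.

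Naturality in $S$ is immediate from the naturality of each step. The only substantive input is Lemma~\ref{lem:fiber final}, whose proof is deferred to a subsequent section; granting that lemma, the corollary is a formal consequence of homotopy cofinality, the existence of the maximal gap $[0,r]$, and Quillen's Theorem~A.
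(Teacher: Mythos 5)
Your proof is correct and follows essentially the same route as the paper: pointwise homotopy Kan extension formula, restriction to the fiber $\tau^{-1}(S)$ via Lemma~\ref{lem:fiber final} and homotopy cofinality, observation that the restricted diagram is constant, and contractibility of the fiber deduced from the terminal object $[0,r]\in(S\!\downarrow\!\tau)$ together with a second application of Lemma~\ref{lem:fiber final} and Quillen's Theorem~A (which is the paper's Corollary~\ref{cor:final functor nerve}).
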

\begin{proof}
We have that
\begin{align*}\hoLan_{\tau^{op}}(V\circ\tau^{op})(S)&=\hocolim_{(\tau^{op}\downarrow S)}\left(V\circ \tau^{op}\circ \forgettodomain\right)
&&\text{(\ref{def: holan})}\\
&\simeq\hocolim_{(\tau^{op})^{-1}(S)}
\left(V\circ \tau\circ \forgettodomain\circ \iota\right)
&&\text{(\ref{lem:fiber final}, \ref{prop:finality criterion})}\\
&=
\hocolim_{(\tau^{op})^{-1}(S)}\left(V\circ \underline{S}\right)
\\
&\simeq V(S)&&\text{(\ref{cor:contractible constant hocolim})}\end{align*} where in the last step we have used that the category $\tau^{-1}(S)$ is contractible. To see why this is so, we note that $(S\!\downarrow\!\tau)$ is contractible, having a final object (since $\gaps{r}$ has the final object $[0,r]$), and invoke Corollary~\ref{cor:final functor nerve} and Lemma~\ref{lem:fiber final} a second time.
\end{proof}

\begin{proof}[Proof of Theorem~\ref{thm:decomposition}] We have the following column of quasi-isomorphisms:
\begin{align*}
C^\SD(F(X))&\simeq \hocolim_{\gaps{r}^{op}}C^\SD(F(\gamma_\E))&&\text{(\ref{cor:hocolim})}\\
&\simeq \hocolim_{\gaps{r}^{op}}I(F(\gamma_\E))&&\text{(\ref{lemma: hocolim and weak equivalence})}\\
&\simeq \hocolim_{\gaps{r}^{op}}\tau^*\mathrm{Bar}_\Delta(I(\E))&&\text{(\ref{lem:pullback},\,\ref{lemma: hocolim and weak equivalence})}\\
&\cong \hoLan_*(\mathrm{Bar}_\Delta(I(\E))\circ \tau^{op})(*)&&\text{(\ref{example: Left Kan extensions recover homotopy colimits})}\\
&\simeq \hoLan_{*}\hoLan_\tau (\mathrm{Bar}_\Delta(I(\E))\circ \tau^{op})(*)&&\\
&\cong \hocolim_{(\Delta\!^{op})^{r}}\hoLan_\tau (\mathrm{Bar}_\Delta(I(\E))\circ\tau^{op})&&\text{(\ref{example: Left Kan extensions recover homotopy colimits})}\\
&\simeq \hocolim_{(\Delta\!^{op})^{r}}\mathrm{Bar}_\Delta(I(\E))&&\text{(\ref{cor:counit},\,\ref{lemma: hocolim and weak equivalence})}\\
&\simeq \mathrm{Bar}(I(\E))&&\text{(\ref{prop:dold-kan hocolim})}.
\end{align*} The unmarked quasi-isomorphism is formal, following from the fact that left Kan extensions compose. Naturality follows from flow compatibility and inspection of Construction~\ref{construction:morse and bar}.
\end{proof}

\subsection{Proof of Lemma~\ref{lem:fiber final}}\label{section:fiber final proof}

The fundamental observation in the proof is the following. Let $\sepgaps{1,k}\subseteq \sepgaps{1}$ denote the (non-full) subcategory with objects the separated gaps with exactly $k$ components and morphisms the $\pi_0$-bijective inclusions.

\begin{lemma}\label{lem:contractible intervals}
For every $k>0$, the category $\sepgaps{1,k}$ is contractible.
\end{lemma}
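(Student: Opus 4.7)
The plan is to identify $\sepgaps{1,k}$ with the poset $P_k$ whose objects are tuples $\vec{x}=(a_1,b_1,\dots,a_k,b_k)$ satisfying $0<a_1<b_1<\cdots<a_k<b_k<1$ (the endpoint data of a separated $k$-component gap) and whose order is $\vec{x}\le\vec{y}$ iff $a_i^{\vec{y}}\le a_i^{\vec{x}}$ and $b_i^{\vec{x}}\le b_i^{\vec{y}}$ for every $i$, that is, the $i$-th interval of $\vec{x}$ is contained in the $i$-th interval of $\vec{y}$ for each $i$. The underlying set $U_k\subseteq\mathbb{R}^{2k}$ of such tuples is a nonempty convex open subset, hence contractible; the strategy is to compare the classifying space $|NP_k|$ with $U_k$.

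To set up the comparison, I would introduce the continuous barycenter map $\psi\colon|NP_k|\to U_k$ defined on each simplex $(\vec{x}_0\le\cdots\le\vec{x}_n)$ with barycentric coordinates $(t_0,\dots,t_n)$ by $\psi=\sum_i t_i\vec{x}_i$, well-defined by convexity of $U_k$. It then suffices to show that $\psi$ is a weak homotopy equivalence. For this, I would apply the nerve theorem to the open cover of $U_k$ by the convex sets $W_{\vec{x}}=\{\vec{y}\in U_k:a_i^{\vec{y}}<a_i^{\vec{x}}\text{ and }b_i^{\vec{x}}<b_i^{\vec{y}}\text{ for all }i\}$, indexed by $\vec{x}\in P_k$. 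Each $W_{\vec{x}}$ is nonempty and convex. By an elementary coordinatewise argument in the spirit of Helly's theorem for intervals on the line, a finite intersection $\bigcap_j W_{\vec{x}_j}$ is either empty or convex (hence contractible), and its nonemptiness is equivalent to the tuples $\vec{x}_j$ admitting a common upper bound in $P_k$. The nerve theorem then yields a weak equivalence $U_k\simeq N(\{W_{\vec{x}}\})$, and since $U_k$ is contractible, so is the \v{C}ech nerve.

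The main obstacle will be matching the \v{C}ech nerve $N(\{W_{\vec{x}}\})$ with the order complex $|NP_k|$: the simplices of the nerve are finite subsets of $P_k$ admitting a common upper bound, while those of $|NP_k|$ are chains, and for $k\ge 2$ the former strictly contains the latter (two incomparable configurations---say one whose second interval lies left of the other's first---can nevertheless have a common upper bound). I would close this gap by exhibiting a discrete Morse matching, or a direct deformation retraction, collapsing the non-chain simplices of the \v{C}ech nerve onto the chain subcomplex, which is exactly $|NP_k|$; this collapse is the technically delicate step in the argument.
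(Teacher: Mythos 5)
Your proposal correctly identifies $\sepgaps{1,k}$ with a poset $P_k$ of endpoint tuples, and the geometric intuition (convexity of the parameter space $U_k$, comparison via a nerve-type argument) is reasonable. But the gap you flag in your final paragraph is the entire content of the lemma, and the plan you sketch for closing it does not work as stated.

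The issue is that the classical nerve theorem produces the \v{C}ech nerve of the cover $\{W_{\vec{x}}\}$, whose $n$-simplices are arbitrary $(n+1)$-element subsets of $P_k$ with nonempty intersection, whereas $|N P_k|$ only has chains. For $k\ge 2$ these genuinely differ, and there is no reason a discrete Morse matching or deformation retraction collapsing the extra simplices should exist in general; you offer none, and producing one here would be at least as hard as the lemma itself. Without that step you have shown only that the \v{C}ech nerve is contractible, which says nothing directly about $|N P_k|$.

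What resolves this tension is precisely the Dugger--Isaksen theorem (Theorem~\ref{thm:dugger-isaksen}, already set up in Appendix~\ref{section:hocolim appendix}), which compares the \emph{homotopy colimit over the poset of the cover}---not the \v{C}ech nerve---to the covered space, provided the cover is complete in the sense of Definition~\ref{def:complete cover}. Since each $W_{\vec{x}}$ is contractible, $\hocolim_{\vec{x}\in P_k^{op}}W_{\vec{x}}\simeq\hocolim\underline{\pt}\simeq|NP_k|$, and Dugger--Isaksen then identifies this with $U_k\simeq\pt$, once you verify completeness. Completeness does hold for your cover: given $\vec{y}\in W_{\vec{x}_1}\cap W_{\vec{x}_2}$, the coordinatewise $\min$/$\max$ construction $a_i^{\vec{z}}=\min(a_i^{\vec{x}_1},a_i^{\vec{x}_2})$, $b_i^{\vec{z}}=\max(b_i^{\vec{x}_1},b_i^{\vec{x}_2})$ produces $\vec z\in P_k$ (the separation inequalities $b_i^{\vec z}<a_{i+1}^{\vec z}$ follow because $\vec z$ sits strictly inside $\vec y$) with $\vec{y}\in W_{\vec{z}}\subseteq W_{\vec{x}_1}\cap W_{\vec{x}_2}$. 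With this substitution your argument becomes essentially the paper's, only run on the parameter space $U_k$ with the cover $\{W_{\vec x}\}$ instead of on the configuration space $B_k((0,1))$ with the cover $\{\chi(A)\}$ of configurations touching every component of $A$. Both are valid; the paper's choice is slightly more in the spirit of the surrounding configuration-space arguments, while yours is more elementary geometry. Either way, you must route through Theorem~\ref{thm:dugger-isaksen} rather than the classical nerve theorem, and once you do, the barycenter map $\psi$ and the putative Morse collapse become unnecessary.
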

\begin{proof}
We define a functor $\chi:\sepgaps{1}\to \Top$ by letting $\chi(A)\subseteq B_k(A)$ be the subspace of configurations that intersect each connected component of $A$ non-trivially. 
The proof will be complete upon establishing the chain of weak homotopy equivalences \[|N \sepgaps{1,k}|\simeq \hocolim_{\sepgaps{1,k}}\underline\pt\simeq\hocolim_{\sepgaps{1,k}}\chi\simeq\pt,\] where $\underline\pt$ is the constant functor with value a singleton. The first equivalence is immediate from Definition~\ref{def:homotopy colimit}, and the second follows from Lemma~\ref{lemma: hocolim and weak equivalence} and the fact that $\chi(A)$ is homeomorphic to the product of the connected components of ${A}$ and hence contractible. To establish the third equivalence, we note that the collection $\{\chi(A):A\in\sepgaps{1,k})\}$ of open subsets of {$B_k((0,1))$} is a basis for its topology (and thus a complete cover).
Since this configuration space is contractible, the desired equivalence now follows from Theorem~\ref{thm:dugger-isaksen}.
\end{proof}

Now, a separated $r$-fold gap is nothing more or less than an $r$-tuple of separated $1$-fold gaps. In other words, there is a commuting diagram of functors \[
\begin{tikzcd}\sepgaps{1}^r\ar{d}&\sepgaps{r}\ar{l}[swap]{\sim}\ar{r}&\gaps{r}\ar{d}{\tau}\\
\gaps{1}^r\ar{rr}{\tau^r}&&\Delta^r.
\end{tikzcd}\] Writing $\widetilde\tau:\sepgaps{r}\to \Delta^r$ for either composite and fixing an object $S=(S_1,\ldots, S_r)\in \Delta^r$, we have another commuting diagram of functors of the form
\[\begin{tikzcd}
\tau^{-1}(S)\ar{r}{\iota}&(S\!\downarrow\!\tau)\\
\widetilde\tau^{-1}(S)\ar{d}[swap]{\wr}\ar{r}{\widetilde \iota}\ar{u}&(S\!\downarrow\!\widetilde \tau)\ar{u}\ar{d}{\wr}\\
\prod_{j=1}^r\widetilde \tau^{-1}(S_j)\ar{r}{\widetilde \iota^r}&\prod_{j=1}^r(S_j\!\downarrow\!\widetilde\tau).
\end{tikzcd}\] The strategy will be to understand $\iota$ by understanding each of the other arrows in the diagram (a direct comparison is possible but somewhat more involved), beginning with the upper vertical arrows.

\begin{lemma}\label{lem:comp reduction}
The inclusions of $\widetilde\tau^{-1}(S)$ and $(S\!\downarrow\!\widetilde\tau)$ into $\tau^{-1}(S)$ and $(S\!\downarrow\!\tau)$, respectively, are each homotopy initial.
\end{lemma}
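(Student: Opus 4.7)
The plan is to invoke the criterion that a functor is homotopy initial if and only if every overcategory $F\!\downarrow\! d$ has contractible nerve, and to verify contractibility in each case by exhibiting the relevant overcategory as a filtered poset (noting that filtered categories have contractible nerves).

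For the fiber inclusion $\widetilde\tau^{-1}(S)\hookrightarrow\tau^{-1}(S)$, I would fix $A\in\tau^{-1}(S)$. An object of the overcategory is a separated subgap $B\subseteq A$ with $\tau(B)=S$; equivalently, it consists of one open interval $B_{j,\ell}\subseteq(j-1,j)$ sitting inside the open interior of the $\ell$-th component of $A\cap[j-1,j]$ for each $j$ and each $\ell\in S_j$, subject to strict boundedness at any integer endpoint of that component. The overcategory thus identifies with the product, indexed by pairs $(j,\ell)$, of posets of admissible subintervals; each such factor is filtered under convex-hull union (the convex hull of two admissible subintervals is again admissible), and the product of filtered posets is filtered, hence contractible.

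For the slice inclusion $(S\!\downarrow\!\widetilde\tau)\hookrightarrow(S\!\downarrow\!\tau)$, fix $(A,\alpha)$, and, for each sufficiently small $\epsilon>0$, define $A^\circ_\epsilon$ to be the subset of $A$ obtained by removing the $\epsilon$-neighborhood of each integer point contained in $A$. For small $\epsilon$, $A^\circ_\epsilon$ is a separated gap with a natural identification $\tau(A^\circ_\epsilon)\cong\tau(A)=S$, so $\alpha$ lifts canonically to an $\alpha^\epsilon$, yielding an object $(A^\circ_\epsilon,\alpha^\epsilon)$ of the overcategory. Given any object $(B,\beta)$ in the overcategory, the separatedness of $B$ combined with gap condition (2) forces $B$ to be strictly bounded away from every integer, so $B\subseteq A^\circ_\epsilon$ for every $\epsilon$ below this bound; the resulting morphism $(B,\beta)\to(A^\circ_\epsilon,\alpha^\epsilon)$ is unique, and the identity $\tau(B\subseteq A^\circ_\epsilon)\circ\beta=\alpha^\epsilon$ required for it to be well-defined is forced by the canonical component identification. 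Thus $\{(A^\circ_\epsilon,\alpha^\epsilon)\}_{\epsilon>0}$ provides a cofinal directed family of upper bounds, and the overcategory is filtered.

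The main obstacle I expect is the careful verification that $A^\circ_\epsilon$ really is a separated gap with $\tau(A^\circ_\epsilon)\cong\tau(A)$ for small $\epsilon$. The first condition requires the complement of $A^\circ_\epsilon$ in $[0,r]$, namely the original complement of $A$ together with the excised neighborhoods, to remain a disjoint union of positive-length closed intervals; this holds once $\epsilon$ is smaller than the distance from the excised integers to the closure of the original complement of $A$. The second condition amounts to observing that removing a small neighborhood of an integer $j$ does not split any component of $A\cap[j-1,j]$, since such an integer sits on the boundary of $[j-1,j]$ and the interior of each component inside $(j-1,j)$ is a single open interval, unaffected by excision near $j-1$ or $j$.
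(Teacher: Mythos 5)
Your proof is correct, and the underlying geometric idea---establishing contractibility of the relevant overcategories by showing they are filtered, using the gaps obtained from $A$ by excising small neighborhoods of the integers---is the same as the paper's. The packaging differs slightly. For the fiber inclusion $\widetilde\tau^{-1}(S)\hookrightarrow\tau^{-1}(S)$, the paper argues directly that the overcategory is a nonempty filtered poset (upper bounds coming from the excised gaps $A^\circ_\epsilon$), whereas you instead identify the overcategory as a product over pairs $(j,\ell)$ of posets of admissible subintervals, each filtered under convex hull; this is a tidier and somewhat more transparent argument, and the identification is valid (the compatibility $\tau(B\subseteq A)=\id_S$ is exactly what forces the $\ell$-th component of $B\cap[j-1,j]$ into the $\ell$-th component of $A\cap[j-1,j]$, which is the decomposition you describe). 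For the slice inclusion $(S\!\downarrow\!\widetilde\tau)\hookrightarrow(S\!\downarrow\!\tau)$, the paper merely says it ``differs only in requiring more notation,'' and your treatment spells out what that notation is: the cofinal directed family $(A^\circ_\epsilon,\alpha^\epsilon)$, with $\alpha^\epsilon$ defined via the canonical isomorphism $\tau(A^\circ_\epsilon\subseteq A)$ and the compatibility $\tau(B\subseteq A^\circ_\epsilon)\circ\beta=\alpha^\epsilon$ following formally since $\tau(A^\circ_\epsilon\subseteq A)$ is invertible. One cosmetic note: early on you write ``$\tau(B)=S$'' when the precise condition is $\tau(B\subseteq A)=\id_S$; your subsequent explicit description uses the correct condition, so this is merely an imprecision of phrasing rather than a gap. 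Also, to keep $A^\circ_\epsilon$ open you should excise closed $\epsilon$-neighborhoods, but this is trivial to arrange.
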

\begin{proof}
We give the proof for the former inclusion only, the latter differing only in requiring more notation. Fixing $A\in\tau^{-1}(S)$, we must check the contractibility of the category of $r$-fold gaps $B$ such that
\begin{enumerate}
\item \label{item: A contains B}$B\subseteq A$,
\item \label{item: tau of B to A is id}$\tau(B\subseteq A)=\id_S$, and
\item \label{item: B hits components}$B$ is separated.
\end{enumerate}
First, we note that this category is non-empty; indeed, we may obtain such a $B$ from $A$ by removing a sufficiently small neighborhood of each $j\in\{0,\ldots, r\}$ from $A$. Moreover, any $B$ satisfying these three conditions is contained one of this form. Since this subcollection is clearly filtered, the claim follows by Example~\ref{example: cofiltered}.
\end{proof}

Next, we consider the middle horizontal arrow.

\begin{lemma}\label{lem:comp finality}
The functor $\widetilde \iota$ is homotopy initial.
\end{lemma}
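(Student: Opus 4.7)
The plan is to apply the standard criterion for homotopy initiality and show, for each $(A,f) \in (S\!\downarrow\!\widetilde\tau)$, that the slice category $(\widetilde\iota\!\downarrow\!(A,f))$ is contractible. The first move is to reduce to the case $r=1$. The isomorphisms $\widetilde\tau^{-1}(S)\cong\prod_{j=1}^r \widetilde\tau^{-1}(S_j)$ and $(S\!\downarrow\!\widetilde\tau)\cong\prod_{j=1}^r (S_j\!\downarrow\!\widetilde\tau)$ provided by the preceding commutative diagram identify $\widetilde\iota$ with the product $\widetilde\iota^r$. Since the slice of a product functor splits as a product of slices and a finite product of contractible categories is contractible, it suffices to treat $r=1$.

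Assume $r=1$, fix $(A,f\colon S\to \tau(A))$, and write $A = A_1\amalg\cdots\amalg A_n$ as the disjoint union of its open-interval components, indexed by $T := \tau(A)$. An object of the slice is a pair $(B, g\colon B\subseteq A)$ with $\tau(B)=S$ and $\tau(g)=f$; unpacking $\tau(g)=f$ says exactly that the components of $B$ indexed by $f^{-1}(t)\subseteq S$ sit, in the correct order, inside $A_t$ for each $t\in T$. Consequently $B$ decomposes uniquely as $\coprod_t B_t$ with $B_t := B\cap A_t$ a separated gap of $A_t$ having exactly $k_t := |f^{-1}(t)|$ ordered components (where $B_t = \varnothing$ if $k_t=0$), and morphisms in the slice restrict componentwise.

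This identifies the slice with a product, indexed by $t\in T$, of factors that are either the terminal category (when $k_t=0$) or, via any order-preserving homeomorphism $A_t\cong(0,1)$, identified with $\sepgaps{1,k_t}$ otherwise. Each non-trivial factor is contractible by Lemma~\ref{lem:contractible intervals}, so the product, and hence the slice, is contractible.

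The main bookkeeping obstacle is verifying the correspondence $B\leftrightarrow(B_t)_{t\in T}$ carefully: one must recover the ordering on $S$ by concatenating the orderings on each $\tau(B_t)$ in the order of $T$, and check that $f$ is then forced by this identification. Once this is established at the level of both objects and morphisms, the product decomposition of the slice, and thus the reduction to Lemma~\ref{lem:contractible intervals}, is immediate.
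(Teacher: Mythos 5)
Your proof is correct and takes essentially the same route as the paper: reduce to $r=1$ via the product decomposition, then identify the slice $(\widetilde\iota\!\downarrow\!(A,f))$ as a product of categories $\sepgaps{1,k_t}$ and apply Lemma~\ref{lem:contractible intervals}. The only cosmetic difference is that you index the product over all $t\in T$, absorbing the $k_t=0$ cases as terminal-category factors, whereas the paper indexes over $\im(f)$; these are equivalent.
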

\begin{proof}
The property of being homotopy initial is preserved by products and equivalences of categories, so it suffices to consider the case $r=1$. We establish some notation.
\begin{enumerate}
\item An object of $(S\!\downarrow\!\widetilde\tau)$ is a pair $(A,f)$, where $f:S\to T$ is a map of ordered sets and $A$ is a union of open subintervals of $(0,1)$ whose components have disjoint closures and such that $\pi_0(A)=T$ as ordered sets.
\item A morphism is an inclusion making the evident triangle in $\Delta$ commute. 
\item The functor $\widetilde\iota$ is defined by sending $A\in \tau^{-1}(S)$ to the pair $(A,\id_S)$. 
\end{enumerate}
We wish to prove the contractibility, for each $(A,f)$, of the category $(\widetilde\iota\!\downarrow\!(A,f))$, which is nothing other than the category of gaps $B\subseteq A$ with $\pi_0(B)=S$ and $\pi_0(B\subseteq A)=f$. By inspection, we have the isomorphism \[(\widetilde\iota\!\downarrow\!(A,f))\cong\prod_{t\in \im(f)}\sepgaps{1,|f^{-1}(t)|},\] so contractibility follows from Lemma \ref{lem:contractible intervals}.
\end{proof}

\begin{proof}[Proof of Lemma~\ref{lem:fiber final}]
It is a fact that homotopy initial functors satisfy a partial two-out-of-three property; that is, if $T_1$ is homotopy initial, then $T_2\circ T_1$ is homotopy initial if and only if $T_2$ is so (see \cite[Prop.~4.1.1.3(2)]{Lurie:HTT}, for example). Applying this fact to the composite $\widetilde\tau^{-1}(S)\to \tau^{-1}(S)\to (S\!\downarrow\!\tau)$ and invoking Lemma~\ref{lem:comp reduction} reduces the lemma to verifying that the composite is homotopy initial. This composite coincides with the composite $\widetilde\tau^{-1}(S)\to (S\!\downarrow\!\widetilde\tau)\to (S\!\downarrow\!\tau)$, so this claim follows from Lemmas~\ref{lem:comp reduction} and~\ref{lem:comp finality}, since homotopy initial functors compose.
\end{proof}

\section{\texorpdfstring{Application to graphs}{Chain models for graph braid groups}}\label{section:graph braid groups}

We use the theory developed above to study the configuration space of a graph. We establish conventions, introduce the \'{S}wi\k{a}tkowski complex, and use the technology developed in the previous section to prove Theorem~\ref{thm:comparison}, which asserts that this complex computes the homology of the configuration spaces of a graph functorially.

\subsection{Conventions on graphs}\label{section:conventions}
A \emph{graph} is a finite $1$-dimensional CW complex $\graf$. Its $0$-cells and $1$-cells are its \emph{vertices} and \emph{edges} $V(\graf)$ and $E(\graf)$, or simply $V$ and $E$. The vertices of an edge $V(e)$ are the vertices contained in the closure of that edge in $\graf$. We write $E(v)$ for the set of edges incident to the vertex $v$. A \emph{half-edge} is an end of an edge. The set of half-edges of $\graf$ is $H(\graf)$ or simply $H$. We write $H(v)$ for the set of half-edges incident to $v$ and $H(e)$ for the set of half-edges contained in $e$. For $h$ in $H$, we write $v(h)$ and $e(h)$ for the corresponding vertex and edge.

Any sufficiently small neighborhood of a vertex $v$ is homeomorphic to a cone on finitely many points; this finite number is the \emph{valence} of $v$, denoted $d(v)$. The vertex $v$ is \emph{isolated} if $d(v)=0$. An edge with a $1$-valent vertex is a \emph{tail}. A \emph{self-loop} at a vertex is an edge whose entire boundary is attached at that vertex. A graph is \emph{simple} if it has no self-loops and no pair of edges with the same vertices.

\begin{example}
The cone on $\{1,\ldots, n\}$ is a graph $\stargraph{n}$ with $n+1$ vertices. These graphs are called \emph{star graphs} and the cone point the \emph{star vertex}. 

The interval $\intervalgraph$ is a graph with two vertices $0$ and $1$ and one edge between them. It is isomorphic to $\stargraph{1}$ and homeomorphic to $\stargraph{2}$ but it will be convenient to have alternate notation for it.
\end{example}
\begin{figure}[ht]
\centering
\begin{tikzpicture}
\begin{scope}[xshift=-2cm]
\fill[black] (0,-.5) circle (2.5pt);
\fill[black] (0,-1) circle (2.5pt);
\draw(0,-1) -- (0,-.5);
\draw(0,-1.2) node[below]{$\stargraph{1}$};
\end{scope}
\fill[black] (0,-.5) circle (2.5pt);
\fill[black] (0,-1) circle (2.5pt);
\fill[black] (0,0) circle (2.5pt);
\draw(0,-1) -- (0,0);
\draw(0,-1.2) node[below]{$\stargraph{2}$};
\begin{scope}[xshift=2.5cm]
\begin{scope}[yshift=-.5cm]
\fill[black] (0,0) circle (2.5pt);
\fill[black] (0,-.5) circle (2.5pt);
\fill[black] (.433,.25) circle (2.5pt);
\fill[black] (-.433,.25) circle (2.5pt);
\draw(0,0) -- (0,-.5);
\draw(0,0) --(.433,.25);
\draw(0,0) --(-.433,.25);
\end{scope}
\draw(0,-1.2) node[below]{$\stargraph{3}$};
\end{scope}
\begin{scope}[xshift=5.5cm]
\begin{scope}[yshift=-.5cm]
\fill[black] (0,0) circle (2.5pt);
\fill[black] (.358,.358) circle (2.5pt);
\fill[black] (.358,-.358) circle (2.5pt);
\fill[black] (-.358,.358) circle (2.5pt);
\fill[black] (-.358,-.358) circle (2.5pt);
\draw(-.358,-.358) -- (.358,.358);
\draw(-.358,.358) -- (.358,-.358);
\end{scope}
\draw(0,-1.2) node[below]{$\stargraph{4}$};
\end{scope}
\end{tikzpicture}
\caption{Star graphs}
\end{figure}
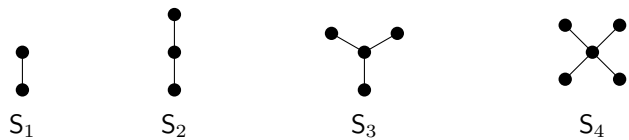

\begin{definition}
Let $f:\graf_1\to \graf_2$ be a continuous map between graphs. We say that $f$ is a \emph{graph morphism} if 
\begin{enumerate}
\item the inverse image $f^{-1}(V(\graf_2))$ is contained in $V(\graf_1)$ and 
\item the map $f$ is injective.
\end{enumerate}
We call a graph morphism a \emph{smoothing} if it is a homeomorphism and a \emph{graph embedding} if it preserves vertices. A graph morphism can be factored into a graph embedding followed by a smoothing. The composite of graph morphisms is a graph morphism, and we obtain in this way a category $\Gph$. Although the objects of $\Gph$ are simply finite 1-dimensional CW complexes, not all morphisms are cellular. A \emph{subgraph} is the image of a graph embedding. A graph morphism $f:\graf_1\to \graf_2$ induces a map $E(f):E(\graf_1)\to E(\graf_2)$, a partially defined map $V(f):f^{-1}(V(\graf_2))\to V(\graf_2)$, and a map $H(v)(f):H(v)\to H(f(v))$ for each $v\in f^{-1}(V(\graf_2))$.
\end{definition}
\begin{figure}[ht]
\centering
\begin{tikzpicture}
\fill[black] (0,0) circle (2.5pt);
\fill[black] (1.5,0) circle (2.5pt);
\fill[black] (.75,0) circle (2.5pt);
\draw(0,0) -- (1.5,0);
\draw(-.5,0) circle (.5cm);
\draw(2,0) circle (.5cm);
\begin{scope}[xshift=5cm]
\fill[black] (0,0) circle (2.5pt);
\fill[black] (1.5,0) circle (2.5pt);
\draw(0,0) -- (1.5,0);
\draw(-.5,0) circle (.5cm);
\draw(2,0) circle (.5cm);
\end{scope}
\end{tikzpicture}
\caption{There is a graph morphism (in fact a smoothing) from left to right but not from right to left.}
\end{figure}
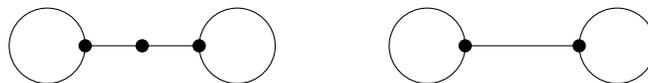

Since graph morphisms are injective, they induce maps at the level of configuration spaces. Thus, it is natural to view $H_*(B(-))$ as a functor from the category $\Gph$ to bigraded Abelian groups (with a weight grading for cardinality).

\subsection{The {Swiatkowski} complex}\label{section:intrinsic complex} We now introduce our main tool in the study of $H_*(B(\graf))$.

\begin{construction}[\'{S}wi\k{a}tkowski complex]
Let $\graf$ be a graph. For each vertex $v\in V$, we set $\localstates{v}=\mathbb{Z}\langle \varnothing,v, h\in H(v)\rangle$ and regard this Abelian group as bigraded with $|\varnothing|=(0,0)$, $|v|=(0,1)$, and $|h|=(1,1)$.

The \emph{\'{S}wi\k{a}tkowski complex} of $\graf$ is the differential bigraded $\mathbb{Z}[E]$-module \[\intrinsic{\graf}=\mathbb{Z}[E]\otimes\bigotimes_{v\in V}\localstates{v},\] where $|e|= (0,1)$ for $e\in E$, and differential determined by setting \[\partial(h)=e(h)-v(h).\]Since $\partial$ is $\mathbb{Z}[E]$-linear, the module structure descends to homology. 

A graph morphism $f:\graf_1\to\graf_2$ determines a map $\intrinsic{f}:\intrinsic{\graf_1}\to \intrinsic{\graf_2}$. This takes edges to their images under $f$. If $f(v)$ is a vertex of $\graf_2$, then the induced map takes $\localstates{v}$ to $\localstates{f(v)}$ using $f$. If $f(v)$ is in the edge $e$ in $\graf_2$, the map factors through $\localstates{v} \to \mathbb{Z}[e]$ where $\varnothing$ goes to 1, $v$ to $e$, and $h\in H(v)$ to 0. By inspection, $\intrinsic{f}$ respects the bigrading, differential, and module structures. 
\end{construction}

\begin{remark}The generators of $\intrinsic{\graf}$ describe ``states'' in the configuration spaces of $\graf$. The module $\localstates{v}$ records the local states allowed at the vertex $v$, with $\varnothing$ corresponding to the absence of a particle at $v$, the element $v$ to a stationary particle at $v$, and the element $h\in H(v)$ to a path in which a particle moves infinitesimally along the edge containing $h$. A general state is obtained by prescribing a local state at each vertex and a number of particles on each edge, and the differential is the cellular differential taking a path to its endpoints.
\end{remark}

We view $S$ as a functor from $\Gph$ to the category of bigraded chain complexes and the action of a (weight-graded) ring. A morphism is a weight-graded morphism of rings and a compatible morphism of differential bigraded modules. We denote the degree $i$ and weight $k$ component of $S(\graf)$ by $S_i(\graf)_k$. Our main result concerning the \'{S}wi\k{a}tkowski complex is the following:

\begin{theorem}[Comparison theorem]\label{thm:comparison}
There is an isomorphism \[H_*(B(\graf))\cong H_*(\intrinsic{\graf})\] of functors from $\Gph$ to bigraded Abelian groups.
\end{theorem}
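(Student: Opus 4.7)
The plan is to apply the Decomposition Theorem to a complete fragmentation of $\graf$. I would form the 1-fold decomposition $\E$ of $\graf$ with $\widetilde{\graf}_0 := \bigsqcup_{v \in V} \stargraph{d(v)}$ (one star graph per vertex), $\widetilde{\graf}_1 := \bigsqcup_{e \in E} \intervalgraph$ (one interval per edge), and $A_1 \times I$ consisting of $2|E|$ bridge intervals, one per half-edge. For each half-edge $h$ the corresponding bridge has its $\{0\}$-end glued to the tip of the matching leg of $\stargraph{d(v(h))}$ and its $\{1\}$-end glued to the matching endpoint of the interval for $e(h)$. Iterated barycentric subdivision furnishes a convergent subdivisional structure, so $B^\SD$ is an $\E$-local invariant.

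Next I would equip each basic with the subdivisional flow of Section~\ref{section:interval and star flows}: on each star $\stargraph{d(v)}$ the flow pushes configurations ``down the legs'' away from the star vertex, while on each interval (edge or bridge) the flow pushes configurations toward one endpoint. Isotopy invariance is inspected directly. The technical heart is the computation of the resulting Morse complexes:
\begin{enumerate}
\item $I(B^\SD(\stargraph{d(v)})) \cong \mathbb{Z}[H(v)] \otimes \localstates{v}$, where $\mathbb{Z}[H(v)]$ records how many particles have flowed down each leg and $\localstates{v}$ records the surviving local state at the star vertex;
\item $I(B^\SD(\intervalgraph))$ is freely generated by the number of particles pushed to the chosen endpoint, giving $\mathbb{Z}[e]$; and
\item $I(B^\SD(\{\mathrm{pt}\} \times I)) \cong \mathbb{Z}[e(h)]$ with algebra structure given by disjoint juxtaposition of particles.
\end{enumerate}
Part~(1) will be the main obstacle, requiring a Forman-style discrete Morse analysis of the cubical approximations $B_k^\Box$ of a suitable subdivision of the star, both to identify the critical cells and to verify isotopy invariance.

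Granting these computations, the total bridge algebra is $\bigotimes_{h \in H} \mathbb{Z}[e(h)] \cong \mathbb{Z}[E]^{\otimes 2}$, and the Decomposition Theorem together with Proposition~\ref{prop:computes homology} and the convergence comparison $|B^\SD(\graf)| \simeq B(\graf)$ yields
\begin{align*}
H_*(B(\graf)) \cong H_*\Bigl(\bigotimes_{v \in V} \bigl(\mathbb{Z}[H(v)] \otimes \localstates{v}\bigr) \bigotimes^{\mathbb{L}}_{\mathbb{Z}[E]^{\otimes 2}} \mathbb{Z}[E]\Bigr).
\end{align*}
Because $\mathbb{Z}[E]$ is free of rank one over $\mathbb{Z}[E]^{\otimes 2}$ via the multiplication map, the derived tensor product collapses to an ordinary one. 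Direct inspection then matches the differential on the collapsed complex with the \'{S}wi\k{a}tkowski differential $\partial h = e(h) - v(h)$, completing the isomorphism on objects. For functoriality in $\Gph$, graph embeddings induce compatible maps of decompositions and flows and hence naturality directly from the Decomposition Theorem; smoothings (collapsing a $2$-valent vertex $v$ into the interior of an edge $e$) are handled by comparing the decompositions of source and target via a common refinement and checking by inspection that the induced map on collapsed complexes reproduces $\intrinsic{f}$, which sends $\localstates{v} \to \mathbb{Z}[e]$ by $\varnothing \mapsto 1, v \mapsto e, h \mapsto 0$.
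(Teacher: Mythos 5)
Your proposal follows the paper's own proof: the decomposition you describe is the paper's ``canonical decomposition,'' the interval and star flows are the same ones constructed in \S\ref{section:interval and star flows}, and your $\mathbb{Z}[H(v)]\otimes\localstates{v}$ is exactly the paper's $\starreduced{\stargraph{d(v)}}$ after the canonical identification $E(\stargraph{d(v)})\leftrightarrow H(v)$. There are, however, two points you have not quite gotten right.

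First, the justification you offer for collapsing the derived tensor product is false. Viewed as a $\mathbb{Z}[E]^{\otimes2}$-module via the multiplication map, $\mathbb{Z}[E]$ is \emph{not} free: already for a single edge, $\mathbb{Z}[e]\cong\mathbb{Z}[e_1,e_2]/(e_1-e_2)$ has nontrivial $\mathrm{Tor}_1^{\mathbb{Z}[e_1,e_2]}(\mathbb{Z}[e],\mathbb{Z}[e])$ via the Koszul resolution of the diagonal. The derived tensor product does reduce to the underived one, but because the \emph{other} factor $\bigotimes_{v\in V}\bigl(\mathbb{Z}[H(v)]\otimes\localstates{v}\bigr)\cong\mathbb{Z}[H]\otimes\bigotimes_{v}\localstates{v}$ is free over $\mathbb{Z}[E]^{\otimes2}\cong\mathbb{Z}[H]$, with basis $\bigotimes_v\localstates{v}$; this is the argument the paper actually gives.

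Second, your treatment of functoriality for smoothings is too optimistic. A smoothing does not respect the canonical decomposition of either the source or the target, and ``a common refinement'' is not what closes this gap. The paper constructs auxiliary two-fold decompositions $\E'$ of $\graf'$ and $\E$ of $\graf$ that the smoothing \emph{does} respect, applies Theorem~\ref{thm:decomposition} there, and then relates those to the respective canonical one-fold decompositions via Proposition~\ref{prop:abstract comparison} of Appendix~\ref{section:comparing decompositions}, once in its parenthesization form and once in its folding form. Some version of that comparison machinery is indispensable; the smoothing case is not a matter of inspection.
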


\begin{remark}
The weight $k$ subcomplex $\intrinsic{\graf}_k$ is isomorphic to the cellular chains of the cubical complex exhibited in \cite{Swiatkowski:EHDCSG}, if every vertex is at least trivalent. 
This implies Theorem \ref{thm:comparison} at the level of objects in this special case. 
The full proof (in particular, functoriality) constitutes the content of this section.

Here we present the $\mathbb{Z}[E]$-module structure on the \'{S}wi\k{a}tkowski complex  algebraically. In subsequent work~\cite{AnDrummond-ColeKnudsen:ESHGBG}, we show that this structure arises from an $E$-indexed family of maps of topological spaces $B_k(\graf)\to B_{k+1}(\graf)$ that increase the number of points on an edge. Such stabilization maps were known to exist for tails~\cite{AnPark:OSBGC} and for trees at the level of Morse complexes \cite{Ramos:SPHTBG}, but stabilization at arbitrary edges is new, and the sequel is devoted to the study of its properties. 
\end{remark}

It is often useful to consider a smaller variation on the \'{S}wi\k{a}tkowski complex.

\begin{definition}\label{definition: reduced intrinsic definition}
Let $\graf$ be a graph and $U$ a subset of $V(\graf)$. For each $v\in U$, let $\reducedlocalstates{v}\subseteq \localstates{v}$ be the subspace spanned by $\varnothing$ and the differences $h_{ij}\coloneqq{}h_i-h_j$ of half-edges. The \emph{reduced \'{S}wi\k{a}tkowski complex} (relative to $U$), is \[\reducedintrinsic{U}{\graf}\coloneqq\mathbb{Z}[E]\otimes\bigotimes_{v\in V\backslash U}\localstates{v}\otimes \bigotimes_{v\in U}\reducedlocalstates{v},\] considered as a subcomplex and submodule of $\intrinsic{\graf}$. To be explicit, the differential is determined by $\partial(h_{ij})=e(h_i)-e(h_j)$.
\end{definition}

\begin{notation}
When $U=V$ is the full set of vertices, we write $\fullyreduced{\graf}\coloneqq\reducedintrinsic{V}{\graf}$. When $U$ is the set of $1$-valent vertices, we write $\tailreduced{\graf}\coloneqq \reducedintrinsic{U}{\graf}$. Both $\fullyreduced{-}$ and $\tailreduced{-}$ are functorial for graph morphisms. 

When $\graf$ is a disjoint union of star graphs and intervals, we write $\starreduced{\graf}$ (an abuse of notation) for the reduced \'{S}wi\k{a}tkowski complex of $\stargraph{n}$ relative to the set of non-star vertices. Since $\stargraph{1}$ and the interval are isomorphic, this requires a specification of which such components are star graphs (and which vertex is the star vertex). The construction $\starreduced{-}$ is functorial only for graph morphisms $f:\graf_1\to \graf_2$ such that $f^{-1}(v)$ is a star vertex whenever $v$ is a star vertex. Thus, for example, the smoothing $\stargraph{2}\to \intervalgraph$ is allowed, but the inclusion $\intervalgraph\to \stargraph{n}$ of a leg is not.
\end{notation}

\begin{proposition}\label{prop:reduced complex}
For any graph $\graf$ and any $U\subseteq V(\graf)$ containing no isolated vertices, the inclusion $\iota:\reducedintrinsic{U}{\graf}\to \intrinsic{\graf}$ is a quasi-isomorphism. 
\end{proposition}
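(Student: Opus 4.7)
The plan is to induct on $|U|$, so it suffices to verify the single-vertex reduction: for any $U' \subsetneq V(\graf)$ with no isolated vertices and any non-isolated $v \in V(\graf)\setminus U'$, the inclusion $\reducedintrinsic{U'\cup\{v\}}{\graf}\hookrightarrow\reducedintrinsic{U'}{\graf}$ is a quasi-isomorphism. Composing such inclusions then yields $\iota$. Equivalently, the quotient complex $Q\coloneqq\reducedintrinsic{U'}{\graf}/\reducedintrinsic{U'\cup\{v\}}{\graf}$ should be acyclic.

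As bigraded modules, $Q\cong B\otimes Q_v$, where $Q_v\coloneqq\localstates{v}/\reducedlocalstates{v}$ and $B$ packages $\mathbb{Z}[E]$ together with the unchanged factors $\localstates{w}$ (for $w\notin U'\cup\{v\}$) and $\reducedlocalstates{w}$ (for $w\in U'$). Fixing a half-edge $h_0\in H(v)$---possible since $v$ is non-isolated---a basis for $Q_v$ is $\{v,[h_0]\}$, since any $h_i\in H(v)$ differs from $h_0$ by $h_i-h_0\in\reducedlocalstates{v}$. Next I analyze the induced differential on $Q$: the \'{S}wi\k{a}tkowski relation $\partial(h)=e(h)-v$ at a half-edge $h$ of $v$ contributes an edge term $e(h)\varnothing$, which vanishes in $Q$ because $\varnothing\in\reducedlocalstates{v}$. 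Thus the $v$-contribution to the differential on $Q$ acts only within the $Q_v$ factor, by $[h_0]\mapsto -v$ and $v\mapsto 0$, while the contributions from half-edges at vertices $w\neq v$ leave the $Q_v$ factor alone. The total differential on $Q$ is therefore, up to Koszul signs, the tensor-product differential on $B\otimes Q_v$.

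To conclude, $Q_v$ is contractible via the chain homotopy $s(v)=-[h_0]$, $s([h_0])=0$, as a direct check shows. Tensoring with $\id_B$ then produces a contracting homotopy for $Q$, so $Q$ is acyclic and the induction closes. The main---albeit mild---obstacle is to justify the tensor-product decomposition of the differential on $Q$; the key observation is that $\varnothing$ lies in $\reducedlocalstates{v}$ and so is killed in $Q_v$, decoupling the $v$-factor from the $\mathbb{Z}[E]$-factor. This is also precisely where the no-isolated-vertices hypothesis enters the picture: were $v$ isolated, $Q_v$ would degenerate to the non-contractible module $\mathbb{Z}\langle v\rangle$ concentrated in degree zero, and the argument would break down.
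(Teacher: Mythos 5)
Your proof is correct. The paper itself only gestures at a proof, suggesting a spectral sequence obtained by filtering by polynomial degree in the vertices and half-edges meeting $U$; you instead induct on $|U|$ one vertex at a time and show the quotient of each single-vertex reduction is acyclic by exhibiting an explicit contracting homotopy. These are genuinely different routes. The spectral-sequence filtration encodes all of $U$ at once and passes to an associated graded; your step-by-step argument is more elementary and gives something slightly stronger at each stage, namely a contraction of $Q$ rather than just the vanishing of its homology. The decisive observation is the same in both cases: once $\varnothing$ is killed (either in the associated graded or in the quotient $Q_v$), the $e(h)$ term of $\partial(h)=e(h)-v(h)$ drops out, decoupling the $v$-factor from the $\mathbb{Z}[E]$-factor. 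Two small points you could make explicit: (i) well-definedness of the induced differential on $Q$ follows from $\reducedintrinsic{U'\cup\{v\}}{\graf}$ being a subcomplex of $\intrinsic{\graf}$, which the paper asserts in Definition~\ref{definition: reduced intrinsic definition}; and (ii) the complex $B$ carries a well-defined differential even though it is not the \'Swi\k{a}tkowski complex of any graph (it is the quotient of $\intrinsic{\graf}$ obtained by setting the entire $v$-factor to $\mathbb{Z}$), and the fact that $\id_B\otimes s$ contracts $B\otimes Q_v$ is the usual tensor-product homotopy computation, which is a routine but worthwhile check of Koszul signs. Your remark that the hypothesis is sharp---an isolated $v$ gives $Q_v=\mathbb{Z}\langle v\rangle$, which is not acyclic, and indeed the inclusion genuinely fails to be a quasi-isomorphism in that case since $v$ represents a nontrivial weight-one class in $H_0$---is a nice addition.
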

We omit a detailed proof, which can be seen, e.g., by a spectral sequence argument, filtering by polynomial degree in vertices and half-edges intersecting $U$.

\begin{remark}
The full \'{S}wi\k{a}tkowski complex has a canonical basis, while the reduced version lacks one in general. The reduction at (some subset of) the $1$-valent vertices of a graph retains a canonical basis: if $v$ is $1$-valent, then $\reducedlocalstates{v}$ is spanned by $\{\varnothing\}$. The corresponding reduced complex is the \'{S}wi\k{a}tkowski complex of a ``graph'' in which the $1$-valent vertex has been deleted, leaving a half-open tail. All of our constructions and results can be made rigorous for such non-compact ``graphs.'' 
\end{remark}

Our strategy in proving Theorem~\ref{thm:comparison} will be to apply the tools developed earlier in the paper, especially Theorem~\ref{thm:decomposition}. In order to do so, we must produce a decomposition, introduce Morse theory on the subdivisional configuration spaces of the pieces, and analyze the resulting Morse complexes.

\begin{construction}[Canonical decomposition]
Let $\graf$ be a graph. Subdivide $\graf$ by adding four vertices to each edge; denote the resulting graph by $\graf_\#$. There is a canonical graph morphism $\graf_\#\to \graf$ inducing a homeomorphism on configuration spaces.
 
Removing from each edge of $\graf$ the open interval defined by the outer pair of added vertices produces a graph $\widetilde\graf_0\cong\coprod_{v\in V}\stargraph{d(v)}$. On the other hand, for each edge, we have the closed interval defined by the inner pair of added vertices, and we let $\widetilde\graf_1\cong E\times \intervalgraph$ be the union of these closed intervals. Clearly, we have \[\graf_\#\cong\widetilde\graf_0\coprod_{A_1\times\{0\}}(A_1\times \intervalgraph)\coprod_{A_1\times\{1\}}\widetilde\graf_1,\] where $A_1$ is a finite set with $|A_1|=2|E|$.

We define the \emph{canonical decomposition} of $\graf_\#$ (abusively, of $\graf$) by choosing the full poset of subdivisions $\SD(\graf_\#)$, which is filtered and convergent.
\end{construction}
The main ingredient in the proof of Theorem~\ref{thm:comparison} is the following.

\begin{figure}[ht]
\centering
\begin{tikzpicture}
\begin{scope}[xshift=.3cm]
\draw (2,-.7) arc [start angle=-90, end angle=90, radius=.5];
\draw[white, line width = 3pt] (2,-1.7) arc [line width = 3pt, start angle=-90, end angle=90, radius=.85];
\draw (2,-1.7) arc [start angle=-90, end angle=90, radius=.85];
\draw[white, line width = 3pt]  (2,-2.7) arc [start angle=-90, end angle=90, radius=1.2];
\draw (2,-2.7) arc [start angle=-90, end angle=90, radius=1.2];
\draw[white, line width = 3pt]  (2,-2) arc [start angle=-90, end angle=90, radius=.5];
\draw (2,-2) arc [start angle=-90, end angle=90, radius=.5];
\draw[white, line width = 3pt]  (2,-3) arc [start angle=-90, end angle=90, radius=.85];
\draw (2,-3) arc [start angle=-90, end angle=90, radius=.85];
\draw[white, line width = 3pt]  (2,-3.3) arc [start angle=-90, end angle=90, radius=.5];
\draw (2,-3.3) arc [start angle=-90, end angle=90, radius=.5];
\end{scope}
\fill[black] (0,0) circle (2.5pt);
\draw(0,0) -- (1,.3);
\draw(0,0) -- (1,0);
\draw(0,0) -- (1,-.3);
\draw(1,.3)--(2.3,.3);
\draw(1,0)--(2.3,0);
\draw(1,-.3)--(2.3,-.3);
\draw[densely dotted] (1,.5)--(1,-3.5);
\draw[densely dotted] (2.3,.5)--(2.3,-3.5);
\begin{scope}[yshift=-1cm]
\fill[black] (0,0) circle (2.5pt);
\draw(0,0) -- (1,.3);
\draw(0,0) -- (1,0);
\draw(0,0) -- (1,-.3);
\draw(1,.3)--(2.3,.3);
\draw(1,0)--(2.3,0);
\draw(1,-.3)--(2.3,-.3);
\end{scope}
\begin{scope}[yshift=-2cm]
\fill[black] (0,0) circle (2.5pt);
\draw(0,0) -- (1,.3);
\draw(0,0) -- (1,0);
\draw(0,0) -- (1,-.3);
\draw(1,.3)--(2.3,.3);
\draw(1,0)--(2.3,0);
\draw(1,-.3)--(2.3,-.3);
\end{scope}
\begin{scope}[yshift=-3cm]
\fill[black] (0,0) circle (2.5pt);
\draw(0,0) -- (1,.3);
\draw(0,0) -- (1,0);
\draw(0,0) -- (1,-.3);
\draw(1,.3)--(2.3,.3);
\draw(1,0)--(2.3,0);
\draw(1,-.3)--(2.3,-.3);
\end{scope}
\draw(.5,-3.3) node[below]{$\widetilde{\graf}_0$};
\draw(1.65,-3.3) node[below]{$\vphantom{\widetilde{\graf}_0}A_1\times \intervalgraph$};
\draw(2.8,-3.3) node[below]{$\widetilde{\graf}_1$};
\end{tikzpicture}
\caption{The canonical decomposition of the complete graph $\completegraph{4}$}
\end{figure}
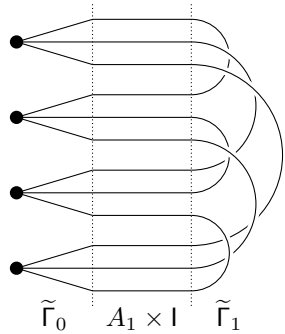

\begin{proposition}\label{prop:flows}
Let $\graf$ be a graph, and regard $B^\SD$ as a local invariant on the canonical decomposition of $\graf$. There is an isotopy invariant local flow on $B^\SD$ such that \begin{align*}I(A_1\times \intervalgraph)&\cong \mathbb{Z}[E]\otimes \mathbb{Z}[E]\\
I(\graf_0)&\cong \bigotimes_{v\in V} \starreduced{\stargraph{d(v)}}\\
I(\graf_1)&\cong\mathbb{Z}[E]
\end{align*} as associative algebras and modules, respectively.
\end{proposition}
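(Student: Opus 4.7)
The plan is to construct discrete Morse pairings on the cellular configuration complexes of the basics of the canonical decomposition, and to check these assemble into an isotopy-invariant local flow whose Morse complexes match the stated ones. Both kinds of basic naturally decompose as disjoint unions of pieces that are either (i) an interval strand (subdivisions of a single component of a bridge basic, or of a single component of $\widetilde\graf_1$ with its adjacent bridge portions attached) or (ii) a star graph with short ``bridge tails'' on each leg (a single component of $\widetilde\graf_0$ with its adjacent bridge portions attached). Since $B^\SD$ is symmetric monoidal, it suffices to define and analyze the flow one piece at a time.

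First I would handle the interval strands. On $B_k^\Box$ of any subdivided interval, I pair each vertex cell with the edge cell immediately to its right within a configuration. The critical cells are those where particles have been pushed as far right as possible, leaving a unique critical cell in each weight $k$. Taking the tensor product over the $|A_1|=|H|=2|E|$ strands of a bridge basic and summing over all weights gives $\mathbb{Z}[A_1]\cong \mathbb{Z}[E]\otimes\mathbb{Z}[E]$ after choosing the canonical splitting of $H$ into half-edges on the two sides of each bridge; for $\widetilde\graf_1\cong E\times \intervalgraph$, the analogous computation yields $\mathbb{Z}[E]$. On each star-graph piece I adapt the Farley–Sabalka-style Morse pairing so that particles flow \emph{outward} along the legs: redundant vertex/edge pairs below a particle on a given leg are paired off, leaving critical cells whose restriction to each leg is a stack of edges at the outer end (recorded by a monomial in $e_i$), together with a local state at the star vertex $v_0$ which can be one of ``no particle'' ($\varnothing$), ``stationary particle'' ($v_0$), or ``particle moving onto leg $i$'' ($h_i^0$). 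The induced Morse differential on these surviving cells produces $\partial h_i^0=e_i-v_0$ by direct inspection of the cellular boundary of the paired edge cell, so the Morse complex of each star piece is canonically $\starreduced{\stargraph{d(v)}}$.

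Second, I would verify that these piecewise pairings assemble into a local flow in the sense of Definition~\ref{def:flow on invariant}. Because each pairing is defined strand-by-strand and star-by-star, the subdivisional embedding induced by $A\subseteq B$ (which is a componentwise subdivisional embedding on strands and stars) sends paired cells to paired cells and critical cells to critical cells, giving flow compatibility of the map $F(B)\to F(A)$; the disjoint-union condition is automatic from Lemma~\ref{lem:box monoidal} and the strand/star-local nature of the pairing. Isotopy invariance is equally transparent: all basics of bridge type are disjoint unions of open interval strands, and any inclusion of such simply includes the critical cells of each strand, identifying the Morse complexes; the same argument handles basics of component type, whose inclusions do not change the star combinatorics. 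Finally, one passes from basics to components and bridges by the prescription $I(F(A_j\times I))=\colim I(F(U))$, $I(F(X_j))=\colim I(F(U))$, which by isotopy invariance reproduces the Morse complexes computed above.

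Third, the algebra and module structures follow by tracing through Construction~\ref{construction:algebra and module structure}: the multiplication on $I(A_1\times I)$ arising from juxtaposing disjoint sub-basics corresponds to adding particle counts on each strand, i.e.\ to polynomial multiplication in $\mathbb{Z}[A_1]=\mathbb{Z}[E]\otimes\mathbb{Z}[E]$; similarly, the module structures on $I(\graf_0)$ and $I(\graf_1)$ come from including a bridge sub-basic into the bridge end of each component, and amount to multiplying by the corresponding edge variable on each factor of the tensor product. The main obstacle will be Step~1 for stars: identifying the Morse complex of a star basic with $\starreduced{\stargraph{d(v)}}$, both as a chain complex (matching the three families of critical cells with $\varnothing$, $v_0$, $h_i^0$ and recovering the differential $\partial h_i^0 = e_i - v_0$) and as a $\mathbb{Z}[E]^{\otimes 2}$-module via the two bridge attachments, which requires careful bookkeeping of the Morse pairing near the star vertex to ensure no spurious critical cells appear and that the bridge-end actions are realized by the expected edge variables.
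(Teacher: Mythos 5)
Your proposal follows the same route as the paper: define an interval flow (particles pushed to one end), define a star flow (particles pushed outward along the legs, with three types of central local state), identify the resulting critical data with $\starreduced{\stargraph{d(v)}}$ and $\mathbb{Z}[E]$, verify the two compatibility conditions of Definition~\ref{def:flow on invariant} together with isotopy invariance, and read off the algebra and module structures from Construction~\ref{construction:algebra and module structure}. The paper packages the flows as explicit idempotents~$\pi$ (Definitions~\ref{defi: interval flow} and~\ref{defi: star flow}) rather than as Forman matchings, but, as Remark~\ref{rmk:pointing} and the discussion around abstract flows indicate, those formalisms are interchangeable here, and your critical-cell bookkeeping matches Lemma~\ref{lem:corolla morse}.

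There is, however, a genuine flaw in your compatibility step. You assert that a subdivisional embedding $i\colon B_k^\Box(\Xigraph')\to B_k^\Box(\graf')$ coming from an inclusion of basics ``sends paired cells to paired cells and critical cells to critical cells.'' This is false in general: if $U\subseteq U'$ are two basics of bridge type (say $A_j\times[a,b]\subset A_j\times[a',b']$ with $b<b'$), the critical $0$-cell of $U$ in weight $k$ has particles packed against the endpoint at parameter~$b$, and its image in $U'$ is a configuration in the interior of $U'$, hence \emph{not} critical there; the same failure occurs along the legs of a component-type basic when the leg lengthens. What is true, and what flow compatibility actually requires, is the weaker identity $\pi_{\Gamma}\circ C(i)=\pi_{\Gamma}\circ C(i)\circ\pi_{\Xi}$: both composites push all particles to the outer end of $\graf'$, so they agree even though $C(i)$ does not preserve criticality. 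This is exactly the content of Lemma~\ref{lem:inclusion compatibility}, whose proof proceeds by a case analysis on whether a cell contains an edge not adjacent to the star vertex, rather than by tracking critical cells through the inclusion. Your claim that inclusions among bridge basics ``simply include the critical cells of each strand'' suffers from the same problem, though the conclusion (that the induced maps on Morse complexes are isomorphisms, hence isotopy invariance) is correct once one verifies flow compatibility correctly. You should replace the ``critical cells to critical cells'' argument with a direct check of the defining identity for flow compatibility.
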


The proof of this result amounts to constructing a couple abstract flows, calculating their Morse complexes, and checking various compatibilities. These tasks will occupy our attention in \S\ref{section:interval and star flows}--\ref{section:compatibilities} below, but the idea behind the constructions is very simple. Intuitively, we define a flow on a star by pulling the cone point up, allowing points to flow down the legs, and we define a flow on an interval by allowing points to flow according to some fixed orientation. The reader who finds this heuristic sufficiently convincing may skip ahead to the computations.

For now, we deduce the following:

\begin{proof}[Proof of Theorem~\ref{thm:comparison}, construction of isomorphism]
Applying Theorem~\ref{thm:decomposition} with the local flow of Proposition~\ref{prop:flows} produces an isomorphism\[H_*(B(\graf))\cong H_*\left(\bigotimes_{v\in V} \starreduced{\stargraph{d(v)}}\bigotimes^{\mathbb{L}}_{\mathbb{Z}[E]\otimes\mathbb{Z}[E]}\mathbb{Z}[E]\right).\] Since $\bigotimes_{v\in V}\starreduced{\stargraph{d(v)}}$ is a free $\mathbb{Z}[E]\otimes \mathbb{Z}[E]$-module, the derived tensor product is computed by the ordinary tensor product, and the proof is complete upon noting the canonical isomorphism \[
\bigotimes_{v\in V} \starreduced{\stargraph{d(v)}}\bigotimes_{\mathbb{Z}[E]\otimes\mathbb{Z}[E]}\mathbb{Z}[E]\cong \intrinsic{\graf}.\qedhere
\]
\end{proof}

\subsection{Interval and star flows}\label{section:interval and star flows} In this section, we construct the abstract flows (in the sense of \S\ref{subsec:subdivisional morse theory}) that form the building blocks of the local flows alluded to in Proposition~\ref{prop:flows}. These flows are inspired by the discrete flows of~\cite{FarleySabalka:DMTGBG} and could be constructed in the same manner, but, in working locally, we are able to avoid the machinery of discrete Morse theory. 

\begin{definition}[Interval flow]\label{defi: interval flow}
Let $\intervalgraph\to\intervalgraph'$ be a subdivision. We fix an orientation of $\intervalgraph'$ and define an order on the vertices by declaring the negative direction to be the direction of decrease. Define an endomorphism $\pi$ of $C(B_k^\Box(\intervalgraph'))$ by declaring that $\pi$
\begin{enumerate}
\item takes any (positively signed) generator which is a set of $k$ many $0$-cells to the (positively signed) set of the $k$ least $0$-cells and
\item takes any generator which is a set containing a $1$-cell to zero.
\end{enumerate}
\end{definition}

Using that the complex $B_k^\Box(\intervalgraph')$ is either contractible or empty, depending on whether $k$ is greater than the number of 0-cells, the following result is easily verified.

\begin{lemma} The interval flow is an abstract flow on $C(B_k^\Box(\intervalgraph'))$.
\end{lemma}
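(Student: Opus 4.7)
The plan is to verify the three defining properties of an abstract flow on $(C(B_k^\Box(\intervalgraph')),\pi)$: that $\pi$ is a chain map, that $\pi$ is idempotent, and that $\pi$ is a quasi-isomorphism.

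Idempotence is immediate from the definition. The image of $\pi$ is either $0$ or spanned by the single generator $\tau$ consisting of the $k$ least $0$-cells, which is a fixed point of $\pi$.

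For the chain-map condition $d\pi = \pi d$, I would split into three cases based on the generator $\sigma$ being a set of $k$ cells of $\intervalgraph'$. If $\sigma$ consists entirely of $0$-cells, then $d\sigma = 0$ and $\pi(\sigma)$ is also a $0$-cell (or zero if $k$ exceeds the number of $0$-cells), so both sides vanish. If $\sigma$ contains at least two $1$-cells, then $\pi(\sigma) = 0$, and every term of $d\sigma$ still contains at least one $1$-cell and so is killed by $\pi$. The one case that requires care is when $\sigma$ contains exactly one $1$-cell $e$ with endpoints $v_\pm$: then $\pi(\sigma) = 0$, while $d\sigma$ is an alternating sum of two configurations of $k$ many $0$-cells, obtained from $\sigma$ by replacing $e$ with $v_-$ or $v_+$. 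Both of these are mapped by $\pi$ to the \emph{same} generator $\tau$, so the two terms cancel with opposite signs, giving $\pi d\sigma = 0$ as desired. The main subtlety here is tracking orientations in the product cell structure to confirm the two boundary terms carry opposite signs, which is standard.

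For the quasi-isomorphism condition, I would invoke the stated dichotomy. If $k$ exceeds the number of $0$-cells of $\intervalgraph'$, then $B_k^\Box(\intervalgraph')$ is empty, $C = 0$, and the claim is vacuous. Otherwise, $B_k^\Box(\intervalgraph')$ is contractible, so $H_0(C) \cong \mathbb{Z}$ is generated by the class of any $0$-cell and all higher homology vanishes. Since $\mathrm{Im}(\pi) = \mathbb{Z}\cdot\tau$ is concentrated in degree $0$ and $\tau$ represents a generator of $H_0(C)$, the inclusion $\mathrm{Im}(\pi)\hookrightarrow C$ is a quasi-isomorphism. By idempotence, $\pi$ is a retraction of this inclusion, so $\pi$ is itself a quasi-isomorphism, completing the verification.

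The only nontrivial step is the sign-cancellation in the chain-map check, and even that is routine given the explicit form of the cellular differential on a product cell; the remainder is bookkeeping against the hypothesis on the homotopy type of $B_k^\Box(\intervalgraph')$.
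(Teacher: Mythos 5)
Your proof is correct and takes the same approach the paper intends: the paper offers no detailed argument, simply remarking that the lemma ``is easily verified'' from the fact that $B_k^\Box(\intervalgraph')$ is either contractible or empty, and your proof supplies exactly that verification (idempotence by inspection, the chain-map condition by the sign cancellation $\pi(\sigma_+)-\pi(\sigma_-)=\tau-\tau=0$ in the one nontrivial case, and the quasi-isomorphism condition from the contractibility dichotomy together with the retraction argument). The details you fill in are right, including the observation that the boundary of a cell with a single $1$-cell produces two $0$-cell configurations that $\pi$ collapses to the same generator $\tau$ with opposite signs.
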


\begin{definition}[Star flow]\label{defi: star flow}
Let $\stargraph{n}\to \stargraph{n}'$ be a subdivision. For convenience, order the vertices in the $i$th leg by declaring the direction away from the star vertex $v$ to be the direction of decrease; call them $v_{i,0},\ldots, v_{i,N_i}$, with $v_0$ the least vertex and $v_{N_i}=v$ the star vertex. Let $e_{i,j}$ be the $1$-cell containing $v_{i,j-1}$ and $v_{i,j}$. Given a tuple $(r_1,\ldots, r_n)$ with $0\le r_i\le N_i$, write $V_{\vec{r}}$ for the set of vertices containing the $r_i$ least vertices in the $i$th leg. Define an endomorphism $\pi$ of $C(B_k^\Box(\stargraph{n}'))$ by declaring that $\pi$
\begin{enumerate}
\item takes any (positively signed) set of $0$-cells not including $v$ with $r_i$ $0$-cells in the $i$th leg to the (positively signed) set $V_{\vec{r}}$,
\item takes any (positively signed) set of $0$-cells including $v$ and $r_i$ additional $0$-cells in the $i$th leg to the (positively signed) union of $V_{\vec{r}}$ and $\{v\}$,
\item takes any set containing a $1$-cell not of the form $e_{j,N_j}$ to zero, and
\item takes any set containing $e_{j,N_j}$ and $r_i$ additional (positively signed) $0$-cells in the $i$th leg (necessarily not including $v$) to the sum 
\[\sum_{r={r_j}+1}^{N_j} V_{\vec{r}}\cup \{e_{j,r}\}.\]
\end{enumerate}
\end{definition}
See Figure~\ref{figure:star flow}.
\begin{figure}[ht]
\begin{center}
\subfigure[0-cell containing the star vertex]{\makebox[0.4\textwidth]{$
\begin{tikzpicture}
\begin{scope}
\draw[lightgray](0,0)--(245:1.5);
\draw[lightgray](0,0)--(270:1.5);
\draw[lightgray](0,0)--(295:1.5);
\fill[black](0,0) circle (2.5pt);
\fill[lightgray](245:0.375) circle (2.5pt);
\fill[lightgray](245:0.75) circle (2.5pt);
\fill[black](245:1.125) circle (2.5pt);
\fill[lightgray](245:1.5) circle (2.5pt);
\fill[lightgray](270:0.75) circle (2.5pt);
\fill[black](270:1.5) circle (2.5pt);
\fill[black](295:0.5) circle (2.5pt);
\fill[black](295:1) circle (2.5pt);
\fill[lightgray](295:1.5) circle (2.5pt);
\draw(0.9,-0.5) node{$\stackrel{\pi}{\mapsto}$};
\end{scope}
\begin{scope}[xshift=1.8cm]
\draw[lightgray](0,0)--(245:1.5);
\draw[lightgray](0,0)--(270:1.5);
\draw[lightgray](0,0)--(295:1.5);
\fill[black](0,0) circle (2.5pt);
\fill[lightgray](245:0.375) circle (2.5pt);
\fill[lightgray](245:0.75) circle (2.5pt);
\fill[lightgray](245:1.125) circle (2.5pt);
\fill[black](245:1.5) circle (2.5pt);
\fill[lightgray](270:0.75) circle (2.5pt);
\fill[black](270:1.5) circle (2.5pt);
\fill[lightgray](295:0.5) circle (2.5pt);
\fill[black](295:1) circle (2.5pt);
\fill[black](295:1.5) circle (2.5pt);
\end{scope}
\end{tikzpicture}
$}}
\subfigure[1-cell containing a central edge]{\makebox[0.55\textwidth]{$
\begin{tikzpicture}
\begin{scope}
\draw[lightgray](0,0)--(245:1.5);
\draw[lightgray](0,0)--(270:1.5);
\draw[lightgray](0,0)--(295:1.5);
\fill[black](0,0) circle (2.5pt);
\draw[black, line width=.5mm](0,0) -- (245:0.375);
\fill[black](245:0.375) circle (2.5pt);
\fill[lightgray](245:0.75) circle (2.5pt);
\fill[black](245:1.125) circle (2.5pt);
\fill[lightgray](245:1.5) circle (2.5pt);
\fill[lightgray](270:0.75) circle (2.5pt);
\fill[black](270:1.5) circle (2.5pt);
\fill[black](295:0.5) circle (2.5pt);
\fill[black](295:1) circle (2.5pt);
\fill[lightgray](295:1.5) circle (2.5pt);
\draw(0.9,-0.5) node{$\stackrel{\pi}{\mapsto}$};
\end{scope}
\begin{scope}[xshift=1.8cm]
\draw[lightgray](0,0)--(245:1.5);
\draw[lightgray](0,0)--(270:1.5);
\draw[lightgray](0,0)--(295:1.5);
\fill[black](0,0) circle (2.5pt);
\draw[black, line width=.5mm](0,0) -- (245:0.375);
\fill[black](245:0.375) circle (2.5pt);
\fill[lightgray](245:0.75) circle (2.5pt);
\fill[lightgray](245:1.125) circle (2.5pt);
\fill[black](245:1.5) circle (2.5pt);
\fill[lightgray](270:0.75) circle (2.5pt);
\fill[black](270:1.5) circle (2.5pt);
\fill[lightgray](295:0.5) circle (2.5pt);
\fill[black](295:1) circle (2.5pt);
\fill[black](295:1.5) circle (2.5pt);
\draw(0.9,-0.5) node{$+$};
\end{scope}
\begin{scope}[xshift=3.6cm]
\draw[lightgray](0,0)--(245:1.5);
\draw[lightgray](0,0)--(270:1.5);
\draw[lightgray](0,0)--(295:1.5);
\fill[lightgray](0,0) circle (2.5pt);
\fill[black](245:0.375) circle (2.5pt);
\draw[black, line width=.5mm](245:0.375) -- (245:0.75);
\fill[black](245:0.75) circle (2.5pt);
\fill[lightgray](245:1.125) circle (2.5pt);
\fill[black](245:1.5) circle (2.5pt);
\fill[lightgray](270:0.75) circle (2.5pt);
\fill[black](270:1.5) circle (2.5pt);
\fill[lightgray](295:0.5) circle (2.5pt);
\fill[black](295:1) circle (2.5pt);
\fill[black](295:1.5) circle (2.5pt);
\draw(0.9,-0.5) node{$+$};
\end{scope}
\begin{scope}[xshift=5.4cm]
\draw[lightgray](0,0)--(245:1.5);
\draw[lightgray](0,0)--(270:1.5);
\draw[lightgray](0,0)--(295:1.5);
\fill[lightgray](0,0) circle (2.5pt);
\fill[lightgray](245:0.375) circle (2.5pt);
\fill[black](245:0.75) circle (2.5pt);
\draw[black, line width=.5mm](245:0.75) -- (245:1.125);
\fill[black](245:1.125) circle (2.5pt);
\fill[black](245:1.5) circle (2.5pt);
\fill[lightgray](270:0.75) circle (2.5pt);
\fill[black](270:1.5) circle (2.5pt);
\fill[lightgray](295:0.5) circle (2.5pt);
\fill[black](295:1) circle (2.5pt);
\fill[black](295:1.5) circle (2.5pt);
\end{scope}\end{tikzpicture}
$}}
\end{center}
\vspace{-1em}
\caption{Examples of two of the cases for star flow (Definition~\ref{defi: star flow})}
\label{figure:star flow}
\end{figure}
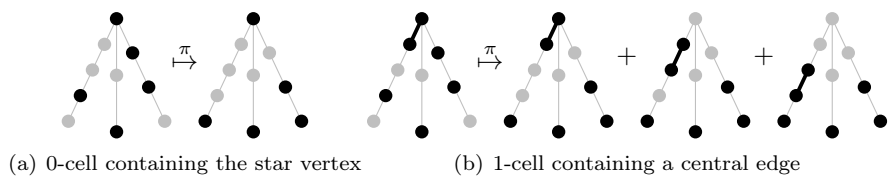

\begin{lemma}
The star flow is an abstract flow on $C(B_k^\Box(\stargraph{n}'))$. 
\end{lemma}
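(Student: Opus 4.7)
The plan is to verify the three defining properties of an abstract flow from \S\ref{subsec:subdivisional morse theory}: $\pi$ is a chain map, $\pi^2=\pi$, and $\pi$ is a quasi-isomorphism. Each reduces to case analysis following the four cases of Definition~\ref{defi: star flow}, with case (4) carrying essentially all the content.

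First I would check that $\pi$ is a chain map by verifying $\partial\pi=\pi\partial$ on generators. Cases (1)--(3) are straightforward by inspection: for a pure vertex configuration the boundary vanishes on both sides, and for a configuration containing a $1$-cell not of the form $e_{j,N_j}$ one checks that each summand of $\partial\sigma$ still contains such a $1$-cell (or else the canonical-form image on the right has vanishing boundary). The content is in case (4). For $\sigma=V_{\vec r}\cup\{e_{j,N_j}\}$, the vertex set $V_{\vec r}$ contributes nothing to $\partial$, so $\partial\sigma$ collapses to the boundary of $e_{j,N_j}$, yielding terms containing $v$ and $v_{j,N_j-1}$; these are sent by $\pi$ through cases (2) and (1) to $V_{\vec r}\cup\{v\}$ and $V_{\vec r}\cup\{v_{j,r_j}\}$ respectively. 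On the other hand, applying $\partial$ to the telescoping sum $\pi(\sigma)=\sum_{r=r_j+1}^{N_j}V_{\vec r}\cup\{e_{j,r}\}$ yields pairwise cancellation along the leg (the upper endpoint $v_{j,r}$ of $e_{j,r}$ cancels the lower endpoint of $e_{j,r+1}$), leaving only the extreme contributions at $v_{j,N_j}=v$ and $v_{j,r_j}$. A sign check confirms these agree.

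Next, idempotence $\pi^2=\pi$ is nearly immediate. Cases (1) and (2) produce canonical-form generators on which $\pi$ restricts to the identity, and case (3) yields zero. For case (4), in the telescope $\sum_{r=r_j+1}^{N_j}V_{\vec r}\cup\{e_{j,r}\}$ each summand with $r<N_j$ contains the non-top edge $e_{j,r}$ and is annihilated by case (3), while the lone summand with $r=N_j$ re-expands via case (4) to the full telescope. This precise cancellation is the content of idempotence.

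Finally, for the quasi-isomorphism I would exhibit a chain contraction $h$ of degree $+1$ satisfying $\mathrm{id}-\pi=\partial h+h\partial$. Following \cite{Forman:MTCC} and the interval case above, the natural choice pairs a non-canonical configuration with the one obtained by replacing the topmost displaced vertex in some leg by the edge joining it to the vertex immediately below (when this respects disjointness), and by zero otherwise. Verifying the homotopy relation is yet another case analysis parallel to the chain map check. The main obstacle throughout is bookkeeping the cascading cancellations and signs in case (4); once the underlying matching is identified the rest of the argument is essentially formal.
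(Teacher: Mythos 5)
Your chain-map and idempotence arguments match the paper's: idempotence reduces to noting that in case (4) all but the top summand of the telescope land in $\ker\pi$, and the chain-map check is by inspection on low-degree generators. The quasi-isomorphism is where you diverge, and there is a gap in the version you describe. The pairing you propose for $h$ --- replace the topmost displaced vertex by the edge joining it to the vertex immediately below --- is the discrete-Morse \emph{matching} $V$, but this does not satisfy $\mathrm{id}-\pi=\partial h+h\partial$. To see the problem already in the interval case with three vertices $v_0<v_1<v_2$: one has $\pi(\{v_2\})=\{v_0\}$, yet $\partial V(\{v_2\})+V\partial(\{v_2\})=\partial\{e_2\}=\{v_2\}-\{v_1\}$, which differs from $\{v_2\}-\{v_0\}$. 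The one-step pairing witnesses $\mathrm{id}-\Phi$ for the \emph{one-step} flow $\Phi=\mathrm{id}+\partial V+V\partial$; to reach the idempotent $\pi=\Phi^N$ you must either iterate $\Phi$ and splice the homotopies, or replace $h$ by a sum over $V$-paths (here $h(\{v_2\})=\{e_2\}+\{e_1\}$). That machinery is indeed in \cite{Forman:MTCC}, but it is not what your formula states, and the ``respects disjointness'' caveat makes the $V$-path bookkeeping genuinely delicate in the star case.

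The paper sidesteps this entirely. It filters $C=C(B_k^\Box(\stargraph{n}'))$ by the subcomplex $A$ spanned by generators containing no $1$-cell incident to the star vertex $v$, observes $\pi(A)\subseteq A$, and compares the two short exact sequences $0\to A\to C\to C/A\to 0$ and $0\to A\cap I\to I\to I/(A\cap I)\to 0$. Both $A$ and $C/A$ are contractible-or-empty in each weight (they are built from configuration complexes of disjoint intervals), while the corresponding Morse-side quotients have zero differential, so both vertical comparison maps are quasi-isomorphisms and the five lemma finishes. This avoids any sign or path bookkeeping. Your approach, if carried out with the corrected $V$-path homotopy, proves the stronger statement that $\pi$ is chain homotopic to the identity, but at the cost of exactly the ``cascading cancellations'' you flag as the main obstacle --- and as written the proposed $h$ does not yet satisfy the homotopy identity.
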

\begin{proof}
Idempotence is immediate except in the last case. There the image of the cell containing $e_{j,N_j}$ and $r_i$ vertices in the $i$th leg is the sum of the cell $V_{\vec{r}}\cup\{e_{j,N_j}\}$ with several cells in the kernel of $\pi$. The map $\pi$ is a chain map by inspection (it suffices to check $2$-cell and $1$-cell generators).

Let us see that $\pi$ is a quasi-isomorphism. For brevity, we write $C$ for $C(B_k^\Box(\stargraph{n}'))$ and $I$ for its Morse complex. Consider the subcomplex $A$ of $C$  spanned by generators (i.e., sets of cells of $B_k^\Box(\stargraph{n}')$) which do not contain any $1$-cell intersecting $v$. Since $\pi(A)\subseteq A$, there is a commuting diagram \[
\begin{tikzcd}
0\ar[r]&
A\ar[d]\ar[r]
&
C\ar[d]\ar[r]
&
C/A\ar[d]\ar[r]
&0
\\
0\ar[r]&
A\cap I\ar[r]
&
I\ar[r]
&
I/(A\cap I)\ar[r]
&0
\end{tikzcd}
\] of exact sequences.
The quotient complex $C/A$ is linearly isomorphic to the subspace $B\subseteq C$ spanned by generating cells containing an edge intersecting $v$, but, in the quotient, the differential ignores the special edge. Similarly, $I/(A\cap I)\cong(\pi(B),0)$. It follows that $A\xrightarrow{\pi} (A\cap I)$ and $C/A\xrightarrow{\pi} I/(A\cap I)$ are both quasi-isomorphisms, since the domains are either contractible or empty in every weight, while the codomains have no differentials. The claim follows by the five lemma.
\end{proof}

\begin{lemma}\label{lem:corolla morse}
There is a canonical chain map $I(B_k^\Box(\stargraph{n}'))\to \starreduced{\stargraph{n}}_k$ that is an isomorphism if $\stargraph{n}'$ has at least $k+1$ vertices in each leg.
\end{lemma}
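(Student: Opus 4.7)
The plan is to read off an explicit basis of the Morse complex $I(B_k^\Box(\stargraph{n}'))$ from the four cases of Definition~\ref{defi: star flow}, to specify $\phi$ in a geometrically transparent way on this basis, and to verify compatibility with the differentials via a telescoping sum. The hypothesis on the number of vertices per leg will be precisely what is needed to put the Morse basis in bijection with the canonical basis of $\starreduced{\stargraph{n}}_k$.

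First I would produce a basis of $I(B_k^\Box(\stargraph{n}'))$. By direct inspection, the nonzero image of the idempotent $\pi$ is spanned by the following three families, and they are linearly independent because their expansions into cells of $B_k^\Box(\stargraph{n}')$ have pairwise disjoint supports:
\begin{enumerate}[label=(\alph*)]
\item $V_{\vec{r}}$, where $\vec{r} = (r_1, \ldots, r_n)$ satisfies $\sum r_i = k$ and $0 \le r_i \le N_i$;
\item $V_{\vec{r}} \cup \{v\}$, where $\sum r_i = k - 1$ and $0 \le r_i \le N_i$;
\item $T_{j, \vec{r}} \coloneqq \sum_{r = r_j + 1}^{N_j} V_{\vec{r}} \cup \{e_{j, r}\}$, where $1 \le j \le n$, $\sum r_i = k - 1$, $0 \le r_i \le N_i$, and $r_j < N_j$.
\end{enumerate}

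Second, I would set
\[
\phi(V_{\vec{r}}) = \prod_i e_i^{r_i} \otimes \varnothing, \qquad \phi(V_{\vec{r}} \cup \{v\}) = \prod_i e_i^{r_i} \otimes v, \qquad \phi(T_{j, \vec{r}}) = -\prod_i e_i^{r_i} \otimes h_j,
\]
where $h_j \in H(v)$ is the half-edge of $e_j$ at the star vertex, the sign in the third formula being dictated by the cubical boundary convention. The differential vanishes on the degree-$0$ generators on both sides. For $T_{j, \vec{r}}$, interior terms cancel in the telescoping sum $\sum_{r = r_j + 1}^{N_j} \partial(V_{\vec{r}} \cup \{e_{j,r}\})$, leaving $\pm(V_{\vec{r}} \cup \{v\} - V_{\vec{r}^{+j}})$, where $\vec{r}^{+j}$ is $\vec{r}$ with the $j$-th entry incremented. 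This matches, up to the chosen sign, the image under $\phi^{-1}$ of $\partial(\prod_i e_i^{r_i} \otimes h_j) = e_j \prod_i e_i^{r_i} \otimes \varnothing - \prod_i e_i^{r_i} \otimes v$, confirming the chain-map identity.

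Finally, the canonical basis of $\starreduced{\stargraph{n}}_k$ consists of monomials $\prod_i e_i^{r_i} \otimes x$ with $x \in \{\varnothing, v, h_1, \ldots, h_n\}$ and appropriate total weight, subject to no upper bound on the $r_i$. Hence $\phi$ is an isomorphism exactly when the range constraints $r_i \le N_i$ in (a)--(c) are vacuous for all $\vec{r}$ with $\sum r_i \le k$, which happens if and only if $N_j \ge k$ for every $j$---equivalently, $\stargraph{n}'$ has at least $k+1$ vertices in each leg. The main technical obstacle will be carefully tracking cubical signs to confirm that a single uniform sign in $\phi(T_{j, \vec{r}})$ suffices to make $\phi$ a chain map on every basis element.
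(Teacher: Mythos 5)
Your proof is correct and takes essentially the same approach as the paper: exhibit an explicit basis for the Morse complex from the four cases of the star flow, map it bijectively to the monomial basis of $\starreduced{\stargraph{n}}_k$, and verify the chain-map condition by telescoping. The paper describes the Morse complex via the coimage of $\pi$ rather than the image (and omits the sign, which reflects the opposite edge orientation convention), but these are cosmetic differences rather than a different route.
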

\begin{proof} 
A basis for the weight $k$ subcomplex of $\starreduced{\stargraph{n}}$ is given by the set of elements of the form $e_1^{r_1}\cdots e_n^{r_n}\otimes x$, where $x\in\{\varnothing, v, h_1,\ldots, h_n\}$, the $r_j$ are non-negative integers, and $\sum r_j+\mathrm{wt}(x)=k$.

The coimage of $\pi$ in the star flow is a quotient of the set of cells containing either the $0$-cell $v$, the $1$-cell $e_{j,N_j}$, or no cell containing $v$, along with $r_j$ additional $0$-cells in the $j$th leg for each $j$. The relation identifies two such configurations with the same central configuration and the same $r_j$ for each $j$ but different choices of $0$-cells away from the center vertex. Then we send such a configuration to 
the element $e_1^{r_1}\cdots e_n^{r_n}\otimes x$ where $x$ is: \begin{enumerate}
\item the element $v$ if $c$ contains the star vertex $v$,
\item the element $h_i$ if $c$ contains the edge $e_{j,N_j}$, and
\item the element $\varnothing$ otherwise.
\end{enumerate} This map is prima facie injective, and it is surjective if $\stargraph{n}'$ has the hypothesized number of vertices in each leg. It is straightforward to verify that this map is also a chain map.
\end{proof}

\subsection{Compatibilities}\label{section:compatibilities} In this section, we check that the abstract flows constructed in the previous section are compatible with inclusion and subdivision, completing the proof of Proposition~\ref{prop:flows} and thereby of the isomorphism statement of Theorem~\ref{thm:comparison}.

Let $\graf\to \graf'$ be a subdivision of graphs, with $\graf$ either the interval graph $\intervalgraph$ or the star graph $\stargraph{n}$, and let $\Xigraph'\subseteq \graf'$ be a subgraph that is a subdivision of a disjoint union $\Xigraph$ of intervals and stars; thus, $\Xigraph\to \graf$ is a graph morphism. We equip $B_k^\Box(\graf')$ with either the interval flow or the star flow, depending on the case in consideration. Through the isomorphism of Lemma~\ref{lem:box monoidal}, the configuration complex $B_k^\Box(\Xigraph')$ also inherits an abstract flow, which depends on choices of star points, since subdivided intervals, 1-stars, and 2-stars are intrinsically indistinguishable, and on choices of orientation for the interval components.

\begin{lemma}\label{lem:inclusion compatibility}
The inclusion $i:B_k^\Box(\Xigraph')\to B_k^\Box(\graf')$ is flow compatible provided that any component of $\Xigraph'$ containing a star vertex of $\graf'$ carries the star flow at that vertex. Moreover, in this case, the following diagram commutes:\[\begin{tikzcd}
{I(B_k^\Box(\Xigraph'))}
\ar[d]
\ar{rr}{I(B_k^\Box(i))}
&&
{I(B_k^\Box(\graf'))}
\ar[d]\\
\starreduced{\Xigraph}_k
\ar{rr}{\starreduced{\Xigraph\to \graf}}
&&\starreduced{\graf}_k.
\end{tikzcd}\]
\end{lemma}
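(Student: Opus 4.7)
The proof splits into two parts: flow compatibility of $i$ and commutativity of the displayed square. For flow compatibility, by definition it suffices to show $\pi_{\graf'}(C(i)(c)) = \pi_{\graf'}(C(i)(\pi_{\Xigraph'}(c)))$ for each cell $c$ of $B_k^\Box(\Xigraph')$. By Lemma~\ref{lem:box monoidal}, any such cell decomposes as a product $c = \prod_\alpha c_\alpha$ indexed by the components $\Xigraph_\alpha'$ of $\Xigraph'$, with $\pi_{\Xigraph'}(c) = \prod_\alpha \pi_{\Xigraph_\alpha'}(c_\alpha)$, so the check reduces to a cellwise case analysis.

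When $\graf = \intervalgraph$, both $\pi_{\Xigraph'}$ and $\pi_{\graf'}$ annihilate cells containing any $1$-cell and send $0$-cell configurations to the leftmost configuration. Since each component flow preserves $0$-cell counts, both composites yield the same leftmost-in-$\graf'$ cell of the same total cardinality. When $\graf = \stargraph{n}$, write $c = c_0 \cdot \prod_\beta c_\beta$ with $c_0$ in the star-containing component $\Xigraph_0'$. A key preliminary observation is that every central edge of $\Xigraph_0'$ (a $1$-cell adjacent to the star vertex $v$) is also a central edge of $\graf'$, since subcomplex inclusion preserves adjacency to $v$. We then dispatch two easy cases: (i) if $c$ has no $1$-cells, rules (1)--(2) of both star and interval flows produce on each side the same canonical cell, determined by the presence of $v$ and by the $0$-cell count in each leg of $\graf'$ (counts that are preserved by all flows involved); (ii) if $c$ has a $1$-cell which is not a central edge of $\graf'$, then either the $1$-cell lies in $c_0$ at a non-central position of $\Xigraph_0'$ (killed by rule (3)) or in some $c_\beta$ (killed by the interval flow), so $\pi_{\Xigraph'}(c) = 0$, while rule (3) for $\graf'$ gives $\pi_{\graf'}(i(c)) = 0$.

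The remaining case, and the main technical obstacle, is when $c$ contains a central edge $e^\graf_{j, N_j}$ of $\graf'$; this edge necessarily sits in $c_0$, and is also a central edge of $\Xigraph_0'$ by the preliminary observation. The star flow $\pi_{\Xigraph_0'}(c_0)$ produces via rule (4) a sum over indices $p$ of cells containing edges $e^{\Xigraph_0}_{j, p}$. Under $i$, these map to cells of $\graf'$ with edges $e^\graf_{j, N_j - M_j + p}$, where $M_j$ is the length of the $j$th leg of $\Xigraph_0'$. Only the top term $p = M_j$ yields a central edge of $\graf'$; all other terms carry non-central edges of $\graf'$ and are annihilated by $\pi_{\graf'}$ via rule (3). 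Since the star flow preserves $0$-cell counts in each leg of $\Xigraph_0'$, the surviving term carries the same leg-$i$ $0$-cell counts as $i(c)$, and applying $\pi_{\graf'}$ to it produces the same canonical sum as $\pi_{\graf'}(i(c))$.

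For commutativity of the diagram, we check on a basis of $I(B_k^\Box(\Xigraph'))$. By Lemma~\ref{lem:corolla morse} combined with the monoidal decomposition, basis elements correspond to monomial generators $\bigotimes_\alpha \omega_\alpha$ of $\starreduced{\Xigraph}_k$. The top-then-right route applies $\pi_{\graf'} \circ i$ to such a Morse cell and then identifies the result in $\starreduced{\graf}_k$ via Lemma~\ref{lem:corolla morse} for $\graf'$; the down-then-right route identifies $\omega$ in $\starreduced{\Xigraph}_k$ and applies the functorial map $\starreduced{\Xigraph \to \graf}$, which componentwise sends edge variables to their images in $\graf$ and $\localstates{v}$-factors according to the graph morphism. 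Agreement of these two descriptions on each generator is immediate from the explicit form of the bijection in Lemma~\ref{lem:corolla morse}, combined with the computations from Part~1.
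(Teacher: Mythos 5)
Your approach matches the paper's almost exactly: a case analysis on cells of $B_k^\Box(\Xigraph')$ that compares $\pi_{\graf'}\circ C(i)$ with $\pi_{\graf'}\circ C(i)\circ \pi_{\Xigraph'}$ cell by cell, followed by a formal verification of the square via Lemma~\ref{lem:corolla morse}. Your three cases repackage the paper's two (which lumps your ``no $1$-cells'' and ``central edge'' cases together), and your preliminary observation about central edges of $\Xigraph_0'$ versus $\graf'$ makes explicit something the paper uses without comment. The analysis of rule~(4) in your main case is correct: only the top term in the sum remains adjacent to $v$ after inclusion into $\graf'$, leg counts are preserved, and this is precisely what the paper expresses as $\pi_\Xi(c)=c'+(\text{cells killed by }\pi_\Gamma\circ i)$.

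There is, however, a genuine (if easily repaired) gap in case~(ii) and in your treatment of the interval case. You assert that whenever $c$ contains a $1$-cell that is not a central edge of $\graf'$, one has $\pi_{\Xigraph'}(c)=0$, and in the interval case you claim outright that $\pi_{\Xigraph'}$ annihilates all cells containing a $1$-cell, attributing this to ``the interval flow'' on components $\Xigraph_\beta'$ other than $\Xigraph_0'$. But those components need not carry the interval flow: the discussion preceding the lemma notes that the flow on $B_k^\Box(\Xigraph')$ involves a free choice of star point on each component (subdivided intervals, $1$-stars, and $2$-stars being intrinsically indistinguishable), and the lemma's hypothesis constrains only the component containing $v$. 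If some $\Xigraph_\beta'$ with $\beta\neq 0$ carries a star flow at a vertex $w\neq v$ and $c_\beta$ contains the edge adjacent to $w$, then rule~(4) makes $\pi_{\Xigraph_\beta'}(c_\beta)$ a nonzero sum and hence $\pi_{\Xigraph'}(c)\neq 0$. The correct observation is weaker but still suffices: every term of $\pi_{\Xigraph'}(c)$ retains a $1$-cell lying in $\Xigraph_\beta'$, hence disjoint from $v$ and non-central in $\graf'$, so $\pi_{\graf'}\circ C(i)$ annihilates it for the same reason it annihilates $i(c)$. Incorporating this adjustment makes your argument complete; note that the paper's own proof asserts $\pi_\Gamma(i(\pi_\Xi(c)))=0$ directly, without the false intermediate claim that $\pi_\Xi(c)$ itself vanishes.
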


\begin{proof}
We prove only the case when $\graf$ is a star graph and $\graf'$ is equipped with the star flow (the interval case is easier), focusing on the verification of flow compatibility. Write $\pi_\Xi$ for the abstract flow on $B_k^\Box(\Xigraph')$ and $\pi_\Gamma$ for the abstract flow on $B_k^\Box(\graf')$. Let $c$ be a cell of $B_k^\Box(\Xigraph')$, written as a symmetric product of cells of $\Xigraph'$. 

If $c$ contains an edge that is not adjacent to the star vertex of $\graf'$, then so does $i(c)$. Then by Definitions~\ref{defi: interval flow}~and~\ref{defi: star flow}, $\pi_\Gamma(i(c))=0=\pi_\Gamma(i(\pi_\Xi(c)))$. 
On the other hand, if $c$ contains no such edge, then by the same definitions and inspection, $\pi_\Xi(c)$ is the sum of a cell $c'$, which is obtained by moving the vertices of $c$ into their minimal positions in the relevant components of $\Xigraph'$, with a linear combination of cells, each containing an edge not adjacent to the star vertex of $\graf'$. The image of these latter cells are all in the kernel of $\pi_\Gamma$, so $\pi_\Gamma(i(\pi_\Xi(c))=\pi_\Gamma(i(c'))$. But the same characterization also shows that $\pi_\Gamma(i(c'))=\pi_\Gamma(i(c))$, since both are obtained by moving the same number of vertices in each leg of $\graf'$ into their minimal positions.

The commutativity claim is essentially immediate from what has already been said and the description of the isomorphism in Lemma~\ref{lem:corolla morse}.
\end{proof}

Similar considerations apply in the case of a subdivision:

\begin{lemma}\label{lem:subdivision compatibility}
For subdivisions $\intervalgraph'\to \intervalgraph''$ and $\stargraph{n}'\to \stargraph{n}''$ of the interval and the star graph $\stargraph{n}$ respectively, the induced subdivisional embeddings $B_k^\Box(\intervalgraph')\to B_k^\Box(\intervalgraph'')$ and $B_k^\Box(\stargraph{n}')\to B_k^\Box(\stargraph{n}'')$ are flow compatible, and the following diagrams commute:
\[\begin{tikzcd}I(B_k^\Box(\intervalgraph'))\ar[r]\ar[d]&I(B_k^\Box(\intervalgraph''))\ar[d]&I(B_k^\Box(\stargraph{n}'))\ar[r]\ar[d]&I(B_k^\Box(\stargraph{n}''))\ar[d]\\
\mathbb{Z}
\ar[no head, r, shift left=.97pt]
\ar[no head, r, shift right=.97pt]
&\mathbb{Z}&\starreduced{\stargraph{n}}_k
\ar[no head, r, shift left=.97pt]
\ar[no head, r, shift right=.97pt]
&\starreduced{\stargraph{n}}_k.
\end{tikzcd}\] 
\end{lemma}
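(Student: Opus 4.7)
The argument proceeds by direct computation on generators. Let $s_*$ denote the cellular chain map induced by the subdivision of configuration complexes, and write $\pi'$, $\pi''$ for the flows on source and target. Flow compatibility amounts to the identity $\pi'' \circ s_* = \pi'' \circ s_* \circ \pi'$ on each generator of $C(B_k^\Box(\cdot))$, and commutativity of the diagram on Morse complexes will then follow from the observation that the isomorphism of Lemma~\ref{lem:corolla morse} identifies each Morse generator with a combinatorial expression $e_1^{r_1} \cdots e_n^{r_n} \otimes x$ depending only on intrinsic graph-theoretic data, hence transporting uniformly across subdivisions.

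In the interval case, a generator $c$ either consists of $k$ 0-cells or contains some 1-cell. In the former case, $s_*(c)$ is again a set of $k$ 0-cells of $\intervalgraph''$, and $\pi''$ selects the $k$ minimal 0-cells of $\intervalgraph''$, an output depending only on $\intervalgraph''$ and $k$; thus $\pi''(s_*(c)) = \pi''(s_*(\pi'(c)))$. In the latter case, the subdivision of any 1-cell is a signed sum of 1-cells of $\intervalgraph''$, so every summand of $s_*(c)$ still contains a 1-cell and lies in $\ker \pi''$, while already $\pi'(c) = 0$. Commutativity of the diagram is then immediate: the Morse complex is either $0$ or $\mathbb{Z}$, generated in the nontrivial case by the set of $k$ minimal vertices, which maps to $1$ under the isomorphism with $\mathbb{Z}$ on both sides.

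For the star graph, cases (1)--(3) of Definition~\ref{defi: star flow} are handled analogously, using that any 1-cell of $\stargraph{n}'$ not adjacent to the star vertex $v$ subdivides into 1-cells of $\stargraph{n}''$ none of which is adjacent to $v$. The substantive case is (4), in which $c$ contains $e_{j, N_j}$ together with $r_i$ 0-cells in leg $i$ for each $i$. Let $N_j''$ denote the number of 1-cells in leg $j$ of $\stargraph{n}''$. The subdivision of $e_{j, N_j}$ is a sum of 1-cells in leg $j$ of $\stargraph{n}''$, of which only the topmost, $e''_{j, N_j''}$, is adjacent to $v$; all other summands of $s_*(c)$ are killed by case (3) applied in $\stargraph{n}''$, and case (4) in $\stargraph{n}''$ evaluates the surviving term to $\sum_{r = r_j + 1}^{N_j''} V''_{\vec r} \cup \{e''_{j, r}\}$. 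On the other hand, $\pi'(c) = \sum_{r = r_j + 1}^{N_j} V_{\vec r} \cup \{e_{j, r}\}$; after applying $s_*$, every summand with $r < N_j$ contains only 1-cells interior to leg $j$ and is killed by $\pi''$, whereas the $r = N_j$ summand splits further but contributes only through its top piece, which again produces the same expression after case (4). This telescoping alignment is the main technical obstacle of the proof.

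Commutativity of the star diagram now follows by tracing Morse generators through the computation. The source Morse classes $V_{\vec r}$, $V_{\vec r} \cup \{v\}$, and $\sum_r V_{\vec r} \cup \{e_{j, r}\}$ map under $\pi'' \circ s_*$ to $V''_{\vec r}$, $V''_{\vec r} \cup \{v\}$, and $\sum_r V''_{\vec r} \cup \{e''_{j, r}\}$, respectively, and each such pair corresponds under the isomorphism of Lemma~\ref{lem:corolla morse} to the common element $e_1^{r_1} \cdots e_n^{r_n} \otimes x$ with $x \in \{\varnothing, v, h_j\}$, by the very definition of that isomorphism. Once the case-(4) telescoping is established, the remainder is inspection.
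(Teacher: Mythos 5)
The paper states this lemma without proof, implicitly treating it as a routine variant of the preceding Lemma~\ref{lem:inclusion compatibility}, whose proof has the same case-by-case structure as yours. Your verification is correct and is the expected filling-in of that omitted argument, including the telescoping alignment for case~(4) of the star flow, the reduction of cases~(1)--(3) to the observation that subdivision preserves adjacency to the star vertex, and the identification of Morse generators via Lemma~\ref{lem:corolla morse} (the only implicit convention you share with the paper being that the orientation of $\intervalgraph''$ is taken compatible with that of $\intervalgraph'$, so that $s_*$ carries an oriented $1$-cell to a positively signed sum of $1$-cells; for the star graph the ordering is determined intrinsically, so no such choice arises).
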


\begin{proof}[Proof of Proposition~\ref{prop:flows}] By Lemma~\ref{lem:subdivision compatibility}, the interval and star flows determine subdivisional flows on any disjoint union of intervals and stars equipped with any subdivisional structure. By the same result and Lemma~\ref{lem:inclusion compatibility}, any embedding among such is flow compatible if the preimage of every star point is a star point. We use the star flow on component type basics in $\graf_0$ with the vertices of $\graf$ as star vertices and the interval flow for all other basics. Since these subdivisional flows respect the isomorphisms $B^\SD(U\amalg V)\cong B^\SD(U)\times B^\SD(V)$, we obtain a local flow on the local invariant $B^\SD$, isotopy invariant by Lemma \ref{lem:inclusion compatibility}. By inspection in the interval case and Lemma~\ref{lem:corolla morse}, there is an identification of the Morse complex $I(B^\SD(U))\cong \starreduced{U}$ which is natural for inclusions among disjoint unions of basics by Lemmas~\ref{lem:inclusion compatibility} and~\ref{lem:subdivision compatibility}. Isotopy invariance and the identification of the algebra and module structures follow.
\end{proof}

\subsection{Naturality for graph morphisms} In this section, we show that the isomorphism of Theorem~\ref{thm:comparison} is functorial. To use the naturality clause of Theorem~\ref{thm:decomposition} we will show that every graph morphism lifts to a flow compatible map of local invariants. It suffices to do so for graph embeddings and smoothings separately.

If $f:\graf_1\to \graf_2$ is a graph embedding, then every edge of $\graf_1$ is mapped homeomorphically to an edge of $\graf_2$, so we may choose auxiliary vertices so that there is a commuting diagram of graph morphisms:
\[\begin{tikzcd}
\graf_1\ar{r}{f}&\graf_2\\
(\graf_1)_\#\ar[r]\ar[u]&(\graf_2)_\#.\ar{u}
\end{tikzcd}\] 
Then from the definition of the canonical decomposition, we have a commuting diagram of graph embeddings, and, in particular, of subdivisional spaces, which is to say a map of decompositions:
\[\begin{tikzcd}
\displaystyle\coprod_{v\in{V(\graf_1)}}\stargraph{d(v)}\ar{d}&\displaystyle\coprod_{e\in E(\graf_1)}\pt\amalg\pt\ar{d}\ar[r]\ar[l]& \displaystyle\coprod_{e\in E(\graf_1)} \intervalgraph\ar{d}\\
\displaystyle\coprod_{v\in{V(\graf_2)}}\stargraph{d(v)}&\displaystyle\coprod_{e\in E(\graf_2)}\pt\amalg\pt\ar[r]\ar[l]& \displaystyle\coprod_{e\in E(\graf_2)} \intervalgraph.
\end{tikzcd}\] 

It follows from Lemmas~\ref{lem:inclusion compatibility} and~\ref{lem:subdivision compatibility} that the identity map on $B^\SD$ is flow compatible when regarded as a map between the induced local invariants on these two decomositions. Since every basic is a disjoint union of intervals and stars, the same lemmas show that the induced map at the level of Morse complexes coincides with the induced map on \'{S}wi\k{a}tkowski complexes. Thus, naturality holds for graph embeddings.

The case of a smoothing reduces to the following scenario. Fix an edge $e_0\in \graf$, and let $\graf'$ be the graph obtained from $\graf$ by adding a bivalent vertex $v_0$ to $e_0$, subdividing it into $e_1$ and $e_2$. There is a smoothing $f:\graf'\to \graf$, and every smoothing is a composite of isomorphisms and smoothings of this kind. This smoothing does not respect the canonical decompositions of $\graf'$ and $\graf$, but it does respect a different pair of decompositions $\E'$ and $\E$, depicted in the following diagram and Figure~\ref{fig: 4 decompositions}:
\[\begin{tikzcd}
\displaystyle\coprod_{v_0\neq v\in {V'}}\stargraph{d(v)}
\ar[no head, d, shift left=.97pt]
\ar[no head, d, shift right=.97pt]
&\displaystyle \coprod_{e\in E} \pt\amalg\pt
\ar[no head, d, shift left=.97pt]
\ar[no head, d, shift right=.97pt] 
\ar[r]\ar[l]& \displaystyle \coprod_{e\in E'} \intervalgraph
\ar[no head, d, shift left=.97pt]
\ar[no head, d, shift right=.97pt]
&\pt\amalg\pt
\ar[no head, d, shift left=.97pt]
\ar[no head, d, shift right=.97pt]
\ar{r}\ar{l}&\stargraph{2}\ar{d}\\
\displaystyle\coprod_{v\in {V}}\stargraph{d(v)}&\displaystyle \coprod_{e\in E} \pt\amalg\pt \ar[r]\ar[l]& \displaystyle \coprod_{e\in E'} \intervalgraph&\pt\amalg\pt\ar{r}\ar{l}& \intervalgraph.
\end{tikzcd}\]
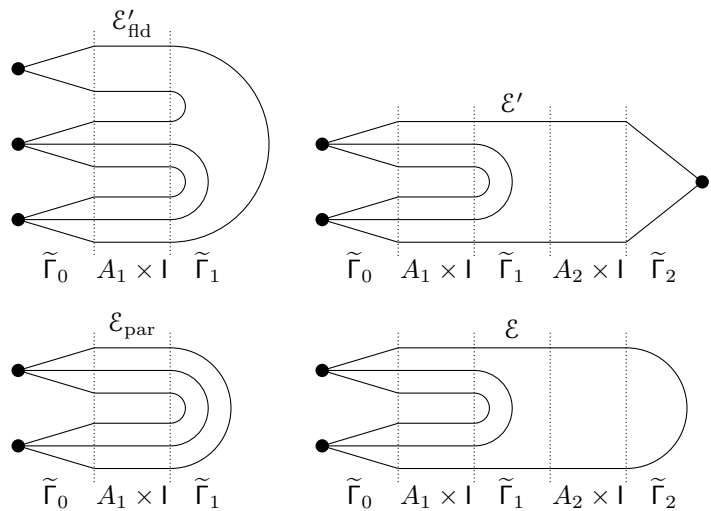
\begin{figure}[ht]
\centering
\begin{tikzpicture}
\begin{scope}[yshift=-1cm]
\fill[black] (0,0) circle (2.5pt);
\fill[black] (0,1) circle (2.5pt);
\draw(0,0) -- (1,.3);
\draw(0,0) -- (1,0);
\draw(0,0) -- (1,-.3);
\draw(1,.3)--(2,.3);
\draw(1,0)--(2,0);
\draw(1,-.3)--(2,-.3);
\draw(2,.7)--(1,.7)--(0,1)--(1,1.3)--(2,1.3);
\draw[densely dotted] (1,1.5)--(1,-1.5);
\draw[densely dotted] (2,1.5)--(2,-1.5);
\begin{scope}[yshift=-1cm]
\fill[black] (0,0) circle (2.5pt);
\draw(0,0) -- (1,.3);
\draw(0,0) -- (1,0);
\draw(0,0) -- (1,-.3);
\draw(1,.3)--(2,.3);
\draw(1,0)--(2,0);
\draw(1,-.3)--(2,-.3);
\end{scope}
\begin{scope}[xshift=.0cm]
\draw (2,-1.3) arc [start angle=-90, end angle=90, x radius = 1.3, y radius=1.3];
\draw (2,-1) arc [start angle=-90, end angle=90, radius=.5];
\draw (2,-.7) arc [start angle=-90, end angle=90, radius=.2];
\draw (2,.3) arc [start angle=-90, end angle=90, radius=.2];
\end{scope}
\draw(.5,-1.3) node[below]{$\widetilde{\graf}_0$};
\draw(1.5,-1.3) node[below]{$\vphantom{\widetilde{\graf}_0}A_1\times \intervalgraph$};
\draw(1.5,1.3) node[above]{$\E'_{\mathrm{fld}}$};
\draw(2.5,-1.3) node[below]{$\widetilde{\graf}_1$};
\end{scope}
\begin{scope}[yshift=-4cm]
\fill[black] (0,0) circle (2.5pt);
\draw(0,0) -- (1,.3);
\draw(0,0) -- (1,0);
\draw(0,0) -- (1,-.3);
\draw(1,.3)--(2,.3);
\draw(1,0)--(2,0);
\draw(1,-.3)--(2,-.3);
\draw[densely dotted] (1,.5)--(1,-1.5);
\draw[densely dotted] (2,.5)--(2,-1.5);
\begin{scope}[yshift=-1cm]
\fill[black] (0,0) circle (2.5pt);
\draw(0,0) -- (1,.3);
\draw(0,0) -- (1,0);
\draw(0,0) -- (1,-.3);
\draw(1,.3)--(2,.3);
\draw(1,0)--(2,0);
\draw(1,-.3)--(2,-.3);
\end{scope}
\begin{scope}[xshift=.0cm]
\draw (2,-1.3) arc [start angle=-90, end angle=90, radius=.8];
\draw (2,-1) arc [start angle=-90, end angle=90, radius=.5];
\draw (2,-.7) arc [start angle=-90, end angle=90, radius=.2];
\end{scope}
\draw(.5,-1.3) node[below]{$\widetilde{\graf}_0$};
\draw(1.5,-1.3) node[below]{$\vphantom{\widetilde{\graf}_0}A_1\times \intervalgraph$};
\draw(1.5,.3) node[above]{$\E_{\mathrm{par}}$};
\draw(2.5,-1.3) node[below]{$\widetilde{\graf}_1$};
\end{scope}
\begin{scope}[xshift=4cm,yshift=-1cm]
\fill[black] (0,0) circle (2.5pt);
\draw(0,0) -- (1,.3);
\draw(0,0) -- (1,0);
\draw(0,0) -- (1,-.3);
\draw(1,.3)--(2,.3);
\draw(1,0)--(2,0);
\draw(1,-.3)--(2,-.3);
\draw[densely dotted] (1,.5)--(1,-1.5);
\draw[densely dotted] (2,.5)--(2,-1.5);
\draw[densely dotted] (3,.5)--(3,-1.5);
\draw[densely dotted] (4,.5)--(4,-1.5);
\begin{scope}[yshift=-1cm]
\fill[black] (0,0) circle (2.5pt);
\draw(0,0) -- (1,.3);
\draw(0,0) -- (1,0);
\draw(0,0) -- (1,-.3);
\draw(1,.3)--(2,.3);
\draw(1,0)--(2,0);
\draw(1,-.3)--(2,-.3);
\end{scope}
\begin{scope}[xshift=.0cm]
\draw(2,.3) -- (4,.3);
\draw(2,-1.3) -- (4,-1.3);
\draw(4,.3) -- (5,-.5) -- (4,-1.3);
\fill[black] (5,-.5) circle (2.5pt);
\draw (2,-1) arc [start angle=-90, end angle=90, radius=.5];
\draw (2,-.7) arc [start angle=-90, end angle=90, radius=.2];
\end{scope}
\draw(.5,-1.3) node[below]{$\widetilde{\graf}_0$};
\draw(1.5,-1.3) node[below]{$\vphantom{\widetilde{\graf}_0}A_1\times \intervalgraph$};
\draw(2.5,-1.3) node[below]{$\widetilde{\graf}_1$};
\draw(2.5,.3) node[above]{$\E'$};
\draw(3.5,-1.3) node[below]{$\vphantom{\widetilde{\graf}_1}A_2\times \intervalgraph$};
\draw(4.5,-1.3) node[below]{$\widetilde{\graf}_2$};
\end{scope}
\begin{scope}[xshift = 4cm, yshift=-4cm]
\fill[black] (0,0) circle (2.5pt);
\draw(0,0) -- (1,.3);
\draw(0,0) -- (1,0);
\draw(0,0) -- (1,-.3);
\draw(1,.3)--(2,.3);
\draw(1,0)--(2,0);
\draw(1,-.3)--(2,-.3);
\draw[densely dotted] (1,.5)--(1,-1.5);
\draw[densely dotted] (2,.5)--(2,-1.5);
\draw[densely dotted] (3,.5)--(3,-1.5);
\draw[densely dotted] (4,.5)--(4,-1.5);
\begin{scope}[yshift=-1cm]
\fill[black] (0,0) circle (2.5pt);
\draw(0,0) -- (1,.3);
\draw(0,0) -- (1,0);
\draw(0,0) -- (1,-.3);
\draw(1,.3)--(2,.3);
\draw(1,0)--(2,0);
\draw(1,-.3)--(2,-.3);
\end{scope}
\begin{scope}[xshift=.0cm]
\draw(2,.3) -- (4,.3);
\draw(2,-1.3) -- (4,-1.3);
\draw (4,-1.3) arc [start angle=-90, end angle=90, radius=.8];
\draw (2,-1) arc [start angle=-90, end angle=90, radius=.5];
\draw (2,-.7) arc [start angle=-90, end angle=90, radius=.2];
\end{scope}
\draw(.5,-1.3) node[below]{$\widetilde{\graf}_0$};
\draw(1.5,-1.3) node[below]{$\vphantom{\widetilde{\graf}_0}A_1\times \intervalgraph$};
\draw(2.5,-1.3) node[below]{$\widetilde{\graf}_1$};
\draw(2.5,.3) node[above]{$\E$};
\draw(3.5,-1.3) node[below]{$\vphantom{\widetilde{\graf}_1}A_2\times \intervalgraph$};
\draw(4.5,-1.3) node[below]{$\widetilde{\graf}_2$};
\end{scope}
\end{tikzpicture}
\caption{The canonical decomposition (labelled $\E'_{\mathrm{fld}}$) and the decomposition $\E'$ of the theta graph $\thetagraph{3}$ with an additional bivalent vertex are depicted in the first row. The canonical decomposition (labelled $\E_{\mathrm{par}}$) and the decomposition $\E$ of the theta graph $\thetagraph{3}$ itself are depicted in the second row. The notation used is that of Appendix~\ref{section:comparing decompositions}; compare Figure~\ref{figure: three decompositions}.}\label{fig: 4 decompositions}
\end{figure}
As before, Lemma~\ref{lem:inclusion compatibility} guarantees that corresponding map of local invariants is flow compatible; moreover, Theorem~\ref{thm:decomposition} implies that the right-hand cell commutes in the diagram below. 
The left-hand cell commutes by inspection. Since the commuting of the outer square is what is to be proven, it will then suffice to verify that the two remaining triangles also commute.
\[
\begin{tikzcd}[column sep=.1mm, row sep=2ex]
H_*(\intrinsic{\graf'})
\ar{ddddd}[swap, description]{H_*(\intrinsic{f})}
\ar{dr}{\simeq}
\ar{rr}{\simeq}
&&
H_*(B(\graf'))
\ar{ddddd}[description]{H_*(B(f))}
\\
&\displaystyle H_*\left(\bigotimes_{v\in V'}\starreduced{\stargraph{d(v)}}\bigotimes_{\mathbb{Z}[E']\otimes\mathbb{Z}[E']}\mathbb{Z}[E']\right)
\ar{d}
\ar{ur}{\simeq}
\\
&\displaystyle H_*\left(\bigotimes_{v\in V}\starreduced{\stargraph{d(v)}}\bigotimes_{\mathbb{Z}[E]\otimes\mathbb{Z}[E]}\mathbb{Z}[E']\bigotimes_{\mathbb{Z}[e_1]\otimes\mathbb{Z}[e_2]}\starreduced{\stargraph{2}}\right)\ar{d}
\ar[uur, start anchor = {[xshift=2.2cm]},swap,"\simeq"]
\ar[uur, start anchor = {[xshift=.35cm]},phantom, "\text{\S\ref{subsection:folding}}", pos=.6]
\\
&\displaystyle H_*\left(\bigotimes_{v\in V}\starreduced{\stargraph{d(v)}}\bigotimes_{\mathbb{Z}[E]\otimes\mathbb{Z}[E]}\mathbb{Z}[E']\bigotimes_{\mathbb{Z}[e]\otimes\mathbb{Z}[e]}\mathbb{Z}[e]\right)\ar{d}\ar[ddr, start anchor = {[xshift=2.2cm]},"\simeq"]
\ar[ddr, start anchor = {[xshift=.35cm]},phantom, "\text{\S\ref{subsection:parenthesization}}", pos=.6]
\\
&\displaystyle H_*\left(\bigotimes_{v\in V}\starreduced{\stargraph{d(v)}}\bigotimes_{\mathbb{Z}[E]\otimes\mathbb{Z}[E]}\mathbb{Z}[E]\right)\ar{dr}[swap]{\simeq}
\\
H_*(\intrinsic{\graf})
\ar{rr}[swap]{\simeq}
\ar{ur}[swap]{\simeq}
&&H_*(B(\graf)).
\end{tikzcd} 
\]

This question of comparing different decompositions of a fixed complex is the subject of Appendix~\ref{section:comparing decompositions}. The main tool there, Proposition~\ref{prop:abstract comparison}, is used twice. In \S\ref{subsection:parenthesization}, it is used to establish a compatibility which is applicable to $\E$ and the canonical decomposition of $\graf$. This first compatibility guarantees the commutativity of the lower triangle provided that every inclusion among disjoint unions of basics for the two decompositions is flow compatible. Then, in \S\ref{subsection:folding}, the proposition is used to establish a compatibility applicable to $\E'$ and the canonical decomposition of $\graf'$. This second compatibility guarantees the commutativity of the upper triangle provided the corresponding inclusions are flow compatible. As before, these compatibilities are a direct consequence of Lemma~\ref{lem:inclusion compatibility}.

\section{Homology of graph braid groups}\label{section:tools}
We begin our computational study of the homology of configuration spaces of graphs in earnest. Highlights include Corollary~\ref{cor: les vertex reduced}, an exact sequence associated to the removal of a vertex; Proposition~\ref{proposition: edge multiplication is injective}, which asserts that multiplication by any fixed edge is injective; Proposition~\ref{proposition: top homology}, which identifies the homology in top degree in the case of a trivalent graph; Proposition~\ref{prop:complete graph}, a full computation in the case of the complete graph on four vertices; and Appendix~\ref{appendix:degree one}, which gives a streamlined derivation of the characterization of the first homology due to \cite{KoPark:CGBG}. Except for this last, these results appear to be new.
\subsection{Graphs, star and loop classes, and relations}\label{subsection:examples} Since a graph embedding induces a map at the level of \'{S}wi\k{a}tkowski complexes, some homology classes originate in subgraphs. Moreover, the homology of the configuration spaces of a graph is constrained by relations originating in its subgraphs. In this section, we acquaint ourselves with a few useful generators and relations of this kind.
The generators we discuss are present in~\cite[Section 4]{HarrisonKeatingRobbinsSawicki:NPQSG}, as is the $Q$-relation of Definition~\ref{definition:relations} (present as their equation (9) and Lemma~3).

Recall that the star graph $\stargraph{n}$ is the cone on the set $\{1,\ldots, n\}$. This graph has $n+1$ vertices: a central vertex $v_0$ of valence $n$ and $n$ vertices $\{v_1,\ldots, v_n\}$ of valence $1$. Its half-edges $E$ are $\{e_1,\ldots, e_n\}$, and it has $2n$ corresponding half-edges $H$, of which $n$ of these, $h_1,\ldots, h_n$, are at $v_0$ and one, $h'_i$, is at $v_i$ for $i\ne 0$.

We will also consider several other graphs in this section.

\begin{definition}
The \emph{cycle graph} $\cyclegraph{n}$ is a topological circle equipped with $n$ bivalent vertices and $n$ edges. 
The \emph{lollipop graph} $\lollipopgraph{n}$ is obtained by attaching an extra edge $e_0$ to the cycle graph $\cyclegraph{n}$ at one half-edge. The \emph{theta graph} $\thetagraph{n}$ is the topological suspension (double cone) on $n$ points, with vertices the cone points $v_1$ and $v_2$.
\end{definition}
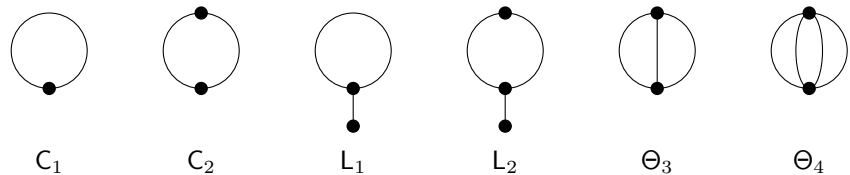
\begin{figure}[ht]
\centering
\begin{tikzpicture}
\fill[black] (0,-.5) circle (2.5pt);
\draw(0,0) circle (.5cm);
\draw(0,-1.2) node[below]{$\cyclegraph{1}$};
\begin{scope}[xshift=2cm]
\fill[black] (0,-.5) circle (2.5pt);
\fill[black] (0,.5) circle (2.5pt);
\draw(0,0) circle (.5cm);
\draw(0,-1.2) node[below]{$\cyclegraph{2}$};
\end{scope}
\begin{scope}[xshift=4cm]
\fill[black] (0,-.5) circle (2.5pt);
\fill[black] (0,-1) circle (2.5pt);
\draw(0,0) circle (.5cm);
\draw(0,-1) -- (0,-.5);
\draw(0,-1.2) node[below]{$\lollipopgraph{1}$};
\end{scope}
\begin{scope}[xshift=6cm]
\fill[black] (0,-.5) circle (2.5pt);
\fill[black] (0,.5) circle (2.5pt);
\fill[black] (0,-1) circle (2.5pt);
\draw(0,0) circle (.5cm);
\draw(0,-1) -- (0,-.5);
\draw(0,-1.2) node[below]{$\lollipopgraph{2}$};
\end{scope}
\begin{scope}[xshift=8cm]
\fill[black] (0,-.5) circle (2.5pt);
\fill[black] (0,.5) circle (2.5pt);
\draw(0,0) circle (.5cm);
\draw(0,.5) -- (0,-.5);
\draw(0,-1.2) node[below]{$\thetagraph{3}$};
\end{scope}
\begin{scope}[xshift=10cm]
\fill[black] (0,-.5) circle (2.5pt);
\fill[black] (0,.5) circle (2.5pt);
\draw(0,0) circle (.5cm);
\draw(0,0) ellipse (.175cm and .5cm);
\draw(0,-1.2) node[below]{$\thetagraph{4}$};
\end{scope}
\end{tikzpicture}
\caption{Cycle graphs, lollipop graphs, and theta graphs}
\end{figure}

The reduced \'{S}wi\k{a}tkowski complex of the star graph $\fullyreduced{\stargraph{n}}$ is concentrated in degrees $0$ and $1$ with a single differential 
\[\mathbb{Z}[E]\langle h_{12},\ldots, h_{1n}\rangle\xrightarrow{\partial}\mathbb{Z}[E]\langle \varnothing\rangle.
\] 

\begin{lemma}\label{lemma:three star}
\begin{enumerate}
\item The homology group $H_1(B(\stargraph{3}))$ is freely generated as a $\mathbb{Z}[E]$-module by a single class $\alpha$ in $H_1(B_2(\stargraph{3}))$.
\item The homology group $H_1(B(\cyclegraph{n}))$  is generated as a $\mathbb{Z}[E]$-module by a single class $\gamma$ in $H_1(B_1(\cyclegraph{n}))$ subject to the relations $e_i\gamma=e_j\gamma$. 
\end{enumerate}
\end{lemma}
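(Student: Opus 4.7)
The plan is to compute both parts via the fully reduced \'{S}wi\k{a}tkowski complex, using Proposition~\ref{prop:reduced complex} together with the Comparison Theorem to reduce everything to explicit linear algebra over $\mathbb{Z}[E]$. At each vertex $v$ of valence $d$, the module $\reducedlocalstates{v}$ contributes $\varnothing$ in bidegree $(0,0)$ together with a free abelian group of rank $d-1$ in bidegree $(1,1)$ spanned by differences of half-edges; at a valence-one vertex it contributes only $\varnothing$.

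For part~(1), the complex $\fullyreduced{\stargraph{3}}$ simplifies to $R\otimes \mathbb{Z}\langle\varnothing,h_{12},h_{13}\rangle$ with $R=\mathbb{Z}[e_1,e_2,e_3]$, concentrated in degrees $0$ and $1$, with $\partial h_{1i}=e_1-e_i$. Hence $H_1$ is the kernel of the $R$-linear map $R^2\to R$ sending $(a,b)$ to $a(e_1-e_2)+b(e_1-e_3)$. The change of coordinates $(e_1,e_2,e_3)\mapsto (e_1,\,e_1-e_2,\,e_1-e_3)$ is a ring automorphism of $R$ exhibiting $(e_1-e_2,\,e_1-e_3)$ as a regular sequence, so the Koszul resolution identifies this kernel as the free rank-one $R$-module generated by the Koszul syzygy $\alpha\coloneqq (e_1-e_3)h_{12}-(e_1-e_2)h_{13}$. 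The bigrading places $\alpha$ in $H_1(B_2(\stargraph{3}))$, as required.

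For part~(2), the complex $\fullyreduced{\cyclegraph{n}}$ is isomorphic, as a bigraded module, to $\mathbb{Z}[e_1,\dots,e_n]\otimes \Lambda[x_1,\dots,x_n]$, where $x_i$ is the difference of half-edges at $v_i$ in bidegree $(1,1)$ and $\partial x_i=e_i-e_{i-1}$ (indices cyclic). Set $\gamma\coloneqq x_1+\cdots+x_n$. The telescoping identity $\sum_i(e_i-e_{i-1})=0$ shows $\partial\gamma=0$, and a direct Leibniz-rule calculation yields
\[
\partial(x_i\gamma)=(e_i-e_{i-1})\sum_{j\neq i}x_j-\Bigl(\sum_{j\neq i}(e_j-e_{j-1})\Bigr)x_i=(e_i-e_{i-1})\gamma,
\]
using the same telescoping to simplify the second sum. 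Thus $(e_i-e_{i-1})\gamma$ is a boundary for every $i$, and chaining these adjacent relations produces $(e_i-e_j)\gamma=0$ in $H_1$ for all $i,j$.

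Finally, to see that $\gamma$ and these relations exhaust $H_1$, I would invoke the homotopy equivalence $B_k(S^1)\simeq S^1$ for $k\ge 1$ (with $B_0(S^1)$ a point), which gives $H_1(B(\cyclegraph{n}))\cong \mathbb{Z}$ in each weight $k\ge 1$ and $0$ in weight~$0$. The candidate module $\mathbb{Z}[E]\gamma/(e_i-e_j)\gamma\cong \mathbb{Z}[e]\gamma$ has the same weight-graded rank, forcing the already-constructed surjection onto $H_1(B(\cyclegraph{n}))$ to be an isomorphism. The main obstacle is precisely this last step: unlike in~(1), the sequence $\{e_i-e_{i-1}\}$ fails to be regular thanks to the cyclic relation $\sum_i(e_i-e_{i-1})=0$, so a purely algebraic determination of the full homology of the associated Koszul-like complex is somewhat delicate, and the topological input sidesteps it cleanly.
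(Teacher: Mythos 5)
Your treatment of part~(1) is correct and, if anything, tidier than the paper's: the paper exhibits the same cycle (your $\alpha$ equals $-a_{123}$ once expanded) but then argues that $\mathbb{Z}[E]\langle a_{123}\rangle$ has the right rank via the Euler characteristic, whereas your identification of the kernel as the image of the Koszul differential for the regular sequence $(e_1-e_2,e_1-e_3)$ gives exactness directly and cleanly isolates the freeness claim.

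Part~(2), however, has a genuine gap in the last step. You have produced a $\mathbb{Z}[E]$-linear map from the candidate module $\mathbb{Z}[e]\gamma$ to $H_1(B(\cyclegraph{n}))$ by showing $\gamma$ is a cycle and each $(e_i-e_j)\gamma$ a boundary, but you have not shown this map is surjective; the phrase ``already-constructed surjection'' asserts something you did not establish. The observation that both sides are free of rank one in each positive weight does not close the gap---an injective map $\mathbb{Z}\to\mathbb{Z}$ need not be onto---and you cannot invoke Proposition~\ref{prop:H1 generators} to supply generation, since its proof passes through Lemma~\ref{lemma: star classes generate stars} and hence through the very lemma you are proving. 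The paper sidesteps this by a genuinely different route: it smooths $\cyclegraph{n}$ to $\cyclegraph{1}$, notes that $\fullyreduced{\cyclegraph{1}}$ has zero differential so its $H_1$ is transparently $\mathbb{Z}[e]$ on the half-edge difference, and uses the functoriality in Theorem~\ref{thm:comparison} (a smoothing is a homeomorphism) to transport both the isomorphism and the $\mathbb{Z}[E]$-action. Alternatively, the purely algebraic route you feared was delicate is actually easy: after the triangular change of variables $y_1 = x_1+\cdots+x_n$, $y_i=x_i$ for $i\geq 2$, and $u=e_1$, $f_i=e_i-e_{i-1}$ for $i\geq 2$, the complex $\fullyreduced{\cyclegraph{n}}$ factors as $\mathbb{Z}[u]\otimes\Lambda[y_1]$ (zero differential) tensored with the Koszul complex of the regular sequence $(f_2,\ldots,f_n)$, so its homology is $\mathbb{Z}[u]\otimes\Lambda[y_1]$ with every $e_i$ acting as $u$, which is exactly the presented module. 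Either fix completes your argument.
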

\begin{proof}

For $\stargraph{3}$, it is a straightforward to see that $H_0(B(\stargraph{3}))$ is rank one in each weight. By observation, the chain $a\coloneqq{}a_{123}=e_1h_{23} + e_2h_{31} + e_3h_{12}$ is closed. Checking the Euler characteristic shows that $\mathbb{Z}[E]\langle a\rangle$ has the correct rank and thus is the entirety of the kernel of $\partial$.

For $\cyclegraph{1}$, the complex $\fullyreduced{\cyclegraph{1}}$ has no differential and its degree one subspace is isomorphic to the module described. There is a (non-unique) smoothing from $\cyclegraph{n}$ to $\cyclegraph{1}$; the induced map on \'{S}wi\k{a}tkowski complexes identifies every edge of $\cyclegraph{n}$ with the unique edge of $\cyclegraph{1}$.
\end{proof}
The sign of the generating class $\alpha$ in $H_1(B_2(\stargraph{3}))$ depends on the choice of ordering of the half-edges. We will use the notation $\alpha_{123}=\alpha_{231}=\alpha_{312}$ for $\alpha$ with the convention employed here and $\alpha_{132}=\alpha_{321}=\alpha_{213}$ for the generator with the opposite sign. 

\begin{definition}
Let $\stargraph{3}\to \graf$ be a graph morphism. We call the image of the class $\alpha$ in $H_1(B_2(\graf))$ a \emph{star class} and a representing cycle of the form above a \emph{star cycle}.

Let $\cyclegraph{n}\to \graf$ be a graph morphism. We call the image of the class $\gamma$ in $H_1(B(\graf))$ a \emph{loop class} and a representing cycle a \emph{loop cycle}.
\end{definition}
A star class depends only on the isotopy class of the graph morphism which induces it. Note also that we have such a morphism whenever there are three distinct half-edges at a vertex; it is not necessary that the corresponding edges be distinct.
\begin{lemma}\label{lemma: star classes are non-trivial}
Star and loop classes are non-trivial.
\end{lemma}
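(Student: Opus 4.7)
The proof separates into two independent arguments, one for each class of generator.

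For loop classes, let $f\colon\cyclegraph{n}\to\graf$ be a graph morphism. Since graph morphisms are injective, $f(\cyclegraph{n})$ is an embedded topological circle in the $1$-dimensional complex $\graf$, and the loop class $f_*(\gamma)$ lies in $H_1(B_1(\graf))=H_1(\graf)$. This embedded circle gives an oriented cellular $1$-cycle $\sum_{e\subseteq f(\cyclegraph{n})}\pm e$, a visibly nonzero element of the cellular $1$-cycle group; since $\graf$ is $1$-dimensional, this group coincides with $H_1(\graf)$, so $f_*(\gamma)\neq 0$.

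For star classes, the plan is to detect $\alpha$ using the sign character. For any space $X$, the $\Sigma_2$-action by coordinate swap makes $\Conf_2(X)\to B_2(X)$ a principal $\Sigma_2$-bundle, and for an injective continuous $f\colon X\to Y$ the induced square is directly checked to be a pullback of such bundles. The monodromy representations therefore assemble into a natural transformation
\[
\mathrm{sgn}\colon \pi_1(B_2(-))\longrightarrow \Sigma_2=\mathbb{Z}/2,
\]
which, because the target is abelian, descends to a natural transformation $H_1(B_2(-))\to\mathbb{Z}/2$. On $\stargraph{3}$, the loop exchanging two particles by routing them through the central vertex realizes the transposition in $\Sigma_2$, so $\mathrm{sgn}_{\stargraph{3}}$ is surjective on $\pi_1(B_2(\stargraph{3}))$. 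Combined with Lemma~\ref{lemma:three star}, which identifies $H_1(B_2(\stargraph{3}))=\mathbb{Z}\cdot\alpha$, this forces $\mathrm{sgn}_{\stargraph{3}}(\alpha)\neq 0$. For any graph morphism $f\colon\stargraph{3}\to\graf$, naturality then gives $\mathrm{sgn}_\graf(f_*(\alpha))\neq 0$, so $f_*(\alpha)\neq 0$ in $H_1(B_2(\graf))$.

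The main conceptual step is the choice of detecting invariant: once one recognizes that the exchange character $H_1(B_2(-))\to\mathbb{Z}/2$ is natural for graph morphisms and nontrivial on the tripod, the conclusion is immediate. A minor bookkeeping point is that Lemma~\ref{lemma:three star} produces $\alpha$ from an explicit cycle on the \'{S}wi\k{a}tkowski side, transported into $H_1(B_2(\stargraph{3}))$ by the Comparison Theorem (Theorem~\ref{thm:comparison}); since that theorem is an isomorphism of groups, it carries generators to generators, so $\alpha$ is indeed a generator of the topological $H_1(B_2(\stargraph{3}))\cong\mathbb{Z}$ and the sign-detection argument applies unambiguously.
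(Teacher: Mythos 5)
Your proof is correct and follows essentially the same route as the paper: for loop classes you observe that $H_1(B_1(\graf)) \cong H_1(\graf)$ and an embedded circle gives a nonzero cycle, and for star classes you use the sign/exchange character $H_1(B_2(-)) \to \mathbb{Z}/2$ (the paper's $\sigma_\graf$), which is natural in graph morphisms and evaluates to $1$ on the star generator by the exchange of the two particles through the central vertex. The extra details you supply (the $\Sigma_2$-bundle pullback verification and the bookkeeping via Theorem~\ref{thm:comparison}) are sound but are implicit in the paper's terser account.
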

\begin{proof}
There is a natural homomorphism $\sigma_\graf: H_1(B_2(\graf))\to \mathbb{Z}/2\mathbb{Z}$, which is induced on Abelianizations by the homomorphism from the braid group $\pi_1(B_2(\graf))$ recording the permutation of the endpoints of a braid. By naturality, evaluating $\sigma_\graf$ on a star class gives the same answer as evaluating $\sigma_{\stargraph{3}}$ on a generator of $H_1(B_2(\stargraph{3}))$. Since any cycle representing such a generator interchanges the two points of the configuration, $\sigma_\graf$ takes the value $1$ on this generator.

Loop classes live in $H_1(B_1(\graf))$ which is naturally isomorphic to $H_1(\graf)$; a loop in $\graf$ is never trivial.
\end{proof}

Star classes play a pivotal role in the remainder of the paper. For example, we have the following result.

\begin{lemma}\label{lemma: star classes generate stars}
The $\mathbb{Z}[E]$-module $H_1(B_k(\stargraph{n}))$ is generated by star classes.
\end{lemma}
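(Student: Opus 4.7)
The plan is to pass to the reduced Świątkowski complex, recognize it as the low-degree part of a Koszul complex on an obvious regular sequence, and match Koszul 2-relations with star cycles.

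First, by the Comparison Theorem and by Proposition~\ref{prop:reduced complex} applied with $U=V(\stargraph{n})$ (no vertex of $\stargraph{n}$ is isolated), I may compute $H_1(B(\stargraph{n}))$ using the full reduction $\fullyreduced{\stargraph{n}}$. Each leaf vertex of $\stargraph{n}$ carries a single half-edge, so $\reducedlocalstates{v_i}=\mathbb{Z}\langle\varnothing\rangle$ for $i\ge 1$; thus $\fullyreduced{\stargraph{n}}$ collapses to the two-term complex
\[
\bigoplus_{i=2}^{n}\mathbb{Z}[E]\cdot h_{1i}\xrightarrow{\ \partial\ }\mathbb{Z}[E],\qquad \partial(h_{1i})=e_i-e_1.
\]
The degree-one part of the lemma has an obvious bigraded refinement: $H_1(B_k(\stargraph{n}))$ is the weight-$k$ piece of $\ker(\partial)$.

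Next, I will observe that this is precisely the truncation in degrees $\le 1$ of the Koszul complex on the sequence $\mathbf{x}=(e_2-e_1,\ldots,e_n-e_1)$ inside $R=\mathbb{Z}[e_1,\ldots,e_n]$. A linear change of variables $y_0=e_1$, $y_i=e_i-e_1$ exhibits $\mathbf{x}$ as part of a polynomial generating set for $R$, so $\mathbf{x}$ is a regular sequence and the associated Koszul complex is acyclic in positive degrees. Consequently $\ker(\partial)$ equals the image of the Koszul differential $\bigwedge^2(R^{n-1})\to R^{n-1}$ and is therefore generated as a $\mathbb{Z}[E]$-module by the Koszul relations
\[
\beta_{ij}\coloneqq (e_i-e_1)h_{1j}-(e_j-e_1)h_{1i},\qquad 2\le i<j\le n.
\]

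Finally, I will match $\beta_{ij}$ with star cycles. Using the identity $h_{1i}-h_{1j}=-h_{ij}$, direct manipulation yields
\[
\beta_{ij}=-\bigl(e_1 h_{ij}+e_i h_{j1}+e_j h_{1i}\bigr)=-\alpha_{ij1},
\]
which is, up to sign, the star cycle associated to the graph morphism $\stargraph{3}\to\stargraph{n}$ picking out the legs indexed $1$, $i$, and $j$. Hence even the subfamily of star classes involving the first leg already generates $H_1(\fullyreduced{\stargraph{n}})$ as a $\mathbb{Z}[E]$-module, and a fortiori the full collection of star classes does so; intersecting with the weight-$k$ subspace gives the statement. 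The only real subtlety is keeping the sign and indexing conventions for $h_{ij}$ and $\alpha_{ijk}$ consistent, which is routine.
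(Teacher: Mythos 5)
Your argument is correct but takes a genuinely different route from the paper. The paper proves this lemma in the tail-reduced complex $\tailreduced{\stargraph{n}}$ by an explicit induction on $n$: given a cycle $a=\sum p_i h_i$, it uses the two constraint equations to peel off star cycles $a_{i,n-1,n}$ and show that the remainder does not involve $e_n$ or $h_n$, hence lies in the image from $\stargraph{n-1}$. You instead work in the fully reduced complex $\fullyreduced{\stargraph{n}}$, which (as in the paper's Lemma~\ref{lemma:three star} setup) is the degree-$\le 1$ truncation of the Koszul complex on $(e_2-e_1,\dots,e_n-e_1)$; this is a regular sequence in $\mathbb{Z}[E]$, so $H_1$ of the Koszul complex vanishes and the degree-one kernel is generated by the Koszul syzygies $\beta_{ij}$. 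Your identification $\beta_{ij}=\pm a_{1ij}$ is correct (your differential has the opposite sign to the paper's $\partial(h_{1i})=e(h_1)-e(h_i)$, but that only flips the overall sign of $\beta_{ij}$, which is harmless). Your route is shorter and more conceptual, and even yields a slightly sharper conclusion -- the star classes at triples containing a fixed leg already generate -- at the cost of invoking Koszul acyclicity over $\mathbb{Z}$, whereas the paper's argument is elementary and self-contained. Both arguments correctly respect the weight grading, so passing to weight $k$ is immediate in either version.
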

Similar statements go back at least to~\cite{FarleySabalka:DMTGBG,FarleySabalka:PGBG}.
\begin{proof}
We proceed by induction on $n$. Since $\stargraph{n}$ is topologically an interval for $n\in\{1,2\}$, the claim holds trivially in these cases, and the case $n=3$ follows from Lemma~\ref{lemma:three star}, so we may assume that $n\ge 4$.

Any degree $1$ element $a\in \tailreduced{\stargraph{n}}$ can be written in the form $a=\sum_{i=1}^n p_i h_i$ with $p_i\in \mathbb{Z}[E]$, and imposing the condition $\partial a=0$ yields the two equations
\begin{equation}\label{equation:cycles in stargraph}
\partial a=\sum_{i=1}^n p_i(e_i - v)=0\Longleftrightarrow
\begin{cases}
\sum_{i=1}^n p_i=0;\\
\sum_{i=1}^n e_i p_i=0.
\end{cases}
\end{equation} For each $i\le n-2$, we write $p_i=(e_n-e_{n-1})p_i'+r_i$, where $r_i$ does not involve the variable $e_n$. 
Now we can rewrite $a$ partially in terms of the star cycles $a_{i,n-1,n}$ as follows:
\begin{align*}
a=\sum_{i=1}^{n-2} (p'_ia_{i,n-1,n} + r_ih_i) + q_{n-1}h_{n-1}+q_nh_n,
\end{align*}
where
\begin{align*}
q_{n-1}&\coloneqq{}p_{n-1}-\sum_{i=1}^{n-2} (e_i-e_n)p_i',&
q_n&\coloneqq{}p_n-\sum_{i=1}^{n-2}(e_{n-1}-e_i)p_i'.
\end{align*}
Write $q_{n-1}$ and $q_n$ as
\begin{align*}
q_{n-1}&=e_n q_{n-1}' +r_{n-1},&
q_n&=e_n q_n'+r_{n},
\end{align*}
where $r_n$ and $r_{n-1}$ do not involve the variable $e_n$. Then considering terms involving $e_n$ in the equations (\ref{equation:cycles in stargraph}) using the fact that $\partial a_{i,n-1,n}=0$, we have
\[
\begin{cases}
e_n (q_{n-1}'+q_n')=0;\\
e_{n-1}q_{n-1}'+e_n q_n' +r_n =0
\end{cases}
\Longleftrightarrow
(e_n-e_{n-1})q_n' +r_n =0.
\]
Since $r_n$ does not involve the variable $e_n$, it follows that $q_n'=r_n=0$, whence $q_n=0$. We further conclude that $q_{n-1}'=0$, and so $q_{n-1}$ does not involve the variable $e_n$; therefore, $a-\sum_{i=1}^{n-2}a_{i,n-1,n}$ does not involve $e_n$ or $h_n$ and so must lie in the image of the map $\tailreduced{-}$ induced by the inclusion $\stargraph{n-1}\to \stargraph{n}$ that misses the $n$th leg. The inductive hypothesis now completes the proof.
\end{proof}

\begin{proposition}\label{prop:H1 generators}
If $\graf$ is connected, then $H_1(B(\graf))$ is generated as a $\mathbb{Z}[E]$-module by star classes and loop classes.
\end{proposition}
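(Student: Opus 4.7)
I would work in the fully reduced \'{S}wi\k{a}tkowski complex $\fullyreduced\graf$, which is quasi-isomorphic to $\intrinsic\graf$ by Proposition~\ref{prop:reduced complex} whenever $\graf$ has no isolated vertex (the single-vertex case is trivial since then $H_1(B(\graf))=0$). A degree 1 element has the form $c=\sum_v c_v$ with $c_v=\sum_{h\in H(v)}q_h^v h$ for $q_h^v\in\mathbb{Z}[E]$ satisfying $\sum_h q_h^v=0$. Setting $f_v\coloneqq\partial c_v=\sum_h q_h^v e(h)\in\mathbb{Z}[E]$, the cycle condition becomes: each $f_v$ lies in the $\mathbb{Z}[E]$-submodule $I_v\coloneqq(e-e':e,e'\in E(v))$ and $\sum_v f_v=0$. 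Writing $\partial_v\colon\mathbb{Z}[E]\otimes\reducedlocalstates{v}_1\to\mathbb{Z}[E]$ for the local boundary at $v$, this yields a short exact sequence
\[
0\to\bigoplus_v\ker(\partial_v)\to Z_1(\fullyreduced\graf)\to K\to 0,\qquad K\coloneqq\Bigl\{(f_v)\in\bigoplus_v I_v:\sum_v f_v=0\Bigr\}.
\]
The plan is to generate the left-hand summand by star cycles and self-loop cycles, and the right-hand quotient by larger loop cycles together with degree-2 boundaries.

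For the local kernels, I would fix $v$ and enumerate the distinct edges at $v$ as $e_1,\ldots,e_m$. The map $\partial_v$ factors through $\mathbb{Z}[E]\otimes\ker(\mathbb{Z}[E(v)]\to\mathbb{Z})$, whose kernel is identified with the first Koszul syzygy module on the sequence $(e_2-e_1,\ldots,e_m-e_1)$ in $\mathbb{Z}[E]$. Because this sequence is part of a regular sequence in the polynomial ring, the Koszul complex is exact in positive degrees, so the first syzygies are generated by the Koszul relations $(e_j-e_1)(e_i-e_1)-(e_i-e_1)(e_j-e_1)$. A direct calculation extending Lemma~\ref{lemma:three star} lifts these, up to sign, to the star cycles $a^v_{1ij}=e_1h_{ij}+e_ih_{j1}+e_jh_{1i}$ at $v$. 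The kernel of the factorization, corresponding to differences of half-edges within a common self-loop at $v$, contributes loop cycles around $\cyclegraph{1}\hookrightarrow\graf$. This recovers Lemma~\ref{lemma: star classes generate stars} as a special case.

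For the quotient $K$, I would consider the $\mathbb{Z}[E]$-linear map $H_1(\graf)\otimes\mathbb{Z}[E]\to K$ induced by sending an embedded cycle $\cyclegraph{n}\hookrightarrow\graf$ with successive vertices $v_0,\ldots,v_{n-1}$ and edges $e_0,\ldots,e_{n-1}$ to the tuple $(f_{v_i})=(e_i-e_{i-1})$. Since $H_1(\graf;\mathbb{Z})$ admits a basis of embedded cycles (for example, the fundamental cycles associated to a spanning tree), each basis element arises from a graph morphism and gives an honest loop cycle. The proposition then follows provided this map is surjective modulo the images in $K$ of the degree 2 boundaries $\partial(p(h-h')(k-k'))$ in $\fullyreduced\graf$, which transfer $\pm p(e(h)-e(h'))(e(k)-e(k'))$ between the two affected vertices. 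The main obstacle is verifying this surjectivity rigorously: Koszul-type relations among distinct $\mathbb{Z}[E]$-presentations of a given $(f_v)\in K$ obstruct a direct decomposition by fundamental-cycle flows, and the transfer boundaries must be deployed carefully to realign the competing presentations. I expect this is the technical heart of the proof.
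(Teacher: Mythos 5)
Your decomposition of $Z_1(\fullyreduced\graf)$ into the local kernels $\bigoplus_v\ker\partial_v$ and the quotient $K$ is a clean structural observation, and the Koszul-syzygy identification of $\ker\partial_v$ with the $\mathbb{Z}[E]$-span of star cycles and self-loop cycles at $v$ is correct and in the right spirit. But the argument has a genuine gap precisely where you flag it: you do not establish that the images of loop cycles, together with the degree-2 boundary transfers, cover $K$. This is not a routine verification. Boundaries only transfer elements of the product $I_v\cdot I_w$ between two vertices at a time, loop cycles only contribute the specific tuples $(e_i-e_{i-1})_i$, and a priori an element $(f_v)\in K$ need not decompose into such pieces. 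In fact, for a tree (where $H_1(\graf)=0$ and no loop cycles exist) the claim collapses to the assertion that $K$ is spanned by transfers alone, which is an honest commutative-algebra statement about intersections versus products of the ideals $I_v$ across the tree. The only easy way to see that $K$ is covered by boundaries is to first know that $H_1(B(\graf))$ is generated by star classes in the tree case, which is circular. So the proposal reduces the proposition to a statement you haven't proved and whose proof would likely require most of the machinery you were hoping to avoid.

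The paper takes a different route that sidesteps a direct analysis of $K$. It first settles the tree case by induction on vertices of valence at least three, using the $1$-cut isomorphism $H_*(B(\graf))\cong H_*(B(\graf_v))/(e_1\sim e_2)$ of Proposition~\ref{proposition: one-bridges} (itself resting on the vertex long exact sequence of Corollary~\ref{cor: les vertex reduced} and edge-injectivity) together with Lemma~\ref{lemma: star classes generate stars}. For a general connected graph it subdivides, chooses a spanning tree $\treegraph$ with complementary middle edges $E_0$, forms the direct sum $\fullyreduced{\treegraph}\oplus\bigoplus_{e\in E_0}\fullyreduced{\graf^{[e]}}$, and shows the induced map to $\fullyreduced{\graf}$ is surjective on $H_1$ by explicitly correcting the spanning-tree component of a cycle by the relevant loop cycles. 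That surjection argument is precisely what replaces the missing surjectivity in your scheme. If you want to salvage your approach, you would essentially need to reprove the tree case of the proposition (or the equivalent fact that $K$ is spanned by transfers for trees) by some independent means and then run a spanning-tree induction for the loop-class part, at which point you would be reproducing the paper's argument in different clothing.
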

\begin{remark}
Again this is more or less implicit in the work of Farley and Sabalka~\cite{FarleySabalka:DMTGBG,FarleySabalka:PGBG}. We provide a proof using our language and methods.
\end{remark}
\begin{proof}
Assume first that $\graf$ is a tree. In this case the claim follows from Proposition~\ref{proposition: one-bridges}, Lemma~\ref{lemma: star classes generate stars}, and induction on the number of vertices of $\graf$ of valence at least three. 
In the general case, subdivide each edge of $\graf$ into three edges, calling the resulting graph $\graf'$. Each edge of $\graf$ corresponds to three edges of $\graf'$; let $E_{\mathrm{mid}}$ be the set of edges of $\graf'$ none of whose vertices are vertices of $\graf$. 
Let $E_0$ be a subset of $E_{\mathrm{mid}}$ whose complement is a spanning tree $\treegraph$ of $\graf'$. 
For $e\in E_0$, write $\graf^{[e]}$ for the disjoint union of the unique cycle subgraph $\cyclegraph{e}$ contained in $e\cup \treegraph$ with a disjoint edge for each edge of $E_{\mathrm{mid}}\backslash E(\cyclegraph{e})$. There are canonical graph embeddings of $\treegraph$ and $\cyclegraph{e}$ into $\graf'$ which induce graph morphisms from $\treegraph$ and $\cyclegraph{e}$ into $\graf$. Write $\widehat{S}(\graf)\coloneqq \fullyreduced{\treegraph}\oplus\bigoplus_{e\in E_0} \fullyreduced{\graf^{[e]}}$; then we have a map of differential graded $\mathbb{Z}[E]$-modules  \[\phi:\widehat{S}(\graf)\to \widetilde{S}(\graf)\] induced by these graph morphisms. It is clear that $H_1(B(\graf^{[e]}))$ is generated as a $\mathbb{Z}[E]$-module by its unique loop class for each $e\in E_0$, and we have already shown that $H_1(B(\treegraph))$ is generated by star classes; therefore, it will suffice to show that $\phi$ induces a surjection on $H_1$. 

For $e\in E_0$, write $e'$ and $e''$ for the unique pair of edges of $\treegraph$ with $\phi(e)=\phi(e')=\phi(e'')$. At the chain level, $\phi|_{\widetilde{S}_1(\treegraph)}$ is surjective with kernel the $\mathbb{Z}[E]$-span of the set $\{(e'-e''):e\in E_0\}$. Let $c_e\in \fullyreduced{\cyclegraph{e}}$ be a cycle representing a loop class and choose a degree $1$ element $b_e\in \fullyreduced{\treegraph}$ with $\phi(b_e)=\phi(c_e)$ and $\partial b_e = \pm (e'- e'')$. Then $\phi(b_e-c_e)=0$ and $\partial(b_e-c_e)=\pm (e'-e'')\in \fullyreduced{\treegraph}$. 

Now, suppose we are given $b\in \fullyreduced{\treegraph}$ with $\phi(\treegraph)$ closed. Then \[\boundary b = \sum_{e\in E_0}p_e(e'-e''),\] and we set
\[b'\coloneqq b+\sum_{e\in E_0}\mp p_e(b_e-c_e)\in \widehat S(\graf).\]
Then $\phi(b')=\phi(b)$ and $b'$ is closed, as desired.
\end{proof}

Now we turn to relations.

\begin{lemma}
\begin{enumerate}
\item The homology group $H_0(B(\stargraph{2}))$ is generated by the class of the empty configuration $\varnothing$ subject to the relation $e_1\varnothing=e_2\varnothing$.
\item In $H_1(B(\stargraph{4}))$, the star classes satisfy the relation \[e_1\alpha_{234}-e_2\alpha_{341}+e_3\alpha_{412}-e_4\alpha_{123}=0.\]
\item In $H_1(B(\lollipopgraph{n}))$, the loop and star classes satisfy the relation 
\[(e-e_0)\gamma = \alpha\]
where $e$ is an edge of the cycle subgraph.
\item 
Let the edges of $\thetagraph{3}$ be numbered from $1$ to $3$ and likewise for the half-edges at $v_1$ and at $v_2$. Then the star class $\alpha_{123}$ at $v_1$ and the star class $\alpha_{321}$ at $v_2$ are equal.
\end{enumerate}
\end{lemma}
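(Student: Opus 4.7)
Each of the four relations can be verified by applying the Comparison Theorem (Theorem~\ref{thm:comparison}) to translate the statement into an assertion about the \'{S}wi\k{a}tkowski complex. By Proposition~\ref{prop:reduced complex} I may replace $\intrinsic{\graf}$ by $\fullyreduced{\graf}$ whenever this is convenient, which it will be in every case. The general shape of the argument is then either a direct computation at the chain level, for (1) and (2), or the identification of a small explicit bounding chain, for (3) and (4).

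For (1), I observe that $\fullyreduced{\stargraph{2}}$ is concentrated in degrees $0$ and $1$, equal to $\mathbb{Z}[E]\langle\varnothing, h_{12}\rangle$ with $\partial h_{12} = e_1-e_2$. Hence $H_0 \cong \mathbb{Z}[E]\varnothing / (e_1-e_2)\varnothing$, which is exactly the claimed presentation. For (2), I would expand each star cycle $a_{ijk} = e_ih_{jk}+e_jh_{ki}+e_kh_{ij}$ in the basis $\{h_{12},h_{13},h_{14}\}$ for $\widetilde{S}(v_0)$ at the centre of $\stargraph{4}$, using $h_{jk}=h_{1k}-h_{1j}$, and verify that the alternating sum $e_1 a_{234}-e_2 a_{341}+e_3 a_{412}-e_4 a_{123}$ vanishes as a chain. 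Collecting by basis element, each coefficient is a difference of two identical monomials $e_pe_q$ and so collapses to zero; this is essentially the Koszul-type identity expressing that the star cycles transform as $\bigwedge^3$ of a four-dimensional space.

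For (4), I exploit that $\fullyreduced{\thetagraph{3}}$ is non-trivial in degree two, since the tensor runs over the two vertices. Expanding the two star cycles in the natural bases $\{h_{12}, h_{13}\}$ at $v_1$ and $\{h'_{12}, h'_{13}\}$ at $v_2$, the difference $\alpha^{(v_1)}_{123} - \alpha^{(v_2)}_{321}$ becomes a sum of four explicit monomials, and a direct application of the graded Leibniz rule shows it is the boundary of the two-term degree-two chain
\[\beta = h_{12}\otimes h'_{13} - h_{13}\otimes h'_{12}.\]
For (3), the strategy is analogous: represent $\gamma$ as a sum $\sum h_{ab}$ at the vertices of $\cyclegraph{n}\subset\lollipopgraph{n}$ and $\alpha$ as a star cycle at the attaching vertex of the tail, then check that $(e-e_0)\gamma - \alpha$ either vanishes as a chain or, more likely, is the boundary of a short explicit chain supported at that attaching vertex after telescoping the identities $\partial h_{ab} = e_a - e_b$ around the cycle.

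The main obstacle throughout is sign and orientation bookkeeping. The convention $\alpha_{ijk} = -\alpha_{ikj}$, the ordering of the three pairs of half-edges at the two vertices of $\thetagraph{3}$ along each shared edge, and the orientation of the loop in $\lollipopgraph{n}$ relative to the chosen edge $e$ together determine the precise signs in the four statements; once these conventions are fixed, the rest is routine algebra.
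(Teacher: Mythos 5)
Your proposal is correct and for parts (1), (2), and (4) it reproduces the paper's argument essentially verbatim: the observation that $\fullyreduced{\stargraph{2}}$ has $\partial h_{12}=e_1-e_2$ so that $H_0\cong\mathbb{Z}[E]/(e_1-e_2)$, the chain-level expansion and cancellation of the alternating sum of star cycles $a_{ijk}$ in $\stargraph{4}$, and the explicit bounding chain $h_{12}\otimes h'_{13}-h_{13}\otimes h'_{12}$ whose boundary is the difference of the two star cycles in $\thetagraph{3}$ (one checks $\partial(h_{12}\otimes h'_{13}-h_{13}\otimes h'_{12})=a_{123}^{(v_1)}-a_{321}^{(v_2)}$ by the Leibniz rule, exactly as you describe). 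For part (3), the paper takes a cleaner route than your direct computation: since the smoothing $\lollipopgraph{n}\to\lollipopgraph{1}$ is a graph morphism and a homeomorphism, it induces an isomorphism on $H_*(B(-))$ sending $\gamma\mapsto\gamma$, $\alpha\mapsto\alpha$, $e\mapsto e$, and $e_0\mapsto e_0$, so one only needs to verify the identity in $\fullyreduced{\lollipopgraph{1}}$, where $\gamma$ has chain representative $h_{12}$ with $\partial h_{12}=0$ and $\alpha$ is the star cycle $(e_0-e)h_{12}$, making the identity immediate. Your proposed direct verification on $\lollipopgraph{n}$ does work—for instance for $n=2$ one has $(e_1-e_0)\gamma-\alpha=\partial\bigl(h_{10}^{v_1}\otimes h_{12}^{v_2}\bigr)$ with the appropriate sign of $\gamma$—but it requires producing a bounding chain supported at both the attaching vertex and the cycle vertices, so it is more bookkeeping-intensive than the reduction the paper uses; in particular the difference does not vanish as a chain for $n>1$ as your hedging alludes to.
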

\begin{proof}
\begin{enumerate}
\item The cokernel of the differential is generated freely by $(e_1-e_2)$.
\item The claim follows by expansion of the star cycles $a_{ijk}$.
\item It suffices to verify the claim for $\lollipopgraph{1}$ where it is already true for the (unique) chain level representatives using the reduced \'{S}wi\k{a}tkowski complex.
\item The chain $h_{12}\otimes h'_{13}- h_{13}\otimes h'_{12}$ in $S_2(\thetagraph{3})$ bounds the difference between the corresponding star cycles.

\end{enumerate}
\end{proof}

\begin{definition}
\label{definition:relations}
Let $\graf$ be a graph.
\begin{enumerate}
\item Let $\stargraph{2}$ to $\graf$ be a graph morphism. We call the induced relation in $H_0(B(\graf))$ an \emph{$I$-relation}.
\item Let $\stargraph{4}\to \graf$ be a graph morphism. We call the induced relation on star classes in $H_1(B(\graf))$ an \emph{$X$-relation}.
\item Let $\lollipopgraph{n}\to \graf$ be a graph morphism. We call the induced relation on loop and star classes in $H_1(B(\graf))$ a \emph{$Q$-relation}.
\item Let $\thetagraph{}'\to \thetagraph{3}$ be a smoothing and $\iota:\thetagraph{}'\to \graf$ a graph embedding. Denoting the preimage of $v_i$ in $\thetagraph{}'$ by $v_i'$, we call the relation induced on the star classes at $\iota(v'_1)$ and $\iota(v'_2)$ in $H_1(B(\graf))$ a \emph{$\Theta$-relation}.
\item Let $\cyclegraph{n}\to \graf$ be a graph morphism. We call the induced relation (from Lemma~\ref{lemma:three star}) on loop classes in $H_1(B(\graf))$ an \emph{$O$-relation}.
\end{enumerate}
\end{definition}

Repeated use of the $I$-relation implies the well-known fact that $H_0(B_k(\graf))$ is one-dimensional for $\graf$ connected.

These atomic classes combine naturally.
\begin{definition}
Consider a graph $\graf_0$ written as the disjoint union of $n_1$ cycle graphs and $n_2$ copies of $\stargraph{3}$. Since the configuration spaces of the components all have torsion-free homology, there is a class $\beta$ in $H_{n_1+n_2}(B_{n_1+2n_2}(\graf_0))$ corresponding to the tensor product of the loop classes in each cycle graph and the star classes in each star graph. If $\graf_0\to \graf$ is a graph morphism, we call the image of $\beta$ in $H_{n_1+n_2}(B_{n_1+2n_2}(\graf))$ the \emph{external product} of the corresponding loop and star classes. We will use juxtaposition to indicate the external product---writing $\alpha_1\alpha_2$ for the external product of classes $\alpha_1$ and $\alpha_2$---and we caution the reader that this construction is only partially defined and so does not define a product on homology.
\end{definition}

\subsection{Euler characteristic}\label{section:euler characteristic}
One calculation that requires no further tools is that of the Euler characteristic. This result has been known at least since~\cite{Gal:ECCSC}.

\begin{corollary}\label{corollary: Euler characteristic}
The Euler characteristic of $B_k(\graf)$ is given by
\[
\chi(B_k(\graf))=\sum_{U\subseteq V}(-1)^{|U|}\binom{k-|U|+|E|-1}{|E|-1}\prod_{v\in U}(d(v)-1).
\]
\end{corollary}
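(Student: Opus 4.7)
The plan is to compute the Euler characteristic directly from the \'Swi\k{a}tkowski complex via its graded Poincar\'e series, which factors as a tensor product by construction.

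First I would invoke the Comparison Theorem (Theorem~\ref{thm:comparison}) so that $\chi(B_k(\graf))$ is computed as the alternating sum of ranks in degree and a fixed weight $k$ of $\intrinsic{\graf} = \mathbb{Z}[E]\otimes\bigotimes_{v\in V} S(v)$. Since Euler characteristic is multiplicative for tensor products of bounded-rank complexes, I would introduce the one-variable generating series
\[
\chi_t(M)\coloneqq \sum_{i,k}(-1)^i \rk(M_i)_k\, t^k
\]
and observe that $\chi_t(M\otimes N)=\chi_t(M)\chi_t(N)$. Then $\chi(B_k(\graf))$ is the coefficient of $t^k$ in $\chi_t(\intrinsic{\graf})$.

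Next, using the explicit bidegrees recorded in the construction of $\intrinsic{\graf}$, I would compute the two types of factors: since every edge has bidegree $(0,1)$,
\[
\chi_t(\mathbb{Z}[E])=\frac{1}{(1-t)^{|E|}}=\sum_{k\ge 0}\binom{k+|E|-1}{|E|-1}t^k;
\]
and since $\localstates{v}$ has one generator of bidegree $(0,0)$, one of bidegree $(0,1)$, and $d(v)$ generators of bidegree $(1,1)$,
\[
\chi_t(\localstates{v})=1+t-d(v)t=1-(d(v)-1)t.
\]
Multiplying out, $\chi_t(\intrinsic{\graf})=(1-t)^{-|E|}\prod_{v\in V}\bigl(1-(d(v)-1)t\bigr)$.

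Finally, expanding the product over vertices as
\[
\prod_{v\in V}\bigl(1-(d(v)-1)t\bigr)=\sum_{U\subseteq V}(-t)^{|U|}\prod_{v\in U}(d(v)-1)
\]
and extracting the coefficient of $t^k$ from the resulting series yields the stated formula. There is no real obstacle here, since the factorization of the \'Swi\k{a}tkowski complex does all of the work; the only point to be careful about is the sign bookkeeping for the half-edge generators, which sit in odd homological degree.
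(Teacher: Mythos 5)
Your proof is correct and is precisely the argument the paper intends (the paper states this corollary without explicit proof, describing it as requiring "no further tools" beyond the \'Swi\k{a}tkowski complex and Theorem~\ref{thm:comparison}). Your generating-series computation of $\chi_t(\intrinsic{\graf})=(1-t)^{-|E|}\prod_{v\in V}\bigl(1-(d(v)-1)t\bigr)$ is exactly the content of the reformulated corollary that immediately follows in the text, where $(1-t)^{-|E|}=\prod_v(1-t)^{-d(v)/2}$ by the handshake lemma.
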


Defining the Euler--Poincar\'e series of $\graf$ to be the formal power series
\[P_\chi(\graf)(t)=\sum_{k=0}^\infty \chi(B_k(\graf))t^k,\]
we have the following reformulation.

\begin{corollary}
Let $\graf$ be a graph. Then
\[
P_\chi(\graf)(t)=\prod_v \frac{1+t(1-d(v))}{(1-t)^{\frac{d(v)}{2}}}.
\]
\end{corollary}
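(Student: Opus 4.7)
The plan is to bypass the preceding corollary entirely and instead read the Euler–Poincaré series directly off the \'Swi\k{a}tkowski complex, exploiting its tensor product structure.

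By the Comparison Theorem, $\chi(B_k(\graf))$ equals the Euler characteristic of the weight $k$ subcomplex $\intrinsic{\graf}_k$, which, being finitely generated in each bidegree, computes as the alternating sum $\sum_i (-1)^i \rk \intrinsic{\graf}_{i,k}$. Assembling these over all $k$, we obtain
\[
P_\chi(\graf)(t) \;=\; \sum_{i,k\geq 0} (-1)^i \rk \intrinsic{\graf}_{i,k}\, t^k,
\]
which is the bigraded Hilbert series of $\intrinsic{\graf}$ evaluated at $(-1,t)$ in (degree, weight).

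Next I would use the factorization
\[
\intrinsic{\graf} \;=\; \mathbb{Z}[E] \otimes \bigotimes_{v\in V} \localstates{v}
\]
together with the fact that the Hilbert series is multiplicative for tensor products of bigraded free modules. Each edge $e$ contributes a polynomial generator with $|e|=(0,1)$, so $\mathbb{Z}[E]$ contributes $(1-t)^{-|E|}$. For each vertex $v$, the factor $\localstates{v}$ has the basis $\{\varnothing, v\} \cup H(v)$ in bidegrees $(0,0), (0,1),$ and $d(v)$ copies of $(1,1)$; its contribution to the Hilbert series is therefore
\[
1 + t - d(v)\, t \;=\; 1 + t\bigl(1-d(v)\bigr).
\]

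Finally, using the handshake identity $\sum_{v\in V} d(v) = 2|E|$, I would redistribute the edge denominator across the vertices as $(1-t)^{-|E|} = \prod_{v} (1-t)^{-d(v)/2}$, yielding
\[
P_\chi(\graf)(t) \;=\; \prod_{v\in V} \frac{1 + t(1-d(v))}{(1-t)^{d(v)/2}},
\]
as claimed. There is no real obstacle here; the only point requiring care is the bookkeeping of the bigrading and the verification that the Hilbert series of a tensor product of bigraded free modules really does factor under the substitution $(-1,t)$, which is immediate from the sign conventions laid out in the introduction.
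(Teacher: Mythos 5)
Your argument is correct and genuinely different from the paper's. The paper derives this generating-function identity from the closed formula of Corollary~\ref{corollary: Euler characteristic}: it observes that both expressions depend only on the degree sequence of $\graf$, and then verifies agreement on the test families $\graphfont{G}_{2n}$ (wedges of circles) and $\graphfont{G}_{2m+1,2n+1}$, which realize all possible valences. You instead bypass Corollary~\ref{corollary: Euler characteristic} entirely and read the Euler--Poincar\'e series directly off the tensor factorization $\intrinsic{\graf}=\mathbb{Z}[E]\otimes\bigotimes_v \localstates{v}$, using that Euler characteristic is invariant under passing to homology and that the bigraded Hilbert series is multiplicative over tensor products of free $\mathbb{Z}$-modules; the handshake identity $\sum_v d(v)=2|E|$ then redistributes the edge denominator across the vertices. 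Your route is arguably cleaner: it makes the product-over-vertices structure of the answer manifest from the start, whereas the paper's proof has to recover that structure a posteriori from the (unproven) explicit sum formula and the specific test graphs. The only step you flag as needing care---multiplicativity of the Hilbert series under the sign substitution---is indeed immediate, since the factors are free Abelian in each bidegree.
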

\begin{proof}
For $n\geq0$, the formula holds for the graph $\graphfont{G}_{2n}$ that is the wedge of $n$ circles. It also holds, for $m\leq n$ non-negative, for the graph $\graphfont{G}_{2m+1,2n+1}$, which is given by connecting the vertices of $\graphfont{G}_{2m}$ and $\graphfont{G}_{2n}$ by a single edge. Since the formula of Corollary \ref{corollary: Euler characteristic} does not depend on which pairs of vertices share an edge, the claim follows.
\end{proof}

\subsection{Spectral sequences and exact sequences}\label{section:exact sequences} 
One particularly nice class of tools afforded by the \'{S}wi\k{a}tkowski complex consists of spectral sequences that arise by decomposing the differential into a bicomplex. In general these facilitate the reduction of computations of $H_*(B(\graf))$ to computations for simpler graphs. Choosing our input data judiciously, we get an exact sequence which arises from deleting a vertex of the graph.
\begin{lemma}\label{lemma: spectral sequence existence}
Let $J$ be a set of half-edges of a graph $\graf$, and let $U$ be the subset of vertices of $\graf$ that half-edges of $J$ are incident on. Then:
\begin{enumerate}
\item  the differential $\boundary$ of the \'{S}wi\k{a}tkowski complex decomposes into the sum of two commuting $\mathbb{Z}[E]$-linear differentials $\boundary_J+(\boundary-\boundary_J)$, where $\boundary_J$ changes the number of generators containing half-edges in $J$, and
\item the (first) spectral sequence associated to this bicomplex collapses at the $(|U|+1)$st page, necessarily to the homology of the \'{S}wi\k{a}tkowski complex.
\end{enumerate}
\end{lemma}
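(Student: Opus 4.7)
The plan is to exploit the tensor product structure $\intrinsic{\graf} = \mathbb{Z}[E]\otimes\bigotimes_{v\in V}\localstates{v}$ together with the fact that $\partial$ is the graded derivation determined by $\partial(h) = e(h) - v(h)$ on half-edges. First I would decompose $\partial$ vertex-by-vertex as $\partial = \sum_{v\in V}\partial_v$, where $\partial_v$ acts non-trivially on a basis element $m\otimes\bigotimes_w x_w$ only when $x_v$ is a half-edge, in which case it replaces the $v$-factor by $\partial(x_v)$ with a Koszul sign depending on the parity of the tensor factors to its left. Each $\partial_v$ is manifestly $\mathbb{Z}[E]$-linear, and the identity $\partial^2=0$ unpacks into $\partial_v^2 = 0$ for each $v$ (immediate, since after one application the $v$-factor has degree zero in $\localstates{v}$ and is therefore annihilated by $\partial_v$) together with the graded anticommutations $\partial_v\partial_w + \partial_w\partial_v = 0$ for $v\ne w$.

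Next I would split each $\partial_v = \partial_v^J + \partial_v^{J^c}$ according to whether the half-edge being differentiated lies in $J$ or not, and set $\partial_J \coloneqq \sum_v \partial_v^J$, so that $\partial - \partial_J = \sum_v \partial_v^{J^c}$. Both operators are $\mathbb{Z}[E]$-linear as sums of the $\partial_v$'s, and by construction $\partial_J$ decreases the number of tensor factors belonging to $J$ by one. Grouping the identity $\partial^2 = 0$ according to whether each of the two successive differentiations uses a half-edge in $J$ or in its complement now yields the three bicomplex relations $\partial_J^2 = 0$, $(\partial - \partial_J)^2 = 0$, and $\partial_J(\partial - \partial_J) + (\partial - \partial_J)\partial_J = 0$, which are exactly the conditions that $\partial_J$ and $\partial - \partial_J$ graded-commute and assemble into a bicomplex. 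This establishes part (1).

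For part (2), I would filter $\intrinsic{\graf}$ by letting $F_p$ be the $\mathbb{Z}$-span of basis elements containing at most $p$ tensor factors in $J$. Since each basis element contains exactly one factor per vertex and only the vertices of $U$ can contribute $J$-factors, the filtration is bounded: $F_{-1} = 0$ and $F_{|U|} = \intrinsic{\graf}$, so it has length $|U|+1$. Because $\partial - \partial_J$ preserves the filtration and $\partial_J$ strictly decreases it by one, the associated spectral sequence is precisely the first spectral sequence of the bicomplex. The differential $d_r$ decreases the filtration by $r$ and hence is forced to vanish for $r > |U|$, so the sequence collapses at the $(|U|+1)$st page; by the standard convergence theorem for bounded filtrations the limit is the associated graded of the induced filtration on $H_*(\intrinsic{\graf})$.

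The main technical point will just be the bookkeeping of Koszul signs needed to verify the three bicomplex relations from $\partial^2=0$; once those are in hand, the spectral sequence argument is the standard one for a filtration of finite length.
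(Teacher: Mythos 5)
Your proof is correct and matches the paper's approach: the paper decomposes $\partial = \sum_h \partial_h$ directly over all half-edges, observes that the $\partial_h$ individually square to zero and pairwise graded-commute, and then deduces the bicomplex relations and page-$(|U|+1)$ collapse by exactly the $J$-count bounds you identify. Your vertex-by-vertex decomposition $\partial = \sum_v \partial_v$ followed by the split $\partial_v = \partial_v^J + \partial_v^{J^c}$ is a reorganization of the same half-edge decomposition, and your explicit treatment of the Koszul signs and the filtration bounds $F_{-1}=0$, $F_{|U|}=\intrinsic{\graf}$ is sound (and somewhat more detailed than the paper's own).
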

\begin{proof}
For the first statement, it suffices to note, first, that $\boundary$ can be rewritten as the sum $\boundary=\sum \boundary_h$ over all half-edges, and, second,that the operators $\boundary_h$ individually square to zero and pairwise commute. 

For the second statement, since the differential $\boundary_J$ lowers the number of half-edges in $J$ in a homogeneous monomial by $1$ and since there are necessarily between $0$ and $|U|$ such half-edges in any homogeneous monomial, all differentials starting from page $(|U|+1)$ vanish identically.
\end{proof}
When $J$ consists of all half-edges incident on a set of vertices, there is a particularly straightforward description of the $E^1$ page of this spectral sequence, or rather of the corresponding spectral sequence for the reduced \'{S}wi\k{a}tkowski complex. 

\begin{definition}
Let $v$ be a vertex of the graph $\graf$. Then we write $\graf_v$ for the \emph{vertex explosion} of $\graf$ at $v$, that is, the graph obtained by 
\begin{enumerate}
\item replacing the vertex $v$ with $\{v\}\times H(v)$ and 
\item modifying the attaching map for half-edges attached at $v$ by letting such a half-edge $h$ be attached at $v\times h$.
\end{enumerate}
\end{definition}
There is a graph morphism from $\graf_v$ to $\graf$ which takes each edge to itself, takes the vertex $v\times h$ to $e(h)$, and takes each other vertex to itself. Defining this morphism requires choices (of precisely where in $e(h)$ to send $v\times h$) but the isotopy class of this graph morphism is unique. See Figure~\ref{figure: vertex explosion}.
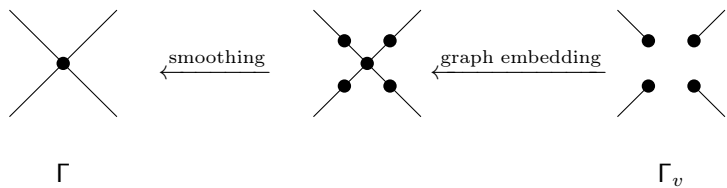
\begin{figure}
\centering
\begin{tikzpicture}
\begin{scope}
\fill[black] (0,0) circle (2.5pt);
\draw(-.707,-.707) -- (.707,.707);
\draw(-.707,.707) -- (.707,-.707);
\draw(0,-1.2) node[below]{$\graf$};
\draw(2,0) node{$\xleftarrow{\text{smoothing}}$};
\draw(6,0) node{$\xleftarrow{\text{graph embedding}}$};
\end{scope}
\begin{scope}[xshift=4cm]
\fill[black] (0,0) circle (2.5pt);
\fill[black] (-.3,-.3) circle (2.5pt);
\fill[black] (-.3,.3) circle (2.5pt);
\fill[black] (.3,-.3) circle (2.5pt);
\fill[black] (.3,.3) circle (2.5pt);
\draw(-.707,-.707) -- (.707,.707);
\draw(-.707,.707) -- (.707,-.707);
\end{scope}
\begin{scope}[xshift=8cm]
\fill[black] (-.3,-.3) circle (2.5pt);
\fill[black] (-.3,.3) circle (2.5pt);
\fill[black] (.3,-.3) circle (2.5pt);
\fill[black] (.3,.3) circle (2.5pt);
\draw(-.707,-.707) -- (-.3,-.3);
\draw(.707,.707) -- (.3,.3);
\draw(-.707,.707) -- (-.3,.3);
\draw(.707,-.707) -- (.3,-.3);
\draw(0,-1.2) node[below]{$\graf_v$};
\end{scope}
\end{tikzpicture}
\caption{A local picture of vertex explosion along with an intermediate graph which admits a graph morphism from $\graf_v$ and a smoothing to $\graf$.}
\label{figure: vertex explosion}
\end{figure}
More generally, if $U$ is a set of vertices, then the isomorphism class of vertex explosion $\graf_U$ at all vertices of $U$ (by sequentially exploding the vertices of $U$) is well-defined and independent of the choice of order of explosion.
\begin{lemma}\label{lemma:SS reduced}
Suppose $J$ is the set of all half-edges incident on a set $U$ of vertices of a graph $\graf$. Then the spectral sequence of Lemma~\ref{lemma: spectral sequence existence} is naturally defined for the reduced \'Swi\k{a}tkowski complex. In weight $k$, the entry $E^1_{p,q}$ in this spectral sequence for the reduced complex is (unnaturally) isomorphic to
\[
E^1_{p,q}\cong \bigoplus_{v_1,\ldots, v_q\in U}\bigoplus^{\prod (d(v_i)-1)}H_{p}(B_{k-q}(\graf_U)).
\]
with differential as described in the proof.

\end{lemma}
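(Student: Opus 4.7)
The plan is twofold: first, to show that the bicomplex decomposition $\partial=\partial_J+(\partial-\partial_J)$ of Lemma~\ref{lemma: spectral sequence existence} restricts to $\reducedintrinsic{U}{\graf}$; second, to compute the resulting $E^1$-page by analyzing the associated graded of the $q$-filtration.

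For the restriction, observe that on a reduced generator $h_{ij}\in\reducedlocalstates{v}$ the Leibniz terms involving the vertex class $v(h_{ij})=v$ cancel, giving $\partial(h_{ij})=(e(h_i)-e(h_j))\otimes\varnothing$, which lies in the reduced complex. By hypothesis $J$ is exactly the set of half-edges incident on $U$, so $\partial_J$ is precisely the part of $\partial$ acting on $\reducedlocalstates{v}$-factors for $v\in U$, and thus preserves $\reducedintrinsic{U}{\graf}$. The complementary operator $\partial-\partial_J$ touches only $\localstates{v}$-factors for $v\notin U$, which are unchanged by reduction, so it too preserves the reduced complex.

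For the $E^1$-page, I would bigrade $\reducedintrinsic{U}{\graf}$ by $(p,q)$, where $p$ counts non-$\varnothing$ factors at vertices outside $U$ and $q$ counts them at vertices in $U$. Then $\partial_J$ has bidegree $(0,-1)$ and $\partial-\partial_J$ has bidegree $(-1,0)$; filtering by $q$ makes the $E^0$-differential equal to $\partial-\partial_J$. At level $q$, the associated graded decomposes as a direct sum indexed by $q$-element subsets $S\subseteq U$, and the $S$-summand factorizes as
\[
W_S\otimes\biggl(\mathbb{Z}[E]\otimes\bigotimes_{v\notin U}\localstates{v},\;\partial-\partial_J\biggr),
\]
where $W_S\coloneqq\bigotimes_{v\in S}(\reducedlocalstates{v})_1$ is free abelian of rank $\prod_{v\in S}(d(v)-1)$ carrying zero differential. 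I would then identify the remaining tensor factor with the \'{S}wi\k{a}tkowski complex of the vertex explosion $\graf_U$ partially reduced at its newly created explosion vertices: these are $1$-valent and collapse to $\mathbb{Z}\langle\varnothing\rangle$ upon reduction, while $\graf_U$ agrees with $\graf$ on edges and on the local data at vertices outside $U$. By Proposition~\ref{prop:reduced complex} and Theorem~\ref{thm:comparison}, the homology of this factor in weight $k-q$ is $H_p(B_{k-q}(\graf_U))$, and summing over $S$ yields the claimed formula. The induced $d^1$ sends a class represented by $h_{ij}\otimes\zeta$ for $v\in S$ and $\zeta$ a class in the residual homology to $(e(h_i)-e(h_j))\cdot\zeta$ in the $(S\setminus\{v\})$-summand, dropping $q$ by one and raising the weight in $H_p(B_\bullet(\graf_U))$ by one.

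The principal subtlety is verifying that the residual complex is indeed a reduction of $\intrinsic{\graf_U}$; this hinges on the observation that $\partial-\partial_J$ never references attaching data at $U$-vertices, so it depends only on the edges and on half-edges at non-$U$ vertices, all of which coincide between $\graf$ and $\graf_U$.
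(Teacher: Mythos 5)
Your proof is correct and follows essentially the same route as the paper: show the bicomplex structure restricts to the reduced complex, identify the $E^0$ columns with copies of the reduced \'{S}wi\k{a}tkowski complex of $\graf_U$, and invoke the comparison theorem to compute the $E^1$ entry. One slip of phrasing: you define $p$ as the number of ``non-$\varnothing$ factors at vertices outside $U$,'' but for $v\notin U$ the degree-zero generator $v\in\localstates{v}$ is also non-$\varnothing$, so under that literal reading $\partial-\partial_J$ (whose $v(h)$ term preserves the non-$\varnothing$ count) would not have bidegree $(-1,0)$. You should instead take $p$ to be the number of \emph{half-edge} (degree-one) factors at vertices outside $U$, which is what the filtration in Lemma~\ref{lemma: spectral sequence existence} actually complements. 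With that fix the bidegree claims, the factorization $W_S\otimes\reducedintrinsic{}{\graf_U}$ in weight $k-q$, and the resulting formula all go through as you describe; your $d^1$ in terms of $h_{ij}\mapsto e(h_i)-e(h_j)$ agrees with the paper's after the paper's choice of distinguished half-edges $h_0^v$, modulo your leaving implicit the alternating signs over the $q$ slots.
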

\begin{proof}
The condition on $J$ implies that we can understand the induced filtration on the reduced subcomplex by counting generators that are differences of half-edges incident on vertices in $U$, so that the bicomplex structure is natural in the reduced setting.

Moreover, the entry $E^0_{p,q}$ is isomorphic to the sum of copies of the reduced \'Swi\k{a}tkowski complex of $\graf_U$ indexed by the choice of $q$ half-edge differences at distinct vertices in $U$. The differential from $E^0_{p,q}$ to $E^0_{p-1,q}$ is the sum of indexed copies of the reduced \'Swi\k{a}tkowski differential of $\graf_U$, yielding the indicated homology groups.

By fixing a half-edge $h_0^v$ of each $v\in U$, we can index this double sum by sequences of $q$ half-edges incident on distinct vertices of $U$, none of them $h_0^v$.
Then the differential $d^1:E^1_{p,q}\to E^1_{p,{q-1}}$ takes the signed sum over deleting a half-edge $h$ from the sequence and multiplying the homology class by $e(h)-e(h_0^{v(h)})$:
\[
d^1(\alpha_{(h_1,\ldots,h_q)})=\sum_{i=1}^q (-1)^{i-1}\left(e(h_i)-e(h_0^{v(h_i)})\right)\alpha_{(h_1,\ldots,\widehat{h_i},\ldots,h_q)}.
\]
\end{proof}
In the computations of the rest of this paper, we will only use the simplest case of this spectral sequence, which degenerates at page $E^2$.

\begin{corollary}\label{cor: les vertex reduced}
Fix a half-edge $h_0\in H(v)$. There is a long exact sequence of differential bigraded $\mathbb{Z}[E]$-modules
\[
\begin{tikzcd}[column sep=2.9mm, row sep=2mm]
\cdots\rar
&
H_{n+1}(B_k(\graf_v))\rar
&
H_{n+1}(B_k(\graf))\rar
&
\displaystyle\bigoplus_{h\in H(v)\backslash\{h_0\}}
H_n(B_{k-1}(\graf_v))\rar
&\ 
\\
\rar
&
H_{n}(B_k(\graf_v))\rar
&
H_{n}(B_k(\graf))\rar
&
\displaystyle\bigoplus_{h\in H(v)\backslash\{h_0\}}
H_{n-1}(B_{k-1}(\graf_v))\rar
&\cdots
%
\end{tikzcd}
\]
The connecting homomorphism $\delta$ from $\bigoplus H_*(B(\graf_v))\to H_*(B(\graf_v))\{-1\}$ is explicitly given by the formula
\[
\delta \beta_h = (e(h)-e(h_0))\beta_h.\]
\end{corollary}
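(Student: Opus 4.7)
The plan is to apply Lemma~\ref{lemma:SS reduced} to the degenerate case $U=\{v\}$, $J=H(v)$, in which the spectral sequence collapses immediately at $E^2$. Since $U$ is a singleton and $\reducedlocalstates{v}$ is spanned by $\varnothing$ together with the differences $h_i-h_j$, every monomial in $\reducedintrinsic{\{v\}}{\graf}$ contains either no or exactly one half-edge difference at $v$. Thus the bicomplex of Lemma~\ref{lemma: spectral sequence existence} is supported in the two columns $q=0$ and $q=1$. Because $\partial_J$ strictly decreases $q$, the $q=0$ part is a subcomplex, and the column filtration produces a short exact sequence of chain complexes
\[
0 \to F_0 \to C \to F_1 \to 0,
\]
where $C$ is $\reducedintrinsic{\{v\}}{\graf}$ in weight $k$, and $F_0$, $F_1$ are its $q=0$ and $q=1$ pieces.

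Next, I would identify the homology of each term. By Proposition~\ref{prop:reduced complex} and Theorem~\ref{thm:comparison}, $H_n(C)\cong H_n(B_k(\graf))$. Lemma~\ref{lemma:SS reduced} gives $H_n(F_0)\cong H_n(B_k(\graf_v))$. Since each half-edge difference carries bidegree $(1,1)$, the quotient $F_1$ is a direct sum of $|H(v)|-1$ copies of the weight-$(k-1)$ reduced \'Swi\k{a}tkowski complex of $\graf_v$, each shifted up by $1$ in degree; hence $H_n(F_1)\cong \bigoplus_{h\in H(v)\setminus\{h_0\}} H_{n-1}(B_{k-1}(\graf_v))$. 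Substituting these isomorphisms into the long exact sequence associated to the short exact sequence above yields the desired long exact sequence.

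Finally, I would identify the connecting homomorphism. By the snake-lemma recipe, $\delta$ lifts a class from $F_1$ to $C$, applies the full differential $\partial$, and projects the result into $F_0$; the composition with projection is exactly $\partial_J$. In the reduced \'Swi\k{a}tkowski complex, $\partial_J(h-h_0)=e(h)-e(h_0)$, so $\delta$ acts on a class in the $h$-summand as multiplication by the edge difference $e(h)-e(h_0)\in\mathbb{Z}[E]$, recovering the formula $\delta\beta_h=(e(h)-e(h_0))\beta_h$. The chief bookkeeping subtlety is tracking the $(1,1)$ bidegree shift incurred by the half-edge differences when identifying $H_n(F_1)$; once that is done, the argument is routine homological algebra.
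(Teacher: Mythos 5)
Your proof is correct and follows the same route as the paper: both invoke Lemma~\ref{lemma:SS reduced} with $U=\{v\}$ and observe that the two-column spectral sequence collapses at $E^2$, which is exactly the content of the short-exact-sequence and snake-lemma argument you spell out explicitly. Your derivation also correctly places $\graf_v$ (rather than $\graf$) in the third term of each row of the long exact sequence, which is what the statement should read and what the paper relies on in its later applications of this corollary.
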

\begin{proof}
The spectral sequence of Lemma~\ref{lemma:SS reduced} in the case $U=\{v(h_0)\}$ collapses at $E^2$, yielding a long exact sequence with connecting homomorphisms the $E^1$ differentials, which are described in the proof of the lemma.
\end{proof}

We now use this spectral sequence to recover a homologico-algebraic version of our decomposition theorem (Theorem~\ref{thm:decomposition}) for the following special type of decomposition.

\begin{definition}
Let $\graf$ be a graph and $k$ a non-negative integer. 
A \emph{$k$-bridge decomposition} $\graf=\graf_1\sqcup_L \graf_2$ consists of a set $L$ of $k$ distinct edges of $\graf$, called the \emph{$k$-bridge}, and two subgraphs $\widetilde\graf_1$ and $\widetilde\graf_2$ of $\graf$ such that \[\graf=\widetilde\graf_1\cup_{V_1} L\cup_{V_2}\widetilde\graf_2,\] where $V_1\cup V_2$ is a set of $2k$ distinct vertices. In this case, we write $\graf_j=\widetilde\graf_j\cup L$.
\end{definition}

\begin{proposition}\label{prop:spectral sequence}
Let $\graf= \graf_1\sqcup_L \graf_2$ be a graph with a $k$-bridge decomposition and $\mathbb{K}$ a field.
There is a spectral sequence converging to $H(B(\graf),\mathbb{K})$ with $E^2$ page given by $\tor^{\mathbb{K}[L]}(H(B(\graf_1),\mathbb{K}),H(B(\graf_2),\mathbb{K}))$.
For $k$ at least $2$, this spectral sequence collapses at page $k$ (for $k=1$ it collapses at page $2$). 
\end{proposition}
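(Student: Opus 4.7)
The plan is to realize the $k$-bridge decomposition as a $1$-fold decomposition (Definition~\ref{defi:decomposition}), apply Theorem~\ref{thm:decomposition}, and then produce the claimed spectral sequence as the standard bar / Eilenberg--Moore spectral sequence for the resulting derived tensor product. Explicitly, I would take $\widetilde X_0 = \widetilde\graf_1$, $\widetilde X_1 = \widetilde\graf_2$, let $A_1$ be a set with one point per bridge, and identify $A_1 \times I$ with $L$. Equipping $B^\SD$ with the local flow of Proposition~\ref{prop:flows} (the interval flow on $L$ and the flows on the adjacent components), Theorem~\ref{thm:decomposition} yields, after tensoring with $\mathbb{K}$, a quasi-isomorphism
\[
C^\SD(B(\graf)) \otimes \mathbb{K} \;\simeq\; M_1 \otimes^{\mathbb{L}}_{R} M_2,
\]
where $R = I(B^\SD(L)) \otimes \mathbb{K}$ and $M_i = I(B^\SD(\graf_i)) \otimes \mathbb{K}$, and the left-hand side has homology $H_*(B(\graf); \mathbb{K})$ by Proposition~\ref{prop:computes homology}.

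Next I would identify the bridge algebra. Since $L$ is a disjoint union of $k$ intervals, every component of each $B_n(L)$ is contractible, indexed by weight distributions among the bridges; hence $H_*(B(L); \mathbb{K}) = \mathbb{K}[L]$ concentrated in homological degree zero. Running the interval flow (Lemma~\ref{lem:corolla morse}, restricted to interval components) realizes this identification at the chain level, so $R$ is the polynomial algebra $\mathbb{K}[L]$ concentrated in degree zero. Under this, the action constructed in Construction~\ref{construction:algebra and module structure} makes $M_i$ a dg $\mathbb{K}[L]$-module whose homology is $H_*(B(\graf_i); \mathbb{K})$ with its natural $\mathbb{K}[L]$-module structure. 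The bar / Eilenberg--Moore spectral sequence of this derived tensor product then takes the form
\[
E^2_{p,q} = \Tor^{\mathbb{K}[L]}_p\bigl(H_*(B(\graf_1); \mathbb{K}),\, H_*(B(\graf_2); \mathbb{K})\bigr)_q \;\Longrightarrow\; H_{p+q}(B(\graf); \mathbb{K}).
\]

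The hardest part is the collapse claim. The basic dimensional bound is Hilbert's syzygy theorem: $\mathbb{K}[L]$ has global dimension $k$, so $E^2$ is supported in columns $0 \leq p \leq k$; since $d^r$ has bidegree $(-r, r-1)$, all sufficiently high differentials vanish by direct bidegree inspection, yielding collapse. For $k=1$ this immediately gives collapse at page $2$, as $E^2$ has only the two nonempty columns $p \in \{0,1\}$. For $k \geq 2$, achieving the sharper collapse at page $k$ requires the additional vanishing of $d^k \colon E^k_{k, *} \to E^k_{0, *+k-1}$, which dimension alone does not supply. I would obtain this extra vanishing by tracking the weight grading: the Koszul resolution $\mathbb{K}[L] \otimes \Lambda^\bullet L \twoheadrightarrow \mathbb{K}$ places generators of $\Tor^{\mathbb{K}[L]}_p$ in weight at least $p$, while the bar differentials preserve weight, so a careful bidegree accounting should rule out the last potentially surviving differential and pin down the collapse at the stated page.
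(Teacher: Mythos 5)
Your construction of the spectral sequence via Theorem~\ref{thm:decomposition} and the bar complex is a legitimate alternative route, and the paper explicitly acknowledges this equivalence in the remark immediately following the proposition. The paper's own proof bypasses the decomposition theorem: it subdivides each bridge edge to produce $k$ bivalent vertices, then applies the elementary bicomplex filtration of Lemma~\ref{lemma:SS reduced} directly to the reduced \'{S}wi\k{a}tkowski complex, so that the $E^1$ page becomes the two-sided Koszul complex computing $\Tor^{\mathbb{K}[L]}$. A small caveat to your setup: Proposition~\ref{prop:flows} only supplies a local flow for the \emph{canonical} decomposition; for the ad hoc $1$-fold decomposition you describe you would need to equip the two components with flows yourself (the identity flow suffices) and verify the flow-compatibility and isotopy-invariance conditions.

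The genuine gap is the collapse claim for $k\geq 2$. You correctly observe that Hilbert's syzygy theorem confines the $E^2$ page to Tor-degrees $0$ through $k$ and that degree considerations alone only give collapse at page $k+1$, so the differential $d^k$ must be shown to vanish. But your proposed weight-grading fix does not close this: both $\Tor_k$ and $\Tor_0$ can be nonzero in the same total weight (the weight of a $\Tor_p$ class is the sum of the weights of the tensor factors plus $p$, and both sides range over all sufficiently large weights), so a weight count cannot kill $d^k$. What the paper actually proves is the strictly stronger statement that $\Tor_k^{\mathbb{K}[L]}\bigl(H(B(\graf_1);\mathbb{K}), H(B(\graf_2);\mathbb{K})\bigr)=0$ outright, so the source of $d^k$ is already zero at the $E^2$ page. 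The mechanism you are missing is Proposition~\ref{proposition: edge multiplication is injective} (edge injectivity): the top Koszul differential on $H(B(\graf_1);\mathbb{K})\otimes_{\mathbb{K}} H(B(\graf_2);\mathbb{K})$ has a component given by $e-e'$, where a bridge edge has been split into halves $e\in E(\graf_1)$ and $e'\in E(\graf_2)$, and filtering by weight in one tensor factor (exactly as in the proof of Proposition~\ref{proposition: one-bridges}) shows this map is injective once multiplication by $e$ is injective on $H(B(\graf_1);\mathbb{K})$. Without invoking edge injectivity, the collapse at page $k$ rather than $k+1$ does not follow.
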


\begin{proof}
Subdivide each edge of the bridge and let $J$ be the set of $k$ new vertices. 
The spectral sequence of Lemma~\ref{lemma:SS reduced} for this vertex set $J$ has a special property. 
Because each vertex is valence $2$, the $E^0$ page has $E^1_{p,q}=\wedge^q H_p(B_{k-q}(\graf_1\sqcup \graf_2))$. 
After passing to $\mathbb{K}$ coefficients and invoking the K\"{u}nneth theorem, examination of the $d^1$ differential reveals that it is the Koszul differential which calculates the $\tor$ groups of the statement of the proposition. 
Moreover, $d^1$ from $E^1_{p,k}$ to $E^1_{p,k-1}$ is injective by Proposition~\ref{proposition: edge multiplication is injective}, which means that $E^2$ is concentrated in rows $q=0$ to $q=k-1$.
\end{proof}

\begin{corollary}
Let $\graf= \graf_1\sqcup_L \graf_2$ be a graph equipped with a $2$-bridge decomposition. 
Then the associated graded of $H(B(\graf))$ is isomorphic to $\tor^{\mathbb{K}[L]}(H(B(\graf_1)), H(B(\graf_2)))$.
\end{corollary}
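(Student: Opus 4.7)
The plan is to apply Proposition~\ref{prop:spectral sequence} directly in the case $k=2$. That proposition constructs a spectral sequence with $E^2$ page
\[
E^2 \cong \tor^{\mathbb{K}[L]}(H(B(\graf_1),\mathbb{K}),\, H(B(\graf_2),\mathbb{K}))
\]
converging to $H(B(\graf),\mathbb{K})$ and, crucially, asserts that this spectral sequence collapses at page $k$ when $k\ge 2$. For a $2$-bridge decomposition this collapse statement is precisely the assertion that $E^2 = E^\infty$.

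Since $E^\infty$ is, by construction, the associated graded of the filtration on $H(B(\graf),\mathbb{K})$ induced by the bicomplex filtration of Lemma~\ref{lemma:SS reduced}—where $J$ is the vertex set obtained by subdividing each of the two bridge edges—the claimed isomorphism is immediate. I expect there to be no genuine obstacle beyond unpacking the meaning of "collapses at page $k$": all the substantive input is already contained in the proposition, which in turn relies on the injectivity of the $d^1$ differential from $E^1_{p,2}$ to $E^1_{p,1}$ supplied by Proposition~\ref{proposition: edge multiplication is injective}. Once one observes that $E^2$ is concentrated in rows $q=0,1$ for $k=2$, the standard degree count rules out any further nonzero differential, completing the argument.
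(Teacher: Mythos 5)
Your proposal is correct and is exactly the (implicit) proof the paper has in mind: Corollary~5.13 is stated immediately after Proposition~\ref{prop:spectral sequence} with no separate argument, precisely because the $k=2$ case of "collapses at page $k$" already says $E^2=E^\infty$, and $E^\infty$ is by definition the associated graded of the filtration coming from the bicomplex of Lemma~\ref{lemma:SS reduced}. Your supporting observation—that the injectivity of $d^1:E^1_{p,2}\to E^1_{p,1}$ from Proposition~\ref{proposition: edge multiplication is injective} kills row $q=2$, leaving $E^2$ concentrated in rows $q=0,1$ so that all higher differentials vanish for degree reasons—is exactly the mechanism behind the collapse claim.
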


We defer consideration of the simpler situation of a $1$-bridge decomposition to Proposition \ref{proposition: one-bridges}, which is an integral statement.

\begin{remark}
Under the quasi-isomorphism supplied by the decomposition theorem, the spectral sequence of Proposition \ref{prop:spectral sequence} is identified with the K\"{u}nneth spectral sequence.
\end{remark}

\subsection{Edge injectivity}\label{section:faithfulness}
We will make repeated use of the following result as a technical tool, but it is interesting in itself, and we think of it as a first step in an investigation of the $\mathbb{Z}[E]$-module structure enjoyed by the homology of the configuration spaces of $\graf$, in the tradition of the study of homological stability. This study is continued in detail in the sequel \cite{AnDrummond-ColeKnudsen:ESHGBG}---see also \cite{Ramos:SPHTBG}.

\begin{proposition}\label{proposition: edge multiplication is injective}
For any $e\in E$, multiplication by $e$ is injective on $H_*(B(\graf))$.
\end{proposition}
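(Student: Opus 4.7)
The plan is to establish the stronger claim that $H_*(\intrinsic{\graf})$ is $e$-torsion-free and then invoke the comparison theorem (Theorem~\ref{thm:comparison}). The argument is a spectral-sequence computation driven by the $e$-adic filtration on the \'{S}wi\k{a}tkowski complex.

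First, set $W\coloneqq\mathbb{Z}[E\setminus\{e\}]\otimes\bigotimes_v\localstates{v}$, so that $\intrinsic{\graf}\cong\mathbb{Z}[e]\otimes_{\mathbb{Z}}W$ as $\mathbb{Z}[e]$-modules. Decompose the differential uniquely as $\partial=\partial_0+e\cdot\partial_1$, where $\partial_0,\partial_1\colon W\to W$ are $\mathbb{Z}$-linear of homological degree $-1$, extended $\mathbb{Z}[e]$-linearly: for a half-edge $h$ of $e$ at vertex $v$, the relation $\partial(h)=e\cdot\varnothing-v$ contributes $-v$ to $\partial_0(h)$ and $\varnothing$ to $\partial_1(h)$, whereas for a half-edge $h'$ of any other edge, $\partial_0(h')=e(h')\cdot\varnothing-v$ already lies in $W$ and $\partial_1(h')=0$. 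The Leibniz rule extends these to $W$, and the identity $\partial^2=0$ yields $\partial_0^2=0$, $\partial_0\partial_1+\partial_1\partial_0=0$, and $\partial_1^2=0$.

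Next, filter $\intrinsic{\graf}$ by the $e$-adic filtration $F^p\intrinsic{\graf}\coloneqq e^p\cdot\intrinsic{\graf}$. In each weight $k$ one has $F^p\intrinsic{\graf}_k=0$ for $p>k$, so the associated spectral sequence converges weightwise. The canonical identification $\mathrm{gr}^pF\cong W$ given by $e^pw\leftrightarrow w$ transports the induced differential to $\partial_0$; in particular $E^{1,p}_{*,k}\cong H_*(W_{k-p},\partial_0)$ for every $p\ge 0$, with $d^1$ induced by $\partial_1$. Multiplication by $e$ preserves the filtration, carrying $F^p$ into $F^{p+1}$, and after the identification it acts between consecutive columns as the \emph{identity} on $H_*(W_{k-p},\partial_0)$. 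By $\mathbb{Z}[e]$-linearity, $e\cdot$ commutes with every differential $d^r$, so inductively it induces an isomorphism on every page, in particular on $E^\infty\cong\mathrm{gr}\,H_*(\intrinsic{\graf})$.

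To conclude, suppose $\xi\in H_*(\intrinsic{\graf})$ is a nonzero class with $e\cdot\xi=0$. Since the induced filtration on homology is bounded in each weight, there is a largest $p$ with $\xi\in F^pH_*(\intrinsic{\graf})$, and the image $\bar\xi\in\mathrm{gr}^pH_*(\intrinsic{\graf})\cong E^\infty_{p,*}$ is nonzero by maximality. The image of $e\cdot\xi$ in $\mathrm{gr}^{p+1}H_*(\intrinsic{\graf})$ equals $e\cdot\bar\xi$, which is nonzero by the isomorphism on $E^\infty$, contradicting $e\cdot\xi=0$. Hence $\xi=0$. The main subtlety is bookkeeping around the filtration: verifying that $e\cdot$ acts as the identity on $E^1$ under the canonical identification $\mathrm{gr}^pF\cong W$, and that it genuinely commutes with every higher differential. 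Once these points are pinned down, the bounded-filtration argument is routine.
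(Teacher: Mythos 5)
Your proof is correct, and it proves the same statement the paper does (that $H_*(\intrinsic{\graf})$ has no $e$-torsion), but by a genuinely different route. The paper's proof is a change of variables: rewriting $\mathbb{Z}[E]\otimes\bigotimes_v\localstates{v}$ in terms of new polynomial generators $e$, $\{e-e'\}_{e'\neq e}$, and $\{e-v\}_v$, it observes that $\partial(h)=e(h)-v(h)=(e-v(h))-(e-e(h))$ never involves the variable $e$ itself, so that if $ec=\partial b$ and $b=\sum_{i\geq 0}e^ib_i$ in the new basis, then $b_0$ is automatically closed and $c=\partial\bigl((b-b_0)/e\bigr)$. This makes the $e$-adic filtration split compatibly with $\partial$ and avoids spectral sequences entirely. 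Your argument keeps the original basis, accepts that $\partial=\partial_0+e\partial_1$ has a nontrivial $\partial_1$-part, and compensates by running the spectral sequence of the $e$-adic filtration: the key point that $e$ acts as the identity from $E^1_p$ to $E^1_{p+1}$ under the canonical identification $\mathrm{gr}^p\cong W$, and hence is an isomorphism on $E^\infty\cong\mathrm{gr}\,H_*$, together with the weightwise boundedness of the filtration, gives injectivity on $H_*$. In effect, the paper's change of variables makes the spectral sequence you use degenerate at $E^1$, which is why the paper's argument stays elementary; your version handles the nondegenerate case directly and is arguably more systematic, at the cost of the spectral-sequence bookkeeping you flag at the end. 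Both are valid; the paper's is shorter.
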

\begin{proof}
Any monomial $p(E,V)$ in the edges and vertices of $\graf$ can be rewritten as a polynomial in variables $e$, $e-e'$, and $e-v$. Write $U$ for the collection of generators other than $e$, which is in bijection with $E\backslash\{e\}\amalg V$. Then an arbitrary chain in $\intrinsic{\graf}$ can be written in the form
\[
b = \sum_{i=0}^M e^i b_i(U,H)
\]
for some (graded commutative) polynomials $b_i$ (not all polynomials are possible).

Let $c$ be a cycle in $\intrinsic{\graf}_k$, and suppose that $ec=\partial b$. The proof will be complete once we are assured that $c$ is itself a boundary. To see this, we note that the differential, acting on $b_i(U,H)$, cannot introduce a factor of $e$ in this basis; that is,
\[
\partial b = \sum_{i=0}^M e^i b'_i(U,H),
 \] 
where $b'_i=\partial b_i$. Then we have
\[
\sum_{i=1}^M e^i {c}_{i-1}(U,H) = \sum_{i=0}^M e^i b'_i(U,H).
\]
Thus, $\boundary b_0 = b'_0=0$, so $ec = \boundary b = \boundary (b-b_0)$. Since $b-b_0$ is divisible by $e$, we have $c=\partial(\frac{b-b_0}{e})$, as desired.
\end{proof}

With Proposition~\ref{proposition: edge multiplication is injective} in hand, we can describe more precisely the effect on $H_*(B(\graf))$ of cutting $\graf$ into two disconnected components at a vertex.

\begin{proposition}\label{proposition: one-bridges}
Let $\graf$ be a connected graph and $v$ a bivalent vertex whose removal disconnects $\graf$. Write $e_1$ and $e_2$ for the edges at $v$. There is an isomorphism \[H_*(B(\graf))\cong H_*(B(\graf_v))/e_1\sim e_2\] of $\mathbb{Z}[E]$-modules. In particular, for any field $\mathbb{K}$, we have \[H_*(B(\graf),\mathbb{K})\cong H_*(B(\graf'_v),\mathbb{K})\bigotimes_{\mathbb{K}[e]} H_*(B(\graf''_v),\mathbb{K}),\] where $\graf'_v$ and $\graf''_v$ are the connected components of $\graf_v$. 
\end{proposition}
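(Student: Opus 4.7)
The plan is to apply the long exact sequence of Corollary~\ref{cor: les vertex reduced} at the bivalent vertex $v$, choosing some $h_0 \in H(v)$. Since $|H(v)\setminus\{h_0\}|=1$, the direct sums collapse and the connecting homomorphism becomes multiplication by $e_2-e_1$, where $e_1 = e(h_0)$ and $e_2$ is the edge of the remaining half-edge, yielding
\[
\cdots \to H_n(B_{k-1}(\graf_v)) \xrightarrow{e_2-e_1} H_n(B_k(\graf_v)) \to H_n(B_k(\graf)) \to H_{n-1}(B_{k-1}(\graf_v)) \to \cdots
\]
The first assertion of the proposition is equivalent to the collapse of this sequence into short exact sequences, which is in turn equivalent to the injectivity of $e_2-e_1$ on $H_*(B(\graf_v))$.

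The heart of the proof is this injectivity claim, and I would establish it directly by exploiting the disjoint union $\graf_v = \graf'_v \sqcup \graf''_v$. Since configuration spaces of a disjoint union decompose as coproducts of products, we obtain the bigraded decomposition
\[
H_n(B_k(\graf_v)) \cong \bigoplus_{i+j=k} H_n\bigl(B_i(\graf'_v) \times B_j(\graf''_v)\bigr),
\]
in which $e_1$, being an edge of $\graf'_v$, increments only the first index, while $e_2$, being an edge of $\graf''_v$, increments only the second. By Proposition~\ref{proposition: edge multiplication is injective} applied to $\graf_v$, multiplication by each of $e_1$ and $e_2$ is injective on $H_*(B(\graf_v))$, hence on each bigraded summand. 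If a class $\xi = \sum_{i+j=k}\xi_{i,j}$ satisfies $(e_1-e_2)\xi = 0$, then comparing bidegrees gives $e_1 \xi_{i-1,j} = e_2 \xi_{i,j-1}$ for each $(i,j)$ with $i+j=k+1$. Taking $i=0$ and using $\xi_{-1,k+1}=0$ yields $e_2\xi_{0,k}=0$, whence $\xi_{0,k}=0$ by $e_2$-injectivity; an induction on $i$ then forces every $\xi_{i,j}$ to vanish. This step is the main technical point, but is elementary given Proposition~\ref{proposition: edge multiplication is injective}.

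For the ``in particular'' statement over a field $\mathbb{K}$, the K\"unneth theorem applied in each weight provides a natural isomorphism
\[
H_*(B(\graf_v),\mathbb{K}) \cong H_*(B(\graf'_v),\mathbb{K}) \otimes_\mathbb{K} H_*(B(\graf''_v),\mathbb{K}),
\]
in which $e_1$ and $e_2$ act on the respective tensor factors. Combining this with the first assertion and the algebraic identity $(A \otimes_\mathbb{K} B)/(e_1-e_2) \cong A \otimes_{\mathbb{K}[e]} B$ yields the second assertion, with no derived correction required because $A = H_*(B(\graf'_v),\mathbb{K})$ is flat over $\mathbb{K}[e_1]=\mathbb{K}[e]$, being torsion-free over the principal ideal domain $\mathbb{K}[e]$ by Proposition~\ref{proposition: edge multiplication is injective}.
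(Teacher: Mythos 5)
Your proposal is correct and follows the same route as the paper: apply Corollary~\ref{cor: les vertex reduced} at $v$, reduce the claim to injectivity of multiplication by $e_1-e_2$ on $H_*(B(\graf_v))$, and prove that injectivity by decomposing $H_*(B(\graf_v))$ along the weight grading into the two components and inducting with Proposition~\ref{proposition: edge multiplication is injective}. The bigraded $(i,j)$ bookkeeping you use is just a reindexing of the paper's single index $j$, and your explicit flatness remark for the $\mathbb{K}$-statement is a welcome bit of added detail the paper leaves implicit.
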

\begin{proof}
Applying Corollary~\ref{cor: les vertex reduced} at the vertex $v$, we obtain the exact sequence
\[
0\to \fullyreduced{\graf_v}\xrightarrow{\phi} \fullyreduced{\graf}\xrightarrow{\psi} \fullyreduced{\graf_v}[1]\{1\}\to 0.
\] 
We claim that the connecting homomorphism in the corresponding long exact sequence is injective. Indeed, suppose that $\delta\beta=0$ for some $\beta\in H_i(B_k(\graf_v))[1]\{1\}$; then, applying the formula of Corollary~\ref{cor: les vertex reduced}, we have $(e_1-e_2)\beta=0$. Since $\graf_v$ is disconnected, the homology of $B_k(\graf_v)$ is naturally graded by the number of points lying in $\graf_v'$; therefore, writing $\beta=\sum_{j=0}^k\beta_j$, we obtain the system of equations \begin{align*}
e_1\beta_k&=e_2\beta_0=0\\
e_1\beta_j&=e_2\beta_{j+1}\qquad (0\leq j<k).
\end{align*} Applying Proposition~\ref{proposition: edge multiplication is injective} repeatedly now implies that $\beta_j=0$ for every $0\leq j\leq k$. 

The statement over $\mathbb{Z}$ now follows by exactness and the formula for $\delta$, and the statement over $\mathbb{K}$.
\end{proof}

\begin{remark}
This situation has been considered before~\cite[Sections III and VI]{MaciazekSawicki:HGP1CG}.
There, assuming their Conjecture VI.1, they prove the special case of Proposition~\ref{proposition: one-bridges} for homological degree two.
To be more precise, their Theorem VI.2 provides a count of the rank of the homology group $H_2(B(\graf))$ in terms of $H_*(B(\graf_v'))$ and $H_*(B(\graf_v''))$. Proposition~\ref{proposition: edge multiplication is injective} implies that $H_*(B(\graf_v'))$ and $H_*(B(\graf_v''))$ are free $\mathbb{K}[e]$-modules, and their formula for the rank comes directly from the formula for the weighted rank of the tensor product of free $\mathbb{K}[e]$-modules (a side remark: there seems to be a typo in the range of the summation of their equation~(24), which omits $\beta_1^{(0)}(\Gamma_1)\beta_1^{(n)}(\Gamma_2)$). 
It seems likely that the injectivity of our connecting homomorphisms used in the proof of Proposition~\ref{proposition: one-bridges} implies their Conjecture VI.1 (justifying the rest of their section VI), but their iterated Mayer--Vietoris decompositions differ enough from the ``one-stage'' decomposition of our vertex long exact sequence that it might take some work to translate between the two. 
Since Proposition~\ref{proposition: one-bridges} supercedes their Theorem VI.2, we have chosen not to further pursue such verification.
\end{remark}

\subsection{High degree homology for unitrivalent graphs}\label{section:high degree trivalent} Call a graph \emph{unitrivalent} if every vertex has valence either $3$ or $1$. 
In this section we apply the exact sequence of Corollary~\ref{cor: les vertex reduced} to study $H_i(B(\graf))$ when $\graf$ is unitrivalent and $i$ is close to the number of trivalent vertices of $\graf$. According to our conventions, a graph may have self-loops (at most one self-loop per vertex by unitrivalency), and multiple edges connecting a given pair of vertices. In this subsection, unless otherwise specified, $\graf$ is a unitrivalent graph and $N$ denotes the number of trivalent vertices of $\graf$.

\begin{construction}
Suppose $\graf$ is unitrivalent and has $r$ self-loops. To each trivalent vertex $v$ we associate a homology class $\beta_v\in H_1(B(\graf))$, well-defined up to sign: if $v$ has a self-loop, then $\beta_v$ is the corresponding loop class; otherwise, $\beta_v$ is the star class at $v$. Taking the external product over the trivalent vertices of the $\beta_v$, we obtain a homology class $\beta
\in H_{N}(B_{2N-r}(\graf))$, well-defined up to sign, called the \emph{canonical class} of $\graf$.
\end{construction}

\begin{proposition}\label{proposition: top homology}
For $\graf$ unitrivalent, we have an isomorphism \[H_{N}(B(\graf))\cong \mathbb{Z}[E]\beta.\]
\end{proposition}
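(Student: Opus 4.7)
The plan is to prove this by induction on $N$, working at the chain level in the reduced \'{S}wi\k{a}tkowski complex. By Theorem~\ref{thm:comparison} and Proposition~\ref{prop:reduced complex} (applicable because a unitrivalent graph has no isolated vertices), $H_N(B(\graf)) \cong H_N(\fullyreduced{\graf})$. Writing $R = \mathbb{Z}[E]$, the top chain group is $\fullyreduced{\graf}_N = R \otimes_\mathbb{Z} \bigotimes_{v \text{ trivalent}} V_v$, where $V_v$ denotes the rank-two degree-one summand of $\reducedlocalstates{v}$. Since $\fullyreduced{\graf}$ has no chains above degree $N$, we have $H_N = \ker(\partial_N)$, and the differential decomposes as $\partial_N = \sum_v \partial_v$ with the $v$-th summand landing in a distinct direct summand of $\fullyreduced{\graf}_{N-1}$, so $\ker(\partial_N) = \bigcap_v \ker(\partial_v)$.

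The base case $N=0$ is trivial: $\graf$ is then a disjoint union of intervals, $\fullyreduced{\graf} \cong R$ in degree zero, and $\beta = 1$. For the inductive step, I would fix a trivalent vertex $v_1$ and analyze the single constraint $\partial_{v_1}\omega = 0$. Writing the $V_{v_1}$-part of $\omega$ explicitly in a basis, the constraint takes the form $(e_1{-}e_2)\omega_2 + (e_1{-}e_3)\omega_3 = 0$ in the remaining tensor factors. If $v_1$ has no self-loop, then $e_1{-}e_2$ and $e_1{-}e_3$ are coprime in the UFD $R$, and the solutions are parametrized by a single free variable, yielding $\omega = \beta_{v_1}\otimes\omega'$ with $\beta_{v_1}$ the star cycle at $v_1$. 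If $v_1$ has a self-loop, the constraint degenerates and solutions take the same form with $\beta_{v_1}$ the loop cycle.

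The remaining constraints $\partial_{v_j}\omega = 0$ for $j\geq 2$ then translate, via the evident injectivity of $\omega' \mapsto \beta_{v_1}\otimes\omega'$, into cycle conditions on $\omega'$ viewed as an element of the top chain group of $\fullyreduced{\graf_{v_1}}$, where $\graf_{v_1}$ is the unitrivalent graph obtained from $\graf$ by exploding $v_1$ into three $1$-valent vertices. Since $\graf_{v_1}$ has $N-1$ trivalent vertices, the inductive hypothesis gives $\omega' \in R\beta_{\graf_{v_1}}$, whence $\omega \in R(\beta_{v_1}\otimes\beta_{\graf_{v_1}}) = R\beta$, and freeness follows from $\beta \neq 0$ in the free $R$-module $\fullyreduced{\graf}_N$.

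The main obstacle is executing the local-to-global reduction cleanly across the two local cases and confirming that the natural identification of the remaining tensor factors with $\fullyreduced{\graf_{v_1}}_{N-1}$ intertwines the relevant differentials; both concerns are essentially bookkeeping, resting on coprimality and non-zero-divisor properties of differences of edge variables in the polynomial ring $R$.
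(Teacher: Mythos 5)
Your proof is correct, and it follows the same high-level strategy as the paper---induction on $N$ via vertex explosion, passing through the reduced complex $\fullyreduced{\graf}$---but implements the key step differently. The paper packages the filtration by the $v$-component into the vertex long exact sequence (Corollary~\ref{cor: les vertex reduced}), then identifies the connecting map $\delta$ with the differential $\partial$ of $\fullyreduced{\graf^{(v)}}$, reducing to the base case $H_1$ of a single star or lollipop, which is in turn established by an Euler characteristic argument (Lemma~\ref{lemma:three star}). You instead analyze $\ker(\partial_N)=\bigcap_v\ker(\partial_v)$ directly at the chain level, solving the single constraint at $v_1$ by a coprimality/non-zero-divisor argument in the UFD $\mathbb{Z}[E]$, with a base case at $N=0$ rather than $N=1$. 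The tradeoff: the paper's route is more structural, reusing a general tool established for other purposes as well, while your chain-level coprimality argument is more elementary, avoids the Euler characteristic computation entirely, and handles the self-loop and non-self-loop local cases uniformly as a single divisibility calculation. One small thing worth making explicit when you write it up: the factorization $\omega_2 = (e_1-e_3)\omega'$, $\omega_3 = -(e_1-e_2)\omega'$ from coprimality in $R$ lifts to the free $R$-module $R\otimes\bigotimes_{j\geq 2}V_{v_j}$ by working basis-vector by basis-vector, and the injectivity of $\omega'\mapsto\beta_{v_1}\otimes\omega'$ follows because its coefficients $e_1-e_3$ and $e_1-e_2$ (or $e_1-e_3$ alone, in the self-loop case) are non-zero-divisors on that free module.
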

\begin{proof}
Without loss of generality, we may assume $\graf$ is connected. We proceed by induction on $N$ with two base cases. 
The first base case is the $3$-star, which was dealt with in \S\ref{subsection:examples}.
The second base case is the lollipop $\lollipopgraph{1}$. 
The reduced \'{S}wi\k{a}tkowski complex $\fullyreduced{\lollipopgraph{1}}$ is concentrated in degrees $0$ and $1$, with the explicit form
\[
\fullyreduced{\lollipopgraph{1}}\cong \mathbb{Z}[e_0,e]\langle \varnothing, h_{01},h_{02}\rangle
\]
with differential $\partial(h_{01})=\partial(h_{02})=e_0-e$.
By inspection, the kernel of the differential in degree $1$ is spanned by $\mathbb{Z}[e_0,e](h_{01}-h_{02})$.

Now assume that $N\geq2$, fix a trivalent vertex $v$, and assume that the claim is known for $\graf_v$. 
Applying Corollary~\ref{cor: les vertex reduced} at $v$, we obtain the exact sequence
 \[
\cdots\to H_{N}(\fullyreduced{\graf_v}) \to H_{N}(\fullyreduced{\graf})\to \bigoplus_{\mathclap{h\in H(v)\setminus \{h_0\}}}H_{N-1}(\fullyreduced{\graf_v})\{1\}\xrightarrow{\delta}\cdots
\]Since $\graf_v$ has $N-1$ trivalent vertices, the first term vanishes, so the homology group of interest is the kernel of $\delta$. The inductive hypothesis identifies the domain of $\delta$ as
\[\left(\bigoplus_{h\in H(v)\setminus\{h_0\}}\mathbb{Z}[E(v)]\right)\otimes \mathbb{Z}[E\backslash E(v)]\beta',\]
where $\beta'$ is the canonical class of $\graf_v$ and $\delta$ acts on the first factor. Denoting by $\graf^{(v)}$ the subgraph containing $v$ and all of its edges (either a star or a lollipop), this first factor is naturally identified with the degree $1$ component of $\fullyreduced{\graf^{(v)}}$. Under this identification, it is immediate from the formula of Corollary~\ref{cor: les vertex reduced} that the condition of lying in the kernel of $\delta$ is precisely the condition of lying in the kernel of $\partial$ from degree one to degree zero of $\fullyreduced{\graf^{(v)}}$. Since $\graf^{(v)}$ has only one vertex, \[\ker \partial\cong H_1(B(\graf^{(v)}))\cong \mathbb{Z}[E(v)]\beta_v,\] and the claim follows.
\end{proof}

\begin{remark}
For a general graph (not necessarily unitrivalent), it is straightforward to show that $H_{N}(B_k(\graf))=0$ for $k<N$. Similar methods to those used in Proposition~\ref{proposition: top homology} show, for instance, that if $\graf$ has no bivalent vertices or self-loops and at least one vertex, that $H_N(B_N(\graf))=0$ as well. Any further sharpening of this result must take into account, e.g., that $H_2(B_3(\thetagraph{4}))$ is non-zero.
\end{remark}

Similar inductive arguments using Corollary~\ref{cor: les vertex reduced} may be used to demonstrate the following two results. Recall that a graph is simple if it has no self-loops and no multiple edges.

\begin{proposition} \label{proposition: codimension one}
Suppose $\graf$ is simple and unitrivalent. There is an isomorphism
\[
H_{N-1}(B(\graf))\cong \bigoplus_{d(v)=3}\mathbb{Z}[E]\hat{\beta}_v/ (e\hat{\beta}_v - e'\hat{\beta}_v: e,e'\in E(v))
\]
of $\mathbb{Z}[E]$-modules, where $\hat{\beta}_v\in H_{N-1}(B_{2N-2}(\graf))$ is the external product of $\beta_w$ for $w\ne v$.
\end{proposition}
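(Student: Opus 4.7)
The plan is to mirror the inductive structure of Proposition~\ref{proposition: top homology}, using Corollary~\ref{cor: les vertex reduced} at a trivalent vertex of $\graf$ to reduce to a computation on $\graf_v$. First I would reduce to connected $\graf$ using the K\"unneth-style decomposition $\intrinsic{\graf_1 \sqcup \graf_2}\cong \intrinsic{\graf_1}\otimes\intrinsic{\graf_2}$, which distributes the right-hand side across components. The induction is then on $N$, with base cases $N = 0$ (a disjoint union of edges; both sides vanish) and, in the connected case, $N = 1$ (forcing $\graf = \stargraph{3}$), which reduces to an explicit computation of $H_0(B(\stargraph{3}))$ using the $I$-relations.

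For the inductive step, I would pick a trivalent vertex $v$ with incident edges $e_0, e_1, e_2$, take $h_0$ to be a half-edge along $e_0$, and apply Corollary~\ref{cor: les vertex reduced}. The long exact sequence yields a short exact sequence
\[0\to\mathrm{coker}(\delta_{N-1})\to H_{N-1}(B(\graf))\to\ker(\delta_{N-2})\to 0,\]
where $\delta_n$ acts on $H_n(B(\graf_v))^{\oplus 2}$ componentwise by multiplication by $e_1 - e_0$ and $e_2 - e_0$. Proposition~\ref{proposition: top homology} applied to $\graf_v$ (which has $N-1$ trivalent vertices) identifies $H_{N-1}(B(\graf_v))\cong \mathbb{Z}[E]\hat{\beta}_v$, since the canonical class of $\graf_v$ is precisely $\hat{\beta}_v$. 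This makes the cokernel manifestly isomorphic to $\mathbb{Z}[E]\hat\beta_v/(e_i\hat\beta_v - e_j\hat\beta_v : e_i,e_j \in E(v))$, which is the $v$-summand of the target.

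The main step is to identify $\ker(\delta_{N-2})$. Since $\graf_v$ is again simple and unitrivalent (exploding a vertex introduces no multi-edges or loops and does not alter adjacency elsewhere), the inductive hypothesis decomposes $H_{N-2}(B(\graf_v))\cong \bigoplus_{w\ne v\text{ triv}} M_w^{(v)}$ with $M_w^{(v)} = \mathbb{Z}[E]\hat\beta_w^{(v)}/(e\hat\beta_w^{(v)} - e'\hat\beta_w^{(v)}: e,e'\in E(w))$, and $\delta_{N-2}$ preserves this decomposition, so the kernel splits over $w$. The key algebraic fact is that simplicity forces $|E(v)\cap E(w)|\leq 1$, which ensures that in the quotient ring presenting $M_w^{(v)}$ the pair $(e_1 - e_0, e_2 - e_0)$ is still a regular sequence; Koszul exactness then shows the $w$-component of the kernel is cyclic, generated by $\bigl((e_0-e_2)\hat\beta_w^{(v)}, (e_1-e_0)\hat\beta_w^{(v)}\bigr)$. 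A further annihilator calculation, again using simplicity, identifies this summand with $\mathbb{Z}[E]\hat\beta_w/(e\hat\beta_w - e'\hat\beta_w: e,e'\in E(w))$. To split the short exact sequence, I would note that each $\hat\beta_w$ lifts to a class in $H_{N-1}(B(\graf))$ already satisfying $e\hat\beta_w = e'\hat\beta_w$ for $e, e'\in E(w)$, as witnessed by a chain representative vanishing near $w$ and the local $I$-relation at the star at $w$; this provides a section of the surjection onto the kernel. The hard part will be the Koszul and annihilator calculations for $\ker(\delta_{N-2})$, which must be carried out in two cases according to the adjacency of $w$ and $v$ and which rely crucially on simplicity: without it, multi-edges at the interface of $v$ and $w$ would identify some $e_i$ with an edge in $E(w)$, producing zero-divisors among the $e_i - e_j$ and destroying the regular sequence structure on which the whole computation rests.
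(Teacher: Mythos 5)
Your proof is correct and follows essentially the same strategy as the paper's sketch: apply the vertex long exact sequence at a trivalent vertex $v$, identify $\coker\delta_{N-1}$ with the $v$-summand via Proposition~\ref{proposition: top homology}, identify $\ker\delta_{N-2}$ with the remaining summands via induction together with a Koszul regular-sequence argument (where simplicity enters through $|E(v)\cap E(w)|\leq 1$), and split the short exact sequence using external products and the $I$-relation at $w$. Two small remarks: the paper's sketch appears to transpose the roles of cokernel and kernel relative to your (correct) identification, and since $\graf_v$ may be disconnected the induction must run over all simple unitrivalent graphs, so the K\"unneth reduction should be folded into the inductive hypothesis (with the requisite Tor-vanishing following from the torsion-freeness of $H_{N}$ and, inductively, of $H_{N-1}$) rather than performed once at the outset.
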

\begin{proposition} \label{proposition: codimension two}
If $\graf$ is simple and unitrivalent, then $H_{N-2}(B(\graf))$ is torsion-free. 
\end{proposition}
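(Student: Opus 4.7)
The plan is to proceed by induction on the number $N$ of trivalent vertices of $\graf$. The base cases $N \leq 2$ are immediate: $H_{N-2}(B(\graf))$ vanishes for $N \leq 1$, and $H_0(B(\graf))$ is a polynomial ring on connected components (after $I$-relations), hence torsion-free. Disconnected graphs reduce to connected ones via K\"unneth, since torsion-free abelian groups are flat over $\mathbb{Z}$ and the relevant tensor factors are either top-degree and free (Proposition~\ref{proposition: top homology}), codimension-one and polynomial-ring-like (Proposition~\ref{proposition: codimension one}), or codimension-two at smaller $N$ and covered inductively.

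For the inductive step with $N \geq 3$, I would choose a trivalent vertex $v$ with half-edges $h_0, h_1, h_2$ and corresponding edges $e_0, e_1, e_2$, and apply the long exact sequence of Corollary~\ref{cor: les vertex reduced} to $\graf_v$, which remains simple and unitrivalent with $N - 1$ trivalent vertices (explosion introduces no loops or multi-edges, and the three new vertices are $1$-valent). The segment of the sequence around $H_{N-2}(B(\graf))$ yields a short exact sequence
\[
0 \to \coker(\delta_{N-2}) \to H_{N-2}(B(\graf)) \to \ker(\delta_{N-3}) \to 0,
\]
where $\delta_n \colon \bigoplus_{i=1,2} H_n(B(\graf_v)) \to H_n(B(\graf_v))$ sends $(a_1, a_2)$ to $(e_1 - e_0)a_1 + (e_2 - e_0)a_2$, by the explicit formula for the connecting homomorphism. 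Since extensions of torsion-free abelian groups by torsion-free abelian groups are torsion-free, I need only verify that each outer term is torsion-free.

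For $\ker(\delta_{N-3})$, the module $H_{N-3}(B(\graf_v))$ is the codimension-two homology of a graph with $N-1$ trivalent vertices, and so is torsion-free by the inductive hypothesis; the kernel then embeds into a torsion-free group. For $\coker(\delta_{N-2})$, I would invoke Proposition~\ref{proposition: codimension one} applied to $\graf_v$, which presents $H_{N-2}(B(\graf_v))$ as a direct sum, over trivalent vertices $w \neq v$, of modules of the form $\mathbb{Z}[E]/(e - e' : e, e' \in E(w))$---each a polynomial ring on a reduced edge set. Imposing the image of $\delta_{N-2}$ adds the relations $e_1 = e_0 = e_2$ in each summand, producing another polynomial-ring quotient and hence a torsion-free cokernel.

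The essential point---and the main obstacle---is that Proposition~\ref{proposition: codimension one} delivers a rigid polynomial-ring presentation of $H_{N-2}(B(\graf_v))$, so that the further quotient by the image of $\delta_{N-2}$ cannot introduce torsion; were the codimension-one homology only known to be torsion-free rather than polynomial-ring-like, the cokernel computation would fail. Once simplicity and unitrivalency of $\graf_v$ are verified (both immediate from the explosion construction) and possibly-disconnected $\graf_v$ is handled by K\"unneth, the argument is a direct assembly of the preceding propositions.
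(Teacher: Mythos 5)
Your proof is correct and follows essentially the same route as the paper's: apply the vertex long exact sequence of Corollary~\ref{cor: les vertex reduced} at a trivalent vertex, observe that the kernel term is a subgroup of $H_{N-3}(B(\graf_v))$ and hence torsion-free by induction, and show the cokernel term is torsion-free using the explicit polynomial presentation supplied by Proposition~\ref{proposition: codimension one}. Your handling of the cokernel is a mild streamlining of the paper's (which separates the cases where the summand vertex $w$ is or is not adjacent to $v$), since you note directly that adjoining the relations $e_0 = e_1 = e_2$ to $\mathbb{Z}[E]/(e - e' : e, e' \in E(w))$ always collapses variables and so yields a polynomial ring whether or not $E(w)$ meets $\{e_0,e_1,e_2\}$.
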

\begin{proof}[Sketch of proofs of Proposition~\ref{proposition: codimension one} and~\ref{proposition: codimension two}]
In both cases, applying Corollary~\ref{cor: les vertex reduced} at a fixed trivalent vertex $v_0$, the desired homology group $A$ fits into a short exact sequence 
\[
0\to \coker\delta_M \to A\to \ker \delta_{M-1}\to 0
\]
for some $M$. 

In the case of Proposition~\ref{proposition: codimension one}, we can use Proposition~\ref{proposition: top homology} and the explicit formula for the connecting homomorphism to show that the cokernel entry of the short exact sequence consists of the terms indexed by vertices other than $v_0$. Then by induction and an explicit calculation, the kernel entry splits and yields the term indexed by $v_0$.

In the case of Proposition~\ref{proposition: codimension two}, the kernel entry is torsion-free by induction so any torsion would have to come from the cokernel term. By explicitly examining two cases using the formula for the connecting homomorphism and Proposition~\ref{proposition: codimension one} (the cases corresponding to whether the vertex indexing the summand in Proposition~\ref{proposition: codimension one} is adjacent to $v_0$ or not), we conclude that the cokernel term is itself torsion-free.
\end{proof}

\subsection{Case study: the complete graph \texorpdfstring{$\mathsf{K}_4$}{K4}}\label{subsection:complete graph}

We apply our results to give a complete description of $H_*(B(\completegraph{4}))$ as a $\mathbb{Z}[E]$-module, where $\completegraph{4}$ is the complete graph on four vertices. As an intermediary result, we also give the corresponding computation for the net graph $\netgraph$.

\begin{figure}[ht]
\centering
\begin{tikzpicture}
\fill[black] (0,0) circle (2.5pt);
\fill[black] (0,1) circle (2.5pt);
\fill[black] (.866,-.5) circle (2.5pt);
\fill[black] (-.866,-.5) circle (2.5pt);
\draw(0,0) -- (0,1) -- (.866,-.5) -- (-.866,-.5) -- (0,1);
\draw(0,0) --(.866,-.5);
\draw(0,0) --(-.866,-.5);
\draw(0,-.6) node[below]{$\completegraph{4}$};
\begin{scope}[xshift=5cm]
\fill[black] (0,1) circle (2.5pt);
\fill[black] (.866,-.5) circle (2.5pt);
\fill[black] (-.866,-.5) circle (2.5pt);
\fill[black] (0,2) circle (2.5pt);
\fill[black] (1.732,-1) circle (2.5pt);
\fill[black] (-1.732,-1) circle (2.5pt);
\draw(-1.732,-1)--(-.866,-.5)--(.866,-.5)--(1.732,-1);
\draw(-.866,-.5)--(0,1)--(.866,-.5);
\draw(0,1)--(0,2);
\draw(0,-.6) node[below]{$\netgraph$};
\end{scope}
\end{tikzpicture}
\caption{The complete graph $\completegraph{4}$ and the net graph $\netgraph$}
\label{figure: k4 netgraph}
\end{figure}

\begin{notation} We number the vertices of $\completegraph{4}$ and write $e_{ab}$ for the edge connecting vertices $a$ and $b$; thus, $e_{ab}=e_{ba}$ and $e_{aa}$ is undefined. Write $h^{(a)}_{b}$ for the half-edge at $a$ with edge $e_{ab}$. The action of the symmetric group $\Sigma_4$ on the vertices extends to an action on $S(\completegraph{4})$ by bigraded chain maps intertwining the $\mathbb{Z}[E]$-action; indeed, a choice of parametrization defines an action of $\Sigma_4$ on $\completegraph{4}$ by graph isomorphisms, and the \'{S}wi\k{a}tkowski complex is functorial. We write $\gamma_a$ for the loop class avoiding vertex $a$ and $\alpha_a$ for the star class at vertex $a$, with signs fixed as follows:
\begin{itemize}
\item $\gamma_4=[h^{(1)}_{2} - h^{(1)}_3+h^{(2)}_3-h^{(2)}_1+h^{(3)}_{1}-h^{(3)}_2]$,
\item $\alpha_4=\alpha^{(4)}_{123}=[(h^{(4)}_1-h^{(4)}_2)e_{34}+(h^{(4)}_3-h^{(4)}_1)e_{24}+(h^{(4)}_2-h^{(4)}_3)e_{14}]$, 
\item $\gamma_a=(-1)^{\mathrm{sgn}(\sigma)}\sigma(\gamma_4)$, and similarly for $\alpha_a$, where $\sigma$ is the cyclic permutation taking $4$ to $a$.
\end{itemize}
We extend this notation to the net graph $\netgraph$ by naming all $1$-valent vertices $4$; this is not ambiguous because we will never refer to the half-edges at these vertices and because every vertex is adjacent to at most one such vertex.
\end{notation}

\begin{proposition}\label{prop:complete graph} The homology $H_*(B(\completegraph{4}))$ is free Abelian and presented as a $\mathbb{Z}[E]$-module in terms of generators and relations as follows. Let $a,b,c,d\in\{1,2,3,4\}$ be distinct.
\begin{itemize}
\item $H_0(B(\completegraph{4}))$ is generated by $\varnothing$ subject to relations identifying all edges.
\item $H_1(B(\completegraph{4}))$ is generated by $\gamma_a$ and $\alpha_a$ subject to the following relations: 
\renewcommand{\labelitemii}{$\circ$}
\begin{align*}
\textstyle\sum_{a=1}^4\gamma_a&=0&
\gamma_a (e_{bc}- e_{cd})&=0\\
\alpha_a&=\alpha_b&
\gamma_a (e_{ab}- e_{ac})&=0\\
\alpha_a (e-e')&=0&
\gamma_a (e_{bc}-e_{ab})&= \alpha_b
\end{align*}
\item $H_2(B(\completegraph{4}))$ is generated by $\gamma_a\alpha_a$ and $\alpha_a\alpha_b$ subject to the following relations:
\begin{align*}
\textstyle \sum_{a=1}^4 \gamma_a\alpha_a &=0& \gamma_a\alpha_a (e_{bc}-e_{cd})&=0\\
\alpha_a\alpha_b(e-e')&=0\text{ if }e,e'\neq e_{ab}&\gamma_a\alpha_a (e_{bc}-e_{ab})&=\alpha_b\alpha_a
\end{align*}
\item $H_3(B(\completegraph{4}))$ is generated by $\alpha_a\alpha_b\alpha_c$ subject to the relations $\alpha_a\alpha_b\alpha_c (e_{ad}-e_{bd})=0$.
\item $H_4(B(\completegraph{4}))$ is freely generated by $\alpha_1\alpha_2\alpha_3\alpha_4$. \end{itemize}
\end{proposition}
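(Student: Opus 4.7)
The plan is to establish the presentation in each homological degree separately, using general structure theorems for the extreme degrees and the vertex long exact sequence to handle the middle. Since $\completegraph{4}$ is simple and unitrivalent with $N=4$ trivalent vertices and no self-loops, Proposition~\ref{proposition: top homology} immediately gives $H_4(B(\completegraph{4})) \cong \mathbb{Z}[E]\beta$ freely with $\beta = \alpha_1\alpha_2\alpha_3\alpha_4$; Proposition~\ref{proposition: codimension one} then gives $H_3$ with generators $\hat{\beta}_d = \alpha_a\alpha_b\alpha_c$ subject to $\alpha_a\alpha_b\alpha_c(e_{ad}-e_{bd})=0$, these relations coming from $E(d)=\{e_{ad},e_{bd},e_{cd}\}$. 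Meanwhile $H_0$ is rank one in each weight, generated by $\varnothing$ with all edges identified via iterated $I$-relations, and torsion-freeness of $H_2$ is guaranteed by Proposition~\ref{proposition: codimension two}.

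For $H_1$, Proposition~\ref{prop:H1 generators} yields the star classes $\alpha_a$ and loop classes $\gamma_a$ as a generating set. The stated relations are then derived from the atomic subgraph relations of Definition~\ref{definition:relations}: the identifications $\alpha_a=\alpha_b$ come from the $\Theta$-relation applied to the three internally disjoint paths between any two vertices (the direct edge $e_{ab}$ together with the two length-two paths through $c$ and $d$); the relation $\sum_a\gamma_a=0$ is the standard tetrahedral face relation in $H_1(\completegraph{4})\cong\mathbb{Z}^3$; the $O$-relations on the triangle $bcd$ give $\gamma_a(e_{bc}-e_{cd})=0$; and the $Q$-relations on the lollipop with cycle $bcd$ and tail $e_{ab}$ give $\gamma_a(e_{bc}-e_{ab})=\alpha_b$. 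The derived relations $\gamma_a(e_{ab}-e_{ac})=0$ and $\alpha_a(e-e')=0$ follow by combining the $Q$-relations at different tail choices with the $\Theta$-identifications and the $O$-relations; for instance, subtracting the two expressions $\alpha=(e_{bc}-e_{ab})\gamma_a$ and $\alpha=(e_{bc}-e_{ac})\gamma_a$ yields $\gamma_a(e_{ab}-e_{ac})=0$. Completeness of this list is verified by comparing ranks against those produced by the Euler characteristic (Corollary~\ref{corollary: Euler characteristic}), with edge injectivity (Proposition~\ref{proposition: edge multiplication is injective}) ensuring no spurious torsion.

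The main obstacle is $H_2$, which is not directly addressed by the general propositions. The plan is to bootstrap via Corollary~\ref{cor: les vertex reduced} at vertex $4$: since $\completegraph{4}_4\cong\netgraph$ is the net graph, the long exact sequence reads
\[
\cdots\to H_2(B(\netgraph))\to H_2(B(\completegraph{4}))\to\bigoplus_{h\in H(4)\setminus\{h_0\}}H_1(B(\netgraph))\{1\}\xrightarrow{\delta} H_1(B(\netgraph))\{1\}\to\cdots
\]
with explicit boundary $\delta(\beta_h)=(e(h)-e(h_0))\beta_h$. Executing this requires first computing $H_*(B(\netgraph))$ by the same template (Propositions~\ref{proposition: top homology} and~\ref{proposition: codimension one} for the top two degrees, and the star/loop class method for $H_1$). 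One then reads off $H_2(B(\completegraph{4}))$ as an extension of $\ker\delta$ by the image of $H_2(B(\netgraph))$. The hardest step will be matching the resulting presentation against the stated generators $\gamma_a\alpha_a$ and $\alpha_a\alpha_b$ and verifying the cross relation $\gamma_a\alpha_a(e_{bc}-e_{ab})=\alpha_b\alpha_a$, which traces how the boundary map intertwines loop and star classes; the $\Sigma_4$-symmetry should reduce the bookkeeping significantly, and edge injectivity again facilitates the final rank comparison and the verification of freeness.
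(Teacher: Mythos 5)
Your outline follows essentially the same route as the paper's proof: use Propositions~\ref{proposition: top homology} and~\ref{proposition: codimension one} for $H_4$ and $H_3$, the atomic relations of Definition~\ref{definition:relations} for $H_1$, and the vertex long exact sequence of Corollary~\ref{cor: les vertex reduced} at a vertex (exploding to the net graph) for $H_2$, with $H_*(B(\netgraph))$ computed first by the same template. The invocation of Proposition~\ref{proposition: codimension two} for torsion-freeness of $H_2$ is a slightly cleaner way to obtain a fact the paper derives from its explicit integral splitting, and the appeal to the $\Sigma_4$-symmetry to pin down the coefficient in $\sum_a\gamma_a\alpha_a=0$ matches the paper exactly.

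One thing to watch: as written, you verify completeness of the $H_1$ presentation by comparing ranks against the Euler characteristic, but that equation has both $\rk H_1(B_k)$ and $\rk H_2(B_k)$ as unknowns at the moment you invoke it, since only $H_0$, $H_3$, $H_4$ are independently in hand (Proposition~\ref{proposition: codimension one} gives $H_{N-1}=H_3$ here, not $H_2$). This is not a fatal problem, but you need to reorder: first extract the integral rank of $H_2(B_k(\completegraph{4}))$ from the long exact sequence (this uses only $H_*(B(\netgraph))$ and the explicit formula for $\delta$, not the $H_1$ presentation), and only then can the Euler characteristic isolate $\rk H_1$. The paper avoids this by citing the known presentation of $H_1$ for $3$-connected graphs (Lemma~\ref{lemma: triconnected}), which is logically independent of the $H_2$ computation; your route is valid but requires being explicit about the order of operations. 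Finally, be aware that the ``hardest step'' you flag---identifying $\ker\delta_1$ with the span of the classes $e_{a4}^m e_{12}^{k-m-3}\gamma_a\alpha_a$---really is a nontrivial explicit calculation (the paper calls it ``tedious''), and the sign conventions in the $\Theta$-relation need to be set carefully to make the identities $\alpha_a=\alpha_b$ come out with the stated signs, which is why the paper fixes signed representatives for $\gamma_a$ and $\alpha_a$ at the outset.
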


We will prove this result by relating $\completegraph{4}$ to the net graph $\netgraph$, which is obtained by exploding a vertex. The corresponding result for $\netgraph$ is the following.

\begin{lemma}\label{lem:net graph}
The homology $H_*(B(\netgraph))$ is free Abelian and presented as a $\mathbb{Z}[E]$-module in terms of generators and relations as follows. Let $a,b,c\in\{1,2,3\}$ be distinct.
\begin{itemize}
\item $H_0(B(\netgraph))$ is generated by $\varnothing$ subject to relations identifying all edges.
\item $H_1(B(\netgraph))$ is generated by $\gamma_4$ and $\alpha_a$ subject to the following relations: 
\begin{align*}
\alpha_a (e-e')&=0\text{ if }e,e'\ne e_{a4}&
\gamma_4 (e_{ab}-e_{ac})&=0\\
&&\gamma_4 (e_{ab}-e_{a4})&= \alpha_a
\end{align*}
\item $H_2(B(\netgraph))$ is generated by $\alpha_a\alpha_b$ subject to the relation
 $\alpha_a\alpha_b(e-e')=0$ if $e,e'\notin\{e_{a4},e_{b4},e_{ab}\}$
\item $H_3(B(\completegraph{4}))$ is freely generated by $\alpha_1\alpha_2\alpha_3$.
\end{itemize}
\end{lemma}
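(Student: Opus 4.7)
The plan is to combine the codimension results of \S\ref{section:high degree trivalent} for the top groups with a direct analysis of $H_1$ via the vertex explosion long exact sequence. Since $\netgraph$ is a simple unitrivalent graph with three trivalent vertices and no self-loops, Proposition \ref{proposition: top homology} immediately gives $H_3(B(\netgraph))\cong \mathbb{Z}[E]\cdot\alpha_1\alpha_2\alpha_3$, and Proposition \ref{proposition: codimension one} gives $H_2(B(\netgraph))\cong\bigoplus_{v}\mathbb{Z}[E]\hat\beta_v/((e-e')\hat\beta_v:e,e'\in E(v))$. Writing $\{a,b,v\}=\{1,2,3\}$, one has $\hat\beta_v=\alpha_a\alpha_b$ and $E(v)=E\setminus\{e_{a4},e_{b4},e_{ab}\}$, which matches the stated presentation exactly. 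The group $H_0(B(\netgraph))$ is one-dimensional per weight by connectedness, with all edges identified via chained $I$-relations at the trivalent vertices.

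For $H_1$, Proposition \ref{prop:H1 generators} provides the generators $\gamma_4,\alpha_1,\alpha_2,\alpha_3$. The $O$-relation from $\cyclegraph{3}\hookrightarrow\netgraph$ yields $\gamma_4(e_{ab}-e_{ac})=0$, and the $Q$-relations from the three lollipop subgraphs (triangle plus a single tail at $a$) yield $\gamma_4(e_{ab}-e_{a4})=\alpha_a$. The star relations $\alpha_a(e-e')=0$ for $e,e'\ne e_{a4}$ are the delicate piece, as they are not formal consequences of the preceding two families. To derive them, I will apply Corollary \ref{cor: les vertex reduced} at a trivalent vertex $a$ with the distinguished half-edge $h_0$ chosen over $e_{a4}$; the exploded graph decomposes as $\netgraph_a\cong\intervalgraph\sqcup\mathsf{H}_a$, where $\mathsf{H}_a$ is a simple unitrivalent H-shaped graph with two trivalent vertices and four univalent vertices. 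The homology of $\mathsf{H}_a$ is accessible via Propositions \ref{proposition: top homology} and \ref{proposition: codimension one}, and the K\"{u}nneth-type splitting of Lemma \ref{lem:box monoidal} for the disjoint union then yields $H_*(B(\netgraph_a))$ explicitly. Tracing the connecting map, which is multiplication by $e(h)-e_{a4}$ for $h\in H(a)\setminus\{h_0\}$, through the resulting long exact sequence produces the star relations in precisely the stated form.

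Completeness is verified by comparing weight-graded ranks of the presentation against the alternating sum dictated by Corollary \ref{corollary: Euler characteristic}: once the ranks of $H_0$, $H_2$, and $H_3$ are fixed by the formulas above, the Euler characteristic forces the presented $H_1$ module to have the correct rank in every weight. Torsion-freeness of $H_1$ follows from Proposition \ref{proposition: codimension two}, and the other groups are manifestly torsion-free from their presentations. The main obstacle will be the careful bookkeeping of signs and connecting homomorphisms through the vertex explosion long exact sequence while verifying that the star relations drop out in exactly the stated form; a purely combinatorial alternative working directly on the tail-reduced \'{S}wi\k{a}tkowski complex is possible but less structural.
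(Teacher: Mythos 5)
Your argument for $H_0$, $H_2$, and $H_3$ is the same as the paper's (Propositions~\ref{proposition: top homology} and~\ref{proposition: codimension one} plus connectedness and the $I$-relation for $H_0$), as are the $\gamma_4$-relations via $O$ and $Q$ and the completeness step via rank counting against Corollary~\ref{corollary: Euler characteristic}; the appeal to Proposition~\ref{proposition: codimension two} for torsion-freeness of $H_1$ is a nice touch the paper leaves implicit. Where you diverge is in how you establish $\alpha_a(e-e')=0$ for $e,e'\neq e_{a4}$. The paper's intended route is much shorter: after using the $O$- and $Q$-relations to replace any triangle edge by a fixed one, one is left with the case $e,e'\in E(v)$ for a vertex $v\neq a$, and then $(e-e')\alpha_a$ is killed directly at the chain level, since if $h,h'\in H(v)$ lie on $e,e'$ and $a$ is a star cycle at $a$, then $\partial((h-h')a)=(e-e')a$. (This is what the paper is compressing into ``$I$-relations'': multiply the star class by an $I$-relation supported at a disjoint vertex.)

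Your alternative via Corollary~\ref{cor: les vertex reduced} at $a$ has a genuine subtlety you should not gloss over. The class $\alpha_a$ has nontrivial image $\psi(\alpha_a)\in\ker\delta_0\subseteq\bigoplus^2 H_0(B(\netgraph_a))\{1\}$, and while one easily checks $\psi(\alpha_a)(e-e')=0$, this only places $\alpha_a(e-e')$ in $\operatorname{im}\phi\cong\operatorname{coker}\delta_1$; showing that this element vanishes requires controlling the $\mathbb{Z}[E]$-module extension $0\to\operatorname{coker}\delta_1\to H_1(B(\netgraph))\to\ker\delta_0\to 0$, which the long exact sequence alone does not determine. ``Tracing the connecting map'' therefore does not suffice as written --- you would either need to exhibit an explicit cycle-level lift of $\psi(\alpha_a)$ and compute its product (which collapses to the chain-level argument above) or show that the extension is split over $\mathbb{Z}[E]$, neither of which is automatic. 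If you intend the long exact sequence only to supply additive ranks (supplementing or replacing the Euler characteristic count) that is fine, but the relations themselves still need an independent chain-level or functoriality argument, and the paper's one-line boundary computation is the efficient way to supply it.
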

\begin{proof}
This follows from Propositions~\ref{proposition: top homology},~\ref{proposition: codimension one}, and~\ref{prop:H1 generators}, along with the $O$, $Q$, and $I$-relations and a rank-counting argument (using Corollary~\ref{corollary: Euler characteristic}, say).
\end{proof}

\begin{proof}[Proof of Proposition~\ref{prop:complete graph}]
The statement for $H_0$ holds for any connected graph, the statement for $H_3$ follows from Proposition~\ref{proposition: codimension one}, and the statement for $H_4$ follows from Proposition~\ref{proposition: top homology}. For $H_1$, generation follows from Proposition~\ref{prop:H1 generators} (along with the identification of loop classes containing four edges as the sum of two loop classes of the given form). The relation involving only loop classes follows from a relation in $H_1(\completegraph{4})$ itself. The other stated relations follow directly from the $\Theta$, $I$, $Q$, and $O$-relations. In fact, $H_1(B_n(\graf))$ is completely known in general~\cite{KoPark:CGBG}. The relevant part of that computation, which we reprove as Lemma~\ref{lemma: triconnected}, shows by inspection of the relations that our presentation is complete. 
Thus, it remains to prove the statement for $H_2$. We may assume that $k\ge 2$, since $H_2$ vanishes in lower weight.

The outline of the proof is as follows. Using the long exact sequence of Corollary~\ref{cor: les vertex reduced}, we obtain an explicit $\mathbb{Z}$-linear description of $H_2$. Then, denoting by $M$ the $\mathbb{Z}[E]$-module presented by generators and relations in the statement of the proposition, we exhibit a $\mathbb{Z}[E]$-linear map from $M$ to $H_2$, which our integral description shows to be surjective. Finally, we verify that the two modules have the same integral rank in each weight.

The portion of the long exact sequence relevant to our purpose is
\begin{align*}
\cdots\to H_2(B_{k-1}(\netgraph))^{\oplus 2}&\xrightarrow{\delta_2} H_2(B_k(\netgraph))\to H_2(B_k(\completegraph{4}))\xrightarrow{\psi}
\\ H_1(B_{k-1}(\netgraph))^{\oplus 2}&\xrightarrow{\delta_1} H_1(B_k(\netgraph))\to \cdots
\end{align*}
The kernel of $\delta_1$ is free over $\mathbb{Z}$, so this portion of the sequence splits integrally, and we have $H_2(B_k(\completegraph{4}))\cong \coker\delta_2\oplus \ker\delta_1$ as $\mathbb{Z}$-modules. Passing to the cokernel of $\delta_2$ merely identifies $e_{a4}$ and $e_{b4}$ in $H_2(B_k(\netgraph))$ and thus contributes the space of external products $\alpha_a\alpha_b p(E)$ for $a,b\in\{1,2,3\}$, subject to the final relation $\alpha_a\alpha_b(e-e')=0$ if $e,e'\neq e_{ab}$. On the other hand, $\ker \delta_1$ consists of the isomorphic image of the classes $e_{a4}^me_{12}^{k-m-3}\gamma_a\alpha_a\in H_2(B_k(\completegraph{4}))$ for $a\in\{1,2,3\}$ and $0\le m\le k-3$. Verifying this fact is a tedious calculation. 

As for relations, the second, third, and fourth follow from the $O$-relation, the $I$-relation, and the $Q$-relation, respectively. For the first relation, counting ranks shows that $\sum_{a=1}^4 r_a\gamma_a\alpha_a=0$ with at least one $r_a$ non-zero, and symmetry under cyclic permutations implies that $r_a$ is constant in $a$. The absence of torsion in $H_2(B(\completegraph{4}))$ forces $r_a$ to be a unit.

For the rank calculation, we note that the $\coker \delta_2$ is the direct sum of three copies of the space of degree $k-4$ polynomials in two variables, which has integral rank $3(k-3)$, while $\ker\delta_1$ is the direct sum of three copies of the space of degree $k-3$ polynomials in two variables, which has integral rank $3(k-2)$. It follows that the integral rank of $H_2(B_k(\completegraph{4}))$ is $6k-15$, which is also a lower bound on the rank of $M$ by surjectivity. Thus, it remains to show that the integral rank of $M$ is at most $6k-15$. To see why this is the case, we note that the relations shown imply that $M$ is spanned integrally in weight $k$ by the classes $e_{ab}^{k-4-m}e^m\alpha_a\alpha_b$, where $1\le a,b\le 4$, $0\leq m\leq k-4$, and $e\neq e_{ab}$ is fixed, together with the classes $e_{bc}^{k-3}\gamma_a\alpha_a$, where $a,b,c\neq 4$. There are $6(k-3)$ generators of the former type and $3$ of the latter type, so the rank is at most $6k-15$, as claimed.
\end{proof}

\appendix

\section{Reminders on homotopy colimits}\label{section:hocolim appendix}
In this appendix, we present a summary of some relevant facts and definitions concerning homotopy colimits and related matters. For the general theory and full details,~\cite{Dugger:PHC},~\cite{GoerssJardine:SHT} and~\cite{Hirschhorn:MCL} are good references.

The objects of the simplicial indexing category $\Delta$, being finite ordered sets, may naturally be regarded as categories, and the arrows of $\Delta$ determine functors among these categories.

\begin{definition}
Let $\D$ be a category. The \emph{nerve} of $\D$ is the simplicial set \[N\D_\bullet\coloneqq{}\Fun(\Delta\!^\bullet, \D).\] 
\end{definition}
Aa $p$-simplex $\sigma\in N\D_p$ is a string $\sigma(p)\to \cdots\to \sigma(0)$ of composable morpisms in $\D$. We say that $\D$ is \emph{contractible} if the geometric realization $|N\D|$ is so.

\begin{example}\label{example: cofiltered}
A filtered or cofiltered category is contractible provided it is not empty.
In particular, any category with an initial or terminal object is contractible.
\end{example}

\begin{definition}\label{def:homotopy colimit}
Let $F:\D\to \Top$ be a functor. The \emph{homotopy colimit} of $F$, denoted $\hocolim_\D F$, is the geometric realization of the simplicial space given in simplicial degree $p$ by \[\coprod_{\sigma\in N\D_p}F(\sigma(p))\] with face and degeneracy maps induced by those of $N\D$.\footnote{Lemma~\ref{lemma: hocolim and weak equivalence} implies that the homotopy colimit induces a functor from the \emph{homotopy category} of functors from $\D$ to $\Top$ to the homotopy category of $\Top$. It is typical to either define the homotopy colimit as this latter functor or to define \emph{a} homotopy colimit as any functor with similar properties to that in our definition which induces this same functor at the level of homotopy categories. We will not need this level of generality.}
\end{definition}
The homotopy colimit, thus defined, is functorial on the functor category, so that a natural transformation of functors $F\to G$ induces a map on homotopy colimits. A fundamental property of homotopy colimits is that they are \emph{homotopy invariant}.

\begin{lemma}\label{lemma: hocolim and weak equivalence}
The homotopy colimit of a natural weak homotopy equivalence between functors $F$ and $G$ is a weak homotopy equivalence.
\end{lemma}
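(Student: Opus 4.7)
The plan is to reduce the statement to a standard fact about geometric realizations of simplicial spaces. Given a natural weak equivalence $\eta\colon F\to G$, I would first observe that it induces a map $\eta_\bullet$ between the simplicial spaces whose realizations are $\hocolim_\D F$ and $\hocolim_\D G$, namely the map which in simplicial degree $p$ is the disjoint union over $\sigma\in N\D_p$ of the maps $\eta_{\sigma(p)}\colon F(\sigma(p))\to G(\sigma(p))$. Because weak homotopy equivalences in $\Top$ are closed under arbitrary disjoint unions, $\eta_\bullet$ is a levelwise weak equivalence of simplicial spaces.

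Next, I would invoke the standard principle that geometric realization sends levelwise weak equivalences between Reedy cofibrant simplicial spaces to weak equivalences (see, e.g., Hirschhorn, \emph{Model Categories and their Localizations}, or Goerss--Jardine, \emph{Simplicial Homotopy Theory}, for the precise statement). It therefore suffices to check that the Bousfield--Kan simplicial space appearing in Definition~\ref{def:homotopy colimit} is Reedy cofibrant. This is where I would do the only real bookkeeping: the $p$-th latching object is the subcoproduct indexed by the degenerate simplices of $N\D_p$, and the latching map into the full coproduct $\coprod_{\sigma\in N\D_p}F(\sigma(p))$ is the inclusion of a summand, hence a cofibration in $\Top$. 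Reedy cofibrancy follows, and applying the principle to $\eta_\bullet$ finishes the argument.

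The only potential obstacle is the Reedy cofibrancy verification, but this is essentially immediate from the structural form of the Bousfield--Kan construction, so the entire argument is a short formal check once the right black-box theorem about geometric realization is in hand.
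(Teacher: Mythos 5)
The paper does not actually prove this lemma; it is recorded in Appendix~\ref{section:hocolim appendix} as one of the standard facts deferred to the references cited at the start of that appendix. Your strategy---reduce to a black-box realization theorem and verify that the Bousfield--Kan simplicial space is suitably well-behaved---is the standard one, but two points deserve more care than you give them.

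First, your identification of the $p$-th latching object with the subcoproduct over degenerate simplices, with latching map the inclusion, is true but not quite immediate: it uses the Eilenberg--Zilber lemma. For each degenerate $p$-simplex $\sigma$ of $N\D$ one needs the indexing poset of surjections $\alpha\colon[p]\twoheadrightarrow[k]$ and $k$-simplices $\tau$ with $\alpha^{*}\tau=\sigma$ to have a terminal object, which is precisely what Eilenberg--Zilber supplies; without it the colimit defining $L_{p}$ need not be a subobject of $X_{p}$.

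Second, and more substantively, ``Reedy cofibrant'' in the Quillen model structure on $\Top$ is stronger than what you actually verify. The inclusion of a subcoproduct $\coprod_{S}A_{s}\hookrightarrow\coprod_{T}A_{t}$ need not be a Quillen cofibration without some cofibrancy hypothesis on the complementary summands $A_{t}$, $t\in T\setminus S$, and the lemma imposes no cofibrancy hypothesis on the values of $F$ and $G$. The references you name (Hirschhorn, Goerss--Jardine) point toward the Quillen--Reedy version of the realization theorem, where this hypothesis is essential; as written, your argument therefore has a gap. The fix is to invoke instead the realization theorem for \emph{proper} (or \emph{good}) simplicial spaces in the sense of May and Segal, which needs only that the latching maps be closed Hurewicz cofibrations. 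The inclusion of a clopen subspace, in particular the inclusion of a subcoproduct into a coproduct, is always a closed Hurewicz cofibration with no further hypothesis on the summands, so the Bousfield--Kan simplicial space is automatically proper and the argument then goes through. (In the situations the paper actually uses---where $F$ and $G$ take CW complex values---the Quillen--Reedy version would also suffice, but the lemma is stated in greater generality.)
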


\begin{definition}\label{def:complete cover}
Let $X$ be a topological space and $\mathcal{U}$ an open cover of $X$. We say $\mathcal{U}$ is \emph{complete} if it is possible to write any finite intersection of elements of $\mathcal{U}$ as a union of elements of $\mathcal{U}$.
\end{definition}

We view the open cover $\mathcal{U}$ as partially ordered under inclusion and thereby as a category. There is a tautological functor $\Gamma:\mathcal{U}\to \Top$ taking $U\in\mathcal{U}$ to the topological space $U$. Our main use of homotopy colimits is via the following result.

\begin{theorem}[{\cite[Prop.~4.6]{DuggerIsaksen:THA1R}}]
\label{thm:dugger-isaksen}
If $\mathcal{U}$ is a complete cover of $X$, then the natural map \[\hocolim_{\mathcal{U}}\Gamma\longrightarrow X\] is a weak homotopy equivalence.
\end{theorem}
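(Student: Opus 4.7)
The plan is to verify that the natural map $\phi \colon \hocolim_\mathcal{U} \Gamma \to X$ is a weak homotopy equivalence by a pointwise analysis of its homotopy fibers. First I would unpack the homotopy colimit via Definition~\ref{def:homotopy colimit}: it is the geometric realization of the simplicial space $\mathcal{U}_\bullet$ whose $p$-simplices are $\coprod U_p$, indexed by chains $U_p \subseteq U_{p-1} \subseteq \cdots \subseteq U_0$ in $\mathcal{U}$, with the natural face maps induced by forgetting and the augmentation $\phi$ to $X$ induced by the inclusions $U_p \hookrightarrow X$ on each factor.

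Next, for each $x \in X$, let $\mathcal{U}_x \subseteq \mathcal{U}$ be the full subcategory of opens containing $x$. Given $U, V \in \mathcal{U}_x$, completeness writes $U \cap V$ as a union of members of $\mathcal{U}$; since $x$ lies in the intersection, at least one such member is contained in both $U$ and $V$ and itself belongs to $\mathcal{U}_x$. Hence $\mathcal{U}_x$ is cofiltered as a poset, and its nerve $|N\mathcal{U}_x|$ is contractible by Example~\ref{example: cofiltered}.

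The main step would then be to identify the homotopy fiber of $\phi$ over $x$ with $|N\mathcal{U}_x|$, or at least to show it is contractible. The cleanest route is to recognize $\mathcal{U}_\bullet \to X$ as a hypercover in the sense of Dugger--Hollander--Isaksen with respect to the standard topology on open subsets of $X$: completeness is precisely what makes each matching-object map a cover, so the realization theorem for hypercovers supplies the desired weak equivalence directly. A more hands-on alternative is to subordinate a partition of unity to a locally finite refinement and produce, on each compact skeleton of a CW approximation of $X$, an explicit inverse map up to homotopy by inductively choosing liftings into the bar construction using cofilteredness of $\mathcal{U}_x$.

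The hard part will be this fiber identification: while cofilteredness of $\mathcal{U}_x$ is immediate, promoting the resulting contractibility of $|N\mathcal{U}_x|$ to a statement about the \emph{honest} homotopy fibers of $\phi$ requires nontrivial simplicial or model-categorical input, either via the hypercover machinery or via a manual comparison with the Čech nerve. Once contractible fibers are in hand, surjectivity of $\phi$ on $\pi_0$ (because $\mathcal{U}$ covers $X$), combined with the long exact sequence associated to the quasi-fibration structure, forces $\phi$ to induce isomorphisms on all homotopy groups at every basepoint, completing the proof.
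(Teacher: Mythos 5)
The paper does not prove this result: Theorem~\ref{thm:dugger-isaksen} is cited directly from Dugger--Isaksen and used as a black box, so there is no ``proof in the paper'' against which to compare your argument. What you have written is an attempt to reprove an external theorem, and it should be evaluated on those terms.

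Your argument contains the right ingredients but the central step is left open, and the way you gesture at filling it conflates two different simplicial objects. You correctly observe that completeness makes the poset $\mathcal{U}_x$ of members containing $x$ cofiltered, hence $|N\mathcal{U}_x|$ contractible. The problem is that $\hocolim_{\mathcal{U}}\Gamma$ is built from the \emph{nerve of the poset} $\mathcal{U}$: the $p$-simplices are indexed by chains $U_p\subseteq\cdots\subseteq U_0$ with value $U_p$. This is not the \v{C}ech nerve of the cover, whose $p$-simplices are indexed by arbitrary $(p+1)$-tuples with value the intersection $U_0\cap\cdots\cap U_p$. When you write that ``completeness is precisely what makes each matching-object map a cover,'' you are reasoning as though you had the \v{C}ech/hypercover object in hand, but you don't; relating the poset hocolim to a hypercover is exactly the content of the theorem, not a free observation. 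Similarly, the statement that the homotopy fiber of $\phi$ over $x$ ``is'' $|N\mathcal{U}_x|$ has no a priori justification: $\phi$ is a map of spaces, not a (quasi-)fibration, and extracting honest homotopy fibers from a realization of a simplicial space augmented to $X$ requires either a fibration replacement and careful skeletal analysis or a comparison with a model (such as a split hypercover) for which the realization theorem is already known. You flag this yourself as ``the hard part,'' and that candor is appropriate, but it means the proposal is an outline of where a proof might live rather than a proof.

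The suggested alternative via a partition of unity has a separate problem: it would need a locally finite refinement of $\mathcal{U}$ and paracompactness hypotheses on $X$, whereas the statement in Dugger--Isaksen (and its use in this paper, where $X$ ranges over configuration spaces of arbitrary finite complexes and their disjoint unions) is meant to hold with no such restriction. If you want a self-contained proof at the generality needed here, the route that actually closes the gap is the one in the cited reference: show the augmented simplicial space obtained by replacing the poset nerve with the poset of nondegenerate chains (or by a cofinality argument) maps to a hypercover with levelwise equivalent terms, and then invoke the realization theorem for hypercovers, the proof of which is itself nontrivial. As written, your proposal identifies the shape of that argument but does not supply the comparison lemma or the fiber identification on which everything rests.
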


One can also define homotopy colimits for functors valued in chain complexes. For simplicity we only define a special case; more detail is available in the references at the beginning of the appendix.

Given a simplicial chain complex $V:\Delta\!^{op}\to \Ch_\mathbb{Z}$, we may construct a bicomplex $\mathrm{Alt}(V)$ by taking the alternating sum of the face maps. 

\begin{proposition}[{\cite[Prop.~19.9]{Dugger:PHC}}]\label{prop:dold-kan hocolim}
Let $V:\Delta\!^{op}\to \Ch_\mathbb{Z}$ be a simplicial chain complex concentrated in non-negative degrees. There is a natural weak equivalence \[\hocolim_{\Delta\!^{op}} V\simeq\mathrm{Tot}(\mathrm{Alt}(V)),\] where $\mathrm{Tot}$ denotes the total complex.
\end{proposition}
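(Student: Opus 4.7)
The plan is to reduce the statement to a combination of the Bousfield--Kan formula for the homotopy colimit and the Eilenberg--MacLane theorem on normalization of simplicial Abelian groups.

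First, I would interpret both sides bicomplex-theoretically. The right-hand side $\mathrm{Tot}(\mathrm{Alt}(V))$ is by construction the total complex of the bicomplex whose $(p,q)$-entry is the chain group in internal degree $q$ of $V_p$, with horizontal differential induced by the internal differential of each $V_p$ and vertical differential the alternating sum $\sum_{i=0}^{p}(-1)^i d_i$ of simplicial face maps. For the left-hand side, I would use the Bousfield--Kan model of $\hocolim_{\Delta^{op}}V$ as the total complex of the simplicial chain complex whose $p$-th level is $\bigoplus_{\sigma \in N(\Delta^{op})_p} V(\sigma(p))$ with the usual bar-construction boundaries. This construction receives a natural last-vertex map from $\mathrm{Tot}(\mathrm{Alt}(V))$ that collapses the external nerve direction.

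Second, I would show that this map is a quasi-isomorphism. The standard cofinality argument uses that each overcategory $\Delta^{op}\!\downarrow\![n]$ has $\id_{[n]}$ as initial object and so is contractible; applied levelwise in the internal simplicial direction, this shows that the underlying map of simplicial chain complexes is a levelwise quasi-isomorphism. The non-negativity hypothesis, together with the first-quadrant nature of the bicomplexes in play, guarantees convergence of the column-filtration spectral sequence, so levelwise quasi-isomorphism passes through totalization. Equivalently, one can first quotient out the degenerate subcomplex, which is nullhomotopic in each row by the Eilenberg--MacLane theorem, to land in the normalized bicomplex $NV$; then the identification with $\hocolim$ follows from the enriched Dold--Kan correspondence, viewing $V$ as a simplicial object in $\Ch_{\mathbb{Z}}$.

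The main obstacle is verifying naturality of this comparison, which is what the application in Theorem~\ref{thm:decomposition} actually requires. However, every map produced above is strictly functorial in $V$, so naturality is automatic once the comparison map is written down. The real subtlety is just ensuring that the spectral sequence comparison genuinely converges, and this is precisely where the non-negative grading hypothesis enters: it keeps the totalization bounded enough for the row-filtration spectral sequence to be strongly convergent.
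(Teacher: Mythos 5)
The paper offers no proof of this statement---it is a direct citation of Dugger's primer---so there is nothing internal to compare against, and your sketch must stand on its own. Its main route has a genuine gap: there is no natural ``last-vertex map'' between $\mathrm{Tot}(\mathrm{Alt}(V))$ and the realization of the Bousfield--Kan simplicial replacement $\mathrm{srep}(V)_p=\bigoplus_{\sigma\in N(\Delta^{op})_p}V(\sigma(p))$, in either direction. Whatever candidate one tries (landing in the summand indexed by the constant $p$-chain at $[p]$, or by the chain $[0]\to[1]\to\cdots\to[p]$ built from coface maps, or a projection onto such a summand) fails to commute with the face maps of the simplicial replacement: $d_0$ of the constant chain at $[p]$ is the constant chain at $[p]$ of length $p-1$, not the constant chain at $[p-1]$, and similarly the coface chain is not stable under $d_0$. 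Because no map of simplicial chain complexes exists here, the subsequent ``cofinality argument applied levelwise,'' which purports to show the (nonexistent) map is a levelwise quasi-isomorphism, does not get off the ground; moreover, the invocation of contractibility of the overcategories $(\Delta^{op}\!\downarrow\![n])$ is the kind of argument that identifies $\hocolim$ with $\colim$ on nice diagrams, not one that compares $\mathrm{srep}(V)$ with $V$ level by level. Your notion of ``the internal simplicial direction'' is also misplaced, since the internal direction of $V$ is a chain direction, not a simplicial one, unless one has already applied Dold--Kan.

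The alternative route you gesture at in the final sentences is essentially the standard proof, and it is the right idea: replace $\mathrm{Tot}(\mathrm{Alt}(V))$ by the normalized bicomplex (using the Eilenberg--Zilber/normalization theorem), identify this with the coend $\int^{[p]}N_*(\Delta^p)\otimes V_p$, and then observe that $N_*(\Delta^\bullet)\colon\Delta\to\Ch_{\mathbb{Z}}$ is a levelwise-free, objectwise-acyclic resolution of the constant functor $\underline{\mathbb{Z}}$, so that this coend computes the derived tensor product $\underline{\mathbb{Z}}\otimes^{\mathbb{L}}_{\Delta^{op}}V=\hocolim_{\Delta^{op}}V$; finally compare the latter with the Bousfield--Kan simplicial replacement, which computes the same derived functor by the bar resolution of $\underline{\mathbb{Z}}$. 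As written, though, this alternative is only named (``enriched Dold--Kan'') rather than argued, and the actual comparison between the two resolutions---the part that replaces your nonexistent direct map---is exactly the content that is missing from the sketch.
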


The following standard result asserts that the notions of homotopy colimit valued in topological spaces and chain complexes are compatible.

\begin{proposition}\label{prop:sing commutes with hocolim}
Let $F:\D\to \Top$ be a functor. There is a natural quasi-isomorphism \[\hocolim_\D C^\sing(F)\simeq C^\sing(\hocolim_\D F).\]
\end{proposition}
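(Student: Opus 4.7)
The plan is to unpack both sides as the total complex of essentially the same bicomplex and then invoke a form of the Eilenberg--Zilber theorem. First I would use Definition~\ref{def:homotopy colimit} to rewrite $\hocolim_\D F$ as the geometric realization of the simplicial space $X_\bullet$ given by $X_p = \coprod_{\sigma \in N\D_p} F(\sigma(p))$. Applying $C^\sing$ produces a natural quasi-isomorphism
\[
C^\sing(|X_\bullet|) \simeq \mathrm{Tot}(\mathrm{Alt}(C^\sing(X_\bullet))),
\]
the standard comparison between singular chains on a geometric realization and the total complex of the associated alternating bicomplex (an Eilenberg--Zilber-type statement for simplicial spaces; see, e.g., \cite{GoerssJardine:SHT}).

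Second, I would use that singular chains send disjoint unions of spaces to direct sums of chain complexes, since any singular simplex has connected image. This yields a levelwise isomorphism of simplicial chain complexes
\[
C^\sing(X_p) \cong \bigoplus_{\sigma \in N\D_p} C^\sing(F(\sigma(p))),
\]
and the right-hand simplicial chain complex is exactly the one whose alternating-sum total complex computes $\hocolim_\D C^\sing(F)$ by Proposition~\ref{prop:dold-kan hocolim}. Composing the two natural quasi-isomorphisms gives the statement, with naturality in $F$ inherited from naturality of each ingredient.

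The main obstacle is the first step: establishing that $C^\sing$ commutes with the geometric realization of a simplicial space up to natural quasi-isomorphism. This can be handled either by appealing to a bisimplicial Eilenberg--Zilber result directly, or by arguing through the skeletal filtration of $|X_\bullet|$, whose successive attachings are pushouts of the form $\partial\Delta^p \times X_p \cup \Delta^p \times L_pX_\bullet \hookrightarrow \Delta^p \times X_p$ (with $L_p$ the latching object); $C^\sing$ converts these into pushout squares that are quasi-isomorphisms on the attaching side (via Mayer--Vietoris) and sends the resulting filtration to one whose associated graded is visibly the normalized bicomplex in question. With this step in hand, everything else is formal bookkeeping through Definition~\ref{def:homotopy colimit} and Proposition~\ref{prop:dold-kan hocolim}.
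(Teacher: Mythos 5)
The paper states this proposition as a standard background fact and does not supply a proof, so there is no authorial argument to compare against; I will instead assess the proposal directly. Your outline is the standard one and is broadly on target, but the step you correctly single out as the ``main obstacle''---commuting $C^\sing$ with geometric realization of a simplicial space---is handled too casually, and as written it contains a genuine gap. The comparison $C^\sing(|X_\bullet|)\simeq \mathrm{Tot}(\mathrm{Alt}(C^\sing(X_\bullet)))$ is \emph{not} true for an arbitrary simplicial space $X_\bullet$: the thin geometric realization only has the right homotopy type when $X_\bullet$ is proper (Reedy cofibrant), meaning each latching map $L_pX_\bullet\to X_p$ is a closed cofibration. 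Without that hypothesis, the pushout attaching the $p$-cells of $|X_\bullet|$ is not carried by $C^\sing$ to a homotopy pushout of chain complexes, and the Mayer--Vietoris argument you invoke does not apply. Your skeletal-filtration sketch quietly relies on this cofibrancy without verifying it.

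Fortunately the hypothesis does hold here, and you should say so explicitly: the Bousfield--Kan simplicial space $X_p=\coprod_{\sigma\in N\D_p}F(\sigma(p))$ of Definition~\ref{def:homotopy colimit} is \emph{split}, because $N\D$ is a simplicial set and its degenerate $p$-simplices index a direct summand of the coproduct defining $X_p$. Hence each degeneracy $X_{p-1}\to X_p$ is an isomorphism onto a summand, the latching map $L_pX_\bullet\to X_p$ is a summand inclusion, and $X_\bullet$ is proper. With this observation in place, the skeletal-filtration argument is correct, and the remaining steps---singular chains taking coproducts of spaces to direct sums, and the appeal to Proposition~\ref{prop:dold-kan hocolim}---are indeed formal bookkeeping. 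A minor further point: the filtration naturally yields the normalized chain complex of the bisimplicial object rather than the unnormalized one implicit in $\mathrm{Alt}$, so one also invokes the normalization lemma to pass between them; this is routine but worth flagging for completeness.
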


We shall also make use of a relative version of the homotopy colimit construction. Recall that, given a functor $T:\D_1\to \D_2$ and the existence of enough colimits in $\Cc$, the restriction functor $T^*:\Fun(\D_2, \Cc)\to \Fun(\D_1,\Cc)$ admits a left adjoint $\Lan$, the \emph{left Kan extension} functor. The homotopical version of this construction is the following.\footnote{As with the homotopy colimit, the homotopy left Kan extension may be defined more invariantly in terms of homotopy categories. With this setup, our definition is a proposition---see {\cite[Prop.~10.2]{Dugger:PHC}}, for example.}

\begin{definition}\label{def: holan}
Let $\D_2\xleftarrow{T}\D_1\xrightarrow{F}\Cc$ be functors with $\Cc=\Top$ or $\Cc=\Ch_\mathbb{Z}$. The \emph{homotopy left Kan extension} of $F$ along $T$ is the functor from $\D_2$ to $\Cc$ given (on objects) by\[\hoLan_TF(d)= \hocolim_{(T\downarrow d)}(F\circ\forgettodomain).
\]
\end{definition}

Here, for an object $d\in\D_2$, the \emph{overcategory} $(T\!\downarrow\! d)$ has as objects the pairs $(d', f)$ with $d'\in \D_1$ and $f:T(d')\to d$ a morphism in $\D_2$. A morphism from $f:T(d')\to d$ to $g:T(d'')\to d$ is a morphism $h:d'\to d''$ such that $g\circ T(h)=f$. The forgetful functor to $\D_1$ takes $(d',f)$ to $d'$. The construction $(T\downarrow{d})$ is functorial in $d$, and $\hoLan_TF$ extends to a functor using this functoriality and the functoriality of the homotopy colimit.

\begin{example}\label{example: Left Kan extensions recover homotopy colimits}
When $\D_2$ is the trivial category $\DeltaZero$ with one object $*$ (so that there is a unique functor $*:\D_1\to\DeltaZero$), this construction recovers the homotopy colimit:
\[
\hoLan_{*} F(*) \cong \hocolim_{\D_1}F
\]
\end{example}

Dually, the objects of the \emph{undercategory} $(d\!\downarrow\!T)$ has as objects the pairs $(f,d')$ with $d'\in \D_1$ and $f:d\to T(d')$ a morphism in $\D_2$, and as morphisms morphisms $h$ in $\D_1$ satisfying a dual condition. Overcategories and undercategories are very useful in the calculation of homotopy colimits. In order to say how this is so, we require a preliminary definition.

\begin{definition}
Let $T:\D_1\to \D_2$ be a functor. We say that $T$ is \begin{enumerate}
\item \emph{homotopy final} if $(d\!\downarrow\! T)$ is contractible for every $d\in\D_2$, or
\item \emph{homotopy initial} if $(T\!\downarrow\!d)$ is contractible for every $d\in \D_2$.
\end{enumerate}
\end{definition}

That is, $T$ is homotopy initial just in case $T^{op}:\D_1^{op}\to \D_2^{op}$ is homotopy final.

\begin{proposition}[{\cite[Thm.~8.5.6]{Riehl:CHT}}]\label{prop:finality criterion}
Let $\D_1\xrightarrow{T}\D_2\xrightarrow{F}\Cc$ be functors with $\Cc=\Top$ {\rm(}or $\Cc=\Ch_\mathbb{Z}${\rm)}. If $T$ is homotopy final, then the natural map \[\hocolim_{\D_1}T^*F\to \hocolim_{\D_2}F\] is a weak  homotopy equivalence {\rm(}quasi-isomorphism{\rm)}.
\end{proposition}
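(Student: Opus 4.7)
The plan is to reduce Proposition~\ref{prop:finality criterion} to Quillen's Theorem A for the constant functor and then bootstrap to the general case using the explicit simplicial model for $\hocolim$ given in Definition~\ref{def:homotopy colimit}. Under the paper's definition, $\hocolim_\D F$ is the realization of the simplicial space
$$B(F)_p \;=\;\coprod_{\sigma\in N\D_p} F(\sigma(p)),$$
and the functor $T:\D_1\to\D_2$ induces a map of simplicial spaces $B(T^*F)\to B(F)$ whose realization is the canonical comparison map to be analyzed.

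\textbf{Step 1 (the constant case).} Specialize $F$ to the constant functor at a point, so that $B(F)_p=N\D_p$ and the map in question becomes the realization $|NT|:|N\D_1|\to|N\D_2|$. The claim in this case is Quillen's Theorem A. I would recall its proof: the homotopy fiber of $|NT|$ over a vertex $d\in|N\D_2|$ admits a natural identification (via a pullback of the universal left fibration / the canonical contractible replacement of $d$) with $|N(d\!\downarrow\!T)|$, which is contractible by the homotopy finality hypothesis. Hence $|NT|$ has contractible homotopy fibers and is a weak equivalence.

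\textbf{Step 2 (general coefficients).} For a general functor $F$, I would use a bisimplicial argument. Build a bisimplicial space whose $(p,q)$-bidegree collects a string $\sigma\in N\D_{2,p}$ together with a singular $q$-simplex of $F(\sigma(p))$. Realizing first in $q$ recovers $B(F)$, while realizing first in $p$ yields a space that is the realization of a (levelwise) bundle over $N\D_2$ whose fiber over $d$ is $F(d)$. The induced map from the corresponding bisimplicial object for $T^*F$ is, fiberwise over $d$, the projection $|N(d\!\downarrow\!T)|\times F(d)\to F(d)$, which is a weak equivalence by Step~1. A standard comparison (e.g.\ via the Bousfield--Kan spectral sequence for a bisimplicial realization, or directly via the skeletal filtration) then concludes that the total map is a weak equivalence. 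For the chain complex target $\Ch_\mathbb{Z}$, apply the corresponding statement after the Dold--Kan correspondence, or repeat the argument using the simplicial model of Proposition~\ref{prop:dold-kan hocolim}.

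\textbf{Main obstacle.} The hardest ingredient is Step~1, i.e.\ the identification of the homotopy fiber of $|NT|$ with $|N(d\!\downarrow\!T)|$; this is the essential content of Quillen's Theorem A and requires either Quillen's Theorem B (which identifies the homotopy fiber of the nerve of a functor with the nerve of the overcategory) or an explicit cofibrant replacement argument. Once Step~1 is granted, the extension to general coefficients in Step~2 is a formal fiberwise comparison. Since Proposition~\ref{prop:finality criterion} is invoked only as a black box imported from \cite[Thm.~8.5.6]{Riehl:CHT}, the entire argument reduces to citing the classical Quillen theorems together with a routine bisimplicial bookkeeping.
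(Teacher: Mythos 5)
The paper gives no proof of this proposition; it is imported as a black box from Riehl's textbook (the cited \cite[Thm.~8.5.6]{Riehl:CHT}), so there is no in-paper argument to compare against. Your Step~1 is fine: specializing $F$ to the constant functor reduces to Quillen's Theorem~A, whose hypothesis is precisely the contractibility of the comma categories $(d\downarrow T)$ appearing in Definition of homotopy finality.

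Step~2 has a genuine gap. You claim that the comparison of realized bisimplicial objects is, ``fiberwise over $d$,'' the projection $|N(d\downarrow T)|\times F(d)\to F(d)$, and conclude from Step~1. But the map $|B(F)|\to|N\D_2|$ is not a fibration, the map on nerves $|N\D_1|\to|N\D_2|$ is not a fibration, and the point-set fiber over a vertex $d$ of the $T^*F$-side is certainly not $|N(d\downarrow T)|\times F(d)$: the points of $\D_1$ mapping to $d$ need not organize into $(d\downarrow T)$, and $F$ takes different values on them. Even granting the claimed identification of fibers, one cannot deduce a weak equivalence of total spaces from a weak equivalence of fibers without a properness or $\pi_*$-Kan-type hypothesis that your bisimplicial object has not been shown to satisfy.

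The repair is standard but not a formality. One correct route is to replace your ad hoc bisimplicial object with the genuine two-sided bar construction $B(*,\D_1,T^*F)$, use the coend isomorphism $B(*,\D_1,T^*F)\cong B(N(-\downarrow T),\D_2,F)$ (this is where the comma categories actually enter, at the level of the resolution rather than as fibers), and then compare with $B(*,\D_2,F)$ by a levelwise weak equivalence coming from the contractibility of the $N(d\downarrow T)$; levelwise weak equivalences of Reedy-cofibrant (here, split or free) simplicial objects do realize to weak equivalences, which is the precise statement replacing your fiberwise heuristic. Alternatively, for $\Ch_\mathbb{Z}$ one can argue directly via the Bousfield--Kan spectral sequence or via Thomason's theorem and Quillen's Theorem~A applied to Grothendieck constructions. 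Any of these closes the gap, but as written Step~2 does not.
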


We will make use of the following immediate consequences of this result.

\begin{corollary}\label{cor:contractible constant hocolim}
Let $\D$ be a category and $\underline c:\D\to \Cc$ the constant functor at $c\in \Cc$, where $\Cc=\Top$ {\rm(}or $\Cc=\Ch_\mathbb{Z}${\rm)}. If $\D$ is contractible, then the natural map \[\hocolim_\D\underline c\to c\] is a weak  homotopy equivalence {\rm(}quasi-isomorphism{\rm)}.
\end{corollary}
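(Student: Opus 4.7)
The plan is to apply Proposition~\ref{prop:finality criterion} to the unique functor $T:\D\to \DeltaZero$ into the terminal category, viewing $\underline{c}$ as the pullback $T^{*}F$ along $T$ of the functor $F:\DeltaZero\to \Cc$ sending the unique object $*$ to $c$.

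First, I would verify that $T$ is homotopy final by inspecting the undercategory $(*\!\downarrow\! T)$. Since $\DeltaZero$ has the single object $*$ with only its identity morphism, an object of $(*\!\downarrow\! T)$ amounts to a pair consisting of $d\in \D$ together with the unique arrow $*\to T(d)=*$, and morphisms in $(*\!\downarrow\! T)$ are simply arrows in $\D$. Thus the forgetful functor $(*\!\downarrow\! T)\to \D$ is an isomorphism of categories, so the nerves coincide, and the contractibility of $\D$ assumed in the hypothesis transfers directly to $(*\!\downarrow\! T)$.

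Second, I would identify $\hocolim_{\DeltaZero}F$ with $c$ using Definition~\ref{def:homotopy colimit}: for every $p\geq 0$, the set $N\DeltaZero_p$ is a singleton, so the relevant simplicial object is the constant simplicial object at $F(*)=c$, whose realization is $c$. Under this identification, the composite $T^{*}F=\underline{c}$ and the structural map $\hocolim_{\D}\underline{c}\to c$ coincide with the natural map $\hocolim_{\D}T^{*}F\to \hocolim_{\DeltaZero}F$ of Proposition~\ref{prop:finality criterion}.

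Applying that proposition now yields the desired weak equivalence (or quasi-isomorphism, depending on the target category). There is no real obstacle here: the argument is a formal reduction to the finality criterion, and the only thing worth checking with any care is the natural identification $\hocolim_{\DeltaZero}F\simeq c$, which follows directly from unpacking the definition.
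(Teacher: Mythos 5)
Your proposal is correct and matches the paper's intent exactly: the paper states this corollary as an ``immediate consequence'' of Proposition~\ref{prop:finality criterion} with no written proof, and the mechanism you supply---applying the finality criterion to the unique functor $T:\D\to\DeltaZero$, observing that $(\ast\!\downarrow\!T)\cong\D$ so contractibility of $\D$ gives homotopy finality of $T$, and identifying $\hocolim_{\DeltaZero}F$ with $c$---is precisely the one-line argument the authors had in mind. (The only hair worth splitting is that in the chain complex case $\hocolim_{\DeltaZero}F$ is quasi-isomorphic to $c$ rather than literally equal to it, since the bar-type model has contributions in all simplicial degrees, but this does not affect the conclusion.)
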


\begin{corollary}\label{cor:final functor nerve}
Let $T:\D_1\to \D_2$ be any functor. If $T$ is homotopy final, then the induced map $N\D_1\to N\D_2$ is a weak homotopy equivalence.
\end{corollary}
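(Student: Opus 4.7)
The plan is to reduce Corollary~\ref{cor:final functor nerve} to Proposition~\ref{prop:finality criterion} by applying the latter to the constant functor at a point. The bridge is the observation that the nerve of a small category is, up to canonical homeomorphism, the homotopy colimit of the constant diagram valued at $\pt$.

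First I would unwind Definition~\ref{def:homotopy colimit} for the constant functor $\underline{\pt}\colon\D\to\Top$: in simplicial degree $p$ the defining simplicial space is $\coprod_{\sigma\in N\D_p}\pt = N\D_p$ regarded as a discrete space, with the face and degeneracy maps of $N\D$. Its geometric realization is thus, by definition, $|N\D|$. This gives a canonical identification
\[
\hocolim_\D \underline{\pt}\;\cong\;|N\D|.
\]

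Next I would check naturality in functors $T:\D_1\to\D_2$. The pullback $T^{*}\underline{\pt}_{\D_2}$ is again the constant diagram $\underline{\pt}_{\D_1}$, and the canonical comparison map $\hocolim_{\D_1}T^{*}\underline{\pt}\to\hocolim_{\D_2}\underline{\pt}$ is given in simplicial degree $p$ by the map of discrete spaces $N\D_{1,p}\to N\D_{2,p}$ sending a string $\sigma$ to the string $N(T)(\sigma)$. Under the identifications of the previous step, this is precisely the geometric realization of $N(T)$.

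Finally, with these identifications in hand, I would invoke Proposition~\ref{prop:finality criterion} with $F=\underline{\pt}$: since $T$ is assumed homotopy final, the map $\hocolim_{\D_1}T^{*}\underline{\pt}\to\hocolim_{\D_2}\underline{\pt}$ is a weak homotopy equivalence, and under the identification above this is the map $|N(T)|\colon|N\D_1|\to|N\D_2|$. I do not anticipate a substantive obstacle; the entire argument is bookkeeping around Definition~\ref{def:homotopy colimit}, and the only mild care required is to verify that the structural map on homotopy colimits induced by restriction along $T$ genuinely coincides with $|N(T)|$ on the nose, which is immediate from the explicit simplicial formulas.
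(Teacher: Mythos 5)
Your argument is correct and is exactly the intended one: the paper states this corollary, without proof, as an ``immediate consequence'' of Proposition~\ref{prop:finality criterion}, and the immediate consequence is obtained precisely by applying that proposition to the constant point-valued functor and identifying $\hocolim_{\D}\underline{\pt}$ with $|N\D|$ via Definition~\ref{def:homotopy colimit}. Your care in checking that the comparison map on homotopy colimits realizes $|N(T)|$ is the only bookkeeping required, and it goes through as you describe.
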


\section{Comparing decompositions}\label{section:comparing decompositions}

In this appendix, we tighten the combinatorial side of the correspondence between decompositions and relative tensor products arising from Theorem~\ref{thm:decomposition}. We present a general conceptual framework in which comparison questions may be formulated, and we provide one uniform answer to such questions in the form of Proposition~\ref{prop:abstract comparison}. We then apply this comparison result in two examples used in the main text. We hope that these examples will make manifest that decompositions and relative tensor products may be manipulated in an essentially identical way.

\subsection{Abstract setup} Throughout this section, we work exclusively with decompositions $\E$ and $\F$ of a fixed complex $X$. We assume that $F$ is a symmetric monoidal functor determining both an $\E$-local invariant and an $\F$-local invariant. With more notational baggage, one could work in a more general setting.

\begin{definition}
An $(r,s)$-\emph{comparison scheme} is a functor $\alpha:\Delta^r\to \Delta^s$.
\end{definition}

We think of a comparison scheme as specifying the pattern of an operation on iterated bimodules, as reflected in the simplicial bar construction, or, equally, the pattern of an operation on decompositions, as reflected in the trace.

\begin{notation} Given an $r$-fold decomposition $\E$, an $s$-fold decomposition $\F$, and an $(r,s)$-comparison scheme $\alpha$, we write $\gaps{\alpha}$ for the category of pairs $(A,B)$ with $A\in\gaps{r}$, $B\in\gaps{s}$, $\gamma_\E(A)\subseteq\gamma_\E(B)$, and $\alpha(\tau(A))=\tau(B)$. By design, this category fits into a commuting diagram \[\begin{tikzcd}
\gaps{r}\ar{d}[swap]{\tau}&\gaps{\alpha}\ar{l}[swap]{p}\ar{r}{q}&\gaps{s}\ar{d}{\tau}\\
\Delta^{r}\ar{rr}{\alpha}&&\Delta^{s}.
\end{tikzcd}\]
\end{notation}

\begin{definition}
We say that $\E$ and $\F$ are $\alpha$-\emph{comparable} if $p:\gaps{\alpha}\to \gaps{r}$ is homotopy initial.
\end{definition}

Roughly, $\alpha$-comparability asserts that every subspace in the image of $\gamma_\E$ is contained in a contractible collection of subspaces in the image of $\gamma_\F$ with compatible patterns of connected components.

We now add local flows into the mix.

\begin{definition}
Suppose that $F$ is equipped with isotopy invariant local flows on both $\E$ and $\F$.
\begin{enumerate}
\item We say that a comparison scheme $\alpha$ is \emph{flow compatible} if, for every $(A,B)\in \gaps{\alpha}$, the inclusion $\gamma_\E(A)\subseteq\gamma_\F(B)$ is flow compatible.
\item Fix a flow compatible comparison scheme $\alpha$. A \emph{comparison datum} is a map $\mathrm{Bar}_\Delta(I(\E))\to \mathrm{Bar}_\Delta(I(\F))\circ\alpha^{op}$ in the homotopy category of $r$-fold simplicial chain complexes, which fits into the commuting diagram \[\begin{tikzcd}
p^*I(F(\gamma_\E))\ar{r}\ar[swap]{dd}{p^*\psi}&q^*I(F(\gamma_\F))\ar{d}{q^*\psi}\\
&q^*(\mathrm{Bar}_\Delta(I(\F))\circ\tau^{op})
\ar[no head, d, shift left=.97pt]
\ar[no head, d, shift right=.97pt]
\\
p^*(\mathrm{Bar}_\Delta(I(\E))\circ\tau^{op})\ar[dashed]{r}&p^*(\mathrm{Bar}_\Delta(I(\F))\circ\alpha^{op}\circ\tau^{op}).
\end{tikzcd}\]
\end{enumerate}
\end{definition}
The upper horizontal arrow exists by flow compatibility. We call the map $\mathrm{Bar}(I(\E))\to \mathrm{Bar}(I(\F))$ induced by a comparison datum the ``comparison map.''

We these definitions in hand, the following result is essentially formal.

\begin{proposition}\label{prop:abstract comparison}
If $\E$ and $\F$ are $\alpha$-comparable, then the composite \[\mathrm{Bar}(I(\E))\to \mathrm{Bar}(I(\F))\simeq C^\SD(F(X))\] of the comparison map followed by the weak equivalence of Theorem~\ref{thm:decomposition} applied to $\F$ coincides with the weak equivalence of Theorem~\ref{thm:decomposition} applied to $\E$. In particular, the comparison map is a quasi-isomorphism.
\end{proposition}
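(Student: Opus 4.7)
The plan is to assemble one large commutative diagram in the homotopy category of chain complexes whose left edge runs along the column of quasi-isomorphisms comprising the proof of Theorem~\ref{thm:decomposition} for $\E$, whose right edge runs along the analogous column for $\F$, and whose middle is a ``pullback column'' indexed by $\gaps{\alpha}^{op}$. The top of the diagram meets at $C^\SD(F(X))$, the bottom at $\mathrm{Bar}(I(\E))$ on the left and at $\mathrm{Bar}(I(\F))$ on the right, and the rung at the level of bar complexes is the comparison datum after applying $\hocolim_{\gaps{\alpha}^{op}}$.

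The diagram has three horizontal layers, corresponding to the functors $C^\SD\circ F\circ \gamma$, $I\circ F\circ \gamma$, and $\tau^*\mathrm{Bar}_\Delta(I(-))$. In each layer, the middle column is obtained by pulling back the $\E$-version along $p$ and the $\F$-version along $q$; the two halves of the middle column are linked by (the hocolim of) the horizontal of the comparison datum square on the bottom layer, and on the upper two layers by the map induced by the inclusions $\gamma_\E(A)\subseteq \gamma_\F(B)$ for $(A,B)\in\gaps{\alpha}$, whose flow compatibility is precisely the flow compatibility of $\alpha$. Vertical maps between layers are the Morse quasi-isomorphism $C^\SD\to I$ and the natural transformation $\psi$ of Construction~\ref{construction:morse and bar}; commutativity of the bottom face is exactly the content of the comparison datum, and commutativity of all remaining faces is formal naturality.

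The first key step is to invoke $\alpha$-comparability: since $p$ is homotopy initial, the canonical maps $\hocolim_{\gaps{\alpha}^{op}} p^*(-) \to \hocolim_{\gaps{r}^{op}}(-)$ are quasi-isomorphisms by Proposition~\ref{prop:finality criterion}. This identifies the entire left half of the diagram with the columns of quasi-isomorphisms implementing Theorem~\ref{thm:decomposition} for $\E$. The second key step is to cap the diagram with a top row $C^\SD(F(X))$ via the natural transformations $\gamma_\E\Rightarrow \underline X$ and $\gamma_\F\Rightarrow \underline X$, pulled back along $p$ and $q$; the leftmost resulting map is a quasi-isomorphism by Corollary~\ref{cor:hocolim}, as is its analogue on the right, while the central copies need not be.

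With the diagram in hand, the theorem is a straightforward chase: the composite along the left edge from $\mathrm{Bar}(I(\E))$ up to $C^\SD(F(X))$ is the Theorem~\ref{thm:decomposition} weak equivalence for $\E$, while commutativity equates it with the composite across the bottom (the comparison map) and then up the right edge (the weak equivalence for $\F$). The ``in particular'' clause then follows from two-out-of-three. The main obstacle is bookkeeping: one must verify that the quasi-isomorphism chain in the proof of Theorem~\ref{thm:decomposition} is strictly natural enough to pull back along $p$ and $q$, and that Corollary~\ref{cor:counit} and the identifications it relies on are similarly natural. This is tedious but not deep, since each step is manifestly functorial in the input diagram.
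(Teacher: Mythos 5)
The paper states that this result ``is essentially formal'' and supplies no proof, so you are reconstructing an argument the authors chose to omit. Your reconstruction captures the right structural ideas: the three-layer ladder over $\gaps{r}^{op}$, $\gaps{\alpha}^{op}$, and $\gaps{s}^{op}$, linked by pullbacks along $p$ and $q$; the use of $\alpha$-comparability via homotopy finality of $p^{op}$ and Proposition~\ref{prop:finality criterion} to identify the pulled-back column with the $\E$-column; the comparison datum providing the commutativity at the bottom; and Corollary~\ref{cor:hocolim} to cap the top. The ``in particular'' clause by two-out-of-three is also right.

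One place I would want more than a gesture. The chain of quasi-isomorphisms in the proof of Theorem~\ref{thm:decomposition} has two regimes: the first three terms are hocolims over $\gaps{r}^{op}$, but the remaining steps pass through homotopy left Kan extension along $\tau$ to land in $\hocolim_{(\Delta^{op})^r}\mathrm{Bar}_\Delta(I(\E))$ and then use Proposition~\ref{prop:dold-kan hocolim} to reach $\mathrm{Bar}(I(\E))$. Your three horizontal layers live entirely over the $\gaps{}$-indexed part of the chain, while the comparison map is defined at the level of multisimplicial objects over $(\Delta^{op})^r$ and $(\Delta^{op})^s$. To actually close the loop you must check that the square
\[
\begin{tikzcd}
\hocolim_{\gaps{r}^{op}}\tau^*\mathrm{Bar}_\Delta(I(\E))\ar[r]\ar[d] & \hocolim_{\gaps{s}^{op}}\tau^*\mathrm{Bar}_\Delta(I(\F))\ar[d]\\
\mathrm{Bar}(I(\E))\ar[r] & \mathrm{Bar}(I(\F))
\end{tikzcd}
\]
commutes in the homotopy category, where the top is supplied by your ladder and the bottom is the comparison map, and where the verticals come from the Kan extension sequence and Corollary~\ref{cor:counit}. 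This requires the commuting square $\tau\circ p = \alpha\circ\tau\circ q$ (which holds by the definition of $\gaps{\alpha}$) together with the compatibility of $\hoLan_\tau$ with restriction along $\alpha$. You allude to this in your final sentence about Corollary~\ref{cor:counit}, but I'd separate it out as the one genuinely non-automatic verification, since it is the only place where the two ends of the Theorem~\ref{thm:decomposition} chain --- the $\gaps{}$-side and the $\Delta^{op}$-side --- must be glued together.
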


\subsection{Parenthesization}\label{subsection:parenthesization} For our first sample application, we begin by noting that the derived tensor product computed by the bar construction on an iterated bimodule enjoys a great deal of symmetry, remaining invariant under any parenthesization of the factors. To give one example, there is a canonical quasi-isomorphism \[M_0\otimes^\mathbb{L}_{R_1}M_1\otimes^\mathbb{L}_{R_2} M_2\simeq M_0\otimes^\mathbb{L}_{R_1}(M_1\otimes^\mathbb{L}_{R_2}M_2).\] The corresponding comparison in the context of the decomposition theorem is beween, on the one hand, the decomposition $\E$ with components $X_0$, $X_1$, and $X_2$ and bridges $A_1\times I$ and $A_2\times I$; and, on the other hand, the decomposition $\E_{\mathrm{par}}$ with components $X_0$ and $X_1\amalg_{A_2\times I}X_2$ and bridge $A_1\times I$ (see Figure~\ref{figure: three decompositions}). In order to apply Proposition~\ref{prop:abstract comparison}, we proceed as follows.
\begin{enumerate}
\item We choose our $(2,1)$-comparison scheme $\alpha:\Delta^2\to \Delta$ to be the projection onto the first factor.
\item With this choice, $\E$ and $\E_{\mathrm{par}}$ are $\alpha$-comparable. Indeed, the undercategory in question is filtered.
\item We assume we are given isotopy invariant local flows for which $\alpha$ is flow compatible. This point will depend on the specifics of the situation.
\item In the manner of Construction~\ref{construction:algebra and module structure}, we build a collection of maps \[I(F(X_1))\otimes I(F(A_2))\otimes\cdots\otimes I(F(A_2))\otimes I(F(X_2))\to I(F(X_1\amalg_{A_2\times I}X_2))\] using isotopy invariance and flow compatibility of the structure maps of $F$ and the comparison scheme. These maps respect the simplicial structure maps, and the induced map of simplicial chain complexes is a comparison datum.
\end{enumerate} Invoking Proposition~\ref{prop:abstract comparison}, we see that the quasi-isomorphism \[\mathrm{Bar}(I(\E))\xrightarrow{\sim} \mathrm{Bar}(I(\E_{\mathrm{par}}))\] is compatible with those supplied by Theorem~\ref{thm:decomposition}. 

\begin{remark}
It should be clear that an arbitrary parenthesization may be treated in an identical fashion.
\end{remark}

\begin{figure}
\begin{align*}
\E&\vcenter{\hbox{\includegraphics{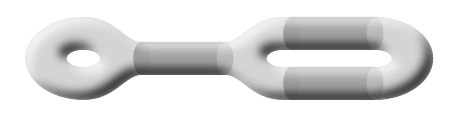}}}\\
\E_{\mathrm{par}}&\vcenter{\hbox{\includegraphics{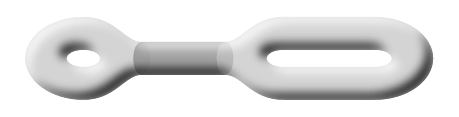}}}\\
\E_{\mathrm{fld}}&\vcenter{\hbox{\includegraphics{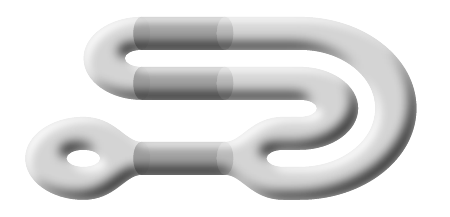}}}
\end{align*}
\caption{From top to bottom, the decompositions $\E$, $\E_{\mathrm{par}}$, and $\E_{\mathrm{fld}}$}
\label{figure: three decompositions}
\end{figure}

\subsection{Folding}\label{subsection:folding} Our second example concerns the canonical quasi-isomorphism \[M_0\otimes^\mathbb{L}_{R_1}M_1\otimes^\mathbb{L}_{R_2}M_2\simeq (M_0\otimes M_2)\otimes^\mathbb{L}_{R_1\otimes R_2^{op}}M_1\] The corresponding comparison in the context of the decomposition theorem is between the decomposition $\E$ already introduced and the decomposition $\E_{\mathrm{fld}}$ with components $X_0\amalg X_2$ and $X_1$ and bridge $(A_1\amalg A_2)\times I$ obtained by ``folding the end over'' (see Figure~\ref{figure: three decompositions}). The data in the this case is the following.

\begin{enumerate}
\item We choose our $(1,2)$-comparison scheme $\alpha:\Delta\to \Delta^2$ to be the identity in the first factor and the ``flip'' automorphism in the second.
\item With this choice, $\E_{\mathrm{fld}}$ and $\E$ are $\alpha$-comparable. Indeed, the undercategory in question has a final object.
\item We assume we are given isotopy invariant local flows for which $\alpha$ is flow compatible. This point will depend on the specifics of the situation.
\item Observe that the simplicial chain complexes $\mathrm{Bar}_\Delta(\E_{\mathrm{fld}})$ and $\mathrm{Bar}_\Delta(\E)\circ\alpha^{op}$ are identical in each simplicial degree, except that the former uses the local flow for $\E_{\mathrm{fld}}$, while the latter uses the local flow for $\E$. The comparison datum comes from matching terms and using the flow compatibility of $\alpha$.
\end{enumerate} 
Invoking Proposition~\ref{prop:abstract comparison}, we see that the quasi-isomorphism \[\mathrm{Bar}(I(\E_{\mathrm{fld}}))\xrightarrow{\sim}\mathrm{Bar}(I(\E))\] is compatible with those supplied by Theorem~\ref{thm:decomposition}. 

\section{Degree one homology of graph braid groups}\label{appendix:degree one}

The homology group $H_1(B_k(\graf))$ for a general connected graph $\graf$ is completely understood; indeed, according to a theorem of Ko--Park \cite[Thm.~3.16]{KoPark:CGBG}, this group may be identified solely in terms of connectivity and planarity data from $\graf$. Their argument proceeds through an intricate combinatorial and linear algebraic analysis of the discrete Morse data constructed in \cite{FarleySabalka:DMTGBG}, which quickly becomes very technical. 

In this appendix, we use the \'{S}wi\k{a}tkowski complex to simplify the calculation of $H_1$ by giving streamlined proofs of four key lemmas of \cite{KoPark:CGBG}, which appear in \S\ref{section:key lemmas} below, cross-referenced with the corresponding results in the original text (and in~\cite{HarrisonKeatingRobbinsSawicki:NPQSG}, where versions of them also appear). These four results directly imply the calculation of $H_1$, which we outline below but do not state in full. We make no claim of originality in this appendix, our purpose being only to demonstrate the efficiency of the \'{S}wi\k{a}tkowski complex in applications.

\subsection{Cuts and connectivity} In order to proceed, we will require some terminology from graph theory. Note that these invariants should only be used as defined on \emph{simple} graphs (see \S\ref{section:conventions}) and may behave unexpectedly on general graphs.

\begin{definition}
Let $\graf$ be a simple graph. A \emph{$k$-cut} is a set of $k$ vertices whose removal topologically separates at least two vertices of $\graf$. A simple graph is \emph{$k$-connected} if it has at least $k+1$ vertices and no $(k-1)$-cuts.

Given a $1$-cut $v$ in $\graf$, a \emph{$v$-component} of $\graf$ is the closure in $\graf$ of a connected component of the complement of $v$ in $\graf$.
\end{definition}

The importance of connectivity for our purposes arises from the following classical result, called Menger's theorem.

\begin{theorem}[\cite{Menger:SAK}]\label{thm: Menger}
Let $k>0$. A simple graph is $k$-connected if and only if, for distinct vertices $x$ and $y$ in $\graf$, there exist $k$ paths from $x$ to $y$ in $\graf$, disjoint except at endpoints.
\end{theorem}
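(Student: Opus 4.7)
The plan is to establish the equivalence in both directions, with the reverse direction being essentially immediate and the forward direction being the substantive content (Menger's classical theorem).

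For the easy direction, assume that between any distinct pair $x,y \in V(\graf)$ there exist $k$ pairwise internally disjoint paths from $x$ to $y$. Any set $S \subseteq V(\graf)\setminus \{x,y\}$ with $|S| \leq k-1$ meets at most $k-1$ of these paths, since they share no internal vertices; at least one path survives intact, so $x$ and $y$ remain topologically connected in $\graf\setminus S$. Since $x$ and $y$ were arbitrary, $\graf$ has no $(k-1)$-cut, and by hypothesis $|V(\graf)|\geq k+1$, so $\graf$ is $k$-connected.

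For the forward direction, fix distinct $x,y$ and reduce to a local statement: it suffices to show that if no set $S\subseteq V(\graf)\setminus\{x,y\}$ of size less than $k$ separates $x$ from $y$, then there are $k$ internally vertex-disjoint $x$-$y$ paths. My preferred route is to deduce this from the integral max-flow min-cut theorem applied to an auxiliary directed network $N$. Construct $N$ by splitting each internal vertex $v \in V(\graf)\setminus\{x,y\}$ into two nodes $v^-, v^+$ joined by a directed arc $v^- \to v^+$ of capacity $1$; replace each edge $\{u,v\}$ of $\graf$ by a pair of directed arcs $u^+ \to v^-$ and $v^+ \to u^-$ of capacity $\infty$; retain $x$ and $y$ as source and sink. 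Finite-value integral cuts in $N$ consist only of the unit-capacity arcs $v^- \to v^+$, which correspond bijectively to vertex separators of $x$ from $y$ in $\graf$ of equal size; integral flows of value $k$ decompose into $k$ arc-disjoint unit flows, which, because each $v^-\to v^+$ arc has capacity $1$, project to internally vertex-disjoint $x$-$y$ paths in $\graf$. Thus max-flow min-cut in $N$ gives the desired equivalence.

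The main obstacle (or point of care) is in the correspondence between flows and paths: one must verify that integral unit flows in $N$ admit path decompositions that are genuinely internally disjoint when projected back to $\graf$ (the capacity-$1$ constraint on the internal arcs $v^- \to v^+$ is exactly what enforces this), and that a minimum cut really can be chosen among the unit-capacity arcs (any infinite-capacity arc can be replaced by the unit-capacity arc at one of its endpoints in any finite cut). If one prefers a self-contained argument, an alternative is a direct induction on $|E(\graf)|$ with a case analysis according to whether a minimum separator coincides with $N(x)$ or $N(y)$; the inductive step contracts a well-chosen edge and reapplies the hypothesis, but the case bookkeeping is tedious compared with the max-flow min-cut route.
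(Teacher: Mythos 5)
The paper itself does not prove this statement; it simply cites Menger's 1927 paper and uses the theorem as a black box, so there is no internal argument to compare yours against. Your proposed max-flow min-cut proof with vertex splitting is one of the standard textbook proofs and is a reasonable choice.

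That said, there is one genuine gap in the forward direction as written. In your network $N$ you retain $x$ and $y$ unsplit and give every edge-arc infinite capacity, so if $x$ and $y$ are adjacent in $\graf$ the arc $x\to y$ has infinite capacity and the min cut in $N$ is infinite; max-flow min-cut then says nothing useful, even though the theorem is certainly true in this case. The standard repair is to treat adjacent $x,y$ separately: delete the edge $xy$, argue that in $\graf - xy$ every $x$-$y$ separator still has size at least $k-1$ (if some $S$ with $|S|\le k-2$ separated them, then either $V(\graf)\subseteq S\cup\{x,y\}$, contradicting $|V|\ge k+1$, or some third vertex $z$ is cut off from $y$ by $S\cup\{x\}$ in $\graf$, contradicting $k$-connectivity), apply your network argument to $\graf - xy$ to get $k-1$ internally disjoint paths, and append the edge $xy$ as the $k$th. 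Alternatively one can subdivide every edge before building the network. Either way, the adjacent case needs to be addressed explicitly.

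A smaller wording issue: in the easy direction you assert $|V(\graf)|\ge k+1$ ``by hypothesis,'' but the path condition is the hypothesis, and the vertex count has to be deduced from it. The deduction is short---at most one of the $k$ internally disjoint $x$-$y$ paths can be the single edge $xy$ (the graph is simple), so at least $k-1$ of them have an internal vertex, and these internal vertices are pairwise distinct, giving $|V(\graf)|\ge 2+(k-1)=k+1$---but it should be said rather than attributed to the hypothesis. With these two repairs the proof is complete.
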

In other words, a simple graph is $k$-connected (for positive $k$) if and only if any pair of vertices $x$ and $y$ there is a graph embedding of a subdivision of the theta graph $\thetagraph{k}$ into $\graf$ taking the two vertices of the theta graph to $x$ and $y$.

As the following result shows, high connectivity places strong constraints on the behavior of the first homology.

\begin{proposition}\label{prop:triconnected have only one star class}
If $\graf$ is simple and $3$-connected, then any two star classes in $H_*(B(\graf))$ coincide up to sign.
\end{proposition}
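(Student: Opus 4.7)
My plan is to use Menger's theorem (Theorem~\ref{thm: Menger}) together with the $\Theta$-relation (Definition~\ref{definition:relations}(4)) to produce identifications among star classes, supplemented where needed by the $X$-relation (Definition~\ref{definition:relations}(2)) and edge injectivity (Proposition~\ref{proposition: edge multiplication is injective}).

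The basic identification is as follows: between any two distinct vertices $v, v' \in V(\graf)$, Menger's theorem furnishes three internally vertex-disjoint paths, whose union is a subdivision $\thetagraph{}'$ of $\thetagraph{3}$ embedded in $\graf$ with cone points $v$ and $v'$. Applying the $\Theta$-relation to this embedding equates, up to sign, a specific star class at $v$ (using the three half-edges in which the paths leave $v$) with a specific star class at $v'$ (using the three half-edges in which they enter $v'$). The remaining task is to show that enough such theta subgraphs exist to collapse all star classes in $\graf$ to a single element up to sign. I plan to split this into two subclaims: \textbf{(i)} at each vertex $v$ of valence $\ge 3$, all star classes at $v$ coincide up to sign; and \textbf{(ii)} for any two vertices $v \ne v'$, some star class at $v$ coincides up to sign with some star class at $v'$. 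Subclaim (ii) is immediate from the basic identification above. For subclaim (i), the case $d(v)=3$ is automatic since there is essentially only one such class; for $d(v)\ge 4$, I intend to vary the choice of Menger paths leaving $v$, exploiting a fan-type refinement of Menger in a $3$-connected graph (for instance by subdividing each edge incident to $v$ and applying the fan lemma in the resulting graph), so that the triple of half-edges used at $v$ in the theta can be prescribed to range over all $\binom{d(v)}{3}$ possibilities. Each such triple will then be identified via $\Theta$ with some triple at an auxiliary vertex, and chaining these identifications collapses all triples at $v$ to a single class up to sign.

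The principal obstacle will be subclaim (i) in the regime where every vertex of $\graf$ has valence $\ge 4$, since we then lack a trivalent ``anchor'' at which the intra-vertex comparison is automatic. In that regime the plan is to invoke the $X$-relation $e_1\alpha_{234}-e_2\alpha_{341}+e_3\alpha_{412}-e_4\alpha_{123}=0$ at a chosen quadruple of half-edges at $v$: after using $\Theta$-identifications to partially collapse the four star classes appearing there, Proposition~\ref{proposition: edge multiplication is injective} should force the residual identifications by cancelling the remaining edge-polynomial discrepancies. Careful sign bookkeeping (since a star class depends on the cyclic ordering of its triple of half-edges) will be needed throughout, and verifying the refined edge-prescribed form of Menger in a simple $3$-connected graph is the main graph-theoretic step I expect to require real work.
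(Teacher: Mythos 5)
Your plan takes a genuinely different route from the paper's. The paper first makes a key reduction: to compare two star classes at the same vertex $v$, it suffices to compare pairs that differ by a single half-edge, yielding an embedded $\stargraph{4}$ with distinct leg-vertices $v_1,\ldots,v_4$. It then applies the set-version of Menger's theorem (\cite[Prop.~9.4]{BondyMurty:GT}) to the $2$-connected graph $\graf\setminus\{v\}$ to produce two disjoint paths joining $\{v_1,v_4\}$ to $\{v_2,v_3\}$ and two more joining $\{v_1,v_2\}$ to $\{v_3,v_4\}$, building two figure-eight subgraphs through $v$; the \emph{$Q$}-relation and \emph{$I$}-relation (not the $\Theta$- or $X$-relations) then equate all four triples at $v$ up to sign. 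The $\Theta$-relation is used only in the easy opening step that you also invoke, to transport a star class from one vertex to another.

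Your approach has three gaps that the paper's reduction avoids. First, the ``fan-type refinement of Menger'' you need --- given any prescribed triple of half-edges at $v$, find an embedded subdivision of $\thetagraph{3}$ from $v$ whose initial edges realize that triple --- is not a standard form of Menger and is not established by ``subdividing edges and applying the fan lemma''; the fan lemma controls terminal vertices, not initial edges, and the auxiliary graph you'd build need not remain $3$-connected. Second, even granting such thetas, each one only identifies the chosen triple at $v$ with \emph{some} triple at the auxiliary vertex $v'$, and nothing ensures two different thetas from $v$ hit the \emph{same} triple at $v'$; so the chaining does not automatically collapse the triples at $v$ without an auxiliary trivalent anchor. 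Third, in the absence of such an anchor you propose to use the $X$-relation $e_1\alpha_{234}-e_2\alpha_{341}+e_3\alpha_{412}-e_4\alpha_{123}=0$ plus edge injectivity, but even after $\Theta$-identifications this gives at best $(\pm e_1\pm e_2\pm e_3\pm e_4)\alpha=0$ for a single class $\alpha$, which edge injectivity does not convert into an identification of distinct star classes. I'd recommend adopting the paper's reduction to a single $\stargraph{4}$ and its use of disjoint paths between $2$-element vertex sets in $\graf\setminus\{v\}$, which sidesteps all three issues.
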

\begin{proof}
By $3$-connectivity, any two vertices of $\graf$ are joined by three distinct paths, so a star class at one vertex coincides with some star class at any other vertex by the $\Theta$-relation. Thus, it suffices to verify that two star classes $\alpha$ and $\alpha'$ at a fixed vertex $v$ coincide up to sign. We may assume further that $\alpha$ and $\alpha'$ are induced by inclusions of $\stargraph{3}$ differing only at a single half-edge, so that we have an inclusion of $\stargraph{4}$ into $\graf$ (with endpoint vertices $v_1,\ldots,v_4$ and central vertex $v$ all distinct by $3$-connectedness). We will show that the four  star classes in $\graf$ obtained in this way agree up to sign.

A corollary of Menger's theorem is the fact that in a $k$-connected graph, any two sets of vertices, each of size $k$, can be joined by $k$ disjoint paths~\cite[Prop.~9.4]{BondyMurty:GT}. Since $\Gamma_v$ is $2$-connected, two disjoint paths join $\{v_1, v_4\}$ and $\{v_2,v_3\}$. By relabelling suppose one joins $v_1$ to $v_2$ and the other joins $v_3$ to $v_4$. Similarly, two disjoint paths join $\{v_1,v_2\}$ to $\{v_3,v_4\}$. By switching the labels on $v_1$ and $v_2$ if need be, we may assume one joins $v_1$ to $v_3$ and the other joins $v_2$ to $v_4$. We now have two extensions of the inclusion $\stargraph{4}\to \graf$ to an inclusion of a subdivision of the figure-eight graph $\figureeightgraph$ as in Figure~\ref{figure: figure eight}, and the $Q$-relation and $I$-relation imply that $\alpha_{123}=\alpha_{124}$ and that $\alpha_{432}=\alpha_{431}$ (using the first set of paths), and that $\alpha_{312}=\alpha_{314}$ (using the second set of paths).
\end{proof}
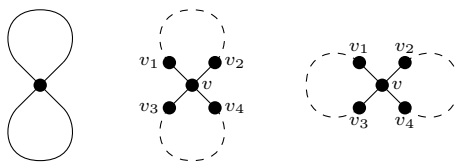
\begin{figure}[ht]
\centering
\begin{tikzpicture}
\fill[black] (0,0) circle (2.5pt);
\draw (-.3,-.3)--(.3,.3);
\draw (-.3,.3)--(.3,-.3);
\draw (.3,.3) .. controls (.5,.5) and (.5,1) .. (0,1) .. controls (-.5,1) and (-.5,.5) .. (-.3,.3);
\draw (.3,-.3) .. controls (.5,-.5) and (.5,-1) .. (0,-1) .. controls (-.5,-1) and (-.5,-.5) .. (-.3,-.3);
\begin{scope}[xshift=2cm]
\fill[black] (0,0) circle (2.5pt);
\draw(0,0) node[right]{$\scriptstyle v$};
\draw(-.3,.3) node[left]{$\scriptstyle v_1$};
\draw(.3,.3) node[right]{$\scriptstyle v_2$};
\draw(-.3,-.3) node[left]{$\scriptstyle v_3$};
\draw(.3,-.3) node[right]{$\scriptstyle v_4$};
\fill[black] (.3,.3) circle (2.5pt);
\fill[black] (-.3,.3) circle (2.5pt);
\fill[black] (-.3,-.3) circle (2.5pt);
\fill[black] (.3,-.3) circle (2.5pt);
\draw (-.3,-.3)--(.3,.3);
\draw (-.3,.3)--(.3,-.3);
\draw [dashed](.3,.3) .. controls (.5,.5) and (.5,1) .. (0,1) .. controls (-.5,1) and (-.5,.5) .. (-.3,.3);
\draw [dashed](.3,-.3) .. controls (.5,-.5) and (.5,-1) .. (0,-1) .. controls (-.5,-1) and (-.5,-.5) .. (-.3,-.3);
\end{scope}
\begin{scope}[xshift=4.5cm]
\fill[black] (0,0) circle (2.5pt);
\fill[black] (.3,.3) circle (2.5pt);
\fill[black] (-.3,.3) circle (2.5pt);
\fill[black] (-.3,-.3) circle (2.5pt);
\fill[black] (.3,-.3) circle (2.5pt);
\draw(0,0) node[right]{$\scriptstyle v$};
\draw(-.3,.3) node[above]{$\scriptstyle v_1$};
\draw(.3,.3) node[above]{$\scriptstyle v_2$};
\draw(-.3,-.3) node[below]{$\scriptstyle v_3$};
\draw(.3,-.3) node[below]{$\scriptstyle v_4$};
\draw (-.3,-.3)--(.3,.3);
\draw (-.3,.3)--(.3,-.3);
\draw [dashed](.3,.3) .. controls (.5,.5) and (1,.5) .. (1,0) .. controls (1,-.5) and (.5,-.5) .. (.3,-.3);
\draw [dashed](-.3,.3) .. controls (-.5,.5) and (-1,.5) .. (-1,0) .. controls (-1,-.5) and (-.5,-.5) .. (-.3,-.3);
\end{scope}
\end{tikzpicture}
\caption{The figure-eight graph $\figureeightgraph$ and two inclusions of subdivisions of it at a vertex of valence at least $4$ in a triconnected graph}
\label{figure: figure eight}
\end{figure}

\subsection{Minors and planarity}

Recall that a graph is said to be \emph{planar} if it can be embedded in $\mathbb{R}^2$. The goal of this section is to establish the following connection between planarity and configuration spaces.

\begin{lemma}\label{lem:non planar 2 torsion}
Let $\graf$ be a $3$-connected simple graph. 
If $\graf$ is non-planar, then any star class of $H_1(B_2(\graf))$ is $2$-torsion.
\end{lemma}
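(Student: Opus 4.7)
The plan proceeds in two stages: first I reduce to the Kuratowski graphs $K_5$ and $K_{3,3}$ by functoriality and 3-connectedness, then I carry out an explicit chain-level computation in the reduced \'Swi\k{a}tkowski complex for each of them.

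\emph{Reduction.} By Proposition~\ref{prop:triconnected have only one star class}, any two star classes of $H_1(B_2(\graf))$ agree up to sign, so it is enough to exhibit a single star class of $\graf$ that is 2-torsion. Since $\graf$ is non-planar, Kuratowski's theorem supplies a subgraph $\graf' \subseteq \graf$ which is a subdivision of $K$, where $K=K_5$ or $K_{3,3}$. The smoothing $\graf' \to K$ and the graph embedding $\graf' \hookrightarrow \graf$ both induce maps on $H_*(B_2(-))$ by functoriality of the \'Swi\k{a}tkowski complex, and the first is an isomorphism (it is a homeomorphism at the level of configuration spaces). Composing gives a homomorphism $H_1(B_2(K)) \to H_1(B_2(\graf))$ which sends star classes to star classes. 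Because 2-torsion is preserved by homomorphisms, the problem reduces to showing that a star class in $H_1(B_2(K_5))$ and one in $H_1(B_2(K_{3,3}))$ is 2-torsion.

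\emph{Explicit 2-chains.} For each $K\in \{K_5, K_{3,3}\}$, the plan is to write down a weight-2, degree-2 element $c\in \fullyreduced{K}$ with $\partial c = 2\alpha$ for some star class $\alpha$ at a chosen vertex. Such a $c$ must be a signed sum of tensors $p\otimes q$ of reduced half-edge differences at distinct vertices; the Leibniz rule expands $\partial c$ into a signed sum of terms of the form $(e-e')\cdot p$, matching the shape of a star cycle. The search is guided by the symmetry groups ($\Sigma_5$ for $K_5$, $\Sigma_3\wr \Sigma_2$ for $K_{3,3}$), which restrict the form of $c$, and by the requirement that all ``off-diagonal'' terms (those not involving half-edge differences at the central vertex of $\alpha$) cancel in pairs. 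Once a candidate $c$ is identified, the verification $\partial c = 2\alpha$ is a mechanical expansion using $\mathbb{Z}[E]$-linearity of the differential.

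\emph{Main obstacle.} The principal difficulty is the search-and-verify step for these explicit 2-chains, with particular care for signs. The conceptual reason such $c$ must exist is that the permutation character $\sigma\colon H_1(B_2(-))\to \mathbb{Z}/2$ of Lemma~\ref{lemma: star classes are non-trivial} captures the sole obstruction to trivializing a star class, and non-planarity is precisely the condition allowing two configuration points to be ``swapped twice'' via a null-homotopic motion that exploits a topological crossing. The proof amounts to realizing this geometric picture as an explicit filling of $2\alpha$; the symmetry of $K_5$ and $K_{3,3}$ should make the filling transparent once the correct orbit is identified, at the cost of somewhat tedious bookkeeping.
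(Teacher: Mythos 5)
Your reduction to $\completegraph{5}$ and $\completegraph{3,3}$ is sound and in fact follows a somewhat lighter route than the paper's: you invoke Kuratowski's theorem, so the comparison with $\graf$ goes through a subdivision of a subgraph, and this falls entirely within the existing functoriality of the \'{S}wi\k{a}tkowski complex for graph morphisms. The paper instead invokes Wagner's theorem (minors) and therefore needs the dedicated Lemma~\ref{lemma: graph minors} to build a chain map along an edge contraction that preserves star classes; your route bypasses that construction. One small check worth making explicit in your argument: a star class of $K$ lifts along the smoothing to a star class of the subdivided subgraph, because the defining graph morphism $\stargraph{3}\to K$ can be shrunk so its image misses all the subdivision points near the chosen vertex. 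Combined with Proposition~\ref{prop:triconnected have only one star class}, this correctly reduces the lemma to exhibiting one $2$-torsion star class in each of $H_1(B_2(\completegraph{5}))$ and $H_1(B_2(\completegraph{3,3}))$.

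The gap is in the second half. You propose to produce an explicit degree-$2$, weight-$2$ element $c\in\fullyreduced{K}$ with $\partial c = 2\alpha$, but you never exhibit such a $c$: the ``search-and-verify'' step that you flag as the main obstacle is precisely the content of the lemma, and the heuristic appeal to the permutation character and to ``swapping twice via a topological crossing'' is motivation, not a proof. The paper avoids searching for $c$ directly by packaging the needed degree-$2$ boundaries into the $\Theta$-relation: in an embedded $\thetagraph{3}$, the chain $h_{12}\otimes h'_{13}-h_{13}\otimes h'_{12}\in S_2(\thetagraph{3})$ bounds the difference of the star cycles at the two cone points, so any two such star classes agree up to a sign determined by the ordering of the three arcs. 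For $\completegraph{5}$, cycling this identification around five embedded theta subgraphs (rotating the picture by $2\pi/5$ each time) returns $\alpha = (-1)^{5}\alpha$; for $\completegraph{3,3}$, two distinct embedded $\thetagraph{3}$'s joining the same pair of vertices give $\alpha=\alpha'$ and $\alpha=-\alpha'$ simultaneously. Your sought-after $c$ is in effect the sum of the theta chains appearing in that argument; identifying that factorization, rather than searching the roughly $90$-dimensional space of degree-$2$ weight-$2$ elements of $\fullyreduced{\completegraph{5}}$, is the missing ingredient.
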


As before, the proof will proceed by reduction to atomic cases. In order to see how this reduction will proceed, we require an auxuiliary notion.

\begin{definition}\label{defi:minor}
Let $\graf$ and $\graf'$ be graphs. We say that $\graf$ is a \emph{minor} of $\graf'$ if $\graf$ may be obtained up to isomorphism from $\graf$ by repeated application of the following operations:
\begin{enumerate}
\item \label{item: contract an edge} contract an edge;
\item \label{item: remove an edge} remove an edge;
\item \label{item: remove a vertex} remove an isolated vertex.
\end{enumerate}
\end{definition}

The relevance of minors for our purposes is the following classical result (see Figures~\ref{figure: k5} and~\ref{figure: k33} for terminology).

\begin{theorem}[\cite{Wagner:UEEK}]\label{thm: Wagner}
A graph is non-planar if and only if it admits $\completegraph{5}$ or $\completegraph{3,3}$ as a minor.
\end{theorem}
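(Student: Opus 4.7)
The plan is to prove the two directions separately, with a clear asymmetry in difficulty: the ``if'' direction (having a $\completegraph{5}$ or $\completegraph{3,3}$ minor forces non-planarity) is elementary, while the ``only if'' direction is the substantive content and will be handled via a reduction to Kuratowski's theorem.

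For the easy direction, I will first verify that $\completegraph{5}$ and $\completegraph{3,3}$ are themselves non-planar. This is a standard application of Euler's formula $V - E + F = 2$ for a connected planar graph. A planar graph with $V \geq 3$ satisfies $E \leq 3V - 6$, which fails for $\completegraph{5}$ ($E = 10 > 9$). A bipartite planar graph with $V \geq 3$ has no triangular faces, hence satisfies the sharper bound $E \leq 2V - 4$, which fails for $\completegraph{3,3}$ ($E = 9 > 8$). I then verify that each of the three operations in Definition~\ref{defi:minor} preserves planarity: an edge contraction, edge removal, or isolated vertex removal can be realized pictorially in any planar embedding. Thus, the class of planar graphs is minor-closed, and any graph with a non-planar minor is itself non-planar.

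For the hard direction, my plan is to invoke Kuratowski's theorem: a graph is non-planar if and only if it contains a subdivision of $\completegraph{5}$ or $\completegraph{3,3}$. Given Kuratowski, the passage to minors is a short argument: any subdivision is a minor, obtained by contracting the internal bivalent vertices of the subdividing paths. So if $\graf$ contains a subdivision of $\completegraph{5}$ or $\completegraph{3,3}$, it admits that graph as a minor. (No fancier conversion is needed in this direction, since subdivisions \emph{are} minors of a particularly restricted kind.)

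The main obstacle is the proof of Kuratowski's theorem, which is the genuine combinatorial content. The standard route is to take a hypothetical non-planar graph $G$ minimizing the number of edges (subject to being non-planar and not containing a $\completegraph{5}$ or $\completegraph{3,3}$ subdivision), and to derive a contradiction. The argument proceeds by showing that such a minimal $G$ must be $3$-connected, then analyzing a planar embedding of a graph $G - e$ obtained by removing an edge $e$ (which is planar by minimality), and using the $3$-connectivity together with the structure of the outer face to locate either a $\completegraph{5}$ or a $\completegraph{3,3}$ topological subgraph containing the missing edge. The $3$-connectivity reduction itself is delicate and uses an induction on the number of vertices, handling low-connectivity cases by splitting at a $2$-cut and reassembling. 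I would defer to a standard reference (e.g.~\cite{BondyMurty:GT}) for the full combinatorial argument, since reproducing it here would be a substantial digression from the configuration-space material of the paper.
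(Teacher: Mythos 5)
The paper does not prove this statement at all: it is quoted as a classical theorem with a citation to Wagner's 1937 paper, so there is no internal proof to compare against. Your outline is the standard textbook argument and is sound as far as it goes. The easy direction is fine: the Euler-formula bounds $E\leq 3V-6$ and $E\leq 2V-4$ correctly rule out planarity of $\completegraph{5}$ and $\completegraph{3,3}$, and the three operations of Definition~\ref{defi:minor} do preserve planarity, so planar graphs form a minor-closed class. The observation that a subdivision is a minor (contract the edges incident to the interior bivalent vertices of each subdividing path) is also correct and is all that is needed to pass from Kuratowski's theorem to the minor formulation.

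That said, be aware that your ``proof'' of the substantive direction consists entirely of invoking Kuratowski's theorem and then deferring \emph{its} proof to a reference, so in effect you have replaced one citation (Wagner) by another (Kuratowski) plus an easy reduction. That is a perfectly legitimate thing to do in a paper of this kind --- indeed it is exactly what the authors do, one step earlier --- but it should be presented as a citation with a remark on the equivalence of the two formulations, not as a proof. One small point of care if you were to write this out in the paper's conventions: the graphs of \S\ref{section:conventions} may have self-loops and parallel edges, whereas the classical statements of Kuratowski and Wagner concern simple graphs; since self-loops and parallel edges affect neither planarity nor the existence of a $\completegraph{5}$ or $\completegraph{3,3}$ minor, this is harmless, but it deserves a sentence.
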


In order to apply this criterion, we must first clarify the relationship between the \'{S}wi\k{a}tkowski complex of a graph and that of its minors. We begin by noting that, if $\graf$ is a minor of $\graf'$, then $E$ is naturally identifed with a subset of $E'$.

\begin{lemma}\label{lemma: graph minors}
Let $\graf$ be a minor of $\graf'$. There is a map $f:\intrinsic{\graf}\to \intrinsic{\graf'}$ of differential graded $\mathbb{Z}[E]$-modules such that $f_*(\alpha)$ is a star class whenever $\alpha$ is.
\end{lemma}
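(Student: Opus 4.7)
The plan is to handle the three operations in Definition~\ref{defi:minor} one at a time and compose; in each case I would exhibit an explicit chain-level formula for $f$ and verify the chain-map, $\mathbb{Z}[E]$-linearity, and star-class-preservation properties.

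For removal of an isolated vertex $v_0\in V'$, we have $\intrinsic{\graf'}\cong \intrinsic{\graf}\otimes \localstates{v_0}$, so I set $f(x):=x\otimes \varnothing_{v_0}$. For removal of an edge $e\in E'$ with endpoints $u,v$, the generating set of $\intrinsic{\graf}$ is naturally a subset of that of $\intrinsic{\graf'}$---since $E=E'\setminus\{e\}$ and $H(u)$, $H(v)$ in $\graf$ are obtained from those in $\graf'$ by omitting the two half-edges of $e$---so I take $f$ to be the resulting inclusion. In both cases $f$ is visibly a chain map, is $\mathbb{Z}[E]$-linear since $E\subseteq E'$, and takes star cycles to star cycles of the same form.

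The substantive case is edge contraction. Let $e=\{u,v\}$ with half-edges $h_u\in H(u)$ and $h_v\in H(v)$, collapsed to a vertex $w$ with $H(w)=(H(u)\cup H(v))\setminus\{h_u,h_v\}$. Define $f$ to be $\mathbb{Z}[E]$-linear (for $E=E'\setminus\{e\}$), to act as the identity on tensor factors $\localstates{x}$ for $x\in V'\setminus\{u,v\}$, and on the factor $\localstates{w}$ of $\intrinsic{\graf}$ to land in the subfactor $\localstates{u}\otimes \localstates{v}$ of $\intrinsic{\graf'}$ via
\[
\varnothing_w\mapsto \varnothing_u\otimes\varnothing_v,\qquad w\mapsto u\otimes \varnothing_v,
\]
\[
h\mapsto \begin{cases} h\otimes\varnothing_v & \text{if } h\in H(u)\setminus\{h_u\},\\ \varnothing_u\otimes h+h_u\otimes \varnothing_v-\varnothing_u\otimes h_v & \text{if } h\in H(v)\setminus\{h_v\}. \end{cases}
\]
That $f$ is a chain map reduces to the identity $\partial(h_u\otimes \varnothing_v-\varnothing_u\otimes h_v)=\varnothing_u\otimes v-u\otimes \varnothing_v$, which absorbs the discrepancy between the chosen lift $f(w)=u\otimes \varnothing_v$ and the boundary of $\varnothing_u\otimes h$ for $h\in H(v)$.

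For star-class preservation, a star cycle at any vertex $x\neq w$ of $\graf$ is mapped by $f$ (up to the factor $\varnothing_u\otimes \varnothing_v$) to a star cycle at $x$ in $\graf'$. If $\alpha=e_1 h_{23}+e_2 h_{31}+e_3 h_{12}$ is a star cycle at $w$ whose three half-edges all lie in $H(u)$, the formula gives $f(\alpha)=\alpha'\otimes \varnothing_v$, where $\alpha'$ is the analogous star cycle at $u$ in $\graf'$; the case $H(v)$ is similar, since the correction terms in the formula for $f(h)$ cancel pairwise upon passage to differences $h_{ij}$. In the mixed case (say $h_1\in H(u)$ and $h_2,h_3\in H(v)$), direct computation yields
\[
f(\alpha)=\varnothing_u\otimes \beta+\partial\bigl((h_1-h_u)\otimes (h_2-h_3)\bigr),
\]
where $\beta=e(h_2-h_3)+e_2(h_3-h_v)+e_3(h_v-h_2)$ is the star cycle at $v$ in $\graf'$ associated to the graph morphism $\stargraph{3}\to \graf'$ whose three legs are mapped to $e,e_2,e_3$---so the contracted edge $e$ itself plays the role of one leg. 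Hence $f_*[\alpha]$ is a star class, and the remaining mixed subcase is symmetric. The main obstacle is identifying the correct primitive and star cycle $\beta$ in the mixed subcase; once those are in hand the verification is a short Leibniz calculation.
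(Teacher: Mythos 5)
Your proposal is correct and takes essentially the same route as the paper: reduce to a single minor operation, handle deletion of edges and isolated vertices via the subgraph inclusion, and handle edge contraction by an explicit chain-level formula. Your lift $w\mapsto u\otimes\varnothing_v$ differs from the paper's $v_0\mapsto e\cdot\varnothing_{v_1}\otimes\varnothing_{v_2}$, but the two chain maps differ by the chain homotopy $h_u\otimes\varnothing_v$ (note $\partial(h_u\otimes\varnothing_v)=e\cdot\varnothing\otimes\varnothing-u\otimes\varnothing_v$), so they induce the same map on homology; your explicit primitive $(h_1-h_u)\otimes(h_2-h_3)$ in the mixed case supplies a step the paper dismisses as ``a direct calculation with star cycles.'' One small omission: your contraction formula presupposes $u\neq v$ and becomes ambiguous when $e$ is a self-loop (the two half-edge cases then conflict); the paper handles this by observing that contracting a self-loop or a tail coincides with a deletion, and so falls under the subgraph case. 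You should add that one-line reduction, since Definition~\ref{defi:minor} permits self-loops.
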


\begin{proof}
We may assume that $\graf$ is obtained from $\graf'$ by a single application of one of the operations of Definition~\ref{defi:minor}. In the case of operation (\ref{item: remove an edge}) or (\ref{item: remove a vertex}), $\graf$ is a subgraph of $\graf'$, and the claim is immediate from the standard functoriality of the \'{S}wi\k{a}tkowski complex. Since the same holds for the contraction of a tail or a self-loop, we may assume that $\graf$ is obtained from $\graf'$ by contracting an edge with two distinct vertices $v_1$ and $v_2$. Denote the corresponding half-edges by $h_1$ and $h_2$, and let $v_0$ be the vertex that is the image of the closure of $e$ under the quotient map $\graf'\to \graf$. 

In order to define $f$, we note that, under the quotient map, each vertex $v\neq v_0$ of $\graf$ is canonically identified with a vertex $\tilde v$ of $\graf'$, and each half-edge of $h$ of $\graf$ is canonically identified with a half-edge $\tilde h$ of $\graf'$. With this in mind, we set 
\begin{align*}
f(v)&=\begin{cases}
\tilde v,& v\ne v_0\\
e, & v=v_0
\end{cases}
&
f(h)&=\begin{cases}
\tilde h,& v(h)\neq v_0\\
\tilde h-h_j, & v(\tilde h)=v_j.
\end{cases}
\end{align*}
By inspection, $f$ respects the differential and $\mathbb{Z}[E]$-action, and the claim regarding star classes is a direct calculation with star cycles.
\end{proof}

\begin{proof}[Proof of Lemma~\ref{lem:non planar 2 torsion}]
Since $\graf$ is non-planar, it admits $\completegraph{5}$ or $\completegraph{3,3}$ as a minor, and we may push a star class of the minor forward using Lemma~\ref{lemma: graph minors} to obtain a star class in $\graf$. Therefore, it suffices to prove the claim for $\completegraph{5}$ and $\completegraph{3,3}$.

In the case of $\completegraph{5}$, we can find an embedded copy of the theta graph $\thetagraph{3}$ as in Figure~\ref{figure: k5}. This shows that the star class $\alpha_{123}$ at $v$ is the negative of the star class $\alpha_{3'1'2'}=\alpha_{1'2'3'}$ at $v'$. The picture depicted and concomitant $\Theta$-relation can be rotated by $\frac{2\pi}{5}$; applying such relations $5$ times shows that the star class $\alpha_{123}$ is equal to $(-1)^{5}\alpha_{123}$. 
\begin{figure}[ht]
\centering
\begin{tikzpicture}
\node[draw=none,minimum size=2.5cm,regular polygon,regular polygon sides=5] (a) {};
\foreach \x in {1,...,5}
\foreach \y in {1,...,\x}
\draw(a.corner \x) -- (a.corner \y);
\draw(-.3,1.3) node {$\scriptstyle h_1$};
\draw(-.1,.9) node[fill=white, inner sep=0pt] {$\scriptstyle h_2$};
\draw(.3,1.3) node {$\scriptstyle h_3$};
\draw(0,1.45) node {$\scriptstyle v$};
\draw(-1.13,.7) node {$\scriptstyle h'_3$};
\draw(-.9,.2) node[fill=white, inner sep=0pt] {$\scriptstyle h'_2$};
\draw(-1.27,.1) node {$\scriptstyle h'_1$};
\draw(-1.35,.45) node {$\scriptstyle v'$};
\foreach \x in {1,2,...,5}
\fill[black] (a.corner \x) circle (2.5pt);
\begin{scope}[xshift= 5cm]
\node[draw=none,minimum size=2.5cm,regular polygon,regular polygon sides=5] (a) {};
\draw [dashed, very thin, gray] (a.corner 1) -- (a.corner 4) -- (a.corner 3) -- (a.corner 5) -- (a.corner 2);
\draw[colorskyblue](a.corner 1) -- (a.corner 3) -- (a.corner 2);
\draw[colorvermillion](a.corner 2) -- (a.corner 4)-- (a.corner 5) -- (a.corner 1);
\draw[coloryellow](a.corner 1) -- (a.corner 2);
\foreach \x in {1,2,...,5}
\fill[black] (a.corner \x) circle (2.5pt);
\end{scope}
\end{tikzpicture}
\caption{The complete graph $\completegraph{5}$ and an embedded subdivision of $\thetagraph{3}$ with edges of $\thetagraph{3}$ color-coded.}\label{figure: k5}
\end{figure}
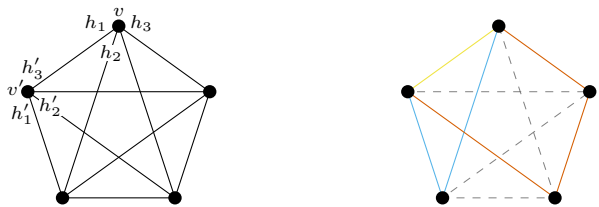

In the case of $\completegraph{3,3}$, we have two embedded subdivisions of $\thetagraph{3}$, as in Figure~\ref{figure: k33}. Applying the $\Theta$-relation twice, we see that both a star class at the upper left vertex and its additive inverse are equal to the same star class at the upper right vertex.
\end{proof}

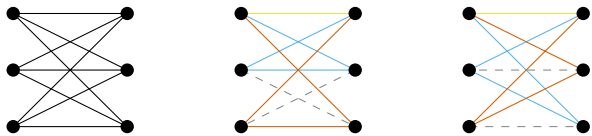
\begin{figure}[ht]
\centering
\begin{tikzpicture}
\foreach  \x in {-.75,.75}\foreach \y in {-.75,0,.75}
\fill[black] (\x,\y) circle (2.5pt);
\draw (-.75,.75) -- (.75,.75) -- (-.75,0) -- (.75,0) -- (-.75, -.75) -- (.75, .75);
\draw (.75, -.75) -- (-.75,.75) -- (.75, 0);
\draw (-.75,0) -- (.75,-.75)-- (-.75,-.75);
\begin{scope}[xshift= 3cm]
\draw [dashed, very thin, gray](.75, 0) -- (-.75,-.75);
\draw [dashed, very thin, gray](-.75, 0) -- (.75,-.75);
\draw[coloryellow] (-.75,.75) -- (.75,.75);
\draw[colorskyblue](.75,.75) -- (-.75,0) -- (.75,0) -- (-.75, .75);
\draw[colorvermillion](-.75, .75) -- (.75, -.75) -- (-.75,-.75) -- (.75, .75);
\foreach  \x in {-.75,.75}\foreach \y in {-.75,0,.75}
\fill[black] (\x,\y) circle (2.5pt);
\end{scope}
\begin{scope}[xshift= 6cm]
\draw [dashed, very thin, gray](.75, 0) -- (-.75,0);
\draw [dashed, very thin, gray](-.75,-.75) -- (.75,-.75);
\draw[coloryellow] (-.75,.75) -- (.75,.75);
\draw[colorskyblue](.75,.75) -- (-.75,0) -- (.75,-.75) -- (-.75, .75);
\draw[colorvermillion](-.75, .75) -- (.75, 0) -- (-.75,-.75) -- (.75, .75);
\foreach  \x in {-.75,.75}\foreach \y in {-.75,0,.75}
\fill[black] (\x,\y) circle (2.5pt);
\end{scope}
\end{tikzpicture}
\caption{The complete bipartite graph $\completegraph{3,3}$ and two embedded subdivisions of $\thetagraph{3}$ with edges of $\thetagraph{3}$ color-coded.}\label{figure: k33}
\end{figure}

\subsection{Four lemmas of Ko--Park}\label{section:key lemmas}
We are now equipped to prove the results of \cite{KoPark:CGBG} alluded to above, which together easily yield a complete calculation of $H_1(B_k(\graf))$ for an arbitrary connected graph $\graf$. We content ourselves with an outline of this computation, directing the interested reader to \cite{KoPark:CGBG} for details.
Versions of these lemmas are also proven in~\cite{HarrisonKeatingRobbinsSawicki:NPQSG}, and we have included citations to the appropriate parts of that paper as well. Harrison et al. use combinatorial ingredients similar to ours, but the logic of their arguments often diverges substantially from ours.

Since any graph has a simple subdivision (see \S\ref{section:conventions}), we may assume that $\graf$ is simple. In this case, there are classical decomposition theorems from graph theory associating to a $1$-connected simple graph a set of $2$-connected simple graphs, obtained by repeated $1$-cuts, and to each of these a set of $3$-connected simple graphs, obtained by repeated $2$-cuts. With this tool in hand, together with the observation that it suffices to assume that $\graf$ is $1$-connected, the argument proceeds as follows.
\begin{enumerate}
\item In characterizing the effect of a $1$-cut, Lemma~\ref{lem:1-cut} reduces the computation for $\graf$ to the computation for each associated $2$-connected graph.
\item Assuming now that $\graf$ is $2$-connected, Lemma~\ref{lem:biconnected stabilization} grants that $H_1(B_k(\graf))$ is independent of $k$ for $k\geq2$.
\item Lemma~\ref{lem:2-cut} describes the effect of a $2$-cut on $H_1(B_2(\graf))$ for $2$-connected $\graf$, reducing the calculation to graphs that are simpler in some technical sense. 
A variant of a classical decomposition theorem in graph theory~\cite{Tutte:CG,CunninghamEdmonds:CDT} says iterating this simplification procedure eventually reduces all $2$-connected graphs to $3$-connected graphs, cycle graphs, and theta graphs---see Remark~\ref{remark: how to go from 2-connected to 3-connected}.
\item Finally, Lemma~\ref{lemma: triconnected} computes $H_1(B_2(\graf))$ for $3$-connected $\graf$. 
The calculation for theta graphs can be understood by inspection or via the vertex long exact sequence of Corollary~\ref{cor: les vertex reduced}.
\end{enumerate}
With careful bookkeeping of cuts and these graph decomposition theorems, one can assemble these results to arrive at an explicit answer, which we do not repeat here.

The first result is a sharpened version of the degree 1 component of Proposition~\ref{proposition: edge multiplication is injective} in the presence of sufficient connectivity.

\begin{lemma}[{\cite[Lem.~3.12]{KoPark:CGBG}; see also~\cite[Theorem 5]{HarrisonKeatingRobbinsSawicki:NPQSG}}]\label{lem:biconnected stabilization}
Suppose that $\graf$ is simple and $2$-connected. For any $e\in E$ and $k\geq2$, multiplication by $e^{k-2}$ induces an isomorphism \[H_1(B_2(\graf))\xrightarrow{\simeq} H_{1}(B_k(\graf)).\]
\end{lemma}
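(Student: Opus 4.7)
Plan.
Injectivity is immediate from iterated application of Proposition~\ref{proposition: edge multiplication is injective}. For surjectivity, I would argue by induction on $k$: the base case $k=2$ is trivial, and the inductive step reduces to showing that $\cdot e : H_1(B_{k-1}(\graf)) \to H_1(B_k(\graf))$ is surjective for each $k \geq 3$. By Proposition~\ref{prop:H1 generators}, $H_1(B_k(\graf))$ is generated as an abelian group by monomials $p\alpha$ (with $\alpha$ a star class, $p \in \mathbb{Z}[E]_{k-2}$) and $q\gamma$ (with $\gamma$ a loop class, $q \in \mathbb{Z}[E]_{k-1}$). Divisibility by $e$ is trivial if $p$ or $q$ already contains $e$ as a factor, so the real task is to handle monomials with $p, q \in \mathbb{Z}[E \setminus \{e\}]$.

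The core chain-level tool is the identity
\[
(e' - e'')\tilde\eta = -\,\partial\bigl((h^w_{e'} - h^w_{e''}) \cdot \tilde\eta\bigr)
\]
in $\intrinsic{\graf}$, valid whenever $e', e''$ share a vertex $w$ and $\tilde\eta$ is a cycle whose $\reducedlocalstates{w}$-factor is $\varnothing$. For a star class $\alpha^{(v)}_{abc}$, the canonical representative $e_a h^v_{bc} + e_b h^v_{ca} + e_c h^v_{ab}$ satisfies this condition at every vertex other than $v$. Because $\graf$ is $2$-connected, the vertex-exploded graph $\graf_v$ is connected, hence so is its line graph, and chaining the swap identity along a path in that line graph yields $(e'-e'')\alpha = 0$ in $H_1$ for any pair of edges $e', e'' \in E$. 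Iterating to replace each factor of $p$ with $e$, we obtain $p\alpha = e^{k-2}\alpha = e \cdot (e^{k-3}\alpha)$ in $H_1(B_k(\graf))$, which lies in $e \cdot H_1(B_{k-1}(\graf))$.

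For loop-class monomials $q\gamma$ with $\gamma$ supported on a loop $L$, the canonical representative is supported at the vertices of $V(L)$, so the swap identity only yields $(e'-e'')\gamma = 0$ when $e', e''$ share a vertex outside $V(L)$. I would supplement this with the $O$-relation to swap among edges of $E(L)$, and with $Q$-relations coming from lollipop subgraphs attaching $L$ to further edges---the relevant paths and lollipops being supplied by $2$-connectivity via Menger's theorem (Theorem~\ref{thm: Menger}). The resulting moves should rewrite $q\gamma$ modulo boundaries as $e \cdot \eta + (\text{linear combination of star-class monomials})$ in $H_1(B_k(\graf))$, and the star-class correction terms are absorbed into $e \cdot H_1(B_{k-1}(\graf))$ by the previous paragraph.

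The hard part will be the loop-class case, in particular handling chord edges of $L$ (both endpoints on $L$) and edges far from $L$. Controlling the star-class correction terms produced by iterated $Q$-relations is the most delicate point: some additional case analysis---perhaps using $\gamma_L = \gamma_{L_1} + \gamma_{L_2}$ when a chord splits $L$ into two subloops, together with the $O$-relations on the subloops---will likely be needed to ensure each correction lands in $e \cdot H_1(B_{k-1}(\graf))$. Once this combinatorial bookkeeping is in place, the inductive hypothesis finishes the proof.
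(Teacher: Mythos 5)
Your overall structure matches the paper's: injectivity via Proposition~\ref{proposition: edge multiplication is injective}, surjectivity via Proposition~\ref{prop:H1 generators} and divisibility by $e$ of the generating monomials. Your star-class argument is essentially identical to the paper's: the chain-level swap identity $(e'-e'')\tilde\eta = \pm\partial((h^w_{e'}-h^w_{e''})\tilde\eta)$ along a path avoiding the star vertex $v$, with $2$-connectivity guaranteeing such a path exists in $\graf\setminus\{v\}$. That part is complete and correct.

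The gap is in the loop-class case, which you explicitly leave unfinished (``the hard part,'' ``likely be needed''). The good news is that the difficulty you anticipate -- iterated $Q$-relations, chord bookkeeping, controlling an accumulating pile of star-class corrections -- does not actually arise. The paper's resolution: write $p(E)\gamma = [q(E)e'c]$, pick any path from $e'$ to $e$, and observe that each step along the path falls into one of three cases. If the current pair of adjacent edges shares a vertex \emph{disjoint} from the supporting cycle $c$, the swap identity applies directly (since the representative $c$ has $\varnothing$ in the local factor there). If both edges lie \emph{in} $c$, the $O$-relation does the swap. The only remaining case is a transition: one edge in $c$, the adjacent one not, sharing a vertex of $c$ -- and there a \emph{single} application of the $Q$-relation gives $q(E)ec \sim \pm q(E)e'c \pm q(E)a$ with $[a]$ a star class, which is then absorbed by the already-proved star-class case. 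Because the path is traversed one step at a time with at most one $Q$-relation per transition, and each introduces exactly one star-class term that is immediately handled, there is no cascading correction to control. So the ``additional case analysis'' you defer is just this three-way case split on a single adjacent pair, and Menger's theorem / $2$-connectivity is not even needed for the loop case -- ordinary connectivity suffices to produce the path.
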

\begin{proof}
By Propositions~\ref{proposition: edge multiplication is injective} and~\ref{prop:H1 generators}, it will suffice to show that $p(E)\alpha$ and $p(E)\gamma$ are divisible by $e$ in the expected range, where $p(E)$ is a monomial in $E$, $\alpha$ is a star class at the vertex $v$, and $\gamma$ is a loop class.

In the case of a star class $\alpha$, there is nothing to show provided $p(E)\in\mathbb{Z}[e]$, so we may write $p(E)\alpha=[q(E)e'a],$ where $e'\neq e$ and $[a]=\alpha$. Since $\graf$ is $2$-connected, both $e'$ and $e$ have vertices distinct from $v$, and there is an injective path in $\graf\setminus\{v\}$ between them. Letting $h$ denote the alternating sum of the half-edges involved in this path, we have $\partial(q(E)ha)=q(E)ea-q(E)e'a$. An easy induction on the degree of $q$ completes the argument in this case.

In the case of a loop class $\gamma$, we write $p(E)\gamma=[q(E)e'c],$ where $e'\neq e$ and $[c]=\gamma$. Since $\graf$ is connected, there is a path connecting $e'$ to $e$. If this path contains none of the edges or vertices involved in $c$, then the same argument as before shows that $q(E)ec\sim q(E)e'c$; on the other hand, if the path is contained entirely in $c$, then the same conclusion follows by the $O$-relation. Thus, we may assume that $e$ lies in the complement of $c$, $e'$ lies in $c$, and that the two share a vertex. By the $Q$-relation, we have $q(E)ec\sim \pm q(E)e'c\pm q(E)a$, where $[a]$ is a star class. Since the case of a star class is known, this completes the proof.
\end{proof}

The second result is the base calculation, the $3$-connected case.

\begin{lemma}[{\cite[Lem.~3.15]{KoPark:CGBG}; see also~\cite[Lemma 3, Theorem 4]{HarrisonKeatingRobbinsSawicki:NPQSG}}]\label{lemma: triconnected}
If $\graf$ is $3$-connected, then $H_1(B_2(\graf))\cong \mathbb{Z}^{\beta_1(\graf)}\oplus K$, where \[K=\begin{cases}
\mathbb{Z}&\quad \text{if }\graf\text{ is planar}\\
\mathbb{Z}/2&\quad \text{else.}
\end{cases}\]
\end{lemma}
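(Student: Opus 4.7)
The plan is to exhibit a small generating set, split off a free summand of rank $\beta_1(\graf)$, and then identify the remaining cyclic summand.

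First I would identify generators. By Proposition~\ref{prop:H1 generators}, $H_1(B(\graf))$ is generated over $\mathbb{Z}[E]$ by loop and star classes, and Proposition~\ref{prop:triconnected have only one star class} reduces all star classes of a $3$-connected simple graph to a single class $\alpha\in H_1(B_2(\graf))$ up to sign. Fix a basis $\gamma_1,\dots,\gamma_b$ of $H_1(\graf)=H_1(B_1(\graf))$ (where $b=\beta_1(\graf)$) and a distinguished edge $e_0$. The argument in the proof of Lemma~\ref{lem:biconnected stabilization} shows that $(e-e_0)\gamma$ is a star class (via the $Q$-relation) for every $e\in E$ and every loop class $\gamma$, so modulo $\langle\alpha\rangle$ every weight-$2$ term $e\gamma_i$ reduces to $e_0\gamma_i$. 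Hence $H_1(B_2(\graf))$ is generated by $\alpha$ together with $\{e_0\gamma_i\}_{i=1}^{b}$.

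Next I would split off a free rank-$b$ summand. The inclusion $B_2(\graf)\hookrightarrow\mathrm{SP}^2(\graf)$ combined with the classical identification $H_1(\mathrm{SP}^2(\graf))\cong H_1(\graf)$ provides a homomorphism $\pi\colon H_1(B_2(\graf))\to H_1(\graf)$. A loop representing $e_0\gamma_i$ has one point tracing $\gamma_i$ while the other stays on $e_0$, and hence descends to $\gamma_i$ in the symmetric product; a star cycle representing $\alpha$ factors through $B_2(\stargraph{3})\to\mathrm{SP}^2(\stargraph{3})$ whose target has vanishing $H_1$, so $\pi(\alpha)=0$. Thus $\pi$ is surjective with kernel $\langle\alpha\rangle$, and since $H_1(\graf)$ is free the resulting short exact sequence splits, giving $H_1(B_2(\graf))\cong\mathbb{Z}^{b}\oplus\langle\alpha\rangle$.

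Finally I would determine the order of $\alpha$. The permutation-sign map $\sigma_{\graf}\colon H_1(B_2(\graf))\to\mathbb{Z}/2$ from the proof of Lemma~\ref{lemma: star classes are non-trivial} sends $\alpha\mapsto 1$, so $\alpha\neq 0$. If $\graf$ is non-planar, Lemma~\ref{lem:non planar 2 torsion} gives $2\alpha=0$, hence $\langle\alpha\rangle\cong\mathbb{Z}/2$. If $\graf$ is planar, fix a planar embedding $\graf\hookrightarrow\mathbb{R}^2$; this induces $B_2(\graf)\to B_2(\mathbb{R}^2)\simeq S^1$ (the angle of the displacement vector, mod $\pi$), and the resulting homomorphism $H_1(B_2(\graf))\to\mathbb{Z}$ sends $\alpha$ to a nonzero integer by direct inspection of any star realized in the plane. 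Therefore $\alpha$ has infinite order and $\langle\alpha\rangle\cong\mathbb{Z}$.

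The principal difficulty is constructing the splitting $\pi$ in the second step. The symmetric-product identification $H_1(\mathrm{SP}^2(\graf))\cong H_1(\graf)$ is outside the Swiatkowski framework developed in the paper, so I would either import it as a black box (a classical consequence of Dold--Thom) or reconstruct it via iterated application of the vertex long exact sequence of Corollary~\ref{cor: les vertex reduced}, reducing to atomic computations at the cost of considerable combinatorial bookkeeping.
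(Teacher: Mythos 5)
Your proof is correct, but it takes a genuinely different route from the paper's. Both arguments begin identically: by Propositions~\ref{prop:H1 generators} and~\ref{prop:triconnected have only one star class} together with the $Q$-relation, $H_1(B_2(\graf))$ is a quotient of $\mathbb{Z}^{\beta_1(\graf)}\oplus K$, with the star class killed or made $2$-torsion via Lemma~\ref{lem:non planar 2 torsion} in the non-planar case. They then diverge. The paper ``thickens'' $\graf$ to a surface $\Sigma$ with boundary such that $\graf\to\Sigma$ is a homotopy equivalence (with genus $0$ exactly when $\graf$ is planar), imports Bellingeri's presentation of the surface braid group $\pi_1(B_2(\Sigma))$ to compute $H_1(B_2(\Sigma))\cong\mathbb{Z}^{\beta_1}\oplus K$, observes that $H_1(B_2(\graf))\to H_1(B_2(\Sigma))$ is surjective, and concludes by the Hopfian property of finitely generated abelian groups. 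You instead split off the free rank-$\beta_1$ summand via the composite $H_1(B_2(\graf))\to H_1(\mathrm{SP}^2(\graf))\cong H_1(\graf)$, then determine the order of $\alpha$ separately: $\sigma_\graf$ shows $\alpha\ne0$; Lemma~\ref{lem:non planar 2 torsion} gives $2\alpha=0$ in the non-planar case; and a planar embedding gives a map to $H_1(B_2(\mathbb{R}^2))\cong\mathbb{Z}$ under which $\alpha$ has odd image (by naturality of $\sigma$), hence infinite order, in the planar case. Your argument replaces the full presentation of the surface braid group with two lighter external inputs (the classical fact $\pi_1(\mathrm{SP}^n X)\cong H_1(X)$ for $n\ge 2$, and the computation $H_1(B_2(\mathbb{R}^2))\cong\mathbb{Z}$), and it isolates the torsion question from the free part, which is conceptually cleaner. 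The paper's route is more uniform---both cases are handled by the same surface-thickening construction---and arguably more self-contained since it stays within the braid-group literature already cited. The one gap you flagged is real but not fatal: the $\mathrm{SP}^2$ identification genuinely does sit outside the \'{S}wi\k{a}tkowski framework and would need either an explicit citation or a from-scratch argument; the paper sidesteps exactly this by computing $H_1(B_2(\Sigma))$ directly from Bellingeri rather than by splitting.
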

Here $K$ is generated by a star class and $\mathbb{Z}^{\beta_1(\graf)}$ by the inclusion of cycle graphs into $\graf$.
\begin{proof}
Choose a set of loops $c_i$ in $\graf$ representing a basis for $H_1(\graf)$ and a star class $\alpha$. Define a map
\[
\left(\bigoplus_{i=1}^{\beta_1(\graf)}H_1(B_2(c_i))\right)\oplus K\cong\mathbb{Z}^{\beta_1(\graf)}\oplus K\xrightarrow{\psi} H_1(B_2(\graf))
\]using the inclusions of $c_i$ in $\graf$ and taking the generator of $K$ to $\alpha$. This map is well-defined by Lemma~\ref{lem:non planar 2 torsion} and surjective by Propositions~\ref{prop:H1 generators} and~\ref{prop:triconnected have only one star class}.

Next, we ``thicken'' $\graf$, replacing vertices with disks and edges with strips, obtaining a surface $\Sigma$ of genus $g$ with $b$ boundary components equipped with an embedding $\iota:\graf\to\Sigma$ that is also a homotopy equivalence (see, \cite[p.~4-5]{Kontsevich:ITMSCMAF}, for example, and the references therein for more on this classical tactic). Note that $g>0$ if $\graf$ is non-planar, while we may take $g=0$ if $\graf$ is planar. Moreover, by comparing Euler characteristics, we find that
\[
\beta_1(\graf)=2g+b-1.
\]

Now, from the explicit presentation for $\pi_1(B_2(\Sigma))$ given in \cite{Bellingeri:OPSBG}, it is easy to see that $\iota$ induces a surjection $\pi_1(B_2(\graf))\to\pi_1(B_2(\Sigma))$ on fundamental groups, and thus also on first homology. By direct computation, the same presentation gives
\begin{align*}
H_1(B_2(\Sigma))&\cong\begin{cases}
\mathbb{Z}\oplus\mathbb{Z}^{b-1} & g=0\\
\mathbb{Z}/2\oplus \mathbb{Z}^{2g+b-1} & g\ge 1
\end{cases}\\
&\cong\mathbb{Z}^{\beta_1(\graf)}\oplus K,
\end{align*}
Thus, $H_1(B_2(\graf))$ both admits a surjection from and a surjection onto $\mathbb{Z}^{\beta_1(\graf)}\oplus K$, which implies the claim by finite dimensionality.
\end{proof}

The third result describes the effect on $H_1$ of a 1-cut.

\begin{lemma}[{\cite[Lem.~3.11]{KoPark:CGBG}; see also~\cite[Section 4.4 and Section 7]{HarrisonKeatingRobbinsSawicki:NPQSG}}]\label{lem:1-cut}
Let $\graf$ be a graph, $v$ a $1$-cut of valence $\nu$ in $\graf$, and $\{\graf^{(i)}\}_{i=1}^\mu$ the set of $v$-components of $\graf_v$. There is an isomorphism
\[H_1(B_k(\graf))\cong \left(\bigoplus_{i=1}^\mu H_1(B_k(\graf^{(i)}))\right)\oplus \mathbb{Z}^{N(k,\graf,v)}\]
where
\[
N(k,\graf,v)=(\nu-2)\binom{k+\mu-2}{k-1}-\binom{k+\mu-2}{k} - (\nu-\mu-1).
\]
\end{lemma}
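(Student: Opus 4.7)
The plan is to apply the vertex long exact sequence of Corollary~\ref{cor: les vertex reduced} at $v$ and compare it against the analogous sequences for each component $\graf^{(i)}$. Since $v$ is a $1$-cut, the vertex explosion splits as $\graf_v\cong\coprod_{i=1}^\mu\graf^{(i)}_v$ with each $\graf^{(i)}_v$ connected. By Lemma~\ref{lem:box monoidal} and the K\"{u}nneth theorem (applicable because the relevant $H_0$-groups are torsion-free), we obtain $H_*(B(\graf_v))\cong\bigotimes_i H_*(B(\graf^{(i)}_v))$. In particular, $H_0(B(\graf_v))$ identifies with the polynomial ring $R\coloneqq\mathbb{Z}[\bar e_1,\ldots,\bar e_\mu]$, where $\bar e_i$ is the common class of any edge of $\graf^{(i)}$.

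Choose $h_0\in H^{(1)}(v)$ together with auxiliary half-edges $h_0^{(i)}\in H^{(i)}(v)$ (with $h_0^{(1)}=h_0$). The LES for $\graf$ yields a short exact sequence $0\to\coker\delta_1\to H_1(B_k(\graf))\to\ker\delta_0\to 0$. Using the explicit formula $\delta_n(\beta_h)=(\bar e_{i(h)}-\bar e_1)\beta_h$, the image of $\delta_1$ equals $I\cdot H_1(B(\graf_v))$, where $I=(\bar e_i-\bar e_1)_{i\ne 1}\subset R$. Quotienting the K\"{u}nneth decomposition by $I$ identifies each $\bar e_i$ with $\bar e_1$ and, using the injectivity of edge multiplication (Proposition~\ref{proposition: edge multiplication is injective}), yields $\coker\delta_1\cong\bigoplus_i H_1(B(\graf^{(i)}_v))$. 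The analogous LES for each $\graf^{(i)}$ degenerates because in $H_*(B(\graf^{(i)}_v))$ all edges coincide, forcing $\delta^{(i)}_n=0$; this gives $0\to H_1(B_k(\graf^{(i)}_v))\to H_1(B_k(\graf^{(i)}))\to\mathbb{Z}^{\nu_i-1}\to 0$.

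The inclusions $\graf^{(i)}\hookrightarrow\graf$ and $\graf^{(i)}_v\hookrightarrow\graf_v$ assemble into a commutative diagram of short exact sequences with the direct sum of the degenerate sequences on top and the sequence for $\graf$ on the bottom. The left vertical is the isomorphism $\bigoplus_i H_1(B(\graf^{(i)}_v))\xrightarrow{\sim}\coker\delta_1$ just described. The right vertical is defined on each summand by extending a tuple supported on $H^{(i)}(v)\setminus\{h_0^{(i)}\}$ to one supported on $H(v)\setminus\{h_0\}$, padding with zeros and, for $i\ne 1$, using the $h_0^{(i)}$-entry to cancel the sum of entries within $H^{(i)}(v)$; commutativity of the right square follows from a chain-level computation using $h_j-h_l=(h_j-h_0)-(h_l-h_0)$ in the reduced \'{S}wi\k{a}tkowski complex. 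The right vertical is injective because the supports in $H(v)\setminus\{h_0\}$ across different $i$ are pairwise disjoint, so by the five lemma the middle vertical $\iota$ is injective.

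To finish, the cokernel of $\iota$ is identified with $\ker\delta_0$ modulo the image of the right vertical. A rank count using $\mathrm{rank}\,\ker\delta_0=(\nu-1)\binom{k+\mu-2}{\mu-1}-\binom{k+\mu-1}{\mu-1}+1$ and rank of the image equal to $\nu-\mu$, combined with Pascal's identity, yields that $\coker\iota$ has rank exactly $N(k,\graf,v)$. It is free Abelian: the image is a pure subgroup of the free Abelian group $\ker\delta_0$ because any nonzero integer multiple of an element with entries in the rank-one subspaces $\mathbb{Z}\cdot\bar e_i^{k-1}\subset H_0(B_{k-1}(\graf^{(i)}_v))$ (with vanishing sum within each component) must already satisfy the same constraint. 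Since the cokernel is thus free, the short exact sequence $0\to\bigoplus_i H_1(B_k(\graf^{(i)}))\xrightarrow{\iota}H_1(B_k(\graf))\to\mathbb{Z}^{N(k,\graf,v)}\to 0$ splits, giving the desired isomorphism. The main subtlety is tracking the K\"{u}nneth decomposition through the quotient by $I$ to establish both the identification of $\coker\delta_1$ and the commutativity of the comparison diagram.
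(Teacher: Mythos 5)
Your overall strategy---the vertex long exact sequence at $v$ together with the K\"{u}nneth decomposition of $H_*(B(\graf_v))$---is the same one the paper uses, but there is a genuine error in how you handle the connecting homomorphism. Corollary~\ref{cor: les vertex reduced} gives $\delta\beta_h=(e(h)-e(h_0))\beta_h$, where $e(h)$ is the \emph{specific} edge of $\graf_v$ attached to $h$, not the common $H_0$-class $\bar e_{i(h)}$. You write $\delta_n(\beta_h)=(\bar e_{i(h)}-\bar e_1)\beta_h$, and this substitution is only valid on $H_0$, where connectedness of $\graf^{(i)}_v$ forces all its edges to act identically. On $H_1$ they do not, so your identification $\im\delta_1=I\cdot H_1(B(\graf_v))$ and the consequent claims $\coker\delta_1\cong\bigoplus_i H_1(B(\graf^{(i)}_v))$ and ``$\delta^{(i)}_n=0$ because all edges coincide'' are all false in general.

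Here is a counterexample to $\delta^{(i)}_1=0$: let $\graf^{(i)}$ be $\thetagraph{3}$ with one tail edge attached at a cone point, and take $v$ to be that $4$-valent vertex. Then $\graf^{(i)}_v\cong\stargraph{3}\amalg\intervalgraph$, and $H_1(B(\graf^{(i)}_v))\cong\mathbb{Z}[e_1,e_2,e_3]\langle\alpha\rangle\otimes\mathbb{Z}[e_0]$, where $e_1,e_2,e_3$ are the star edges and $e_0$ the interval. Taking $h_0^{(i)}$ to be the tail, $\delta^{(i)}_1$ includes multiplication by $e_1-e_0$, and $(e_1-e_0)(\alpha\otimes 1)=e_1\alpha\otimes 1-\alpha\otimes e_0$ is visibly nonzero (the two terms live in different K\"{u}nneth bidegrees). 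So the top row of your comparison diagram should read $0\to\coker\delta^{(i)}_1\to H_1(B_k(\graf^{(i)}))\to\ker\delta^{(i)}_0\to 0$ rather than having $H_1(B_k(\graf^{(i)}_v))$ on the left, and correspondingly the paper's identification is $\coker\delta_1\cong\bigoplus_i\coker\delta^{(i)}_1$, not $\bigoplus_i H_1(B(\graf^{(i)}_v))$. Establishing that isomorphism is exactly the delicate part: the paper proves it directly by showing that the inclusion-induced map $\bigoplus_i\coker\delta^{(i)}_1\to\coker\delta_1$ is surjective (every class has a representative concentrated in one $v$-component) and injective (via a basis change for the domain of $\delta_1$ that separates out each component). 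Because your two errors are ``matched''---you drop the same quotient on both sides of the comparison diagram---your final rank count happens to agree with the true answer, but the argument as written does not establish it.
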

\begin{proof}
Applying Corollary~\ref{cor: les vertex reduced} at $v$, we obtain the exact sequence

\begin{align*}
\cdots \to \bigoplus^{|H(v)|-1} H_1 (B_{k-1}(\graf_v))&\xrightarrow{\delta_1}
H_1(B_k(\graf_v))\to \\
H_1 (B_k(\graf))\to
\bigoplus^{|H(v)|-1} H_0 (B_{k-1}(\graf_v))&\xrightarrow{\delta_0}
H_0 (B_{k}(\graf_v))\to\cdots
\end{align*}
Since zeroth homology is free, $\ker \delta_0$ is as well, so $H_1(B_k(\graf))\cong \coker \delta_1\oplus \ker\delta_0$. 
Assume $\graf$ is connected for simplicity; then $\pi_0(B_k(\graf_v))$ is in bijection with the set of partitions of $k$ into $\mu$ distinguished blocks and we conclude by exactness that \begin{align*}\rk \ker \delta_0=(\nu-1)\binom{k+\mu-2}{k-1}-\binom{k+\mu-1}{k}+1.\end{align*} 
A similar argument shows $H_1(B_k(\graf^{(i)}))\cong \coker\delta^{(i)}_1\oplus \ker\delta^{(i)}_0$ and $\bigoplus_i \ker\delta^{(i)}_0\cong \mathbb{Z}^{(\nu-\mu)}$. Then, after a little combinatorial rearrangement, all that remains is to show that $\coker\delta_1\cong \bigoplus_{i=1}^\mu \coker\delta^{(i)}_1$. The maps $H_1(B_k(\graf^{(i)}_v))\to H_1(B_k(\graf_v))$ arising from the various inclusions induce a map $\bigoplus_{i=1}^\mu \coker\delta^{(i)}_1\to \coker\delta_1$. We will show that this map is an isomorphism.

By the K\"unneth formula (again using the fact that zeroth homology is free) the homology $H_1(B_k(\graf_v))$ splits as a direct sum over $i$. The $i$th summand consists of homology classes in $H_1(B_{k'}(\graf^{(i)}_v))$ equipped with an ordered partition of $k-k'$ into $\mu-1$ blocks for some $k'\le k$. Passing to the quotient $\coker\delta_1$, every equivalence class in the $i$th summand has a representative homology class with $k'=k$. Since this representative is in the image of the map in question, surjectivity follows.

For injectivity, we note that the map $H_1(B_k(\graf^{(i)}_v))\to H_1(B_k(\graf_v))$ lands in the $i$th summand of the direct sum decomposition, and $\delta_1$ respects this decomposition. 
Now for fixed $i$, we change basis for the direct sum indexing the domain of $\delta_1$. 
The given basis is spanned by all half-edges of $v$ except a special half-edge $h_0$. 
We choose the special half-edge to lie in the $v$-component $\graf^{(i)}_v$. 
Then for each $v$-component $\graf^{(j)}_v$ other that $\graf^{(i)}_v$, we keep one basis half-edge $h_{j0}$ corresponding to an edge in $\graf^{(j)}_v$ and change basis to $h_{jk}-h_{j0}$ for each other basis half-edge $h_{jk}$ in that $v$-component.
We retain all basis half-edges in the $v$-component $\graf^{(i)}_v$ without change.
 All of the changed basis elements are in the kernel of $\delta_1$.
The retained basis element $h_{j0}$ from other $v$-components each change one of the entries of the ordered partition of $k-k'$ under $\delta_1$. 
The set of $h_{j0}$ and the set of $\mu-1$ blocks are in bijection under this correspondence.
A homology class $\alpha$ in the codomain $H_1(B_k(\graf_v))$ of $\delta_1$ which is in the image of the inclusion of $H_1(B_k(\graf^{(i)}_v))$ has $k'=k$. 
Suppose $\alpha$ is in the image of $\delta_1$, say $\alpha=\delta_1(p_jh_{j0}+q)$, with $q$ in the summands indexed by half-edges in $\graf^{(i)}_v$. Then since $k'=k$ each $p_j$ is zero, so that $\alpha$ is also in the image of $\delta_1^{(i)}$.
 \end{proof}

The final result describes the effect on $H_1$ of a 2-cut. We will establish a result for a certain type of decomposition.
\begin{notation}
Let $\{x,y\}$ be a $2$-cut in a $2$-connected simple graph $\graf$. An \emph{$\{x,y\}$-decomposition} consists of a (redundant) collection of subgraphs of $\graf$, namely $(\graphfont{G}_1, \graphfont{G}_{-1}, \graphfont{P}_1, \graphfont{P}_{-1}, \graf_1,\graf_{-1}, \cyclegraph{})$, where
\begin{enumerate}
\item the subgraph $\graphfont{G}_1$ contains $x$ and $y$ and $\graphfont{G}_1\backslash \{x,y\}$ is a connected component of $\graf\backslash\{x,y\}$,
\item the subgraph $\graphfont{G}_{-1}$ is $(\graf\backslash \graphfont{G}_1)\cup \{x,y\}$,
\item the subgraph $\graphfont{P}_i$ is a simple path in $\graphfont{G}_i$ between $x$ and $y$,
\item the subgraph $\graf_i$ is the union of $\graphfont{G}_i$ and $\graphfont{P}_{-i}$,
\item the subgraph $\cyclegraph{}$ is the union of $\graphfont{P}_1$ and $\graphfont{P}_{-1}$.
\end{enumerate}
\end{notation}
See Figure~\ref{figure:xy decomposition} for an example. This notation is not symmetric in its indices because of the connectivity requirement on $\graphfont{G}_1\backslash\{x,y\}$.
Note that the graphs $\graf_{\pm 1}$ are still $2$-connected and simple. 
\begin{remark}

\label{remark: how to go from 2-connected to 3-connected}
This type of decomposition comes from classical graph theory~\cite{Tutte:CG,CunninghamEdmonds:CDT}, although our notation and terminology differ. In particular, we use a different definition of $2$-cut which coincides with the classical definition on simple graphs, and our $\graf_{\pm 1}$ are subdivisions of the components considered in the above references.

Using the argument of~\cite[Theorem 1]{CunninghamEdmonds:CDT}, for example, we can make a sequence of carefully chosen $\{x,y\}$-decompositions to decompose $\graf$ into a collection of subdivisions of cycle graphs, $\theta$-graphs, and $3$-connected graphs. Thus, since cycles and $\theta$-graphs may be treated by ad hoc means, reducing the problem of understanding $H_1(B(\graf))$ to that of understanding $H_1(B(\graf_{\pm 1}))$, as Lemma \ref{lem:2-cut} below, is tantamount to reducing the problem of understanding $H_1(B(\graf))$ for $2$-connected $\graf$ to the problem of understanding $H_1(B(\graf'))$ for $3$-connected $\graf'$. See~\cite{KoPark:CGBG} for details of this argument.
\end{remark}

\begin{figure}
\centering
\begin{tikzpicture}[scale=.8]
\begin{scope}[xshift=12cm, yshift=3cm]
\fill[black] (0,-1) circle (3.125pt);
\fill[black] (0,1) circle (3.125pt);
\fill[black] (-.35,0) circle (3.125pt);
\fill[black] (-1,0) circle (3.125pt);
\fill[black] (1,0) circle (3.125pt);
\fill[black] (.707,-.707) circle (3.125pt);
\draw (1, 0) arc (67.5:-112.5:0.3827cm);
\draw(1,0) arc (0:360:1cm);
\draw(.35,0) arc (0:360:.35cm and 1cm);
\draw(-.35,0) -- (-1,0);
\draw(1.15,0) node[right]{$\graf$};
\end{scope}
\begin{scope}[xshift = 9cm, yshift=4.5cm]
\fill[black] (0,-1) circle (3.125pt);
\fill[black] (0,1) circle (3.125pt);
\fill[black] (-.35,0) circle (3.125pt);
\fill[black] (-1,0) circle (3.125pt);
\fill[black] (1,0) circle (3.125pt);
\fill[black] (.707,-.707) circle (3.125pt);
\draw(0,1) arc (90:450:1cm);
\draw(0,1) arc (90:270:.35cm and 1cm);
\draw(-.35,0) -- (-1,0);
\draw(0,1.1) node[above]{$\graf_1$};
\end{scope}
\begin{scope}[xshift = 9cm, yshift=1.5cm]
\fill[black] (0,-1) circle (3.125pt);
\fill[black] (0,1) circle (3.125pt);
\fill[black] (-1,0) circle (3.125pt);
\fill[black] (1,0) circle (3.125pt);
\fill[black] (.707,-.707) circle (3.125pt);
\draw (1, 0) arc (67.5:-112.5:0.3827cm);
\draw(0,-1) arc (-90:270:1cm);
\draw(0,-1) arc (-90:90:.35cm and 1cm);
\draw(0,-1.1) node[below]{$\graf_{-1}$};
\end{scope}
\begin{scope}[xshift = 6cm, yshift=6cm]
\fill[black] (0,-1) circle (3.125pt);
\fill[black] (0,1) circle (3.125pt);
\fill[black] (-.35,0) circle (3.125pt);
\fill[black] (-1,0) circle (3.125pt);
\draw(0,1) arc (90:270:1cm);
\draw(0,1) arc (90:270:.35cm and 1cm);
\draw(-.35,0) -- (-1,0);
\draw(0,1.1) node[above]{$\graphfont{G}_1$};
\end{scope}
\begin{scope}[xshift = 6cm, yshift=3cm]
\fill[black] (0,-1) circle (3.125pt);
\fill[black] (0,1) circle (3.125pt);
\fill[black] (-1,0) circle (3.125pt);
\fill[black] (1,0) circle (3.125pt);
\fill[black] (.707,-.707) circle (3.125pt);
\draw(0,-1) arc (-90:270:1cm);
\draw(0,0) node{$\cyclegraph{}$};
\end{scope}
\begin{scope}[xshift = 6cm, yshift=0cm]
\fill[black] (0,-1) circle (3.125pt);
\fill[black] (0,1) circle (3.125pt);
\fill[black] (1,0) circle (3.125pt);
\fill[black] (.707,-.707) circle (3.125pt);
\draw (1, 0) arc (67.5:-112.5:0.3827cm);
\draw(0,-1) arc (-90:90:1cm);
\draw(0,-1) arc (-90:90:.35cm and 1cm);
\draw(0,-1.1) node[below]{$\graphfont{G}_{-1}$};
\end{scope}
\begin{scope}[xshift = 3cm, yshift=4.5cm]
\fill[black] (0,-1) circle (3.125pt);
\fill[black] (0,1) circle (3.125pt);
\fill[black] (-1,0) circle (3.125pt);
\draw(0,1) arc (90:270:1cm);
\draw(0,1.1) node[above]{$\graphfont{P}_1$};
\end{scope}
\begin{scope}[xshift = 3cm,yshift=1.5cm]
\fill[black] (0,-1) circle (3.125pt);
\fill[black] (0,1) circle (3.125pt);
\fill[black] (1,0) circle (3.125pt);
\fill[black] (.707,-.707) circle (3.125pt);
\draw(0,-1) arc (-90:90:1cm);
\draw(0,-1.1) node[below]{$\graphfont{P}_{-1}$};
\end{scope}
\begin{scope}[xshift=0cm, yshift=3cm]
\fill[black] (0,-1) circle (3.125pt);
\fill[black] (0,1) circle (3.125pt);
\draw(0,-1) node[left]{$y$};
\draw(0,1) node[left]{$x$};
\end{scope}
\draw[->](10.2,3.9)--(10.8,3.6);
\draw[->](10.2,2.1)--(10.8,2.4);
\draw[->](1.2,2.4)--(1.8,2.1);
\draw[->](1.2,3.6)--(1.8,3.9);
\draw[->](4.2,3.9) --(4.8,3.6);
\draw[->](4.2,2.1) --(4.8,2.4);
\draw[->](4.2,.9) --(4.8,.6);
\draw[->](4.2,5.1) --(4.8,5.4);
\draw[->](7.2,3.6) -- (7.8,3.9);
\draw[->](7.2,2.4) -- (7.8,2.1);
\draw[->](7.2,.6) -- (7.8,.9);
\draw[->](7.2,5.4) -- (7.8,5.1);
\end{tikzpicture}
\caption{An example of an $\{x,y\}$-decomposition. Every diamond is a push-out in $\Top$.}
\label{figure:xy decomposition}
\end{figure}
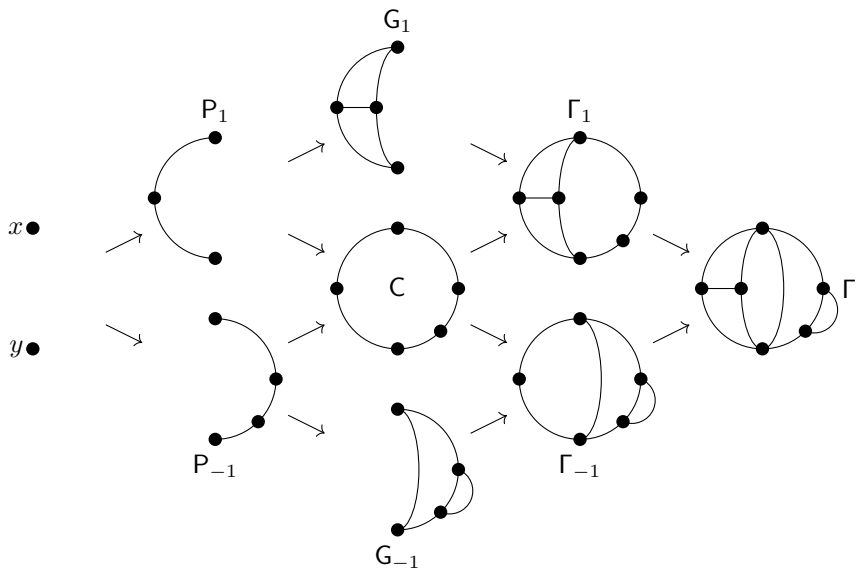

\begin{lemma}\label{lemma: xy decomposition retract}
Let $\graf$ be a $2$-connected simple graph with a $2$-cut $\{x,y\}$ and an $\{x,y\}$-decomposition. The natural map of \'{S}wi\k{a}tkowski complexes $\intrinsic{\graphfont{P}_i}\to \intrinsic{\graphfont{G}_i}$ admits a retraction $\intrinsic{\graphfont{G}_i}\to \intrinsic{\graphfont{P}_i}$ with the property that generators of $\intrinsic{\graphfont{G}_i}$ which avoid $x$ and its half-edges (respectively $y$ and its half-edges) are taken to sums of generators of $\intrinsic{\graphfont{P}_i}$ which avoid $x$ and its half-edge (respectively $y$ and its half-edge).
\end{lemma}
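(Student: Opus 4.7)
I would construct $r$ by induction on the excess $\varepsilon(\graphfont{G}_i)\coloneqq|V(\graphfont{G}_i)\setminus V(\graphfont{P}_i)|+|E(\graphfont{G}_i)\setminus E(\graphfont{P}_i)|$, with base case $\varepsilon=0$, where $\graphfont{G}_i=\graphfont{P}_i$ and $r=\id$. For the inductive step, I first claim that $\graphfont{G}_i$ admits either (A) a valence-$1$ vertex $w\notin V(\graphfont{P}_i)$, or (B) a non-bridge edge $e\notin E(\graphfont{P}_i)$. Indeed, if $\graphfont{G}_i$ has a cycle, any cycle must use at least one non-path edge (since $\graphfont{P}_i$ is a path), and such an edge is a non-bridge, giving (B); if $\graphfont{G}_i$ is a tree, then its leaves cannot be interior path vertices (which have valence $\geq 2$), so if (A) fails, the only leaves are $x,y$, forcing $\graphfont{G}_i$ to be the unique simple path $\graphfont{P}_i$ between them.

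For case (A), let $v$ be the neighbor of $w$. The vertex $v$ has another edge $f$ in $\graphfont{G}_i\setminus\{w,e_w\}$ (otherwise $\graphfont{G}_i$ would be a single edge, hence equal to $\graphfont{P}_i$). I would define the elementary retraction $\intrinsic{\graphfont{G}_i}\to\intrinsic{\graphfont{G}_i\setminus\{w,e_w\}}$ on basis elements by the substitutions $e_w\mapsto f$ in the polynomial part, $h^v_{e_w}\mapsto h^v_f$ in $\localstates{v}$, and on the $\localstates{w}$ factor by $\varnothing_w\mapsto 1$, $w\mapsto f$, $h^w_{e_w}\mapsto 0$ (with images in the polynomial part). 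The chain property reduces to $\partial h^v_{e_w}=e_w-v\mapsto f-v=\partial h^v_f$ and $\partial h^w_{e_w}=e_w-w\mapsto f-f=0$.

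For case (B), using that $\graphfont{G}_i\setminus\{x,y\}$ is connected (by the definition of the $\{x,y\}$-decomposition), I would choose a simple path $\gamma=(f_1,\ldots,f_n)$ in $\graphfont{G}_i\setminus\{e\}$ between the endpoints $u,v$ of $e$ whose interior vertices $v_1,\ldots,v_{n-1}$ avoid $\{x,y\}$. Writing $h^+_j$ and $h^-_j$ for the half-edges of $f_j$ at $v_j$ and $v_{j-1}$ respectively, the element $\eta=\sum_{j=1}^n h^+_j-\sum_{j=2}^n h^-_j\in\intrinsic{\graphfont{G}_i\setminus\{e\}}$ satisfies $\partial\eta=f_1-v$. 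I would define the elementary retraction on basis elements via the substitutions $e\mapsto f_1$ on polynomials, $h^u_e\mapsto h^u_{f_1}$, and $h^v_e\mapsto\eta$, combining the last with the surrounding tensor factors via the natural shuffle at vertices in the support of $\eta$. The self-loop case $u=v$ is analogous.

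Composing the elementary retractions yields $r$. The avoidance property is preserved at each step: case (A) modifies only $\localstates{v}$ and $\localstates{w}$, leaving $\varnothing_x$ and $\varnothing_y$ unaffected; case (B) modifies $\localstates{u},\localstates{v}$ (where $\varnothing$-generators are unaffected by the half-edge substitutions) and the local states at interior vertices of $\gamma$, which are disjoint from $\{x,y\}$ by our choice of $\gamma$. The main technical obstacle will be making the case (B) retraction fully rigorous: because $\eta$ is supported across several vertices, the substitution $h^v_e\mapsto\eta$ must be consistently interpreted when combined with existing local states along $\gamma$, and the chain-map property ultimately reduces to a term-by-term Leibniz computation using $\partial\eta=f_1-v$.
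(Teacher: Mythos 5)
Your plan replaces the paper's argument (which fixes an explicit chain map only on $S_{\leq 1}(\graphfont{G}_i)$ and then extends degree by degree, choosing nullhomotopies using the contractibility of $\intrinsic{\graphfont{P}_i}$ and of its reductions at $x$, $y$, and $\{x,y\}$) by a structural induction that peels one vertex or one non-bridge edge off $\graphfont{G}_i$ at a time. Your case~(A) is sound and gives a genuinely explicit elementary step. But case~(B) has a real gap, not merely a technical one to be smoothed over. The assignment $h^v_e\mapsto\eta$ is not a substitution that makes sense on $\intrinsic{\graphfont{G}_i}$: the tensor factors $\localstates{w}$ carry no product, and $\eta$ is spread across half-edges at several vertices $v_1,\dots,v_{n-1}$, so if a generator already occupies one of those vertices with a nontrivial state, the ``shuffled'' term requires multiplying two elements of $\localstates{v_j}$, which is meaningless. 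Nor can those terms simply be discarded. Consider a triangle on $\{a,b,c\}$ with $e=\{a,c\}$, $u=a$, $v=c$, $f_1=\{a,b\}$, $f_2=\{b,c\}$; then $\eta=h^b_{f_1}+h^c_{f_2}-h^b_{f_2}$. For $g=\varnothing_a\otimes h^b_{f_1}\otimes h^c_e$, discarding the ill-defined terms gives $r(g)=\varnothing_a\otimes h^b_{f_1}\otimes h^c_{f_2}$, and a direct computation yields
\[
r(\partial g)-\partial r(g) \;=\; f_2\,\varnothing_a\otimes h^b_{f_1}\otimes\varnothing_c \;-\; f_1\,\varnothing_a\otimes h^b_{f_2}\otimes\varnothing_c \;\neq\; 0,
\]
so $r$ is not a chain map. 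In addition, the appeal to ``$\graphfont{G}_i\setminus\{x,y\}$ is connected'' is only part of the definition when $i=1$; for $i=-1$ the complement may have several components (the paper remarks explicitly on this asymmetry of the indexing), so the path $\gamma$ avoiding $\{x,y\}$ need not exist for $\graphfont{G}_{-1}$.

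Patching case~(B) is harder than it looks: even if you gave up the explicit formula and tried the paper's obstruction-theoretic extension at each elementary step, that argument needs the target of the elementary retraction to be contractible in each weight, and $\intrinsic{\graphfont{G}_i\setminus e}$ generally is not (the intermediate graphs can still contain cycles). So the existence of retractions onto these intermediate subcomplexes would itself require justification, and the structural induction loses its footing. The route that does work is the paper's: specify a chain map on $S_{\leq 1}(\graphfont{G}_i)$ extending the identity on $S_{\leq 1}(\graphfont{P}_i)$ (your case~(A) substitutions, together with a degree-one version of case~(B) that sends $h^v_e$ to a degree-one chain in $\intrinsic{\graphfont{P}_i}$ bounding $v-e_0$ for a chosen $e_0\in E(\graphfont{P}_i)$, supply this), and then extend one degree at a time, choosing nullhomotopies inside $\intrinsic{\graphfont{P}_i}$ and inside the subcomplexes avoiding $x$, $y$, or both, whose vanishing positive-degree homology in each weight makes the extension unobstructed and preserves the avoidance property.
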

We think of $\graphfont{G}_{\pm 1}$ and $\graphfont{P}_{\pm 1}$ as relatively simpler subgraphs of the $\{x,y\}$-decomposition. In Lemma~\ref{lem:2-cut}, we will use these retractions to build similar retractions involving the more complicated subgraphs $\graf$ and $\graf_{\pm 1}$.
\begin{proof}
Suppose we are given a retraction $f:S_{\leq m}(\graphfont{G}_i)+\intrinsic{\graphfont{P}_i}\to \intrinsic{\graphfont{P}_i}$ with the desired property. The problem of extending this map to a generator $b$ of degree $m+1$ is that of choosing a nullhomotopy in $\intrinsic{\graphfont{P}_i}$ for $f(\partial(b))$ having the desired property. Since $\intrinsic{\graphfont{P}_i}$ and its reductions at $x$, at $y$, and at $\{x,y\}$ are all contractible in each weight, this extension is unobstructed as long as $m>0$. Thus, by induction on $m$, it will suffice to produce a retraction $f:S_{\le 1}(\graphfont{G}_i)+\intrinsic{\graphfont{P}_i}\to \intrinsic{\graphfont{P}_i}$ with the desired property. For this, we extend the identity on $S_{\le 1}(\graphfont{P}_i)$ by 
\begin{enumerate}\item 
sending each edge or vertex not lying in $\graphfont{P}_i$ to a fixed arbitrarily chosen edge $e$ of $\graphfont{P}_i$, 
\item sending each half-edge whose vertex is not in $\graphfont{P}_i$ to zero, and
\item sending each half-edge whose vertex $v$ is in $\graphfont{P}_i$ to a chain in $\intrinsic{\graphfont{P}_i}$ (avoiding $x$ and $y$ except potentially at $v$) whose boundary is the difference between $v$ and $e$. This is possible because $\graphfont{P}_i$ is topologically an interval with endpoints $x$ and $y$.\qedhere
\end{enumerate}
\end{proof}

\begin{lemma}[{\cite[Lem.~3.13]{KoPark:CGBG}; see also~\cite[Section 4.3]{HarrisonKeatingRobbinsSawicki:NPQSG}}]\label{lem:2-cut}
Let $\graf$ be a $2$-connected simple graph with a $2$-cut $\{x,y\}$ and an $\{x,y\}$-decomposition. Then the sequence
\[
0\to H_1(B_2(\cyclegraph{}))\to H_1(B_2(\graf_1))\oplus H_1(B_2(\graf_{-1}))
\to H_1(B_2(\graf))\to 0
\]
is split exact, where the maps are induced by the respective inclusions, with the map $H_1(B_2(\graf_{-1}))\to H_1(B_2(\graf))$ twisted by an overall sign.
\end{lemma}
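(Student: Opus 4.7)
The plan is to derive the 4-term split exact sequence from a Mayer--Vietoris-type long exact sequence of \'{S}wi\k{a}tkowski complexes associated to the pushout $\graf = \graf_1 \cup_{\cyclegraph{}} \graf_{-1}$, with the retractions of Lemma~\ref{lemma: xy decomposition retract} playing the central role. First I would construct a chain-level model for $\intrinsic{\graf}$ as a homotopy pushout: let $P$ be the cokernel of the chain map
\[
(\iota_1, \iota_{-1}): \intrinsic{\cyclegraph{}} \to \intrinsic{\graf_1} \oplus \intrinsic{\graf_{-1}},
\]
where $\iota_i$ and $\kappa_i$ denote the inclusions $\intrinsic{\cyclegraph{}} \to \intrinsic{\graf_i}$ and $\intrinsic{\graf_i} \to \intrinsic{\graf}$ respectively. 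The combined map $\kappa_1 - \kappa_{-1}$ kills the image of $(\iota_1, \iota_{-1})$, so it descends to a chain map $P \to \intrinsic{\graf}$. This descended map is not surjective---the missing generators of $\intrinsic{\graf}$ are those with edge monomials using edges from both $E(\graphfont{G}_1) \setminus E(\graphfont{P}_1)$ and $E(\graphfont{G}_{-1}) \setminus E(\graphfont{P}_{-1})$, or whose vertex states at $x$ or $y$ mix half-edges from both sides. The heart of the argument is to show that $P \to \intrinsic{\graf}$ is nevertheless a quasi-isomorphism, which I would establish by using the retractions $r_{\pm 1}$ to exhibit explicit null-homotopies for such offending generators in the cokernel; intuitively, the paths $\graphfont{P}_{\pm 1}$ allow one to shift any problematic edge or half-edge difference across the cycle to a canonical representative on $\graphfont{P}_i$.

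Given this quasi-isomorphism, the short exact sequence
\[
0 \to \intrinsic{\cyclegraph{}} \xrightarrow{(\iota_1, \iota_{-1})} \intrinsic{\graf_1} \oplus \intrinsic{\graf_{-1}} \to P \to 0
\]
induces a weight-graded long exact sequence in homology. Restricting to weight $2$, I would verify that the connecting homomorphisms flanking the relevant four-term subsequence vanish. The map $H_1(B_2(\graf)) \to H_0(B_2(\cyclegraph{}))$ is zero because its successor $H_0(B_2(\cyclegraph{})) \to H_0(B_2(\graf_1)) \oplus H_0(B_2(\graf_{-1}))$ is injective---by the $I$-relation all three $H_0$ groups are isomorphic to $\mathbb{Z}$ and the induced map is nonzero in both coordinates. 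The map $H_2(B_2(\graf)) \to H_1(B_2(\cyclegraph{}))$ is zero by the analogous injectivity of the first map of the claimed sequence, which follows because $H_1(B_2(\cyclegraph{})) \cong \mathbb{Z}$ is generated by an edge multiple of a loop class, and this class remains nonzero in each $H_1(B_2(\graf_i))$ by Proposition~\ref{proposition: edge multiplication is injective} combined with Lemma~\ref{lemma: star classes are non-trivial}.

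To produce the splitting, I would extend the retraction $r_i: \intrinsic{\graphfont{G}_i} \to \intrinsic{\graphfont{P}_i}$ to a chain map $R_i: \intrinsic{\graf_i} \to \intrinsic{\cyclegraph{}}$ by applying $r_i$ on the $\graphfont{G}_i$-component of each generator and the identity on the $\graphfont{P}_{-i}$-component. The property from Lemma~\ref{lemma: xy decomposition retract} that $r_i$ preserves avoidance of $x$ and $y$ is what makes this extension well-defined at the gluing vertices $x,y$, and by direct inspection $R_i \circ \iota_i = \id$. The induced map $H_1(B_2(\graf_i)) \to H_1(B_2(\cyclegraph{}))$ then left-inverts $\iota_{i*}$, giving the desired splitting.

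The hard part will be proving the quasi-isomorphism $P \xrightarrow{\sim} \intrinsic{\graf}$ of the first step. This is essentially a combinatorial problem of exhibiting explicit null-homotopies for generators that straddle the two sides of the decomposition, and the retractions of Lemma~\ref{lemma: xy decomposition retract} are the indispensable tool, allowing us to systematically trade each offending edge or half-edge difference for a canonical counterpart supported on $\graphfont{P}_{\pm 1}$ while the differential accounts for the discrepancy.
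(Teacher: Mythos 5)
Your approach is genuinely different from the paper's, and the key step is not adequately established. The paper proves the four-term sequence directly: the composite is zero because the two middle maps agree up to sign; the left map is injective because the loop class of $\cyclegraph{}$ stays nonzero (via the surface-embedding argument of Lemma~\ref{lemma: triconnected}); the right map is surjective by Proposition~\ref{prop:H1 generators} together with the $Q$- and $O$-relations; and exactness at the middle term and the splitting both come from the explicit retractions $\pi_i$, $\rho_i$ of Lemma~\ref{lemma: xy decomposition retract} and a commuting diagram chase, never invoking any homotopy-pushout property.

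By contrast, your whole argument hinges on the claim that $P\to\intrinsic{\graf}$ is a quasi-isomorphism, which you acknowledge as ``the hard part'' but leave entirely unproven, offering only an informal appeal to ``explicit null-homotopies.'' This is not a defect you can postpone: it is the substance of the proof. The point is subtle because the \'{S}wi\k{a}tkowski complex is built as a tensor product over edges and vertices, and such constructions do \emph{not} generally carry topological pushouts of graphs to homotopy pushouts of chain complexes. Indeed, the paper's entire Decomposition Theorem exists precisely because the right answer for a gluing is a \emph{derived} tensor product, not an ordinary pushout of complexes---and the Decomposition Theorem only applies to collar gluings along bridges of the form $A\times I$, whereas $\graf=\graf_1\cup_{\cyclegraph{}}\graf_{-1}$ is not of that form. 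So you cannot lean on the paper's general machinery; you would have to prove acyclicity of $\intrinsic{\graf}/P$ by an ad hoc combinatorial argument, and you have not done so. Until you supply that argument, the long exact sequence you want to restrict does not exist. (The remaining steps---the identification of the splitting $R_i$, which coincides with the paper's $\rho_i$, and the vanishing of the two flanking connecting homomorphisms---are correct. A minor inaccuracy: a vertex state at $x$ or $y$ is a single half-edge, so it cannot ``mix half-edges from both sides''; the non-surjectivity comes solely from edge monomials straddling the two components.)
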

\begin{proof}
The composite of the middle two maps is the sum of the map induced by the inclusion of $\cyclegraph{}$ into $\graf$ with its negative, hence zero. For injectivity of the second map, we appeal to the argument of Lemma~\ref{lemma: triconnected}, which shows that the loop class corresponding to $\cyclegraph{}$ is non-zero. For surjectivity of the third map, by $2$-connectedness of $\graf$, any graph morphism of $\stargraph{3}$ to $\graf$ can be extended to an embedding of some lollipop graph. By Proposition~\ref{prop:H1 generators} and the $Q$-relation it suffices to show that classes of the form $e\gamma$ lie in the image, where $e$ is an edge and $\gamma$ a loop class.

If $e$ does not lie in the representing loop of $\gamma$, then by $2$-connectedness of $\graf$ we may write $e\gamma=e\gamma'+e\gamma''$ where $e$ lies within the representing loops of both $\gamma'$ and $\gamma''$. By the $O$-relation, any two choices for $e$ within $\gamma'$ represent the same class. This reduces us to classes $e\gamma$ where $\gamma$ passes through both $\graphfont{G}_1$ and $\graphfont{G}_{-1}$ and $e$ lies in $\graphfont{G}_1$. Then $e\gamma$ can be rewritten as $e\gamma_{1}+e\gamma_{-1}$, where $\gamma_i$ is a loop class in $\graf_i$ for $i\in\{-1,1\}$. The first of these lies in the image of $H_1(B_2(\graf_1))$, and the second lies in the image of $H_1(B_2(\graf_{-1}))$ by the connectivity property of $\graphfont{G}_1$.

It remains to show exactness and splitting, for which we will use the retracts of Lemma~\ref{lemma: xy decomposition retract}. A generator of $\intrinsic{\graf}$ is a product of edges, half-edges, and vertices of $\graphfont{G}_1$ and $\graphfont{G}_{-1}$ (which intersect only at the vertices $x$ and $y$), and so, by applying these retracts on each of these two pieces, we obtain retracts $\pi_i:\intrinsic{\graf}\to \intrinsic{\graf_i}$ and $\rho_i:\intrinsic{\graf_i}\to \intrinsic{\cyclegraph{}}$ of the maps induced by the respective inclusions. 
We only have well-definition of these retracts because of the ``boundary conditions'' on generators involving $x$ and $y$ in the Lemma~\ref{lemma: xy decomposition retract}. Moreover, because the maps of Lemma~\ref{lemma: xy decomposition retract} are \emph{retracts}, the following diagram commutes:
\[
\begin{tikzcd}
	\intrinsic{\graf_i}\ar{r}{\rho_i}\ar[d]&\intrinsic{\cyclegraph{}}\ar[d]\\
	\intrinsic{\graf}\ar{r}[swap]{\pi_{-i}}&\intrinsic{\graf_{-i}}.
\end{tikzcd}
\] 
The map $\rho_1$ splits the sequence.  For exactness, consider the composite
\[H_1(B_2(\graf_1))\oplus H_1(B_2(\graf_{-1}))\to H_1(B_2(\graf))\to H_1(B_2(\graf_1))\oplus H_1(B_2(\graf_{-1})),\] where the second map is $(\pi_1, -\pi_{-1})$. Any map in the kernel of the first map is a fortiori in the kernel of the composition. Now writing $\iota_*$ for any map on homology induced by an inclusion, and using the commuting diagram above, this composition is given by \[(\beta_1,\beta_{-1})\mapsto(\beta_1-\iota_*(\rho_{-1})_*(\beta_{-1}),\beta_{-1}-\iota_*(\rho_1)_*(\beta_1)).\] This expression vanishes if and only if $\beta_1=\iota_*(\rho_1)_*(\beta_1)$, in which case $(\beta_1,\beta_{-1})$ is the image of $(\rho_1)_*(\beta_1)$, showing exactness.
\end{proof}

\bibliographystyle{amsalpha}
\providecommand{\bysame}{\leavevmode\hbox to3em{\hrulefill}\thinspace}
\providecommand{\MR}{\relax\ifhmode\unskip\space\fi MR }
\providecommand{\MRhref}[2]{%
  \href{http://www.ams.org/mathscinet-getitem?mr=#1}{#2}
}
\providecommand{\href}[2]{#2}

\Addresses
\end{document}